\newtheorem{thm}{Theorem}[section]
\newtheorem{cor}[thm]{Corollary}
\newtheorem{lem}[thm]{Lemma}
\newtheorem{prop}[thm]{Proposition}
\newtheorem{claim}[thm]{Claim}
\newtheorem{fact}[thm]{Fact}
\theoremstyle{definition}
\newtheorem{defn}[thm]{Definition}
\theoremstyle{remark}
\newtheorem{ntn}[thm]{Notation}
\newtheorem{rem}[thm]{Remark}
\newtheorem{sample}[thm]{Example} \numberwithin{equation}{section}
\numberwithin{equation}{section}
\def\GL{\mathrm{GL}}
\def\SL{\mathrm{SL}}
\newcommand{\RR}{\mathbb{R}}
\newcommand{\CN}{\mathcal{N}}
\newcommand{\rarr}{\rightarrow}
\newcommand{\la}{\langle}
\newcommand{\ra}{\rangle}
\def\tp{\mathrm{tp}}
\newcommand{\sub}{\subseteq}
\def\CL{\mathcal{L}}
\def\st{\operatorname{st}}
\def\esub{\prec}
\def\sube{\succ}
\def\CM{\mathcal{M}}
\def\CO{\mathcal{O}}
\def\UU{\mathbb{U}}
\def\Smu{\operatorname{Stab}^\mu}
\def\Stab{\operatorname{Stab}}
\def\ccdot{{\cdot}}
\def\SO{\mathrm{SO}}
\DeclareRobustCommand{\rchi}{{\mathpalette\irchi\relax}} 
\newcommand{\irchi}[2]{\raisebox{0.35ex}{$#1\chi$}} 
\newcommand*\bbar[1]{
  \hbox{%
    \vbox{%
      \hrule height 0.5pt 
      \kern0.3ex
      \hbox{%
        \kern-0.1em
        \ensuremath{#1}%
        \kern-0.1em
      }%
    }%
  }%
}
\newcommand{\al}{\alpha}
\newcommand{\bH}{\bbar{\mathbb{H}}}
\begin{document}


\title[{\large $\mu$}-types and stabilizers]{Topological groups, {\large $\mu$}-types and their stabilizers}

\author{Ya'acov Peterzil}
\address{University of Haifa}
\email{kobi@math.haifa.ac.il}
\author{Sergei Starchenko}
\address{University of Notre Dame}
\email{sstarche@nd.edu}





\subjclass[2010]{%
Primary %
03C64, 
03C98 
}

\maketitle

\begin{abstract} We consider an arbitrary topological group $G$ definable in a structure $\CM$, such
that some basis for the topology of $G$ consists of  sets definable in $\CM$.
 To each such group $G$ we associate a compact $G$-space of partial types
$S^\mu_G(M)=\{p_\mu\colon p\in S_G(M)\}$ which is the quotient of the usual type
space $S_G(M)$ by the relation of two types being ``infinitesimally close to each
other''. In the o-minimal setting, if $p$ is a definable type then it has a
corresponding definable subgroup $\Smu(p)$, which is the stabilizer of $p_\mu$. This
group is nontrivial when $p$ is unbounded; in fact it is a torsion-free solvable
group.

Along the way, we analyze the general construction of $S^\mu_G(M)$ and its
connection to the  Samuel compactification of topological groups.
\keywords{O-minimality, definable groups, compactification}
\end{abstract}

\section{Introduction} It was shown in \cite{PS} that in a  group $G$
definable in an
 o-minimal structure, one can associate to any definable unbounded curve $\gamma\sub G$ a
definable one-dimensional torsion-free group $H_{\gamma}$. In fact, the group
$H_\gamma$ can be viewed as associated to the (definable) type $p$ of $\gamma$ at
``$+\infty$''. Our initial goal in the current article  was to extend that result to
arbitrary definable types in $G$ and associate to any such $p$ a definable group
$H_p$, which is nontrivial if and only if $p$ is unbounded (here, a type $p$ is
called {\em unbounded\/} if no formula in $p$ defines a definably compact set with
respect to the $G$-topology).

While working on the above  we discovered  interesting  connections to general
topological groups, $G$-spaces and their universal compactifications. Namely,
consider an arbitrary topological group $G$, definable in some structure $\CM$, with
a basis for its topology consisting of  sets definable in $\CM$. Under these
assumptions we view the partial type $\mu$ of all definable open subsets of $G$
containing the identity as an ``infinitesimal subgroup'' and use it to define an
equivalence relation on complete types in $S_G(M)$: $p\sim_\mu q$ if $\mu\ccdot
p=\mu\ccdot q$, as partial types. It turns out that this equivalence relation is
well behaved and the quotient space $S^\mu_G(M)$ is a compact $G$-space. Moreover,
this construction recovers the  Samuel compactification, see \cite{samuel}, in the
case of an arbitrary topological group, when {\em all\/} subsets of $G$ are definable
in $\CM$. In the case when $G$ is a discrete group our analysis is already subsumed
by the work of Newelski \cite{N} and others (see for example \cite{GPP}).

Returning to our original problem, the group $H_p$ described above is just the
stabilizer of the associated partial type $p_\mu$ under the action of $G$ on
$S^\mu_G(M)$. For the result below, we say that a complete type $p\in S_G(M)$ is
$\mu$-reduced, if its $\sim_\mu$-class does not contain any complete type of lower
dimension. We first prove (see Claim~\ref{claim:def-red}) that any definable type
$q$ which is unbounded has a $\sim_\mu$-equivalent definable type $p$ of positive
dimension which is $\mu$-reduced. Summarizing our main results we have
(see theorems
\ref{Prop:definable} and \ref{theorem:1}):

\begin{thm}\label{intro1} Let $G$ be a definable group in an o-minimal expansion of a real
closed field. Then to any definable $\mu$-reduced type $p\in S_G(M)$ there is an
associated definable, torsion-free  group $H_p=\Smu(p)\supseteq \Stab(p)$, with
$\dim H_p=\dim p$. In particular, $\dim H_p>0$ if and only if $p$ is unbounded.
\end{thm}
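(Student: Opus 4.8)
The plan is to build $H_p = \Smu(p)$ as the stabilizer of $p_\mu$ under the $G$-action on $S^\mu_G(M)$ and then to extract definability, torsion-freeness, and the dimension equality by o-minimal arguments. First I would set up the basic group theory: the set $\Smu(p) = \{ g \in G : g \cdot p_\mu = p_\mu \}$ is a subgroup of $G$, and it clearly contains the ordinary stabilizer $\Stab(p) = \{ g : g \cdot p = p \}$, since $g\cdot p = p$ implies $g \cdot (\mu \ccdot p) = \mu \ccdot p$. The key observation to make at the outset is a concrete reformulation of the condition $g \in \Smu(p)$: it should be equivalent to saying that for every formula $\phi(x) \in p$ and every definable open $U \ni e$ there is a formula $\psi(x) \in p$ and a definable open $V \ni e$ with $gV\psi(\UU) \subseteq U\phi(\UU)$, and symmetrically. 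In other words, $g$ "infinitesimally fixes" the type. I would phrase this cleanly, because it is what lets one pass between the topological/combinatorial description and a definable one.

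The crucial step — and the one I expect to be the main obstacle — is \emph{definability of $\Smu(p)$}. Here is where $\mu$-reducedness is used: since $p$ is $\mu$-reduced, the dimension of the $\sim_\mu$-class is exactly $\dim p$, so there is no "collapse" and one can hope to recover the stabilizer as a definable object from the data of $p$ itself. The plan is to use definability of the type $p$: for each formula $\phi(x,y)$, the set of $b$ with $\phi(x,b)\in p$ is definable, and one should be able to express "$g\in\Smu(p)$" by a formula over $M$ quantifying over the (finitely many, by o-minimality and the cell-decomposition/uniform-finiteness machinery) formulas that control the germ of $p$ at infinity. Concretely I would try to reduce, using o-minimality, the infinitary condition above to a finite conjunction: it should suffice to check the containment condition on a fixed finite family of formulas and a fixed finite family of basic opens, because the $G$-topology is definable and $p$ is definable, so the relevant "$\mu$-closure" $\mu\ccdot p$ is determined by finitely much data near each point of the (finite) frontier. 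Granting that reduction, $\Smu(p)$ becomes $M$-definable, and then by the group configuration / definable-group machinery available in o-minimal structures it is a definable subgroup of $G$.

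Next I would establish $\dim \Smu(p) = \dim p$. For the inequality $\dim \Smu(p) \le \dim p$: the action map $g \mapsto g \cdot p_\mu$ on the orbit of $p_\mu$ has fibers equal to cosets of $\Smu(p)$, and the orbit "lives inside" the $\mu$-class of $p$, whose dimension is $\dim p$ by $\mu$-reducedness; so $\dim \Smu(p) \le \dim p$. For the reverse inequality $\dim \Smu(p) \ge \dim p$, which is really the heart of the o-minimal content (generalizing the one-dimensional curve case of \cite{PS}): I would take a realization $a \models p$ in $\UU$, consider the type $q = \tp(a/M)$, and show that left translation by a suitable definable family of elements, built from $a$ using the field structure, moves within the $\mu$-class — essentially producing a $\dim p$-dimensional definable family of elements of $\Smu(p)$, e.g. by looking at $a \cdot b^{-1}$ for $b$ in a small definable neighborhood realizing the same type "to first order". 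This is exactly where unboundedness enters: if some $\phi \in p$ is definably compact then $p_\mu$ has trivial stabilizer because everything is "standard", whereas if $p$ is unbounded the infinitesimal wiggle room at infinity produces a positive-dimensional stabilizer.

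Finally, torsion-freeness: I would argue that any nontrivial definable subgroup $H \le \Smu(p)$ that contains torsion would contain a definably compact (indeed finite) subgroup, and then use that $p_\mu$ is "pushed to infinity" — more precisely, a torsion element $g$ generates a finite, hence definably compact, hence bounded subgroup, and a bounded group element cannot infinitesimally fix an unbounded type without fixing a bounded piece, forcing $\dim p = 0$ and $H$ trivial; combined with the structure theory of definable groups in o-minimal fields (a connected definable group with no torsion and the dimension bound above is solvable), one concludes $H_p$ is torsion-free solvable. The statement "$\dim H_p > 0 \iff p$ unbounded" then follows: $\dim H_p = \dim p$ and, by Claim~\ref{claim:def-red} together with the fact that bounded types have $0$-dimensional (trivial) $\mu$-stabilizer, $\dim p > 0$ precisely when $p$ is unbounded.
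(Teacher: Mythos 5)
Your overall frame (take $H_p=\Smu(p)=\Stab(\mu\ccdot p)$, get definability from definability of $p$, compute $\dim$ via elements of the form $\st(\beta\alpha^{-1})$ with $\beta\models p$) points in the right direction, but the two hard steps are asserted rather than proved, and one of them is argued incorrectly. First, definability: $\mu$-reducedness plays no role there, and your proposed reduction of the infinitary stabilizer condition to "a finite conjunction controlling the germ of $p$ at infinity" is not an argument. What actually works is that $\mu\ccdot p$ is a \emph{definable partial type} when $p$ is definable (because $(\mu\ccdot p)\vdash\phi(x,b)$ iff $p\vdash\neg(B_t\ccdot\neg\phi(x,b))$ for some $t$, as in Claim~\ref{claim:stab3}), so each $\Stab_\varphi(\mu\ccdot p)$ is definable, and the DCC for definable subgroups of an o-minimal group makes the intersection finite (Proposition~\ref{claim:stab2} and Proposition~\ref{DCC}). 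Second, and this is the heart of the theorem, the lower bound $\dim\Smu(p)\geq\dim p$: saying that "infinitesimal wiggle room at infinity produces a positive-dimensional stabilizer" skips exactly the two difficulties. One must (a) produce an $M$-definable $S\vdash p$ with $\dim S=\dim p$ such that \emph{every} point of $S\cap(\CO\ccdot\alpha)$ realizes $p$ — this is where $\mu$-reducedness is genuinely used, via an analysis of connected components of the relatively definable set $S\alpha^{-1}\cap\CO$ (Claim~\ref{claim:components2} and Appendix B), and it yields $\Smu(p)=\st(S\alpha^{-1})$ (Claims~\ref{claim:stub-subset} and \ref{claim:one}); and (b) show that the standard part map does not collapse the dimension of $S\alpha^{-1}$. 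Point (b) is not automatic — in general $\dim\st(Y)$ can be strictly smaller than $\dim Y$ — and the paper handles it by showing that the tangent spaces ${T(S\alpha^{-1})}_h$ at all infinitesimal points $h$ have the same standard part (a right-translation argument, Claim~\ref{label:claim1.7}, using that such $h=\beta\alpha^{-1}$ with $\beta\equiv_M\alpha$) and then an implicit-function-theorem argument (Proposition~\ref{claim:general}). Your sketch addresses neither (a) nor (b).

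Finally, the torsion-freeness argument does not work as written: the dichotomy bounded/unbounded applies to types and definable sets, not to elements of $G(M)$, so "a bounded group element cannot infinitesimally fix an unbounded type" is a non sequitur, and a torsion element in $\Smu(p)$ does not obviously force $\dim p=0$. The paper instead invokes the structural decomposition $G=C\ccdot H_1$ with $H_1$ definable torsion-free solvable and $C$ definably compact (Fact~\ref{Borel}, due to Conversano and Conversano--Pillay), and then uses definable compactness of $C$ to find $g_0\in C(M)$ with $(g_0^{-1}\ccdot p)_\mu=q_\mu$ for some $H_1$-type $q$; by Claim~\ref{claim:type-in-group}, $\Smu(q)\leq H_1$, so $\Smu(p)$ is conjugate to a subgroup of $H_1$ and hence torsion-free and solvable (Theorem~\ref{theorem:1}). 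Some such global input about definable groups is needed here; your local argument does not supply it. The concluding equivalence ($\dim H_p>0$ iff $p$ unbounded, for $\mu$-reduced $p$) is fine once one knows that bounded definable types are $\mu$-equivalent to algebraic ones (Fact~\ref{fact:algebraic-types}).
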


Regarding the general setting of a topological group $G$ which is definable in a
structure $\CM$ and has a basis of definable sets, we prove in Appendix A the
following (see Theorem~\ref{thm-def-sam}):
\begin{thm} The quotient $S^\mu_G(M)$ is a compact Hausdorff space on which the group $G(M)$
acts continuously. The map $g\mapsto {tp(g/M)}_\mu$ embeds $G(M)$ as a dense subset of
$S^\mu_G(M)$.
\end{thm}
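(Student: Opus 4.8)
The plan is to dispatch the three assertions --- compactness together with Hausdorffness, continuity of the $G(M)$-action, and the dense embedding of $G(M)$ --- in increasing order of difficulty, the real content being Hausdorffness.

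First the routine parts. Recall that $S_G(M)$ is the Stone space of the Boolean algebra of $M$-definable subsets of $G$; it is compact, Hausdorff and zero-dimensional, and the realized types $\{\tp(g/M):g\in G(M)\}$ are dense in it, since an $M$-formula is consistent precisely when it is satisfied in $\CM$. The map $\pi\colon p\mapsto p_\mu$ is by construction a continuous surjection $S_G(M)\to S^\mu_G(M)$ (continuity being built into the quotient topology), so $S^\mu_G(M)$ is compact, and a continuous surjection carries a dense set to a dense set, so $\{\tp(g/M)_\mu:g\in G(M)\}$ is dense in $S^\mu_G(M)$. For the action: because $G$ is a topological group, conjugation $x\mapsto gxg^{-1}$ by any $g\in G(M)$ is an $M$-definable homeomorphism fixing the identity, hence permutes the $M$-definable open neighbourhoods of $e$ and so fixes the partial type $\mu$; thus $g\mu=\mu g$, and unwinding the definition of the product of $\mu$ with a type gives $\mu\cdot(g\cdot p)=g\cdot(\mu\cdot p)$ for all $p\in S_G(M)$. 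Consequently left translation, which is a homeomorphism of $S_G(M)$ for each $g$, respects $\sim_\mu$ and descends to an action of $G(M)$ by homeomorphisms on $S^\mu_G(M)$; since $\pi$ is a quotient map the descended maps are continuous, which is the required continuity (with $G(M)$ carrying the discrete topology, as is standard for such flows).

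The crux is Hausdorffness. The plan is to prove that the relation $\sim_\mu$ is a \emph{closed} subset of $S_G(M)\times S_G(M)$; since $S_G(M)$ is compact Hausdorff, the quotient of it by a closed equivalence relation is compact Hausdorff, which yields the remaining assertion (and re-proves compactness). The structural input is the uniformity coming from the topology of $G$: for every $M$-definable open $U\ni e$ there is an $M$-definable open symmetric $V\ni e$ with $VVV\subseteq U$, and, infinitesimally, $\mu$ is a subgroup, $\mu\cdot\mu=\mu=\mu^{-1}$. Given $p\not\sim_\mu q$, fix a formula $\phi$ witnessing, say, $\phi\in\mu\cdot p\setminus\mu\cdot q$; unwinding, $\psi_{U,\phi}\in p$ for some $U\in\mu$, where $\psi_{U,\phi}(x):=(\forall z\in U)\,\phi(zx)$, while $\psi_{V,\phi}\notin q$ for every $V\in\mu$. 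Using the refinement $VVV\subseteq U$ one replaces $\phi$ by a formula $\chi$ of the shape $\psi_{V,\phi}$ that is simultaneously in $\mu\cdot p$ and ``self-reinforcing'' --- any $r$ with $\chi\in r$ has $\chi\in\mu\cdot r$ --- while its negation can be pushed into $\mu\cdot q$; then $[\chi]$ and a suitable clopen neighbourhood of $q$ have $\pi$-images missing each other, exhibiting a basic open box around $(p,q)$ disjoint from $\sim_\mu$. This is the model-theoretic transcription of the classical fact that the Samuel (left) uniform compactification of a topological group is Hausdorff, and the bookkeeping with the $\psi_{U,\phi}$'s --- in particular turning a small family of conditions into a single separating formula --- is where the difficulty lies: the naive sets $\{r:\phi\in\mu\cdot r\}=\bigcup_{V\in\mu}[\psi_{V,\phi}]$ are open but in general not closed, so one genuinely needs the group structure (idempotency and normality of $\mu$) to manufacture the separating clopen data. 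I expect this to be the main obstacle.

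It remains to see that $g\mapsto\tp(g/M)_\mu$ is injective on $G(M)$. If $\mu\cdot\tp(g/M)=\mu\cdot\tp(h/M)$ then $h$ lies in every $M$-definable neighbourhood of $g$ and conversely, so $g=h$ by Hausdorffness of $G$ (if Hausdorffness is not built into ``topological group'', then $\mu$ already collapses the closure of $\{e\}$ and the statement should be read for $G/\overline{\{e\}}$). Combined with the density noted above and the fact that $G(M)$ is given the discrete topology in this statement, injectivity yields the asserted dense embedding.
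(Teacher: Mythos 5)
Your reading of the second and third assertions is weaker than what the theorem claims, and that is the main gap. In this paper the statement is made precise by Theorem~\ref{thm-def-sam}: the topology on $G=G(M)$ is its given group topology (the one with a basis of definable sets), the action map $G\times S^\mu_G(M)\to S^\mu_G(M)$ is required to be \emph{jointly} continuous for that topology, and $g\mapsto \tp(g/M)_\mu$ is required to be a \emph{topological} embedding onto a dense subspace; this is exactly what makes $S^\mu_G(M)$ a compactification of the topological group $G$ in the sense of Samuel. You explicitly take $G(M)$ discrete, so you only verify that each translation $p_\mu\mapsto g\ccdot p_\mu$ is continuous and that the map is injective with dense image; with the discrete reading the statement has almost no content (for discrete $G$ one simply has $S^\mu_G(M)=S_G(M)$). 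The two missing points are where the paper does real work: joint continuity is proved directly from $\mu\ccdot\mu=\mu$ (if $g\ccdot p_\mu\in U^\mu_\varphi$, choose $\theta\in\mu$ and $\psi\in p$ with $(g\ccdot\theta\ccdot\psi)\vdash\varphi$, then $\theta_1\in\mu$ with $(\theta_1\ccdot\theta_1)\vdash\theta$; the open set $(g\ccdot\theta_1(M))\times U^\mu_{\theta_1\ccdot\psi}$ is then mapped into $U^\mu_\varphi$), and the fact that the injection is a homeomorphism onto its image for the group topology is the content of Claims~\ref{claim:ctab-closure}, \ref{claim:orbits-top} and Corollary~\ref{cor:G-mod-closed}: the basic open sets $U^\mu_\varphi$ trace on the image exactly the sets $V\ccdot a$ with $V$ open in $G$, and injectivity uses that the stabilizer $\{e\}$ is closed. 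None of this is addressed in your proposal.

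For Hausdorffness your strategy (show $\sim_\mu$ is closed in $S_G(M)\times S_G(M)$ and quote that the quotient of a compact Hausdorff space by a closed equivalence relation is compact Hausdorff) is the paper's strategy (Claim~\ref{claim:mu-haus}), but you do not carry out the closedness step: you sketch a separation scheme with $VVV\subseteq U$ refinements and ``self-reinforcing'' formulas and flag it as the expected main obstacle. In fact no such bookkeeping is needed. By Claims~\ref{claim:cont}, \ref{claim:inf-eq2} and \ref{claim:inf-eq} (i.e.\ $\mu^{-1}=\mu$ and $\mu\ccdot\mu=\mu$), one has $p\sim_\mu q$ if and only if $p(x)\cup q(y)\cup\Phi(x,y)$ is consistent, where $\Phi(x,y)=\{\exists z\,(\theta(z)\wedge z\ccdot x=y)\colon\theta\in\mu\}$ is the partial type defining $y\in\mu(\UU)\ccdot x$. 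Hence if $p\not\sim_\mu q$, a single application of compactness yields $\varphi\in p$ and $\psi\in q$ such that $\{\varphi(x),\psi(y)\}\cup\Phi(x,y)$ is inconsistent, and then the open box of all pairs $(p',q')$ with $\varphi\in p'$ and $\psi\in q'$ is disjoint from $\sim_\mu$. As written, your treatment of what you yourself identify as the crux is a plan rather than a proof, and the plan is more involved than the argument actually required.
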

Thus, $S^\mu_G(M)$ is a $G$-ambit, and we show that it is the greatest $G$-ambit
within the so-called definably separable $G$-ambits.

\medskip

While our main interest is with the group $G$ itself we found it useful to treat the
more general case of  $G$ acting definably on a definable set $X$, and this is the
setting throughout the first part of the paper.
\medskip

 \noindent {\bf A uniformity vs.\ a type-definable
equivalence relation.}   As we pointed out, our construction of $S^\mu_G(M)$
recovers to a great extent the work of Samuel on compactifications of uniform spaces
(see \cite{samuel}). Samuel works under the assumption of a set together with a {\em
uniformity}, a collection of subsets of $X^2$ which satisfies certain conditions and
gives rise to a topology on $X$. As we explain in Appendix A, the notion of a
uniformity is the same as the model theoretic notion of a type-definable equivalence
relation on $X$. Thus, we carry out some of the work in this more general setting.
\medskip

\noindent {\bf Outline of the paper}  We begin  in Section 2  by a  discussion of
topological groups and $\mu$-types in general. In Section 3 we move to the o-minimal
setting and analyze in details the group $\Smu(p)$. In Appendix A we carry out the
above mentioned analysis of the general case and describe the space $S^\mu_G(M)$ in
details. In Appendix B we prove a technical result which is needed for the o-minimal
case.
\medskip

\noindent{\bf General conventions} We fix a complete first order theory $T$ and a
large saturated enough model $\UU$ of $T$.   When $D$ is a definable set and
$\varphi(x)$  a formula, we say that $\varphi(x)$ is a {\em $D$-formula\/} if
$\UU\models \varphi(x)\rarr x\in D$. We extend this definition to types (possibly
incomplete) by saying that a type $q(x)$ is a \emph{$D$-type} if $q(x)\vdash x\in
D$. We let $\mathcal L_D(M)$ be the collection of all $D$-formulas with parameters
in $M$ and let $S_D(M)$ denote the set of all complete $D$-types over $M$.

We  use the predicate  $D$ to denote both a definable set and a formula defining
it. Thus, we use for example $g\in D$ or $p\vdash D$, where $D$ is thought of as a
definable set in the first case and a formula in the latter. When $D$ is a definable
set over a model $\CM$ and $\CN\succ \CM$ then we will write $D(M)$ or $D(N)$ to denote the
specific realization of $D$ in the structures $\CM$ or $\CN$.
Very often when we work in a fixed structure $\CM$ we just write $D$
instead of $D(M)$.

\section{Topological groups and $\mu$-types}
\label{sec:prelim}

\subsection{On definable groups and group actions}\label{sec:group-actions}

Let $G$ be a group. Recall that a {\em $G$-set\/} is a set $X$ together with an action
of $G$ on it. If $X$ is a $G$-set then for subsets $P\subseteq G$, $Y\subseteq X$,
we will denote by $P\ccdot Y$ the set $\{ x{\ccdot} y \colon x\in P, y\in Y
\}\subseteq X$, and if $P$ (or $Y$) is a singleton $\{a\}$ then we just write
$a\ccdot Y$ ($P\ccdot a$).

When $G$ is a topological group, $X$ is a topological space and the action is
continuous, (i.e.\  the map $(g,x)\mapsto g\ccdot x$ is a continuous function from
$G\times X$ to $X$), then $X$ is called {\em a $G$-space}.
 All topological groups and compact spaces are assumed to be
Hausdorff.

\medskip
{\em Throughout Section~\ref{sec:prelim} we fix a small arbitrary $\CM\esub \UU$ and
an $M$-definable group $G$. We also fix an $M$-definable set $X$ with $G$ acting
definably on $X$, namely, the map $(g,x)\mapsto g\ccdot x$ is definable in $\CM$. We
call $X$ a definable $G$-set.}

\begin{ntn}
  \begin{enumerate}[leftmargin=*]
  \item   If $\varphi(v)$ is a   $G$-formula
over $M$ and $\psi(x)$ is an $X$-formula then  by $\varphi\ccdot \psi$ we will
denote the $X$-formula
\[  (\varphi\ccdot \psi)(x) = \exists v\exists u \,\bigl( \varphi(v) \,\&\,
\psi (u) \,\&\, x=v\ccdot u \bigr). \]
\item  If $p(v)$ is a $G$-type and $r(x)$ is an $X$-type  (possibly incomplete) then
by $p\ccdot r$ we will denote the $X$-type
\[ (p\ccdot r)(x) =\{ (\varphi\ccdot \psi)(x) \colon p(v)\vdash\varphi(v)
, r(x)\vdash \psi(x) \}. \]
 \end{enumerate}
\end{ntn}

\begin{rem}\label{rem:products}
\begin{enumerate}[leftmargin=*]
\item Considering the action of $G$ on itself by left multiplication the above
definition gives us a notion of products of types,
  but for $p,q\in S_G(M)$  the type $p\ccdot q$ is usually
  incomplete.
\item Identifying an element $g\in G(M)$ with the complete type
  $\tp(g/M)$ the above definition agrees with the usual definition of
  the action of $G(M)$ on the $G$-types and the $M$-definable $G$-sets.
\item It is easy to see that if $\varphi(v)$ is a $G$-formula over $M$
  and $\psi(x)$ is an $X$-formula over $M$ then
\[ \varphi(M)\ccdot \psi(M) = (\varphi\ccdot \psi) (M). \]
\item  For a $G$-type $p(v)$ over $M$ and an $X$-type $r(x)$ over $M$
  we have
\[  p(\UU)\ccdot r(\UU)=(p\ccdot r)(\UU). \]
But if $\CN\sube \CM$ is not $|M|^+$-saturated then in general we have only
inclusion
 \[  p(N)\ccdot r(N) \subseteq (p\ccdot r)(N). \]
\item  It follows from (4) that if $p(v), q(v)$ are $G$-types over $M$
  and $r(x)$ is an $X$-type over $M$ then
\[ \bigl( (p\ccdot q)\ccdot r\bigr)(x) =  \bigl(p\ccdot (q\ccdot r)\bigr)(x). \]

\end{enumerate}
\end{rem}

\subsection{Topological groups and their infinitesimal types}

For the rest of Section~\ref{sec:prelim} we assume in addition that $G$ is a
topological group and furthermore that $G$ has a basis for its topology consisting
of sets definable in $\CM$. Note that this is a rather weak assumption and for
example does not imply that $G(\mathbb U)$ is still a topological group. When we
develop the theory further, we make a stronger assumption, that a basis for the
topology of $G$ is {\em uniformly definable\/} in $\CM$ (which is what Pillay calls in
\cite{pillay-t} ``a first order topological group''). This will be sufficient to
ensure that $G(\mathbb U)$ is a topological group.

\begin{defn}  The {\em infinitesimal type\/} of $G$
is the partial $G$-type over $M$, denoted by $\mu_G(v)$ (or just by $\mu(v)$ if $G$
is fixed), consisting of all formulas over $M$ defining  an  open neighborhood of
$e$.
\end{defn}

Notice that the type $\mu(v)$ is not complete unless the topology on $G$ is
discrete. The next claim follows from the continuity of the group operations.

\begin{claim}\label{claim:cont}
  \begin{enumerate}[leftmargin=*]
  \item  $\mu(v)=\mu^{-1}(v)$.
  \item For every $g\in G(M) $ we have $g\ccdot \mu = \mu\ccdot g$.
  \item $\mu\ccdot\mu =\mu$.
  \end{enumerate}
\end{claim}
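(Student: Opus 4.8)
The three assertions of Claim~\ref{claim:cont} all come directly from unwinding the definition of $\mu$ as the partial type of $M$-definable open neighborhoods of $e$, together with the fact that in a topological group inversion and the two translations are homeomorphisms. The plan is to prove each in the form ``$\mu \vdash \varphi$ iff $\varphi$ is implied by some $M$-definable open neighborhood of $e$'', i.e.\ to compare the two partial types formula by formula.

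For (1): a formula $\psi(v)$ lies in $\mu^{-1}$ iff $\psi(v^{-1})$ (equivalently some $M$-definable open $U \ni e$ with $U^{-1} \subseteq \psi(M)$) belongs to $\mu$, and since $v \mapsto v^{-1}$ is a definable homeomorphism of $G$ fixing $e$, the image $U^{-1}$ of an $M$-definable open neighborhood of $e$ is again an $M$-definable open neighborhood of $e$; applying this in both directions gives $\mu = \mu^{-1}$ as partial types. For (2): fix $g \in G(M)$; left translation $L_g$ and right translation $R_g$ are $M$-definable homeomorphisms, and they agree on the germ at $e$ in the sense that for an $M$-definable open $U \ni e$ the sets $g\ccdot U$ and $U \ccdot g$ are both $M$-definable open neighborhoods of $g$; conversely every $M$-definable open $V \ni g$ contains some such $g\ccdot U$ and some such $U'\ccdot g$. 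Unwinding the definition of $g\ccdot\mu$ and $\mu\ccdot g$ via the Notation, a formula $\chi(x)$ is in $g\ccdot\mu$ iff it is implied by $g\ccdot U$ for some $M$-definable open $U \ni e$, iff (translating back by $g^{-1}$, again a definable homeomorphism) it is implied by $U'\ccdot g$ for some such $U'$, iff $\chi \in \mu\ccdot g$.

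For (3): one inclusion, $\mu \vdash \mu\ccdot\mu$, is immediate since $e \in \mu(\UU)$ gives $\mu(\UU) \subseteq \mu(\UU)\ccdot\mu(\UU)$, hence by Remark~\ref{rem:products}(4) every formula in $\mu\ccdot\mu$ is in $\mu$. For the reverse, $\mu\ccdot\mu \vdash \mu$: take $\psi(v) \in \mu$, so there is an $M$-definable open $U \ni e$ with $U \subseteq \psi(M)$; by continuity of multiplication at $(e,e)$ and since $G$ has a basis of $M$-definable open sets, there is an $M$-definable open $V \ni e$ with $V\ccdot V \subseteq U \subseteq \psi(M)$; then $(V\ccdot V)(v) \in \mu\ccdot\mu$ by Remark~\ref{rem:products}(3), and it implies $\psi(v)$, so $\psi \in \mu\ccdot\mu$ up to logical equivalence. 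Hence $\mu\ccdot\mu = \mu$ as partial types.

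I expect the only real subtlety to be bookkeeping: being careful that at each step the neighborhood produced is genuinely \emph{$M$-definable} and \emph{open}, which is exactly what the standing assumption (a basis of $M$-definable open sets, so in particular such a basis at $e$) buys us, and translating fluently between the ``partial type'' picture and the ``there exists an $M$-definable open set witnessing it'' picture via the Notation and Remark~\ref{rem:products}(3). None of the steps is hard; the point is to set up the formula-by-formula comparison once and reuse it for all three parts.
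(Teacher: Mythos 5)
Your argument is correct and is exactly the intended one: the paper gives no written proof, remarking only that the claim ``follows from the continuity of the group operations,'' and your formula-by-formula unwinding (continuity of inversion and translations for (1) and (2), continuity of multiplication at $(e,e)$ plus the definable basis at $e$ for (3)) is precisely that justification spelled out. No gaps; the bookkeeping points you flag (definability and openness of the witnessing neighborhoods, and reducing finite intersections of members of $\mu$ to a single basic one) are handled correctly.
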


\begin{cor}\label{mu-group}
For any elementary extension $\CN$ of $\CM$ the set $\mu(N)$ is a subgroup of $G(N)$ and
every element of $G(M)$ normalizes $\mu(N)$.
\end{cor}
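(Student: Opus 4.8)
The plan is to deduce Corollary~\ref{mu-group} directly from Claim~\ref{claim:cont}, which packages exactly the algebraic consequences of the continuity of the group operations that we need. Since $\mu(v)$ is a partial type over $M$ and $\CN$ is an elementary extension of $\CM$, every formula in $\mu$ has realizations in $N$, and the set $\mu(N)$ is simply the set of elements of $G(N)$ satisfying all the $M$-formulas in $\mu$. To check it is a subgroup, I would verify closure under inverse and under multiplication.

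For closure under inverse: if $a\in\mu(N)$ then $a$ satisfies every $M$-formula $\varphi(v)$ with $\varphi\in\mu$; since $\mu=\mu^{-1}$ by Claim~\ref{claim:cont}(1), for each such $\varphi$ the formula $\varphi^{-1}(v):=\varphi(v^{-1})$ is also in $\mu$, so $a$ satisfies $\varphi^{-1}$, which says precisely that $a^{-1}$ satisfies $\varphi$. As $\varphi$ ranges over $\mu$ this shows $a^{-1}\in\mu(N)$. The identity $e\in\mu(N)$ is immediate since $e$ lies in every open neighborhood of $e$. For closure under multiplication: given $a,b\in\mu(N)$, I want $ab\in\mu(N)$, i.e.\ $ab$ satisfies every $\varphi\in\mu$. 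Fix $\varphi\in\mu$. By Claim~\ref{claim:cont}(3), $\mu\ccdot\mu=\mu$, so $\varphi$ is (implied by a conjunction of formulas of the form) $\psi_1\ccdot\psi_2$ with $\psi_1,\psi_2\in\mu$; more precisely, $\mu\ccdot\mu\vdash\varphi$ means there are $\psi_1,\psi_2\in\mu$ with $(\psi_1\ccdot\psi_2)\vdash\varphi$. Using Remark~\ref{rem:products}(3), $\psi_1(N)\ccdot\psi_2(N)=(\psi_1\ccdot\psi_2)(N)\subseteq\varphi(N)$, and since $a\in\psi_1(N)$, $b\in\psi_2(N)$ we get $ab\in\varphi(N)$. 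Hence $ab\in\mu(N)$.

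For the normalization statement: fix $g\in G(M)$ and $a\in\mu(N)$; I want $g a g^{-1}\in\mu(N)$. By Claim~\ref{claim:cont}(2), $g\ccdot\mu=\mu\ccdot g$ as partial types, equivalently $g\ccdot\mu\ccdot g^{-1}=\mu$. Concretely, for each $\varphi\in\mu$, the formula $\varphi^g(v):=\varphi(g^{-1}vg)$ lies in $\mu$ (this is just the restatement of $g\ccdot\mu\ccdot g^{-1}\vdash\varphi$, using that these are partial types closed in the appropriate sense and $g\in G(M)$ so $\varphi^g$ is still over $M$). Then $a\in\varphi^g(N)$ says $g a g^{-1}\in\varphi(N)$, and letting $\varphi$ range over $\mu$ gives $g a g^{-1}\in\mu(N)$.

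I do not expect a genuine obstacle here; the statement is essentially a formal unwinding of Claim~\ref{claim:cont} together with Remark~\ref{rem:products}. The one point requiring mild care is the passage between the ``type-level'' identities in Claim~\ref{claim:cont} (e.g.\ $\mu\ccdot\mu=\mu$, $g\ccdot\mu=\mu\ccdot g$), which are statements about partial types over $M$, and their ``pointwise'' consequences in the model $\CN$ — one must invoke Remark~\ref{rem:products}(3) to move from the formula identity $\varphi(M)\ccdot\psi(M)=(\varphi\ccdot\psi)(M)$ up to $\CN$, which is legitimate because $\CN\succ\CM$ and all formulas involved are over $M$. I would present the subgroup verification in two short paragraphs (inverses via (1); products via (3) and Remark~\ref{rem:products}(3)) and the normalization in one (via (2)), with $e\in\mu(N)$ noted in passing.
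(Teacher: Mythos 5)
Your argument is correct and is essentially the paper's (implicit) proof: Corollary~\ref{mu-group} is stated there without proof as the routine unwinding of Claim~\ref{claim:cont}, which is exactly what you carry out. One trivial slip: with your definition $\varphi^g(v)=\varphi(g^{-1}vg)$, the conclusion from $a\in\varphi^g(N)$ is $g^{-1}ag\in\varphi(N)$, not $gag^{-1}\in\varphi(N)$ --- either define $\varphi^g(v)=\varphi(gvg^{-1})$ or run the argument for $g^{-1}$ as well; since $g\in G(M)$ is arbitrary this also gives the equality $g\mu(N)g^{-1}=\mu(N)$ needed for normalization.
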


\begin{claim}\label{claim:inf-eq2} For a partial $X$-type $\Sigma(x)$
over $M$ and $p\in S_X(M)$, the following are equivalent:
\begin{enumerate}
\item $(\mu\ccdot p) \cup \Sigma$ is consistent.

\item $p\vdash \mu\ccdot \Sigma$.

\end{enumerate}
\end{claim}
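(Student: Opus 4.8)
The plan is to prove the equivalence by unwinding the definitions of the type products involved and using the basic properties of $\mu$ from Claim~\ref{claim:cont}, particularly $\mu \ccdot \mu = \mu$ and $\mu = \mu^{-1}$. The asymmetry between the two conditions is only apparent: condition (1) says $p$ is compatible with $\mu \ccdot \Sigma$ (in the sense that their union has a realization in $\UU$), while condition (2) says $p$ actually implies $\mu \ccdot \Sigma$; the point is that for this particular pair of partial types, compatibility already forces implication, and this is a shadow of $\mu$ being (the type of) a subgroup.

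First I would prove $(2) \Rightarrow (1)$, which is the easy direction. Assuming $p \vdash \mu \ccdot \Sigma$, I want to see that $(\mu \ccdot p) \cup \Sigma$ is consistent. Work in $\UU$: pick a realization $a \models p$. Since $p \vdash \mu \ccdot \Sigma$, for each formula in $\mu \ccdot \Sigma$ there are witnesses, and by saturation (and Remark~\ref{rem:products}(4), which gives $p(\UU) \ccdot r(\UU) = (p \ccdot r)(\UU)$ for the relevant partial types) I can find $n \models \mu$ and $b \models \Sigma$ with $a = n \ccdot b$. Then $b = n^{-1} \ccdot a$, and since $\mu = \mu^{-1}$ we have $n^{-1} \models \mu$, so $b \models \mu \ccdot p$ (as $b = n^{-1}\ccdot a$ with $n^{-1}\models\mu$, $a\models p$); combined with $b \models \Sigma$ this witnesses the consistency of $(\mu \ccdot p)\cup\Sigma$. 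I should be slightly careful to phrase this at the level of finite fragments of the types so that saturation applies cleanly, but that is routine.

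For $(1) \Rightarrow (2)$, assume $(\mu \ccdot p) \cup \Sigma$ is consistent, so there is $c \in X(\UU)$ realizing it. Then $c \models \Sigma$ and $c = n \ccdot a$ for some $n \models \mu$, $a \models p$. I want $p \vdash \mu \ccdot \Sigma$, i.e. every realization of $p$ satisfies every formula in $\mu \ccdot \Sigma$. From $c = n \ccdot a$ I get $a = n^{-1} \ccdot c$ with $n^{-1} \models \mu$ and $c \models \Sigma$, so $a \models \mu \ccdot \Sigma$. But $a$ is just \emph{one} realization of $p$, and $\mu \ccdot \Sigma$ need not be complete, so I cannot immediately conclude $p \vdash \mu \ccdot \Sigma$ — I need that $\mu \ccdot \Sigma$ is (equivalent to) a $p$-closed set of formulas, which is where $\mu \ccdot \mu = \mu$ enters. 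The clean way: show $\mu \ccdot (\mu \ccdot \Sigma) = \mu \ccdot \Sigma$ as partial types, using associativity (Remark~\ref{rem:products}(5)) and $\mu \ccdot \mu = \mu$. Then since $a \models \mu \ccdot \Sigma$ and $p = \tp(a/M)$ is complete, any realization $a'$ of $p$ satisfies the same $M$-formulas as $a$; in particular $a' \models \mu\ccdot\Sigma$ — wait, this needs $\mu\ccdot\Sigma$ to be over $M$, which it is, since $\mu$ and $\Sigma$ are over $M$. So each formula $\theta(x) \in \mu \ccdot \Sigma$ is an $M$-formula with $a \models \theta$, hence $\theta \in p$, hence $p \vdash \theta$. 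That actually settles it directly without needing idempotence at all.

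The main obstacle — and the place to be most careful — is the interaction between \emph{consistency in $\UU$} and \emph{implication of types over the small model $M$}: the subtlety is that $\mu \ccdot \Sigma$ is a partial type over $M$, and I must confirm at each step that the partial types $\mu \ccdot p$, $\mu \ccdot \Sigma$ really are over $M$ and that the witnesses $n, n^{-1}$ can be taken in $\UU$ (which is fine since $\UU$ is saturated and $\mu$, being the type of open neighborhoods of $e$, is consistent over $M$). I would also double-check that the definition of $\varphi \ccdot \psi$ being an honest first-order $X$-formula over $M$ (it is, by the explicit $\exists v \exists u$ formula in the Notation) means $\mu \ccdot \Sigma$ is genuinely a set of $M$-formulas, so that "$a \models \mu\ccdot\Sigma$ and $\tp(a/M) = p$" gives "$p \vdash \mu \ccdot \Sigma$" for free. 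Once these bookkeeping points are nailed down, both directions are short.
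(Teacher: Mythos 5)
Your proof is correct and follows essentially the same route as the paper: both directions work with realizations in $\UU$, use saturation (Remark~\ref{rem:products}(4)) to decompose a realization of $\mu\ccdot p$ or $\mu\ccdot\Sigma$ as an actual product, apply $\mu=\mu^{-1}$ to move the infinitesimal to the other side, and invoke completeness of $p$ over $M$ to pass from one realization to $p\vdash\mu\ccdot\Sigma$. Your observation that idempotence $\mu\ccdot\mu=\mu$ is ultimately not needed matches the paper's proof, which likewise uses only $\mu^{-1}=\mu$.
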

\begin{proof}  We work in $\mathbb U$.

$1\Rightarrow 2$: Assume that $\mu\ccdot p\cup \Sigma$ is consistent and fix in
$G(\mathbb U)$, $X(\mathbb U)$, elements $\epsilon\models \mu$, $b\models p$,
respectively, such that $\epsilon\ccdot b\models \Sigma$. Let $\beta=\epsilon\ccdot
b$. Since $b=\epsilon^{-1}\beta$ and $\mu^{-1}=\mu$ we have $b\models \mu\ccdot
\Sigma$. Since $p$ is a complete type, it implies $p\vdash \mu\ccdot \Sigma$.

$2\Rightarrow 1$: Assume that $p\vdash \mu\ccdot \Sigma$ and choose $\epsilon\models
\mu$ and $b\models p$ such that $\epsilon^{-1}\ccdot b\models \Sigma$. Since
$\mu^{-1}=\mu$, $\epsilon^{-1}\ccdot b\models \mu\ccdot p$ so the result
follows.\end{proof}

We now conclude:

\begin{claim}\label{claim:inf-eq}
  For $p, q\in S_X(M)$, the following conditions are
  equivalent.
  \begin{enumerate}
  \item  The type $(\mu \ccdot p)(x) \cup (\mu\ccdot q)(x)$ is
    consistent.
\item $p(x)\vdash (\mu\ccdot q)(x)$. \item $\mu\ccdot p = \mu\ccdot q$ (here and
below we consider two partial types over $M$ to be equal if they are logically
equivalent).

  \end{enumerate}
\end{claim}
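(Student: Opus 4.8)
The plan is to prove the cycle of implications $1 \Rightarrow 2 \Rightarrow 3 \Rightarrow 1$, using Claim~\ref{claim:inf-eq2} as the main engine and the semigroup identity $\mu \ccdot \mu = \mu$ from Claim~\ref{claim:cont}(3) to pass between the various forms.

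For $1 \Rightarrow 2$: if $(\mu \ccdot p) \cup (\mu \ccdot q)$ is consistent, then in particular $(\mu \ccdot p) \cup \Sigma$ is consistent with $\Sigma := \mu \ccdot q$. Applying Claim~\ref{claim:inf-eq2} (the implication $1 \Rightarrow 2$ there, with this choice of $\Sigma$), we get $p \vdash \mu \ccdot (\mu \ccdot q)$. Now $\mu \ccdot (\mu \ccdot q) = (\mu \ccdot \mu) \ccdot q = \mu \ccdot q$ by associativity (Remark~\ref{rem:products}(5)) and Claim~\ref{claim:cont}(3), so $p \vdash \mu \ccdot q$, which is condition 2.

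For $2 \Rightarrow 3$: assume $p \vdash \mu \ccdot q$. Then certainly $(\mu \ccdot p) \cup (\mu \ccdot q)$ is consistent — any realization of $p$ together with $\epsilon = e$ witnesses $\mu \ccdot p$, and the same element realizes $\mu \ccdot q$ by hypothesis — but to get the stronger conclusion $\mu \ccdot p = \mu \ccdot q$ I apply Claim~\ref{claim:inf-eq2} in the other direction. From $p \vdash \mu \ccdot q$, the pair $(\mu \ccdot p) \cup (\mu \ccdot q)$ is consistent, hence by the already-proved $1 \Rightarrow 2$ we also have $q \vdash \mu \ccdot p$ by symmetry of the hypothesis in form~1. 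Applying $\mu \ccdot (-)$ to $p \vdash \mu \ccdot q$ gives $\mu \ccdot p \vdash \mu \ccdot \mu \ccdot q = \mu \ccdot q$, and similarly $\mu \ccdot q \vdash \mu \ccdot p$ from $q \vdash \mu \ccdot p$; together these give logical equivalence $\mu \ccdot p = \mu \ccdot q$. (One should check that $r \vdash s$ implies $\mu \ccdot r \vdash \mu \ccdot s$ for partial types, which is immediate from the definition of $\mu \ccdot (-)$ on types.) Finally $3 \Rightarrow 1$ is trivial: if $\mu \ccdot p = \mu \ccdot q$ then $(\mu \ccdot p) \cup (\mu \ccdot q)$ is just $\mu \ccdot p$, which is consistent since it is realized by $e \ccdot b$ for any $b \models p$.

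The only genuinely delicate point is making sure the monotonicity $r \vdash s \implies \mu \ccdot r \vdash \mu \ccdot s$ and the associativity/idempotence manipulations are applied to \emph{partial} types correctly, keeping track that "$=$" means logical equivalence as stipulated in condition~3; everything else is a routine unwinding of Claim~\ref{claim:inf-eq2}. I expect no real obstacle here — this claim is essentially a corollary of the previous one, and the main content is bookkeeping with the identity $\mu \ccdot \mu = \mu$ to collapse iterated applications of $\mu$.
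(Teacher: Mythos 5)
Your proof is correct and uses essentially the same approach as the paper: the whole claim is driven by Claim~\ref{claim:inf-eq2} applied with $\Sigma=\mu\ccdot q$ together with the idempotence $\mu\ccdot\mu=\mu$. The paper states this in one line, leaving implicit the routine steps (the symmetry of condition~1 and the monotonicity $r\vdash s\Rightarrow\mu\ccdot r\vdash\mu\ccdot s$) that you spell out to get condition~3, so your write-up is just a more detailed rendering of the same argument.
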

\begin{proof} We apply Claim~\ref{claim:inf-eq2}, by taking $\Sigma= \mu\ccdot q$, and using
$\mu\ccdot \mu=\mu$.\end{proof}

\begin{ntn} \leavevmode
\begin{itemize}[label=\textbullet, leftmargin=*]

 \item For
$p,q\in S_X(M)$ write $p\sim_\mu q$ if $\mu\ccdot p=\mu\ccdot q$ as partial types.

\item  We let $S_X^\mu(M)$ be the quotient of $S_X(M)$ by the equivalence relation
$\sim_\mu$, and use $p_\mu$ to denote the $\sim_\mu$-equivalence class of a type
$p$. Namely,  $\mu\ccdot p$ is  a partial type and $p_\mu$ is the associated
equivalence class.

\end{itemize}
\end{ntn}

\begin{claim}
\label{claim:g-action} For any $g\in G(M)$ and $p\in S_X(M)$ we have
\[ g\ccdot (\mu\ccdot p)= \mu\ccdot(g\ccdot p). \]
\end{claim}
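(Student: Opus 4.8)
The plan is to reduce the claim to two facts already in hand: the commutation $g\ccdot\mu=\mu\ccdot g$ from Claim~\ref{claim:cont}(2), and the associativity of the type product from Remark~\ref{rem:products}(5). First I would identify $g$ with the complete $G$-type $\tp(g/M)$, as in Remark~\ref{rem:products}(2), so that $g\ccdot(\mu\ccdot p)$ means $\tp(g/M)\ccdot(\mu\ccdot p)$ and similarly $g\ccdot p$ means $\tp(g/M)\ccdot p$. Applying Remark~\ref{rem:products}(5) with the $G$-types $\tp(g/M)$ and $\mu$ and the $X$-type $p$ gives
\[ g\ccdot(\mu\ccdot p)=\bigl(\tp(g/M)\ccdot\mu\bigr)\ccdot p ,\]
and a second application with the $G$-types $\mu$ and $\tp(g/M)$ gives
\[ \bigl(\mu\ccdot\tp(g/M)\bigr)\ccdot p=\mu\ccdot\bigl(\tp(g/M)\ccdot p\bigr)=\mu\ccdot(g\ccdot p).\]
So it remains to see that $\bigl(\tp(g/M)\ccdot\mu\bigr)\ccdot p$ and $\bigl(\mu\ccdot\tp(g/M)\bigr)\ccdot p$ are logically equivalent, which by Claim~\ref{claim:cont}(2) (applied with the parameter $g\in G(M)$) amounts to knowing that the product of partial types is well defined on logical-equivalence classes.

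For that last point I would argue semantically in $\UU$: by Remark~\ref{rem:products}(4), for any partial $G$-type $\Sigma(v)$ and any $X$-type $r(x)$ over $M$ one has $(\Sigma\ccdot r)(\UU)=\Sigma(\UU)\ccdot r(\UU)$, so if $\Sigma_1,\Sigma_2$ are logically equivalent (i.e.\ $\Sigma_1(\UU)=\Sigma_2(\UU)$) then $\Sigma_1\ccdot r$ and $\Sigma_2\ccdot r$ have the same realizations in the saturated model $\UU$, hence are logically equivalent as well. Since $g\ccdot\mu=\mu\ccdot g$ by Claim~\ref{claim:cont}(2), this yields $\bigl(\tp(g/M)\ccdot\mu\bigr)\ccdot p=\bigl(\mu\ccdot\tp(g/M)\bigr)\ccdot p$, completing the chain of equalities.

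Alternatively, I could bypass the associativity bookkeeping and run the whole argument at the level of realizations in $\UU$. Using Remark~\ref{rem:products}(4) twice, $g\ccdot(\mu\ccdot p)$ is realized in $\UU$ by $g\ccdot\bigl(\mu(\UU)\ccdot p(\UU)\bigr)$ (here I use that $\tp(g/M)$ is isolated by $x=g$ for $g\in M$, so its realization set in $\UU$ is $\{g\}$), and $\mu\ccdot(g\ccdot p)$ is realized by $\mu(\UU)\ccdot\bigl(g\ccdot p(\UU)\bigr)$; and these two subsets of $X(\UU)$ coincide because, by Corollary~\ref{mu-group}, $g$ normalizes $\mu(\UU)$, so $g\,\mu(\UU)=\mu(\UU)\,g$ and the honest associativity of the $G(\UU)$-action on $X(\UU)$ finishes it. Saturation of $\UU$ then promotes equality of realization sets to logical equivalence of the partial types.

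I do not expect a genuine obstacle here; the only thing requiring a little care is the remark that multiplication of partial types descends to logical-equivalence classes, which is exactly what makes the notation $\mu\ccdot p$ and the equivalence relation $\sim_\mu$ legitimate, and which follows from Remark~\ref{rem:products}(4) together with the saturation of $\UU$.
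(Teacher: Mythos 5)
Your proof is correct and follows essentially the same route as the paper, whose own proof is the one-line observation that the claim follows from Remark~\ref{rem:products}(5) together with Claim~\ref{claim:cont}(2). Your additional care about the product of partial types descending to logical-equivalence classes (via Remark~\ref{rem:products}(4) and saturation of $\UU$) simply makes explicit what the paper leaves implicit.
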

\begin{proof} Follows from Remark~\ref{rem:products}(5) and  Claim~\ref{claim:cont}(2). \end{proof}

Thus the action of $G(M)$ on $S_X(M)$ preserves $\sim_\mu$, so it induces an action
of $G(M)$ on $S^\mu_G(M)$ by $g\ccdot p_\mu={(g\ccdot p)}_\mu$. We will consider
$S^\mu_X(M)$ as a $G$-set. In Appendix A we discuss other properties of
$S_X^\mu(M)$.

\subsection{$\mu$-stabilizers}

\leavevmode

 We still assume that $X$ is a definable $G$-set (and $X$ not
assumed to carry any topology).

\subsubsection{Stabilizers of partial types}

As we pointed out already, the group $G(M)$ acts on $\mathcal L_X(M)$.
\begin{defn} Let $\Sigma(x)$ be a partial $X$-type  over $M$. We first define
$$\Stab(\Sigma)=\{g\in G(M)\colon \mbox{ for all } \varphi\in \mathcal L_X(M),\,\,
\Sigma \vdash \varphi \Leftrightarrow \Sigma \vdash g\ccdot \varphi\}.$$ For
$\varphi(x)\in \mathcal L_X(M)$, consider the set
\begin{equation}\label{set}\{h\in G(M)\colon \Sigma\vdash h\ccdot \varphi(x).\}\end{equation} and define $
\Stab_{\varphi}(\Sigma)\sub G(M)$ to be the stabilizer of the above set (so in
particular a subgroup).
\end{defn}

The following is easy to verify:
\begin{claim}\label{claim:stab1} For every partial $X$-type $\Sigma$
  over $M$,
$$\Stab(\Sigma)=\bigcap_{\Sigma \vdash \varphi} \Stab_{\varphi}(\Sigma)=
\bigcap_{\varphi\in \mathcal L_X(M)} \Stab_{\varphi}(\Sigma).$$
\end{claim}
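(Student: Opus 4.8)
The plan is to work with the sets from \eqref{set}: for $\varphi\in\mathcal L_X(M)$ put $A_\varphi=\{h\in G(M)\colon\Sigma\vdash h\ccdot\varphi\}$, so that $\Stab_\varphi(\Sigma)$ is by definition the setwise stabilizer $\{g\in G(M)\colon g\ccdot A_\varphi=A_\varphi\}$, a subgroup of $G(M)$. In particular both $\bigcap_{\Sigma\vdash\varphi}\Stab_\varphi(\Sigma)$ and $\bigcap_{\varphi\in\mathcal L_X(M)}\Stab_\varphi(\Sigma)$ are subgroups of $G(M)$. Throughout I would use two elementary observations: first, $h\in A_\varphi$ iff $\Sigma\vdash h\ccdot\varphi$, so $e\in A_\varphi$ iff $\Sigma\vdash\varphi$; second, for $g,h\in G(M)$ the formula $(gh)\ccdot\varphi$ is logically equivalent to $g\ccdot(h\ccdot\varphi)$ and $e\ccdot\varphi$ to $\varphi$ (Remark~\ref{rem:products}(5) applied to singletons). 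Since $\mathcal L_X(M)$ is closed under the $G(M)$-action and $\{\varphi\colon\Sigma\vdash\varphi\}\subseteq\mathcal L_X(M)$, the inclusion $\bigcap_{\varphi\in\mathcal L_X(M)}\Stab_\varphi(\Sigma)\subseteq\bigcap_{\Sigma\vdash\varphi}\Stab_\varphi(\Sigma)$ is automatic, so it remains to prove $\Stab(\Sigma)\subseteq\bigcap_{\varphi\in\mathcal L_X(M)}\Stab_\varphi(\Sigma)$ and $\bigcap_{\Sigma\vdash\varphi}\Stab_\varphi(\Sigma)\subseteq\Stab(\Sigma)$.

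For the first inclusion, I would fix $g\in\Stab(\Sigma)$ and $\varphi\in\mathcal L_X(M)$, and show $g\ccdot A_\varphi=A_\varphi$. For $h\in G(M)$ one has $gh\in A_\varphi\iff\Sigma\vdash g\ccdot(h\ccdot\varphi)$ and $h\in A_\varphi\iff\Sigma\vdash h\ccdot\varphi$; applying the defining biconditional of $\Stab(\Sigma)$ to the formula $h\ccdot\varphi\in\mathcal L_X(M)$ gives $\Sigma\vdash h\ccdot\varphi\iff\Sigma\vdash g\ccdot(h\ccdot\varphi)$, hence $h\in A_\varphi\iff gh\in A_\varphi$. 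The ``$\Rightarrow$'' direction yields $g\ccdot A_\varphi\subseteq A_\varphi$, and for the reverse inclusion, given $k\in A_\varphi$ take $h=g^{-1}k$, so $gh=k\in A_\varphi$, hence $h\in A_\varphi$ and $k\in g\ccdot A_\varphi$. Thus $g\in\Stab_\varphi(\Sigma)$.

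For the second inclusion, I would fix $g$ in the subgroup $\bigcap_{\Sigma\vdash\varphi}\Stab_\varphi(\Sigma)$, so $g^{-1}$ lies there too, and check $g\in\Stab(\Sigma)$ directly. Given $\psi\in\mathcal L_X(M)$: if $\Sigma\vdash\psi$ then $e\in A_\psi$ and $g\ccdot A_\psi=A_\psi$, so $g=g\ccdot e\in A_\psi$, i.e.\ $\Sigma\vdash g\ccdot\psi$. Conversely, if $\Sigma\vdash g\ccdot\psi$, set $\varphi:=g\ccdot\psi$; then $\Sigma\vdash\varphi$, so $g^{-1}\ccdot A_\varphi=A_\varphi$ and, using $e\in A_\varphi$, $g^{-1}\in A_\varphi$, that is $\Sigma\vdash g^{-1}\ccdot\varphi$; but $g^{-1}\ccdot\varphi=g^{-1}\ccdot(g\ccdot\psi)$ is equivalent to $(g^{-1}g)\ccdot\psi=e\ccdot\psi$, hence to $\psi$, so $\Sigma\vdash\psi$. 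Therefore $\Sigma\vdash\psi\iff\Sigma\vdash g\ccdot\psi$ for all $\psi$, i.e.\ $g\in\Stab(\Sigma)$, and the three sets coincide.

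The only point that needs a moment's attention is the remark that the two intersections are genuine subgroups of $G(M)$; this is exactly what makes $g^{-1}$ available in the second inclusion, even though the definition of $\Stab_\varphi(\Sigma)$ speaks only of setwise stabilization. Beyond that, the argument is a routine unwinding of the definitions together with associativity of the $G(M)$-action on $\mathcal L_X(M)$, so I do not anticipate any real obstacle.
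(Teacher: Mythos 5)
Your proof is correct: the paper states this claim without proof (``easy to verify''), and your argument is precisely the routine unwinding of the definitions that is intended, with the cycle $\Stab(\Sigma)\subseteq\bigcap_{\varphi\in\mathcal L_X(M)}\Stab_\varphi(\Sigma)\subseteq\bigcap_{\Sigma\vdash\varphi}\Stab_\varphi(\Sigma)\subseteq\Stab(\Sigma)$ giving all three equalities. The one point genuinely worth making explicit is the one you already flag, namely that the intersections are subgroups, which is what licenses passing to $g^{-1}$ (and to the formula $g\ccdot\psi$) in the last inclusion.
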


\begin{defn} We say that a partial type $\Sigma(x)\sub \mathcal L(M)$ is {\em
definable\/} over $A\sub M$ if for every formula $\phi(x,y)$ there
exists a formula $\rchi(y)\in \mathcal L(A)$ such that for every $a\in M$,
$\Sigma\vdash \phi(x,a)$ if and only if $M\models \rchi(a)$.

For $\Sigma(x)$ a partial type definable over $M$, and $\mathcal N\succ \CM$ we
denote by $\Sigma|N$ the extension of $\Sigma$ by definitions to a partial type
over $N$. Namely, for $a\in N$, and $\phi\in \mathcal L(M)$, $\phi(x,a)\in
\Sigma|N$ iff $\CN\models \rchi(a)$, for $\rchi(y)$ as above.
\end{defn}
\vspace{.3cm}

Here is our main use of definability of types:
\begin{prop}\label{claim:stab2} Assume that $\Sigma$ is a definable partial $X$-type over $M$.
Then $\Stab(\Sigma)$ can be written as the intersection of $M$-definable subgroups.

If in addition $G$ has the Descending Chain Condition on $M$-definable subgroups
then $\Stab(\Sigma)$ is a definable subgroup of $G$.
\end{prop}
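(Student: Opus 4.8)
The plan is to use Claim~\ref{claim:stab1}, which expresses $\Stab(\Sigma)$ as the intersection $\bigcap_{\varphi} \Stab_\varphi(\Sigma)$, and to show that under the definability hypothesis each $\Stab_\varphi(\Sigma)$ is $M$-definable. Fix $\varphi(x) \in \mathcal L_X(M)$, say $\varphi(x) = \phi(x,a)$ for some $\phi(x,y)$ and $a \in M$. The set in \eqref{set} is $A_\varphi = \{h \in G(M) : \Sigma \vdash h\ccdot\varphi(x)\}$. The first step is to observe that $h \ccdot \phi(x,a)$ is, up to logical equivalence, of the form $\psi(x, h, a)$ for a fixed formula $\psi$ obtained by unwinding the definition of the action (using that the action is definable in $\CM$). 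By definability of $\Sigma$, there is $\rchi(v,y) \in \mathcal L(M)$ such that for all $h, a$ in any model, $\Sigma \vdash \psi(x,h,a)$ iff $\rchi(h,a)$ holds; specializing to our fixed $a$, the set $A_\varphi$ is defined by $\rchi(v,a)$, hence is an $M$-definable subset of $G$.

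The second step is purely group-theoretic: $\Stab_\varphi(\Sigma)$ is by definition the (left) stabilizer $\{g \in G(M) : g\ccdot A_\varphi = A_\varphi\}$ of a definable set $A_\varphi$, and the stabilizer of a definable set is always a definable subgroup — it is defined by the formula $\forall v\,(v \in A_\varphi \leftrightarrow g\ccdot v \in A_\varphi)$, which is a first-order condition on $g$ with parameters from $M$. Thus each $\Stab_\varphi(\Sigma)$ is an $M$-definable subgroup, and Claim~\ref{claim:stab1} gives $\Stab(\Sigma) = \bigcap_{\varphi \in \mathcal L_X(M)} \Stab_\varphi(\Sigma)$ as an intersection of $M$-definable subgroups, proving the first assertion.

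For the second assertion, assume $G$ satisfies the Descending Chain Condition on $M$-definable subgroups. A standard argument applies: among all finite intersections $\Stab_{\varphi_1}(\Sigma) \cap \cdots \cap \Stab_{\varphi_n}(\Sigma)$ — each of which is $M$-definable — choose one of minimal dimension, and among those of that dimension one with the least number of connected components. Call it $H$. For any further $\varphi$, the subgroup $H \cap \Stab_\varphi(\Sigma)$ is $M$-definable, has dimension $\le \dim H$ and no more components than $H$ when dimensions agree; by minimality it equals $H$, so $H \subseteq \Stab_\varphi(\Sigma)$ for every $\varphi$. Hence $H = \Stab(\Sigma)$ is $M$-definable. (Invoking DCC directly also works: the finite sub-intersections form a descending family which must stabilize, and the stable value is $\Stab(\Sigma)$.)

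The step I expect to require the most care is the first one: verifying that $h\ccdot\varphi$ really is given by a single formula $\psi(x,h,a)$ with $h$ appearing as an ordinary parameter, so that the definability schema for $\Sigma$ applies with $\rchi$ taken in the variables $(v,y)$ jointly. This is where the hypothesis that $G$ acts \emph{definably} on $X$ (and that $\Sigma$ is definable over $M$ for \emph{every} formula $\phi$, in particular for this $\psi$) is used; once the uniformity in $h$ is in place, the rest is routine.
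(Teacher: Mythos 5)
Your proposal is correct and follows essentially the same route as the paper: definability of $\Sigma$ (applied to the formula obtained by unwinding $h\ccdot\varphi$, with $h$ as an extra parameter) makes each set in \eqref{set} $M$-definable, hence each $\Stab_\varphi(\Sigma)$ is a definable subgroup, and Claim~\ref{claim:stab1} plus DCC reduces the intersection to a finite, hence definable, one. One small caveat: your primary argument for the DCC step, choosing a finite intersection of minimal dimension and fewest connected components, presupposes a dimension theory not available in the general setting of this proposition; the parenthetical argument (under DCC the downward-directed family of finite sub-intersections has a minimum, which is then contained in every $\Stab_\varphi(\Sigma)$ and so equals $\Stab(\Sigma)$) is the one to keep, and it is exactly what the paper does.
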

\begin{proof} The fact that each of the sets in \eqref{set} is definable is immediate from
the definability of $\Sigma$. It follows that each $\Stab_\varphi(\Sigma)$ is
definable and therefore $\Stab(\Sigma)$ is the intersection of definable groups. If
in addition $G$ has DCC, then the intersection is finite hence definable.\end{proof}

\medskip

\noindent{\bf Strengthening the assumptions}

\noindent  {\em From now on we assume that $G$ has a uniformly definable basis
$$\{B_t\colon t\in T\}$$ of open neighborhoods of the identity. We call such $G$  a definably topological
group. As pointed out earlier, for $\CN\succ\CM$ the group $G(N)$ is again a
topological group and the definable family $\{B_t\colon t\in T(N)\}$ forms a basis for the
open neighborhoods of $e$.}

\medskip

We may identify the type  $\mu$ with the collection of formulas $\{B_t\colon t\in
T\}$. Note that $\mu$ itself is a definable partial type, over the parameters
defining $T$. Indeed, for $a\in M$, $\mu\vdash \phi(x,a)$ if and only if $\CM\models
\exists t\in T \, \forall x\,(x\in B_t\to \phi(x,a))$. If $\CN\succ \CM$ then $\mu|
N$ is just the infinitesimal type of $G(N)$ in the structure $\CN$.

\subsubsection{Stabilizers of $\mu$-types}

\begin{ntn}  For $p\in S_X(M)$,
we define \emph{the infinitesimal stabilizer $\Smu(p)$} as
\[   \Smu(p) =\Stab(\mu \ccdot p).\]
\end{ntn}

Note that $\Smu(p)$ contains the usual stabilizer of $p$, denoted by $\Stab(p)$. The
claim below and its proof was proposed to us by A.~Pillay.
\begin{claim}\label{claim:stab3}
 If $p$ is a complete type in $S_X(M)$ definable over $A\sub M$ then $\mu\ccdot p$
is a partial type definable over $A$.
\end{claim}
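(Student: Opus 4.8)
The plan is to unwind the definition of $\mu\ccdot p$ and show that membership of a formula $\phi(x,a)$ in $\mu\ccdot p$ is controlled by an $A$-formula in $a$, using the definability of both $\mu$ (over the parameters defining $T$, which we may assume lie in $A$) and $p$ (over $A$). First I would recall from the definition of the product of types that $\phi(x,a)\in\mu\ccdot p$ precisely when there are a $G$-formula $\theta(v)\in\mu$ and an $X$-formula $\psi(x)\in p$ such that $\theta\ccdot\psi\vdash\phi(\cdot,a)$; equivalently, using that $\mu$ has the basis $\{B_t:t\in T\}$ and that $p$ is a complete type closed under implication, one gets that $\phi(x,a)\in\mu\ccdot p$ iff there exists $t\in T$ such that $B_t\ccdot p\vdash\phi(x,a)$, i.e.\ $p(x)\vdash \forall v\,(v\in B_t\to \phi(v\ccdot x, a))$, for some $t$. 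Actually it is cleaner to use Claim~\ref{claim:inf-eq2}: by that claim (applied with $\Sigma$ a single formula, or rather its negation) one has $\mu\ccdot p\vdash\phi(x,a)$ iff $p\vdash \mu^{-1}\ccdot\phi(\cdot,a)$, and since $\mu^{-1}=\mu$ (Claim~\ref{claim:cont}(1)) this is $p\vdash \mu\ccdot\phi(\cdot,a)$.

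So the task reduces to: $\phi(x,a)\in\mu\ccdot p$ iff $p(x)\vdash(\mu\ccdot\phi(\cdot,a))(x)$. Now $(\mu\ccdot\phi)(x)$ is the partial type $\{(B_t\ccdot\phi)(x):t\in T\}$, and $p$ being a complete type, $p\vdash(\mu\ccdot\phi(\cdot,a))$ iff $p\vdash(B_t\ccdot\phi(\cdot,a))(x)$ for some $t\in T$ — this uses compactness/completeness: a complete type implies a partial type iff it implies a single member, once we note $\mu\ccdot\phi$ is closed under... hmm, more carefully: $p\vdash\{(B_t\ccdot\phi)(x):t\in T\}$ means $p$ implies each $(B_t\ccdot\phi)(x)$; but for membership in the product type we only need $p$ to imply one of them for some $t$. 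Let me restate: $\phi(x,a)\in(\mu\ccdot p)(x)$ iff there exist $\theta\in\mu$, $\psi\in p$ with $\vdash(\theta\ccdot\psi)(x)\to\phi(x,a)$, which (since the $B_t$ are cofinal in $\mu$ and $p$ is deductively closed) is equivalent to: $\exists t\in T$ with $p(x)\vdash(B_t^{-1}\ccdot\phi(\cdot,a))(x)$, and $B_t^{-1}$ is again a member of $\mu$ up to shrinking. Either way, the key point is that $\phi(x,a)\in\mu\ccdot p$ is equivalent to an existential statement over $t\in T$ about $p$ implying a formula $\eta(x,a,t)$ whose parameters are $a$ together with the (fixed, in $A$) parameters defining $T$ and the map. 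Now invoke definability of $p$: for the formula $\eta(x;y,t)$ there is an $\mathcal L(A)$-formula $\rchi_\eta(y,t)$ with $p\vdash\eta(x,a,t)\iff \CM\models\rchi_\eta(a,t)$. Then $\phi(x,a)\in\mu\ccdot p$ iff $\CM\models\exists t\in T\,\rchi_\eta(a,t)$, and the right-hand side is an $\mathcal L(A)$-formula in $a$, which is exactly the definability of $\mu\ccdot p$ over $A$ for the formula $\phi(x,y)$.

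The main obstacle, and the step needing care, is the passage from "$\phi(x,a)$ lies in the product type $\mu\ccdot p$" to a clean single existential quantifier over the index set $T$: one must check that the cofinal family $\{B_t\}$ inside $\mu$, together with the fact that $p$ is a complete (hence deductively closed) type, really lets us replace an unbounded conjunction/choice of $\theta\in\mu$ and $\psi\in p$ by a single $B_t$ and absorb $\psi$ into the deductive closure of $p$; the symmetry $\mu=\mu^{-1}$ and the transitivity $\mu\ccdot\mu=\mu$ (Claim~\ref{claim:cont}) are what make this work, and Claim~\ref{claim:inf-eq2} packages exactly this. Once that reduction is in hand, the rest is a direct application of the definition of a definable type to the formula $\eta$, with an extra existential quantifier over $T$ that stays within $\mathcal L(A)$ since the parameters for $T$ and for the group action are in $M$ and, after naming them, in $A$. (If one prefers not to assume the parameters defining $T$ are in $A$, one notes $\mu$ is definable over those parameters and $A$ can be harmlessly enlarged by them, or one simply states the conclusion as definability over $A$ together with the parameters for the topology, as the intended applications have $A$ containing these anyway.)
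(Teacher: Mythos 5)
Your overall skeleton is the paper's: characterize when $\mu\ccdot p$ entails $\phi(x,a)$ by an existential condition over the index set $T$, then apply definability of $p$ to the resulting formula in the extra variable $t$ and absorb $\exists t\in T$ into an $\mathcal L(A)$-formula. Your first formulation of the key reduction — $\mu\ccdot p\vdash\phi(x,a)$ iff there is $t\in T$ with $p(x)\vdash\forall v\,(v\in B_t\to\phi(v\ccdot x,a))$ — is correct and is exactly the paper's condition $p\vdash\neg\bigl(B_t\ccdot\neg\phi(x,a)\bigr)$ (modulo $\mu=\mu^{-1}$), and from there the rest of your argument goes through.

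However, the two ``cleaner'' restatements you then assert are false, and they are precisely the step you flag as the one needing care. First, ``$\mu\ccdot p\vdash\phi(x,a)$ iff $p\vdash\mu\ccdot\phi(\cdot,a)$'' is not what Claim~\ref{claim:inf-eq2} gives: applied to $\Sigma=\{\phi\}$ it characterizes \emph{consistency} of $\mu\ccdot p$ with $\phi$, and applied to $\Sigma=\{\neg\phi\}$ it gives $\mu\ccdot p\vdash\phi$ iff $p\not\vdash\mu\ccdot\neg\phi$; you have dropped the negations, turning a non-entailment into an entailment. Second, the restated equivalence ``$\exists\,\theta\in\mu,\psi\in p$ with $(\theta\ccdot\psi)\vdash\phi$ iff $\exists t$ with $p\vdash(B_t^{-1}\ccdot\phi)$'' holds only left-to-right: $(B_t^{-1}\ccdot\phi)(x)$ says \emph{some} $B_t$-translate of $x$ satisfies $\phi$, whereas what is needed is that \emph{all} such translates do. Concretely, take $G=\la\RR^2,+\ra$, $p=\tp((\alpha,0)/M)$ with $\alpha>M$, and $\phi(x_1,x_2)\colon x_2=0$: then $p\vdash(B_\epsilon\ccdot\phi)$ for every $\epsilon$, yet $\mu\ccdot p\not\vdash\phi$ (a realization $(\alpha,\delta)$ with $\delta$ a nonzero infinitesimal satisfies $\mu\ccdot p$). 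So the defining formula produced by that version would capture a strictly larger set of parameters than $\{a\colon \mu\ccdot p\vdash\phi(x,a)\}$, and definability would fail. (Relatedly, ``$p\vdash\mu\ccdot\phi$ iff $p\vdash(B_t\ccdot\phi)$ for some $t$'' is also backwards: entailing a partial type means entailing all of its formulas.) With the reduction done through the negation — $\mu\ccdot p\vdash\phi(x,a)$ iff $(\mu\ccdot p)\cup\{\neg\phi(x,a)\}$ is inconsistent iff, by Claim~\ref{claim:inf-eq2} and completeness of $p$, there is $t\in T$ with $p\vdash\neg(B_t\ccdot\neg\phi(x,a))$ — your final definability step (apply $\rchi(t,y)$ from definability of $p$, then $\exists t\in T$) is exactly the paper's proof.
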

\begin{proof}
  Let $\phi(x,y)$ be a formula. For $b\in M$ we have
$(\mu\ccdot p)(x)\vdash \phi(x,b)$ if and only if $(\mu\ccdot p)(x) \cup\{\neg
\phi(x,b)\}$ is inconsistent, that  by Claim~\ref{claim:inf-eq2} is equivalent to
$p(x)\not\vdash (\mu\ccdot \neg\phi(x,b))(x)$. Since $p$ is a complete type, the
latter condition is equivalent to the existence of $t\in T(M)$ such that $p(x)\vdash
\neg (B_t\ccdot \neg \phi(x,b))(x)$.

Since the type $p(x)$ is definable over $A$, there is a formula $\rchi(u,y)\in\CL(A)$ such that
$p(x)\vdash \neg (B_t\ccdot \neg \phi(x,b))(x)$ if and only if
$\CM\models \rchi(t,b)$.

Thus $(\mu\ccdot p)(x)\vdash \phi(x,b)$ if and only if $\CM\models
\exists t\in T
(\rchi(t,b))$.\end{proof}

\begin{rem} The definition of $\mu\ccdot p$ is canonical in the following sense: If $p\in S_X(M)$
is definable over $M$ and $\CN\succ \CM$ then $(\mu| N)\ccdot (p| N)=(\mu\ccdot
p)| N$.\end{rem}

\begin{prop}\label{DCC} Assume that
$G$ has the Descending Chain Condition for definable subgroups and that  $p\in
S_X(M)$ is a definable type over $M$.  Then $\Smu(p)$ is an $M$-definable subgroup
of $G$.

Moreover, if $\CN\succ \CM$ then $\Smu(p| N)$ is  defined by the same formula.
\end{prop}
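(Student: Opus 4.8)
The plan is to combine Proposition~\ref{claim:stab2} with Claim~\ref{claim:stab3}. Since $p$ is definable over $M$, Claim~\ref{claim:stab3} tells us that $\mu\ccdot p$ is a definable partial $X$-type over $M$. Therefore Proposition~\ref{claim:stab2} applies with $\Sigma=\mu\ccdot p$: $\Stab(\mu\ccdot p)=\Smu(p)$ is the intersection of $M$-definable subgroups of $G$, and since $G$ has DCC on definable subgroups, this intersection is finite and hence $\Smu(p)$ is itself an $M$-definable subgroup. So the first sentence is essentially a direct corollary of what has already been set up.

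For the ``moreover'' part, the claim is that the very same formula that defines $\Smu(p)$ over $M$ also defines $\Smu(p| N)$ over $N$. First I would unwind how the defining formula is produced. By the DCC, there are finitely many formulas $\varphi_1,\dots,\varphi_k\in\CL_X(M)$ such that $\Smu(p)=\bigcap_{i\le k}\Stab_{\varphi_i}(\mu\ccdot p)$. Each $\Stab_{\varphi_i}(\mu\ccdot p)$ is the stabilizer of the set $\{h\in G(M):(\mu\ccdot p)\vdash h\ccdot\varphi_i\}$; by the definability of $\mu\ccdot p$ (established over $M$ in Claim~\ref{claim:stab3}, with an explicit defining formula of the form $\exists t\in T\,\rchi_i(t,y)$), this set is cut out by an $M$-formula, and the stabilizer of a definable set is uniformly definable from it. So $\Smu(p)$ is given by an explicit $\CL(M)$-formula $\theta(v)$, built from the $\varphi_i$ and the definitions $\rchi_i$ of $p$.

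Now I would pass to $\CN\succ\CM$. By the remark following Claim~\ref{claim:stab3}, the construction of $\mu\ccdot p$ is canonical: $(\mu| N)\ccdot(p| N)=(\mu\ccdot p)| N$, and its definition scheme over $N$ is given by the same formulas $\exists t\in T\,\rchi_i(t,y)$. Hence, running the same recipe inside $\CN$, the group $\Smu(p| N)=\Stab((\mu| N)\ccdot(p| N))$ equals $\bigcap_{i\le k}\Stab_{\varphi_i}((\mu\ccdot p)| N)$ provided this intersection still stabilizes after $k$ steps — which it does because the relevant DCC statement, namely ``$\bigcap_{i\le k}\Stab_{\varphi_i}=\bigcap_{\text{all }\varphi}\Stab_{\varphi}$'', is a first-order property of $G$ that transfers from $\CM$ to $\CN$ — and the same $M$-formula $\theta(v)$ defines it. The main obstacle, and the only point requiring care, is exactly this last transfer: one must check that the finiteness extracted from DCC over $\CM$ persists over $\CN$, i.e.\ that no new definable subgroups appear in $\CN$ that would force the intersection to continue past stage $k$. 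This follows because for each $i$ the statement ``$\Stab_{\varphi_1}(\mu\ccdot p)\cap\cdots\cap\Stab_{\varphi_k}(\mu\ccdot p)\subseteq\Stab_{\varphi_i}(\mu\ccdot p)$'' is expressible by an $\CL(M)$-sentence (using the explicit definitions above) and holds in $\CM$, hence in $\CN$; so $\theta(v)$ continues to define $\Smu(p| N)$.
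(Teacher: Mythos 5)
Your proof of the first statement is exactly the paper's: Claim~\ref{claim:stab3} gives definability of the partial type $\mu\ccdot p$, and then Proposition~\ref{claim:stab2} together with DCC gives that $\Smu(p)=\Stab(\mu\ccdot p)$ is an $M$-definable subgroup; the paper's proof consists of precisely these two sentences (it leaves the ``moreover'' clause without an explicit argument).

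For the ``moreover'' part your overall plan -- unwind the defining formula $\theta(v)$ for the finite intersection, invoke the canonicity $(\mu| N)\ccdot(p| N)=(\mu\ccdot p)| N$, and transfer first-order statements asserting that this finite intersection already computes the full stabilizer -- is the right one, but the sentence you actually transfer does not do the job. The containment ``$\Stab_{\varphi_1}(\mu\ccdot p)\cap\cdots\cap\Stab_{\varphi_k}(\mu\ccdot p)\subseteq\Stab_{\varphi_i}(\mu\ccdot p)$'' is trivially true (an intersection is contained in each of its terms), so transferring it to $\CN$ says nothing about the stabilizers $\Stab_{\psi}\bigl((\mu\ccdot p)| N\bigr)$ attached to formulas $\psi$ with parameters in $N\setminus M$ -- which is exactly the obstacle you yourself flag (``no new definable subgroups appear in $\CN$''). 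Likewise, ``$\bigcap_{i\le k}\Stab_{\varphi_i}=\bigcap_{\text{all }\varphi}\Stab_{\varphi}$'' is not a single first-order property of $G$, since it quantifies over all formulas. What is needed is a schema, one sentence per formula template $\varphi(x,y)$: by definability of $\mu\ccdot p$, the set $\{h\in G\colon \mu\ccdot p\vdash h\ccdot\varphi(x,y)\}$, and hence $\Stab_{\varphi(x,y)}(\mu\ccdot p)$, is definable uniformly in the parameter $y$, so ``for all $y$, $\theta(v)\rightarrow v\in\Stab_{\varphi(x,y)}(\mu\ccdot p)$'' is an $\CL(M)$-sentence; it holds in $\CM$ because there the finite intersection equals $\Stab(\mu\ccdot p)=\bigcap_{\psi\in\CL_X(M)}\Stab_{\psi}(\mu\ccdot p)$ by Claim~\ref{claim:stab1}, and by elementarity it holds in $\CN$, where by the canonicity remark the same defining formulas describe $\{h\colon (\mu\ccdot p)| N\vdash h\ccdot\varphi(x,b)\}$ for $b\in N$. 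With that replacement (and with your correct observation that $\theta$ still defines the finite intersection over $N$), the argument closes; as written, the final justification is a genuine gap.
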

\begin{proof}  We take $\Sigma=\mu\ccdot p$ which is definable by the last claim. By Claim~\ref{claim:stab2}, $\Smu(p)$ is definable in $\CM$.\end{proof}

The following claim follows  from generalities of group actions.
\begin{claim}\label{claim:conjugate} For $p,q\in S_X(M)$ and $g\in G(M)$,
if $g\ccdot p_\mu = q_\mu$ then $\Smu(q)=g\Smu(p)g^{-1}$.\end{claim}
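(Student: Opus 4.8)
The plan is to unwind the definitions and use the fact that $\Stab(\Sigma)$ depends only on the logical-equivalence class of the partial type $\Sigma$, so that $\Smu(p)=\Stab(\mu\ccdot p)$ is really a function of the equivalence class $p_\mu$. Concretely, I would first observe that the hypothesis $g\ccdot p_\mu=q_\mu$ means precisely $g\ccdot(\mu\ccdot p)$ and $\mu\ccdot q$ define the same partial type; by Claim~\ref{claim:g-action} we have $g\ccdot(\mu\ccdot p)=\mu\ccdot(g\ccdot p)$, so the hypothesis is equivalent to $\mu\ccdot(g\ccdot p)=\mu\ccdot q$, i.e.\ $(g\ccdot p)\sim_\mu q$, i.e.\ $\Smu(g\ccdot p)=\Smu(q)$ once we know $\Smu$ respects $\sim_\mu$ (which is immediate since $\Smu(r)=\Stab(\mu\ccdot r)$ and equal partial types have equal stabilizers). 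So the statement reduces to the single identity
\[ \Smu(g\ccdot p)=g\,\Smu(p)\,g^{-1}, \]
which is the "generalities of group actions" alluded to.

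Next I would prove that identity. Set $\Sigma=\mu\ccdot p$, a partial $X$-type over $M$. Because $G(M)$ acts on $\mathcal L_X(M)$ and this action is by automorphisms of the Boolean algebra compatible with $\vdash$, one checks directly from the definition of $\Stab$ that for any partial $X$-type $\Sigma$ and any $g\in G(M)$,
\[ \Stab(g\ccdot\Sigma)=g\,\Stab(\Sigma)\,g^{-1}. \]
Indeed $h\in\Stab(g\ccdot\Sigma)$ iff for all $\varphi$, $g\ccdot\Sigma\vdash\varphi\Leftrightarrow g\ccdot\Sigma\vdash h\ccdot\varphi$; applying $g^{-1}$ to both sides and substituting $\varphi=g\ccdot\psi$ turns this into $\Sigma\vdash\psi\Leftrightarrow\Sigma\vdash (g^{-1}hg)\ccdot\psi$ for all $\psi$, i.e.\ $g^{-1}hg\in\Stab(\Sigma)$. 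Then apply this with $\Sigma=\mu\ccdot p$ together with Claim~\ref{claim:g-action}, $g\ccdot(\mu\ccdot p)=\mu\ccdot(g\ccdot p)$, to get $\Stab(\mu\ccdot(g\ccdot p))=g\,\Stab(\mu\ccdot p)\,g^{-1}$, which is exactly $\Smu(g\ccdot p)=g\,\Smu(p)\,g^{-1}$.

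Finally I would assemble the two pieces: from $g\ccdot p_\mu=q_\mu$ we get $\mu\ccdot(g\ccdot p)=\mu\ccdot q$ as partial types, hence $\Smu(g\ccdot p)=\Smu(q)$, and combining with the conjugation identity $\Smu(g\ccdot p)=g\,\Smu(p)\,g^{-1}$ yields $\Smu(q)=g\,\Smu(p)\,g^{-1}$. The only point requiring any care — and the place I would write out the equivalences carefully rather than wave at them — is the substitution $\varphi\mapsto g\ccdot\psi$ in the stabilizer computation, making sure the equivalence "for all $\varphi$" is genuinely preserved when we pass to "for all $\psi$" (this uses that $\psi\mapsto g\ccdot\psi$ is a bijection of $\mathcal L_X(M)$ and that $\Sigma\vdash g\ccdot\psi\Leftrightarrow g^{-1}\ccdot\Sigma\vdash\psi$, i.e.\ compatibility of the $G(M)$-action with $\vdash$). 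Everything else is bookkeeping with the already-established Claims~\ref{claim:g-action} and the definition of $\Smu$.
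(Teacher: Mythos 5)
Your proof is correct: the reduction via Claim~\ref{claim:g-action} to the identity $\Stab(g\ccdot\Sigma)=g\,\Stab(\Sigma)\,g^{-1}$, and the substitution argument establishing that identity, are exactly the ``generalities of group actions'' the paper invokes without writing them out, together with the (correct) observation that $\Stab$, hence $\Smu$, depends only on the logical-equivalence class of the partial type. So you have simply filled in the details of the paper's intended argument, and done so accurately.
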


\begin{rem} In the case $X=G$
we always consider the action of $G$ on itself by left multiplication. In
particular, both stabilizer groups $\Stab(p)$ and $\Smu(p)$  are taken with respect
to left multiplication. Note that they could turn out to be different groups for the
opposite action. The following example is taken from  \cite{gp}.

Let $G=\SL(2,\RR)$. Consider the curve
\[\gamma(t) =
\begin{pmatrix}
t & 1 \\
0 & t^{-1}
\end{pmatrix}, t>0. \]

We will denote by $S\subseteq G$ the image of $\gamma$, and let $p(x)$ be the type
on $S$ corresponding to $t>\mathbb R$.

It is not hard to see that the $\mu$-stabilizer of $p$ with respect to left
multiplication is:  \[ \left \{ \begin{pmatrix}
1 & a \\
0 & 1
\end{pmatrix}, a\in \mathbb R\right \}\]
and with respect to right-multiplication is:\[  \left \{
\begin{pmatrix}
r & 0 \\
0 & r^{-1}
\end{pmatrix}, r\in \mathbb R^{>0}\right \}. \]

\end{rem}

\subsection{A partial ``standard part'' map}
In the o-minimal case, when $\CN$ is a tame extension of $\CM$, there is a standard part
map $\st\colon \CO_M(N)\to M$ from the set of $M$-bounded elements of $N$ into $M$ (see
Section~\ref{sec:standard-part-map} for more details).  Here we introduce an analogue of
a standard part map without assuming o-minimality or tameness.

Let $\CN$ be an elementary extension of $\CM$.  By Corollary~\ref{mu-group},
$\mu(N)$ is a subgroup of $G(N)$ normalized by $G(M)$, hence  $\mu(N)\ccdot G(M)$ is
a subgroup of $G(N)$, still normalized by $G(M)$. In abuse of notation we denote
this subgroup by $\mathcal O_G(N)$.

Because $\mu(N)\cap G(M)=\{e\}$ there exists a surjective group homomorphism
$\st^*\colon \mathcal O_G(N)\to G(M)$ which sends every $h$ to the unique $g\in
G(M)$ such that $h\in \mu(N)g$. We think of the map $\st^*$ as a partial ``standard
part'' map from $G(N)$ into $G(M$), so for $Y\sub G(N)$ we will use $\st^*(Y)$,
again in abuse of notation, to denote the image under $\st^*$ of the set $Y\cap
\mathcal O_G(N)$.

Note that if $H\leq G(N)$ is any subgroup then  $\st^*(H)$ is a subgroup of $G(M)$.
As we shall see later, under various assumptions this group is definable in $M$.

\begin{sample}
If $G=\la \mathbb R,+\ra$ and $\mathcal R$ is a nonstandard
elementary extension of the real field then $\mathcal O_G(\mathcal R)$ equals the
subgroup of elements of $G$ of finite size. If $G=\la \RR^{>0},\ccdot\ra$ then
$\mathcal O_G(\mathcal R)$ is the collection of all positive elements $a\in \mathcal
R$ such that both $a$ and $1/a$ are finite.
\end{sample}

\begin{claim}\label{normal} Let $p, q\in S_X(M)$, $\CN\sube \CM$ be
  $|M|^+$-saturated
 and $\alpha\models p$, $\beta\models
q$ be in $N$.  For $a\in X(N)$, let $G_a=\{h\in G(N)\colon h\ccdot a=a\}$. Then,

(1) If $p\sim_\mu q$ then $\st^*(G_{\alpha})=\st^*(G_\beta)$.

(2) The group $\st^*(G_\alpha)$ is normal in $\Smu(p)$.

\end{claim}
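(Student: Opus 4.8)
The plan is to prove both parts by working concretely in the $|M|^+$-saturated extension $\CN$ and using the partial standard part map together with Claim~\ref{claim:inf-eq}.

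\textbf{Part (1).} Suppose $p\sim_\mu q$, so $\mu\ccdot p=\mu\ccdot q$ as partial types. Fix realizations $\alpha\models p$ and $\beta\models q$ in $N$. First I would observe that, by Claim~\ref{claim:inf-eq}, the type $(\mu\ccdot p)\cup(\mu\ccdot q)$ is consistent, so by $|M|^+$-saturation of $\CN$ there are $\epsilon,\delta\in\mu(N)$ with $\epsilon\ccdot\alpha=\delta\ccdot\beta$; writing $\eta=\delta^{-1}\epsilon\in\mu(N)$ (using $\mu\ccdot\mu=\mu$ and $\mu^{-1}=\mu$) we get $\eta\ccdot\alpha=\beta$. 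Now if $h\in G_\alpha\cap\CO_G(N)$, write $h=\epsilon_0 g$ with $\epsilon_0\in\mu(N)$, $g=\st^*(h)\in G(M)$. Then $\eta h\eta^{-1}\in G_\beta$, and since $\mu(N)$ is normal in $\CO_G(N)$ we have $\eta h\eta^{-1}=\eta\epsilon_0 g\eta^{-1}=(\eta\epsilon_0 (g\eta^{-1} g^{-1}))\,g\in\mu(N)g$, because $g\eta^{-1}g^{-1}\in\mu(N)$ as $G(M)$ normalizes $\mu(N)$. Hence $\st^*(\eta h\eta^{-1})=g=\st^*(h)$, which shows $\st^*(G_\alpha)\subseteq\st^*(G_\beta)$; by symmetry we get equality. (One should also check the bookkeeping that $G_\alpha\cap\CO_G(N)$ is exactly what $\st^*$ sees, which is immediate from the conventions just established.)

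\textbf{Part (2).} Let $g\in\Smu(p)=\Stab(\mu\ccdot p)$. By definition $g$ stabilizes the partial type $\mu\ccdot p$ up to logical equivalence, i.e.\ $g\ccdot(\mu\ccdot p)=\mu\ccdot p$. By Claim~\ref{claim:g-action}, $g\ccdot(\mu\ccdot p)=\mu\ccdot(g\ccdot p)$, so $\mu\ccdot(g\ccdot p)=\mu\ccdot p$, that is, $g\ccdot p\sim_\mu p$. Applying Part (1) with $q=g\ccdot p$ (and the realization $g\ccdot\alpha\models g\ccdot p$ in $N$), we get $\st^*(G_{g\ccdot\alpha})=\st^*(G_\alpha)$. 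But $G_{g\ccdot\alpha}=\{h\in G(N): h g\ccdot\alpha=g\ccdot\alpha\}=g G_\alpha g^{-1}$, and since $g\in G(M)$ normalizes $\mu(N)$ hence $\CO_G(N)$, and $\st^*$ is a homomorphism fixing $G(M)$ pointwise, we have $\st^*(g G_\alpha g^{-1})=g\,\st^*(G_\alpha)\,g^{-1}$. Combining, $g\,\st^*(G_\alpha)\,g^{-1}=\st^*(G_\alpha)$, which is exactly normality of $\st^*(G_\alpha)$ in $\Smu(p)$.

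The main obstacle, and the step deserving the most care, is the normalization bookkeeping in Part (1): one must use in the right order that $\mu(N)\trianglelefteq\CO_G(N)$ and that $G(M)$ normalizes $\mu(N)$ (Corollary~\ref{mu-group}) to conclude that conjugating an element of a fixed $\mu(N)$-coset by an element $\eta\in\mu(N)$ lands in the same coset, so that $\st^*$ is genuinely conjugation-invariant here. Everything else is a direct unwinding of definitions: Claim~\ref{claim:inf-eq} supplies the element $\eta$ conjugating $\alpha$ to $\beta$, Claim~\ref{claim:g-action} reduces Part (2) to Part (1), and the homomorphism property of $\st^*$ handles the conjugation by $g\in G(M)$.
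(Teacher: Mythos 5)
Your Part (1) has a genuine gap at its first step. From Claim~\ref{claim:inf-eq} you only get that the partial type $(\mu\ccdot p)\cup(\mu\ccdot q)$ (one variable, over $M$) is consistent; saturation then produces \emph{some} common realization $\gamma=\epsilon\ccdot\alpha'=\delta\ccdot\beta'$ with $\alpha'\models p$, $\beta'\models q$, but nothing ties $\alpha',\beta'$ to the \emph{given} realizations $\alpha,\beta$, so you cannot conclude $\epsilon\ccdot\alpha=\delta\ccdot\beta$, i.e.\ $\beta\in\mu(N)\ccdot\alpha$. In fact that conclusion is false in general: take $G=\la \RR,+\ra$ in an o-minimal expansion of the real field, $p=q$ the type at $+\infty$, and $\beta$ any realization with $\beta-\alpha$ infinite (e.g.\ $\beta=\alpha^2$ if multiplication is present); then $p\sim_\mu q$ but $\beta\notin\mu(N)\ccdot\alpha$, so the element $\eta$ your argument relies on does not exist. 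The paper circumvents this with two moves you are missing: first, by $|M|^+$-saturation of $\CN$, the group $\st^*(G_\beta)$ depends only on $\tp(\beta/M)$ and not on the choice of realization in $N$ (this is the point of the paper's opening remark that the case $p=q$ is immediate); second, since $q\vdash\mu\ccdot p$, saturation yields a realization $\beta'$ of $q$ of the special form $\beta'=\epsilon\ccdot\alpha$ with $\epsilon\in\mu(N)$, and one runs the conjugation argument with this $\beta'$. Your subsequent bookkeeping (conjugating a coset $\mu(N)g$ by an infinitesimal stays in the coset, using that $G(M)$ normalizes $\mu(N)$ and that $\st^*$ is a homomorphism killing $\mu(N)$) is correct and is essentially the paper's computation, but as written it only proves the claim for the special pairs with $\beta\in\mu(N)\ccdot\alpha$.

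In Part (2), your reduction to Part (1) via Claim~\ref{claim:g-action}, applied to $q=g\ccdot p$ and the realization $g\ccdot\alpha$, together with $\st^*(gG_\alpha g^{-1})=g\,\st^*(G_\alpha)\,g^{-1}$, is exactly the paper's argument; but it only shows that $\Smu(p)$ \emph{normalizes} $\st^*(G_\alpha)$. Normality \emph{in} $\Smu(p)$ also requires the containment $\st^*(G_\alpha)\sub\Smu(p)$, which you never address: if $g=\st^*(h)$ with $h\in G_\alpha$, then $h=\epsilon\ccdot g$ for some $\epsilon\in\mu(N)$, so $g\ccdot\alpha=\epsilon^{-1}\ccdot h\ccdot\alpha=\epsilon^{-1}\ccdot\alpha\models\mu\ccdot p$, whence $g\ccdot p$ is consistent with $\mu\ccdot p$ and $g\in\Smu(p)$ by Claim~\ref{claim:inf-eq}. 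This is short, but it is a needed half of the statement; and of course Part (2) also inherits the gap from Part (1), since it invokes it.
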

\begin{proof}Let us see (1). Note that if $p=q$ then we can immediately conclude that
$\st^*(G_\alpha)=\st^*(G_\beta)$. Our result shows that it is sufficient to assume
that $p$ and $q$ are $\mu$-equivalent types.

 Because $p\sim_\mu q$ we may replace $\beta$ by
another realization of $q$ if needed and assume $\beta=\epsilon \ccdot\alpha$ for some
$\epsilon\in \mu(N)$. It follows that $G_{\beta}=\epsilon G_\alpha
\epsilon^{-1}$. Also, it is not hard to see that  $G_\beta\cap \mathcal O_G(N)=\epsilon (G_{\alpha}\cap \mathcal O_G(N))\epsilon^{-1}$. Because $\st^*$ is
a homomorphism  and $\epsilon\in \ker(\st^*)$, we see that
$\st^*(G_\beta)=\st^*(G_\alpha)$.

To see (2) we first note that the group $\st^*(G_\alpha)$ is contained in $\Smu(p)$.
Indeed, assume that $g=\st^*(h)$ for some $h\in G_{\alpha}$, so $g=\epsilon h$ for $
\epsilon\in \mu(N)$. Then,
$$g\ccdot \alpha=\epsilon h\ccdot \alpha=\epsilon\ccdot \alpha\models \mu\ccdot p,$$ so $g\in
\Smu(p)$.

To see that $\st^*(G_\alpha)$ is normal in $\Smu(p)$, we take $g\in \Smu(p)$ (in
particular, $g\in G(M)$), and then
$$g(\st^*(G_\alpha))g^{-1}=\st^* (gG_\alpha g^{-1})=\st^*(G_{g\ccdot \alpha})=\st^*(G_\alpha),$$ where the last
equality follows by (1), since $g\ccdot \alpha\sim_\mu \alpha$.  Thus we showed that
$ \st(G_\alpha)$ is normal in $\Smu(p)$. \end{proof}

Finally, we have:

\begin{claim} For $p\in S_G(M)$ and
  $|M|^+$-saturated extension $\CN\sube \CM$ we have
\begin{enumerate} \item  $\Smu(p)=\st^*(p(N){p(N)}^{-1})$. \item For
every $\alpha\models p$ in $N$, $\Smu(p)=\st^*(p(N)\alpha^{-1})$.\end{enumerate}
\end{claim}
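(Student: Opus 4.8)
The plan is to reduce the whole statement to one characterization of $\Smu(p)$. Unwinding the definition of $\Stab$, an element $g\in G(M)$ lies in $\Stab(\mu\ccdot p)$ exactly when the partial types $\mu\ccdot p$ and $g^{-1}\ccdot(\mu\ccdot p)$ have the same logical consequences; by Claim~\ref{claim:g-action} the second type equals $\mu\ccdot(g^{-1}\ccdot p)$, and applying $g$ this condition becomes $\mu\ccdot(g\ccdot p)=\mu\ccdot p$. Hence, for $g\in G(M)$, one has $g\in\Smu(p)$ if and only if $g\ccdot p\sim_\mu p$. In addition I will use two consequences of $\CN$ being $|M|^+$-saturated: every type in $S_G(M)$ is realized in $\CN$, and if $c\in N$ realizes a partial type $\mu\ccdot r$ with $r\in S_G(M)$, then already $c=\epsilon\ccdot b$ for some $\epsilon\in\mu(N)$ and $b\models r$ in $N$ (the type over $Mc$ asserting this is consistent in $\UU$ and has size $|M|$); this is the nontrivial half of the equality $(\mu\ccdot r)(N)=\mu(N)\ccdot r(N)$, cf.\ Remark~\ref{rem:products}(4).

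Granting this, I would first prove (2) for a fixed $\alpha\models p$ in $N$. For the inclusion $\st^*(p(N)\alpha^{-1})\subseteq\Smu(p)$, take $h\in p(N)\alpha^{-1}$ with $h\in\CO_G(N)$ and set $g=\st^*(h)\in G(M)$; writing $h=\beta\alpha^{-1}$ with $\beta\models p$ in $N$, and $h=\epsilon g$ with $\epsilon\in\mu(N)$, we get $g\ccdot\alpha=\epsilon^{-1}\ccdot\beta$. Since $\epsilon^{-1}\models\mu$ (Claim~\ref{claim:cont}) and $\beta\models p$, this gives $g\ccdot\alpha\models\mu\ccdot p$; but $g\ccdot\alpha$ also realizes $g\ccdot p$, hence $\mu\ccdot(g\ccdot p)$ (as $e\models\mu$). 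Thus $\mu\ccdot(g\ccdot p)$ and $\mu\ccdot p$ are consistent, and since $g\ccdot p\in S_G(M)$, Claim~\ref{claim:inf-eq} gives $\mu\ccdot(g\ccdot p)=\mu\ccdot p$, i.e.\ $g\in\Smu(p)$. For the reverse inclusion, take $g\in\Smu(p)$, so $g\in G(M)$ and $g\ccdot p\sim_\mu p$; since $g\ccdot\alpha\models g\ccdot p$ and, as $e\models\mu$, $g\ccdot p\vdash\mu\ccdot(g\ccdot p)=\mu\ccdot p$, the element $g\ccdot\alpha\in G(N)$ realizes $\mu\ccdot p$, so by saturation $g\ccdot\alpha=\epsilon\ccdot\beta$ with $\epsilon\in\mu(N)$ and $\beta\models p$ in $N$. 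Then $\beta\alpha^{-1}=\epsilon^{-1}g\in\mu(N)g$, so $\beta\alpha^{-1}\in\CO_G(N)$ with $\st^*(\beta\alpha^{-1})=g$, whence $g\in\st^*(p(N)\alpha^{-1})$.

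Part (1) then follows at once from (2): since $p$ is realized in $N$, we have $p(N)p(N)^{-1}=\bigcup\{p(N)\alpha^{-1}\colon\alpha\models p,\ \alpha\in N\}$, a nonempty union, so $\st^*(p(N)p(N)^{-1})=\bigcup_{\alpha}\st^*(p(N)\alpha^{-1})=\Smu(p)$.

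The only place where genuine care is needed — and the reason (2) is phrased for $\alpha$ lying in $\CN$ rather than in $\UU$ — is the passage between consistency of $\mu\ccdot(g\ccdot p)$ with $\mu\ccdot p$ (a statement about $\UU$) and the existence of a witnessing factorization $g\ccdot\alpha=\epsilon\ccdot\beta$ inside $\CN$; this is exactly what $|M|^+$-saturation provides. The rest is routine bookkeeping with $\mu=\mu^{-1}$ and $\mu\ccdot\mu=\mu$, with the fact (Corollary~\ref{mu-group}) that $\mu(N)$ is a group normalized by $G(M)$, and with the description of $\st^*$ via left cosets of $\mu(N)$.
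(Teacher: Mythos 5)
Your proof is correct, and its two key computations are the same as the paper's: the inclusion $\st^*(p(N)\alpha^{-1})\subseteq\Smu(p)$ comes from a common realization of $\mu\ccdot(g\ccdot p)$ and $\mu\ccdot p$ together with completeness of $g\ccdot p$ and Claim~\ref{claim:inf-eq}, while the reverse inclusion uses $|M|^+$-saturation to factor $g\ccdot\alpha=\epsilon\ccdot\beta$ with $\epsilon\in\mu(N)$ and $\beta\models p$ in $N$. The only real difference is organizational: the paper proves (1) first (with an arbitrary $b\models p$ playing the role of your fixed $\alpha$) and then deduces (2) by moving a witness $\beta'\alpha'^{-1}$ onto the given $\alpha$ via an automorphism of $\CN$ over $M$, a step that tacitly requires homogeneity of $\CN$ (or a further saturation argument over $M\cup\{\alpha\}$); you instead prove (2) directly for every $\alpha\models p$ in $N$ and obtain (1) at once by writing $p(N){p(N)}^{-1}$ as the union of the sets $p(N)\alpha^{-1}$. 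This reorganization makes the automorphism step unnecessary and is, if anything, slightly cleaner; otherwise the two arguments coincide.
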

\begin{proof} (1) Let us see that $\st^*\bigl(p(N){p(N)}^{-1}\bigr)\sub \Smu(p)$. Indeed,
assume that $g=\varepsilon \ccdot a\ccdot b^{-1}\in G(M)$ for $\varepsilon\in
\mu(N)$ and $a,b\in p(N)$. Then clearly $g\ccdot p$ is consistent with $\mu\ccdot
p$, so, since $g\ccdot p$ is a complete type, it follows that $g\ccdot p \vdash
\mu\ccdot p$. By Claim~\ref{claim:inf-eq}, $g\mu\ccdot p=\mu\ccdot p$ so $g\in
\Smu(p)$.

 For the opposite inclusion, assume that  $g\in \Smu(p)$ and take an arbitrary $b\models p$. Since $g\ccdot
p\vdash \mu\ccdot p$, there exist $\varepsilon\in \mu(N)$ and $a\in p(N)$
such that $ \varepsilon\ccdot a= g\ccdot b$ (here we used the saturation of $N$). It
follows that $g=\varepsilon a b^{-1}$, so $g\in \st^*(p(N){p(N)}^{-1})$.

For (2), use the fact that if $g=\st^*(\beta'\alpha'^{-1})$ is in $M$ and $\beta',
\alpha'\models p$, then by saturation of $\CN$ using an automorphism over $M$ we can replace $\alpha'$ with
$\alpha$ and $\beta'$ with some other $\beta\models p$.
\end{proof}

 This ends our discussion under the assumption that $G$ is
a general definably topological group.

\section{The case of an o-minimal $G$}

We assume in this section that $G$ is a definable group in an o-minimal structure
$\CM$ expanding a real closed field $R$.

\medskip

By Pillay's work (\cite{p}) we know that for any $k$, $G$  has a structure of a
definable $C^k$-manifold with respect to  $R$, making $G$ into a $C^k$-group. In
particular it is a definably topological group in $\CM$. Because $G$ has DCC (see
\cite{p}), it follows from Proposition~\ref{DCC} and Proposition~\ref{claim:stab2}
that for a definable type $p\in S_G(M)$, the groups $\Smu(p)$ and $\Stab(p)$ are
definable. Our goal in this section is to realize $\Smu(p)$ as the image under the
standard part map of a definable set in a bigger model. This will allow us to prove
for example that $\Smu(p)$ is a torsion-free group which is nontrivial unless $p$ is
bounded. It will also help us determining the dimension of $\Smu(p)$.

Most of the work in this section concerns the action of $G$ on itself by left
multiplication, but towards the end we also consider general definable $G$-sets in
the o-minimal setting.

\subsection{Embedding $G$ as an affine group}
The following claim is known, but since we could not find a precise
reference we provide an outline of a proof.
\begin{claim} The group $G$, with its $C^k$-manifold structure,  is definably
$C^k$-diffeomorphic to a closed $C^k$-submanifold of $M^n$ for some $n$.
\end{claim}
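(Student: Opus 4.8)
The plan is to embed $G$ as a definable $C^k$-submanifold of some affine space by a standard partition-of-unity style argument, adapted to the o-minimal setting where genuine $C^\infty$ partitions of unity are unavailable but definable $C^k$-functions suffice for any fixed $k$. First I would use the fact, due to Pillay, that $G$ carries the structure of a definable $C^k$-manifold: there is a finite atlas $\{(U_i,\phi_i)\colon i=1,\dots,m\}$ with $\phi_i\colon U_i\to V_i\subseteq R^{d}$ a definable $C^k$-diffeomorphism, $d=\dim G$, and the $U_i$ covering $G$. The key geometric input is that in an o-minimal structure over a real closed field one has definable $C^k$ ``bump functions'': for any definable open $W$ and definable closed $C\subseteq W$ there is a definable $C^k$ function $\lambda\colon G\to [0,1]$ with $\lambda\equiv 1$ on $C$ and $\mathrm{supp}(\lambda)\subseteq W$. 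This is where I would either cite the literature on definable smooth functions / o-minimal $C^k$ cell decomposition, or sketch it via shrinking the cover to $C_i\subseteq U_i$ still covering $G$ and composing coordinate functions with a definable $C^k$ plateau function on $R$.

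Next I would assemble the embedding. Shrink the atlas to a closed cover: by o-minimal normality (definable sets, closed, can be separated) choose definable closed $C_i\subseteq U_i$ with $G=\bigcup_i C_i$, and then definable closed $C_i'$ and open $W_i$ with $C_i\subseteq W_i\subseteq \overline{W_i}\subseteq U_i$. Pick definable $C^k$ bump functions $\lambda_i\colon G\to[0,1]$ supported in $W_i$ with $\lambda_i\equiv 1$ on $C_i$. Define $F\colon G\to (R^{d+1})^m\cong R^{m(d+1)}$ by
\[
F(x)=\bigl(\lambda_1(x)\phi_1(x),\lambda_1(x),\;\lambda_2(x)\phi_2(x),\lambda_2(x),\;\dots\bigr),
\]
where $\lambda_i(x)\phi_i(x)$ is interpreted as $0$ outside $\mathrm{supp}(\lambda_i)$, which is legitimate and $C^k$ because $\lambda_i$ vanishes to all relevant orders near $\partial\,\mathrm{supp}(\lambda_i)$ within $U_i$. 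This $F$ is definable and $C^k$. It is injective: if $F(x)=F(y)$, pick $i$ with $x\in C_i$, so $\lambda_i(x)=1$, hence $\lambda_i(y)=1$ too, forcing $y\in W_i\subseteq U_i$, and then $\phi_i(x)=\lambda_i(x)\phi_i(x)=\lambda_i(y)\phi_i(y)=\phi_i(y)$, so $x=y$ since $\phi_i$ is injective. It is an immersion: near any $x\in C_i$, the block $\lambda_i F$ restricted to a neighborhood agrees with $\phi_i$ up to a $C^k$-diffeomorphism of the target, so $dF_x$ has rank $d$.

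Finally I would upgrade ``injective immersion'' to ``$C^k$-diffeomorphism onto a closed $C^k$-submanifold''. For closedness of the image, note $F$ is definable, so $F(G)$ is a definable set; I would argue $F$ is a homeomorphism onto its image using that on each $C_i$ the map is a topological embedding and the $C_i$ are closed and finite in number, and that properness can be arranged — or, more robustly, invoke that a definable injective immersion with image a definable set that is locally closed is automatically an embedding onto a $C^k$-submanifold, and then enlarge $n$ if necessary (e.g. by a further definable modification, or by citing that any definable $C^k$-submanifold of $R^N$ embeds definably as a closed one). The main obstacle, and the one I would spend the most care on, is the closedness/properness of the embedding: injectivity and immersivity are formal once bump functions are in hand, but ensuring $F(G)$ is \emph{closed} in $R^n$ (rather than merely locally closed) requires either a properness argument using definable choices of exhausting closed sets or an appeal to a known o-minimal embedding theorem; since the statement only claims diffeomorphism onto \emph{a} closed submanifold for \emph{some} $n$, I would if needed post-compose with a definable closed embedding of the image's ambient chart data, and I would flag that this step is exactly where the ``we provide an outline'' caveat does its work.
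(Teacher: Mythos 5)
Your strategy (a Whitney-style embedding assembled from Pillay's finite definable atlas together with definable $C^k$ bump functions) is genuinely different from the paper's, which instead cites Fischer's theorem that definable $C^k$ manifolds are affine (adapting its key ingredient, the $C^k$ zero-set theorem, from \cite{DM} to real closed fields) and then makes the image closed by the trick from \cite{Coste}: choose a definable $C^k$ function $h$ vanishing exactly on $Cl(G)\setminus G$ and replace $G\subseteq M^n$ by $\{(x,t)\colon x\in G,\ t\cdot h(x)=1\}\subseteq M^{n+1}$. Your construction of $F$, and the injectivity and immersion checks, are fine, but the step upgrading $F$ to a topological embedding has a genuine gap. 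The reason you give --- that $F$ restricts to an embedding on each member of a finite closed cover $\{C_i\}$ --- does not imply that $F$ is a homeomorphism onto its image (compare $[0,2\pi)\to S^1$, which is an embedding on $[0,\pi]$ and on $[\pi,2\pi)$ but not globally), and the fallback lemma you propose, that a definable injective immersion with locally closed definable image is automatically an embedding onto a $C^k$-submanifold, is false: an injective definable parametrization of a figure-eight curve is an injective immersion whose image is closed but is not a submanifold at the crossing point. What actually saves your $F$ is a different argument, which you should make explicit: if $F(x)$ is close to $F(y)$ and $y\in C_i$, then $\lambda_i(x)$ is close to $1$, forcing $x\in U_i$; then $\phi_i(x)$ is close to $\phi_i(y)$ inside the \emph{open} set $\phi_i(U_i)$, so $x$ is close to $y$. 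This uses $\lambda_i\equiv 1$ on $C_i$ and openness of the chart image, not the closed-cover principle.

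Even after that repair, $F(G)$ is only locally closed, not closed: already for the multiplicative group of positive elements with the identity chart and $\lambda\equiv 1$, the image is $\{(x,1)\colon x>0\}$, which is not closed in the plane. So the closedness asserted in the statement is not delivered by your construction; you defer it to the fact that any definable $C^k$-submanifold of $M^N$ embeds definably as a closed one, but that is precisely the second half of the paper's proof (the zero-set trick of \cite{Coste} together with \cite{DM}), so it must be carried out, not treated as an optional post-composition. A further point needing care: your shrinking of the cover and the construction of bump functions take place on the \emph{abstract} definable manifold $G$, before any affine embedding exists; definable normality and definable $C^k$ zero-set or bump functions are standard for definable subsets of $M^n$ with the induced topology, but for an abstract definable atlas they require justification --- in effect this is exactly what Fischer's argument, cited by the paper, supplies --- so as written your outline borrows the same nontrivial inputs it set out to reprove.
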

\begin{proof} This is done in two steps. First, we apply a result of Fischer (see
\cite[1.3]{F}), to find a $C^k$-diffeomorphism between $G$, and a $C^k$ submanifold
of $M^n$, for some $n$. Since Fischer states his result only for structures over the
reals, we clarify several points: The main tool in his article, Corollary 1.2, which
states that every closed set is the zero set of a definable $C^k$-function, can be
replaced by \cite[4.22]{DM}, whose proof goes through word-for-word for a general
real closed field. The rest of the argument (we only need Step 1 in that proof)
can remain as it is. We now identify  $G$ with its image in $M^n$.

Next we use the argument from \cite[6.18]{Coste}: as above we find a definable
$C^k$-function $h\colon M^n\to M$ whose zero locus is the closed set $Cl(G)\setminus
G$ and identify $G$ with the closed set $\{(x,t)\in M^{n+1}\colon  x\in G\, , \, t\ccdot
h(x)=1\}$.\end{proof}

We assume from now on that $G$ is a closed $C^1$-submanifold of $M^n$, with the group
operations $C^1$-maps.

\begin{rem} Note that now $p$ is an unbounded $G$-type if and only if
$p(\mathbb U)$ is not contained in any bounded subset of $\mathbb U^n$.
\end{rem}

\subsection{On the standard part map}
\label{sec:standard-part-map}

Recall that an elementary extension $\CN\sube \CM$ is called {\em
  tame\/}  if for every $n\in N^k$ the type
$\tp(n/M)$ is definable.

Let  $p$ be a type over $M$  and $\alpha$ be a realization of $p$.  By definability
of Skolem functions (see \cite{vdd}) the definable closure of $M\cup \{\alpha\}$ is
an  elementary extension of $\CM$  that we  will denote by $\CM\la \alpha\ra$. Clearly, when $p$ is a definable type, the extension    $\CM\la \alpha\ra\succeq
\CM$ is tame.
\medskip

Let $\CN\sube \CM$ be a tame extension and  $\CO=\CO_M(N)$  the set of $M$-bounded
elements of $N$, i.e.
\[ \CO=\{ n\in N \colon -m< n < m \text{ for some } m\in M\}.\]

We denote by $\nu=\nu_M(N)$ the set of $M$-infinitesimal elements of $N$, i.e.
\[ \nu=\{ n\in N \colon -m<n<m \text{ for any } m>0\in M \}. \] Note that $\nu^n$ is the
intersection of all $M$-definable open neighborhoods of $0\in M^n$.

Because $\CN$ is a tame extension of $\CM$ every element in $\CO$ is infinitesimally
close to a (unique)  element in $M$ (see \cite[Theorem 2.1]{MS}), hence  we have the
standard part map $\st\colon \CO\to M$ defined as: $\st(n)$ is the unique $m\in M$ such
that $n\in m+\nu$. We extend this definition to $\st\colon \CO^n\to M^n$. Note that in
this case the map $\st$ is the same as  our previous $\st^*$ with respect to the group
$G=\la M,+\ra$ or its cartesian powers.
\medskip

The group  $G$ is embedded as  a closed $C^k$-subamanifold of $M^n$. As before the
infinitesimal subgroup $\mu$ of $G$ is given by a definable basis  of $G$-open
neighborhoods  of $e\in G$. By the fact that the group topology agrees with the
restriction of the $M^n$-topology, we have $\mu(N)=(e+\nu^n)\cap G(N)$. Thus a
partial type which defines $\mu$ can be taken to be $\{B_\epsilon\colon \epsilon>0,
\epsilon\in M\}$, where we write $B_{\epsilon}$ for the intersection of the
$\epsilon$-ball in $M^n$ around $e$ with the  set $G$.

 Using continuity of
the group operations it is not difficult to show that for any $g\in G(M)$ we have
\[ g\ccdot\mu(N) = (g+\nu^n)\cap G(N). \]

Since $G$ is a closed subset, it follows that for any  $a\in \mathcal O(N)^n\cap
G(N)$ we have $\st(a)\in G(M)$ so we have $\mathcal O_G(N)=\mathcal O(N)^n\cap
G(N)$. In particular, the standard part map with respect to $G$ and  $\la M^n,+\ra$
coincide for elements of $G(N)$.

\subsection{Reduced types}

\label{sec:redef-infin-stab}

\begin{defn}
  We say that a type $p\in S_G(M)$ is {\em $\mu$-reduced\/}  if for every
  $q\in S_G(M)$ with $q_\mu=p_\mu$ we have $\dim (p)\leq \dim (q)$.
\end{defn}

By finiteness of dimension, for every $p\in S_G(M)$ we can find a $\mu$-reduced
$q\in S_G(M)$ with $p_\mu=q_\mu$. Note however that one $\sim_{\mu}$-class can
contain more than one $\mu$-reduced  types. E.g.\  in $\la M^2,+\ra$ the type of
$(\alpha,0)$ where $\alpha>M$ and the type of $(\alpha,1/\alpha)$ are both
$\mu$-reduced, one dimensional and $\sim_{\mu}$-equivalent.

\medskip

Our first goal is to show that for \emph{definable} $p\in S_G(M)$ we can find a \emph{definable } $\mu$-reduced $q$ with $p_\mu=q_\mu$.

\begin{claim}\label{claim:def-red}
Let $p\in S_G(M)$ be a definable type. If $p$ is not $\mu$-reduced  then there is a
{\em definable\/} type $q\in S_G(M)$ with $q_\mu=p_\mu$ and $\dim(q)< \dim(p)$.
\end{claim}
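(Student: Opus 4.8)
The plan is to use the definability of $p$ together with a compactness/saturation argument to extract a lower-dimensional type inside the $\sim_\mu$-class, and then to transport definability from $p$ to that type. Fix a tame elementary extension realizing $p$, say work in $\CM\la\alpha\ra$ for $\alpha\models p$; since $p$ is definable, this is a tame extension of $\CM$. Since $p$ is not $\mu$-reduced, there is some $q_0\in S_G(M)$ with ${q_0}_\mu=p_\mu$ and $\dim(q_0)<\dim(p)$. By Claim~\ref{claim:inf-eq} applied to $p$ and $q_0$, we have $p\vdash \mu\ccdot q_0$, which means (working in a saturated model, as in the proof of Claim~\ref{claim:inf-eq2}) that $\alpha=\epsilon\ccdot\beta$ for some $\epsilon\models\mu$ and some $\beta\models q_0$. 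In the o-minimal closed-submanifold picture of Section~\ref{sec:standard-part-map}, $\mu(N)=(e+\nu^n)\cap G(N)$, so $\epsilon$ is infinitesimally close to $e$ and hence $\beta=\epsilon^{-1}\ccdot\alpha$ lies in $\CM\la\alpha\ra$ (it is definable over $M\cup\{\alpha\}$ once we know $\epsilon$ is, which it is, being the unique element of $\mu(N)$ with $\epsilon\ccdot\beta=\alpha$ for the chosen $\beta$; more carefully, one picks $\beta$ realizing $q_0$ inside $\CM\la\alpha\ra$ using tameness and the fact that $\st^\ast\alpha$-type considerations let $\beta$ be found there). The cleaner route: realize $q_0$ by a suitable element of $\dcl(M\alpha)$.

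Here is the more robust version of that middle step, which I expect to be the main obstacle: I need a $\beta$ realizing \emph{some} type in the $\sim_\mu$-class of $p$, of dimension $<\dim p$, with $\beta\in\CM\la\alpha\ra$. Rather than chasing $q_0$ directly, I would argue dimension-theoretically. Let $d=\dim(p)$ and let $d'<d$ be the minimal dimension of a type in the class; we are told $d'<d$. Consider the partial type $\mu\ccdot p$ and its realizations in $N$; these are exactly $\mu(N)\ccdot p(N)$ (using saturation, Remark~\ref{rem:products}(4)). Any $\beta\in\mu(N)\ccdot p(N)$ satisfies $\beta\sim_\mu$-type-equivalent-below-$p$ in the sense that $\tp(\beta/M)_\mu=p_\mu$. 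So I must locate inside $\mu(N)\ccdot p(N)$ a point whose type over $M$ has dimension exactly $d'$, and moreover that point should be found in the tame extension $\CM\la\alpha\ra$ for a \emph{specific} $\alpha\models p$. Fix $\alpha\models p$ in $\CM\la\alpha\ra$. Then $\mu\ccdot p$-realizations in $\CM\la\alpha\ra$ are $(\mu(N)\cap\CM\la\alpha\ra)\ccdot\alpha=(\alpha+\nu')\cap G$ where $\nu'$ is the $M$-infinitesimals of $\CM\la\alpha\ra$; equivalently, the set of $\st$-images agrees, and the set of elements of $\CM\la\alpha\ra$ infinitesimally close to $\alpha$. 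I claim one of these has $M$-type of dimension $d'$: pick $q'\in S_G(M)$ in the class with $\dim(q')=d'$ and $\beta'\models q'$; then $\beta'=\epsilon'^{-1}\alpha'$ with $\alpha'\models p$, $\epsilon'\models\mu$; apply an $M$-automorphism sending $\alpha'\mapsto\alpha$; this sends $\epsilon'\mapsto\epsilon\models\mu$ and $\beta'\mapsto\beta:=\epsilon^{-1}\alpha$, with $\tp(\beta/M)=q'$ and $\beta\in\CM\la\alpha\ra$ (it is $\dcl$ over $M\alpha$, being $\epsilon^{-1}\alpha$ with $\epsilon$ the unique infinitesimal with $\epsilon\beta=\alpha$, and $\beta$ itself is in the tame extension since its type is definable, so $\CM\la\beta\ra\prec\CM\la\alpha\ra$... the needed fact is just that $\beta\in\dcl(M\alpha)$, which holds because $\beta$ realizes a definable type and $\st(\beta-\alpha)=0$). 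So let $q=\tp(\beta/M)$; then $q_\mu=p_\mu$ and $\dim(q)=d'<\dim(p)$.

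It remains to check that $q$ is definable. Here I would use that $\beta\in\dcl(M\alpha)$: write $\beta=f(\alpha,\bar m)$ for some $\CM$-definable function $f$ and $\bar m\in M$. Then for any formula $\psi(x,y)$, $q\vdash\psi(x,b)\iff\beta\models\psi(x,b)\iff\alpha\models\psi(f(x,\bar m),b)\iff p\vdash\psi(f(x,\bar m),b)$, and the last condition is given by an $\CM$-formula in $b$ precisely because $p$ is definable. Hence $q$ is definable (over $M$). This completes the argument. The only genuinely delicate point, as flagged, is the transfer of the lower-dimensional realization into the \emph{tame} extension $\CM\la\alpha\ra$ attached to a single $\alpha\models p$; I would handle it exactly by the automorphism trick above combined with the observation that $\epsilon=\alpha\beta^{-1}$ is $M$-infinitesimal and hence, together with $\alpha$, puts $\beta$ in $\dcl(M\alpha)$ — and if one prefers to avoid even that, one can instead invoke Appendix~B, which the introduction advertises as supplying exactly the technical input needed for the o-minimal case.
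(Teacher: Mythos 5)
Your overall shape (use $p\vdash\mu\ccdot q_0$, factor a realization $\alpha=\epsilon\ccdot\beta$, and transfer definability from $p$ to $\tp(\beta/M)$ via $\beta\in\dcl(M\alpha)$) is reasonable, but the step you yourself flag as delicate is genuinely broken, and the patches you offer do not repair it. The automorphism trick only produces a realization $\beta$ of the low-dimensional type $q'$ somewhere in the monster $\UU$, infinitesimally close to $\alpha$; nothing forces it into $\dcl(M\alpha)$. Your two justifications are, respectively, circular ($\epsilon$ is defined from $\beta$ and $\beta$ from $\epsilon$, so neither is shown to be in $\dcl(M\alpha)$) and false: "$\tp(\beta/M)$ is definable and $\st(\beta\ccdot\alpha^{-1})=e$, hence $\beta\in\dcl(M\alpha)$" fails already in $\la\RR,+\ra$ --- take $\alpha>\RR$ and $\beta=\alpha+\delta$ with $\delta$ a positive infinitesimal not in $\dcl(\RR\alpha)$; then $\beta$ realizes the (definable) type at $+\infty$ and is infinitesimally close to $\alpha$, yet $\beta\notin\dcl(\RR\alpha)$. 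In general there is no reason that the \emph{given} type $q'$ of minimal dimension has any realization inside the specific tame extension $\CM\la\alpha\ra$, so insisting on realizing $q'$ itself is the wrong move. The closing appeal to Appendix~B is also a misdirection: that appendix concerns connected components of relatively definable subsets of convexly definable sets and is used for Claim~\ref{claim:components2}, not for this claim.

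The paper's proof avoids exactly this obstacle by never trying to realize the low-dimensional type: it only keeps an $M$-definable set $V$ with $s\vdash V$ and $\dim V=\dim s<\dim p$, observes that $p_\mu=s_\mu$ gives $p(x)\vdash\exists y\exists z\,(x=y\ccdot z\,\&\,y\in B_r\,\&\,z\in V)$ for a fixed positive $r\in M$, and then takes the witnesses \emph{inside} $\CN=\CM\la\alpha\ra$: there are $\beta\in V(N)$ and $g^*\in B_r(N)$ with $\alpha=g^*\ccdot\beta$. Since $\CN$ is tame, $q'=\tp(\beta/M)$ is automatically definable, and $\dim(q')\leq\dim V<\dim p$ even though $q'$ need not be $s$. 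Because $g^*$ is only bounded (not infinitesimal), one corrects by $g=\st(g^*)\in G(M)$, writing $g^*=g\ccdot\varepsilon$ with $\varepsilon\in\mu(N)$, so that $p_\mu=(g\ccdot q')_\mu$ and the translate of $q'$ by $g$ is the required definable type of smaller dimension. Your definability-transfer step at the end (if $\beta=f(\alpha,\bar m)$ then $\tp(\beta/M)$ is definable) is fine and is essentially the tameness fact the paper uses; what is missing in your argument is the replacement of "realize $q'$ in $\CM\la\alpha\ra$" by "witness membership in a low-dimensional definable set $V$ in $\CM\la\alpha\ra$", together with the standard-part correction and translation by an element of $G(M)$.
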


\begin{proof}
  Assume $p$ is a definable type that is not $\mu$-reduced.  Then we can
  find a type $s(x)\in S_G(M)$ such that $s_\mu=p_\mu$ and
$\dim(s)<\dim(p)$.  We can choose an $M$-definable set $V\subseteq G$ such that
$\dim(V)=\dim(s)$ and $s\vdash V$.

We fix some positive $r\in M$.

Since $p_\mu=s_\mu$ we have
\begin{equation}
  \label{eq:1}
p(x)\vdash \exists y \exists z ( x=y \ccdot z \,\&\, y\in B_{r} \,\&\, z\in V).
\end{equation}

We choose a realization $\alpha$ of $p$ and let $\CN=\CM\la \alpha \ra$.  Since the
type $p$ is definable, $\CN$ is a tame extension of $\CM$.

Using \eqref{eq:1} we obtain that there are $\beta\in V(N)$ and $g^*\in B_r(N)$ such
that $\alpha=g^*\ccdot \beta$.

Since $g^*\in B_{r}(N)$, we have $g^*\in \CO^n$. Let $g=\st(g^*)\in M$. Then
$g^*=g\ccdot \varepsilon$ for some $\varepsilon\in \mu(N)$. We have $\alpha=g\ccdot
(\varepsilon\ccdot \beta)$.

Let $q'(x)=\tp(\beta/M)$.  It is a definable type with $\dim(q')\leq \dim(V)<
\dim(p)$.

By \ref{claim:g-action} we also have $p_\mu= g\ccdot q'_\mu$. Now for the type
$q(x)=g^{-1}\ccdot q'$ we have that it is definable of the same dimension as $q'$ and
with $q_\mu=p_\mu$.
\end{proof}

\begin{cor}
\label{cor:def-reduced} For any definable type $p\in S_G(M)$ there is a
$\mu$-reduced
  definable type $q\in S_G(M)$ with $p_\mu=q_\mu$.
  \end{cor}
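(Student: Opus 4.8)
The plan is to deduce Corollary~\ref{cor:def-reduced} from Claim~\ref{claim:def-red} by a straightforward induction on dimension. Given a definable type $p\in S_G(M)$, if $p$ is already $\mu$-reduced there is nothing to prove. Otherwise, Claim~\ref{claim:def-red} produces a \emph{definable} type $q_1\in S_G(M)$ with $(q_1)_\mu=p_\mu$ and $\dim(q_1)<\dim(p)$. If $q_1$ is $\mu$-reduced we are done; if not, we apply Claim~\ref{claim:def-red} again to obtain a definable $q_2$ with $(q_2)_\mu=p_\mu=(q_1)_\mu$ and $\dim(q_2)<\dim(q_1)$, and so on.

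The key point that makes this work is that $\dim$ takes values in $\{0,1,\dots,\dim G\}$, a well-ordered (indeed finite) set, so the strictly decreasing sequence of dimensions $\dim(p)>\dim(q_1)>\dim(q_2)>\cdots$ must terminate after at most $\dim(p)+1$ steps. When it terminates, say at $q_n$, this means $q_n$ is not amenable to a further application of Claim~\ref{claim:def-red}, i.e.\ $q_n$ \emph{is} $\mu$-reduced. Throughout, each $q_i$ is definable (this is exactly what Claim~\ref{claim:def-red} guarantees, as opposed to merely the existence of some lower-dimensional $\sim_\mu$-equivalent type), and $(q_i)_\mu = p_\mu$ since $\sim_\mu$ is an equivalence relation and each step preserves the $\sim_\mu$-class. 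Setting $q=q_n$ gives the desired definable $\mu$-reduced type with $q_\mu=p_\mu$.

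There is essentially no obstacle here: the corollary is a formal consequence of the claim together with the finiteness of $\dim$. The only thing to be slightly careful about is the base case of the induction — that a definable type which fails the hypothesis ``not $\mu$-reduced'' of Claim~\ref{claim:def-red} is by definition $\mu$-reduced — but this is immediate from the definition of $\mu$-reduced. One could phrase the whole argument in one line: pick $q$ of minimal dimension among all definable types in $S_G(M)$ with $q_\mu=p_\mu$ (such exist, e.g.\ $p$ itself); then $q$ must be $\mu$-reduced, for otherwise Claim~\ref{claim:def-red} applied to $q$ would yield a definable type of strictly smaller dimension in the same $\sim_\mu$-class, contradicting minimality.
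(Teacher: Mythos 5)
Your argument is correct and is exactly the route the paper intends: the corollary is stated without proof as an immediate consequence of Claim~\ref{claim:def-red}, using finiteness of dimension (just as the paper earlier notes for arbitrary, not necessarily definable, types). Your closing one-line version — take a definable type of minimal dimension in the $\sim_\mu$-class and note that Claim~\ref{claim:def-red} would contradict minimality if it were not $\mu$-reduced — correctly handles the point that ``$\mu$-reduced'' quantifies over all types in the class, since the claim converts any failure of $\mu$-reducedness into a \emph{definable} witness of smaller dimension.
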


Finally, we easily have:
\begin{claim}\label{claim:simple} If $p\in S_G(M)$ is a $\mu$-reduced  type then for every $g\in G(M)$,
the type $g\ccdot p$ is $\mu$-reduced.
\end{claim}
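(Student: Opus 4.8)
The plan is to reduce everything to two observations: first, that left translation by a fixed $g\in G(M)$ is a definable bijection of $G$ onto itself, and second, that this translation induces a bijection on $S^\mu_G(M)$ preserving dimension, so that the defining property of being $\mu$-reduced is transported along it.

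Concretely, I would start by recalling that for $g\in G(M)$ the map $x\mapsto g\ccdot x$ is an $M$-definable bijection of $G$, hence for any $q\in S_G(M)$ we have $\dim(g\ccdot q)=\dim(q)$; in particular $\dim(g\ccdot p)=\dim(p)$. Next, let $q\in S_G(M)$ be arbitrary with $q_\mu=(g\ccdot p)_\mu$. Applying the action of $g^{-1}$ and using Claim~\ref{claim:g-action} (which gives $g^{-1}\ccdot(g\ccdot p)_\mu = p_\mu$), we obtain $(g^{-1}\ccdot q)_\mu = p_\mu$. Since $p$ is $\mu$-reduced, this yields $\dim(p)\le \dim(g^{-1}\ccdot q) = \dim(q)$. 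Combining with the first observation, $\dim(g\ccdot p)=\dim(p)\le\dim(q)$, which is exactly the assertion that $g\ccdot p$ is $\mu$-reduced.

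There is essentially no real obstacle here: the only content beyond bookkeeping is the invariance of dimension under the definable bijection given by left translation, which is completely standard in the o-minimal context, and the compatibility of the $G(M)$-action with $\sim_\mu$, which is already established in Claim~\ref{claim:g-action}. One should just be mildly careful to phrase the argument in terms of an arbitrary witness $q$ for the failure of $\mu$-reducedness of $g\ccdot p$ and pull it back by $g^{-1}$, rather than trying to push forward from $p$, so that the quantifier over all $\sim_\mu$-equivalent types is handled correctly.
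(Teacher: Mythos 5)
Your argument is correct and is exactly the intended one: the paper states this claim without proof (``we easily have''), and the natural argument is precisely yours, pulling an arbitrary witness $q$ back by $g^{-1}$ via the induced action on $S^\mu_G(M)$ from Claim~\ref{claim:g-action} and using invariance of dimension under the $M$-definable bijection $x\mapsto g\ccdot x$. Nothing is missing.
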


We end with the following observation:
\begin{fact}\label{fact:algebraic-types} For a definable type $p\in
  S_G(M)$,  $p(\mathbb U)$ is bounded if and only if $p$ is $\mu$-equivalent to
 an algebraic  type $\tp(g/M)$, for some  $g\in
G(M)$.
\end{fact}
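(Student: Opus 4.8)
The plan is to prove Fact~\ref{fact:algebraic-types} in two directions, using the standard part map machinery developed in Section~\ref{sec:standard-part-map}.

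\textbf{The easy direction} ($\Leftarrow$): Suppose $p$ is $\mu$-equivalent to $\tp(g/M)$ for some $g\in G(M)$. By Claim~\ref{claim:inf-eq} (applied with $q=\tp(g/M)$), we have $p(x)\vdash (\mu\ccdot g)(x)$, i.e. $p(x)\vdash (g\ccdot\mu)(x)$ using Claim~\ref{claim:cont}(2). Since $g\ccdot\mu(\UU)=(g+\nu^n)\cap G(\UU)$ is contained in the bounded set $g+\nu^n$, it follows that $p(\UU)$ is bounded.

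\textbf{The main direction} ($\Rightarrow$): Suppose $p$ is definable and $p(\UU)$ is bounded. First I would pass to the tame extension $\CN=\CM\la\alpha\ra$ for a realization $\alpha$ of $p$; since $p$ is definable, $\CN\succ\CM$ is tame. Because $p(\UU)$ is bounded, $\alpha\in\CO(N)^n\cap G(N)=\CO_G(N)$, so $g:=\st(\alpha)=\st^*(\alpha)\in G(M)$ is well-defined, and $\alpha\in g\ccdot\mu(N)$, i.e. $\alpha=g\ccdot\varepsilon$ for some $\varepsilon\in\mu(N)$. Now I claim $p\sim_\mu\tp(g/M)$. Indeed $g^{-1}\ccdot\alpha=\varepsilon\models\mu$, so $g^{-1}\ccdot p$ is consistent with $\mu$; but $g^{-1}\ccdot p$ is a complete type, so $g^{-1}\ccdot p\vdash\mu = \mu\ccdot\mu$ (using Claim~\ref{claim:cont}(3) read as $\mu=\mu\ccdot\tp(e/M)$ followed by $\mu\ccdot\mu=\mu$), hence by Claim~\ref{claim:inf-eq} applied to $g^{-1}\ccdot p$ and $\tp(e/M)$ we get $g^{-1}\ccdot p\sim_\mu\tp(e/M)$; acting by $g$ and using Claim~\ref{claim:g-action} gives $p\sim_\mu\tp(g/M)$, an algebraic type. (Alternatively, one may read off $p\sim_\mu \tp(g/M)$ directly from the final Claim preceding this Fact, since $\Smu(p)=\st^*(p(N)\alpha^{-1})$ and the argument shows $p(N)\subseteq g\mu(N)$, but the direct route above is cleaner.)

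\textbf{The main obstacle.} The only subtle point is making sure the standard part $\st(\alpha)$ really lands in $G(M)$ and that $\alpha$ is infinitesimally close to it \emph{in the group topology}; this is exactly the content of the remarks in Section~\ref{sec:standard-part-map} that $\CO_G(N)=\CO(N)^n\cap G(N)$ and $g\ccdot\mu(N)=(g+\nu^n)\cap G(N)$, using that $G$ is a closed submanifold of $M^n$ and the group topology agrees with the subspace topology. Once this is invoked, the rest is a formal manipulation of $\mu$-types via the claims already established. Note the hypothesis of definability of $p$ is used only to guarantee $\CN$ is tame so that $\st$ is defined; the argument shows the forward implication without assuming $p$ is $\mu$-reduced.
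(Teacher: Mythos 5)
Your proof is correct and follows essentially the same route as the paper: the easy direction by noting realizations of $p$ are infinitesimally close to $g$ hence bounded, and the main direction by passing to the tame extension $\CN=\CM\la\alpha\ra$, observing $\alpha\in\CO_G(N)$, taking $g=\st(\alpha)\in G(M)$ and reading off the $\mu$-equivalence (the paper leaves this last step implicit, while your detour through $\tp(e/M)$ and Claim~\ref{claim:g-action} spells it out; one could also apply Claim~\ref{claim:inf-eq} directly to $p$ and $\tp(g/M)$ since $\alpha$ realizes both $\mu\ccdot p$ and $\mu\ccdot g$).
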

\begin{proof}
If $p\sim_\mu \tp(g/M)$ then any realization of $p$ is infinitesimally close to $g$.
But then, any $M$-definable open set containing $g$ must be in $p$ so $p$ has
formulas defining bounded sets. For the converse, if $p$ contains a formula over $M$
defining a bounded set $D$ then  $D(N)\sub \mathcal O_G(N)$, where $\CN=\CM\la
\alpha\ra$ for some $\alpha\models p$. It  follows that the standard part map is
defined on $D(N)$ and in particular, there exists $g\in G(M)$  infinitesimally close
to $\alpha$.\end{proof}

\subsection{Re-defining $\Smu(p)$ using the standard part map}

Our main goal in this section is to show that for a definable
$\mu$-reduced type $p$ we can
find an $M$-definable set $S$ in $p$ such that for any realization $\alpha\models p$ we
have $\Smu(p)=\st(S\alpha^{-1})$. We first clarify the notations.

Since $p$ is a definable type, the structure $\CN=\CM\la \alpha\ra$ is tame and we work
in $\CN$.  As before $\CO$ is the convex hull of $M$ in $\CN$ and we
have the standard part
map $\st\colon \CO^n \to M^n$. By \cite[Corollary 1.3]{vdd1}, for
every set $D$ definable in $\CN$ the  image $\st(D\cap \CO^n)$ is an $M$-definable set. We are going to omit
$\CO^n$ and just write $\st(D)$ in this case.  We still let $B_r$ denote the intersection
of the ball of radius $r$ centered at $e$ in $M^n$ with $G$.

\medskip
The first inclusion that we want is not difficult.

\begin{claim}\label{claim:stub-subset} 
Let $p\in S_G(M)$ be a definable type and
  $S$  an $M$-definable set in $p$. Then for every realization
  $\alpha\models p$  we have
\[ \Smu(p) \subseteq \st(S\alpha^{-1}), \]
where $\st$ is taken in the structure $\CN=\CM\la \alpha\ra$.
\end{claim}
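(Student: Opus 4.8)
The plan is to show directly that every $g \in \Smu(p)$ lies in $\st(S\alpha^{-1})$, where $\alpha \models p$ and $\CN = \CM\langle\alpha\rangle$. First I would recall what $g \in \Smu(p)$ means: by definition $\Smu(p) = \Stab(\mu\ccdot p)$, and by Claim~\ref{claim:inf-eq} (applied with $q = p$), $g \in \Smu(p)$ is equivalent to $g\ccdot p \vdash \mu\ccdot p$ — that is, $g\ccdot p$ is consistent with $\mu\ccdot p$, hence (since $g\ccdot p$ is a complete type) implies it. In particular $g\ccdot p \vdash \mu\ccdot S$, since $S \in p$ so $\mu\ccdot S \in \mu\ccdot p$.

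Next I would unwind this back in the saturated $\UU$, or more precisely in $\CN = \CM\langle\alpha\rangle$ if it is saturated enough, otherwise pass to a suitable $|M|^+$-saturated $\CN' \succeq \CM$ containing a copy of $\alpha$. Since $g\ccdot\alpha \models g\ccdot p \vdash \mu\ccdot S$, there exist $\varepsilon \in \mu(N)$ and $s \in S(N)$ with $g\ccdot\alpha = \varepsilon\ccdot s$. Then $s = \varepsilon^{-1}\ccdot g\ccdot \alpha$, and since $\mu^{-1} = \mu$ (Claim~\ref{claim:cont}(1)) and $g \in G(M)$, we get $s \in \mu(N)\ccdot(g\ccdot\alpha)$, hence $s \in \CO_G(N)$ with $\st^*(s) = \st^*(g\ccdot\alpha)$. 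But $g\ccdot\alpha$ is a translate by $g \in G(M)$ of $\alpha$... here I need to be slightly careful: $\alpha$ itself need not be $M$-bounded, so $\st^*(\alpha)$ may not exist. The cleaner route is to work with $S\alpha^{-1}$ directly: from $g\ccdot\alpha = \varepsilon\ccdot s$ we obtain $s\ccdot\alpha^{-1} = \varepsilon\ccdot g$, so $s\ccdot\alpha^{-1} \in \mu(N)\ccdot g \subseteq \CO_G(N)$ and $\st^*(s\ccdot\alpha^{-1}) = g$ (using $\varepsilon \in \ker\st^*$). Since $s \in S(N)$, we have $s\ccdot\alpha^{-1} \in (S\alpha^{-1})(N)$, and therefore $g = \st(s\ccdot\alpha^{-1}) \in \st(S\alpha^{-1})$, as required.

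The main obstacle I anticipate is the saturation bookkeeping: the standard-part map $\st^*$ and the membership $\varepsilon\ccdot s \models \mu\ccdot S$ require that the relevant consistency statements actually be \emph{realized} in the model $\CN$ we compute $\st$ in. Since $p$ is definable, $\CN = \CM\langle\alpha\rangle$ is tame, so $\st = \st^*$ is defined on $\CO_G(N)$; but to extract $\varepsilon$ and $s$ inside $\CN$ from the entailment $g\ccdot p \vdash \mu\ccdot S$ one should verify this is a statement about $\alpha$ alone — indeed $g\ccdot\alpha$ satisfies the $\CM$-definable formula $\exists y\exists z(g\ccdot\alpha = y\ccdot z \,\&\, y \in B_r \,\&\, z \in S)$ for every $r > 0$ in $M$, and since $\CN \models \mathrm{dcl}(M\alpha)$ realizes its own existential consequences, we may pick $\varepsilon = y$, $s = z$ in $N$ with $y$ of arbitrarily small radius; a standard compactness/definability argument (exactly as in the proof of Claim~\ref{claim:stab3}) then yields a single $\varepsilon \in \mu(N)$. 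Once this is set up the rest is the one-line computation above.
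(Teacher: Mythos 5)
Your skeleton matches the paper's proof: from $g\in\Smu(p)$ you get $g\ccdot p\vdash \mu\ccdot p$, hence $g\ccdot\alpha$ satisfies $\exists y\exists z\,(g\ccdot\alpha=y\ccdot z \,\&\, y\in B_r \,\&\, z\in S)$ in $\CN$ for every positive $r\in M$, and your closing computation $g=\st(\beta\ccdot\alpha^{-1})$ is the paper's as well (your identity $s\ccdot\alpha^{-1}=\varepsilon\ccdot g$ should read $\varepsilon^{-1}\ccdot g$, which is harmless since $\mu^{-1}=\mu$). The gap is the step you delegate to ``a standard compactness/definability argument (exactly as in the proof of Claim~\ref{claim:stab3})'': passing from witnesses $y_r\in B_r(N)$, one for each positive $r\in M$, to a \emph{single} $\varepsilon\in\mu(N)=\bigcap_{r>0,\,r\in M}B_r(N)$ with $g\ccdot\alpha\in\varepsilon\ccdot S(N)$. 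Compactness only produces such a witness in a sufficiently saturated elementary extension, not in $\CN=\CM\la\alpha\ra$, which is precisely the model in which $\st$ must be computed; and the proof of Claim~\ref{claim:stab3} establishes definability of $\mu\ccdot p$ over $M$ (a quantifier switch over the uniform basis using completeness of $p$), so it does not manufacture realizations of $\mu$ inside the non-saturated $\CN$ either.

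What the paper actually does at this point is an o-minimal overspill argument inside $\CN$: the set of radii $r\in N$ for which $\CN\models\exists y\exists z\,(g\ccdot\alpha=y\ccdot z\,\&\,y\in B_r\,\&\,z\in S)$ is $\CN$-definable, upward closed, and contains every positive $r\in M$, so its infimum lies in $\nu(N)$; choosing a positive $\varepsilon\in\nu(N)$ above this infimum gives a single infinitesimal radius for which the statement holds, and hence witnesses $g^*\in B_\varepsilon(N)\subseteq\mu(N)$ and $\beta\in S(N)$ inside $\CN$. With that step inserted your argument goes through and coincides with the paper's; note that definability of $p$ is needed only to ensure $\CN$ is a tame extension, so that $\st$ takes values in $G(M)$, and not for the witness-extraction step.
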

\begin{proof} Let $\alpha$ be a realization of $p$. 

Assume $g\in \Smu(p)$. Then $g\ccdot p \vdash \mu\ccdot p$. 

As in Claim~\ref{claim:def-red} for every positive $r\in M$  we have
\[   g\ccdot p(x)\vdash \exists y \exists z ( x=y \ccdot z \,\&\, y\in B_{r} \,\&\,
z\in S). \] Thus for every positive $r\in M$ we have
\[ \CN \models \exists y \exists z (g\alpha= y\ccdot z \,\&\, y\in B_{r}\,\& \, z\in S). \]

In the structure $\CN$ we can now take the infinmum of all $r>0$ which satisfy the above.
This infimum belongs to $\nu(N)$ hence we can find $\varepsilon\in \nu(N)$ such that
\[ \CN \models \exists y \exists z (g\alpha= y\ccdot z \,\&\, y\in B_{\varepsilon}\,\& \, z\in S). \]

Hence there is $\beta\in S(N)$ (for $z$) and $g^*\in \mu(N)$ (for $y$) such that $g\ccdot
\alpha=g^*\ccdot \beta$.    Therefore $g=\st(\beta\ccdot \alpha^{-1})$.
\end{proof}

Recall (\cite[Proposition 1.10]{vdd1})  that for every definable set $V$ in an elementary
tame extension $\CN$ of $\CM$, $\dim (\st(V))\leq \dim V$. Since $\Smu(p)\sub \st(S\alpha^{-1})$
and we can choose $S$ with $\dim(p)=\dim(S)$, we have:

\begin{cor} For every definable type $p\in S_G(M)$, $\dim \Smu(p)\leq
\dim(p)$.\end{cor}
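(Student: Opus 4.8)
The plan is to deduce the corollary directly from the preceding Claim~\ref{claim:stub-subset} together with the two cited facts from \cite{vdd1}. Recall that the corollary asserts $\dim\Smu(p)\le\dim(p)$ for every definable type $p\in S_G(M)$.

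First I would pick a realization $\alpha\models p$ and work in the tame extension $\CN=\CM\la\alpha\ra$. Since $p$ is a definable type, by quantifier elimination (or just by the general theory of definable types) its dimension equals the dimension of its canonical base locus; concretely, one can choose an $M$-definable set $S\in p$ with $\dim(S)=\dim(p)$ — simply take any formula in $p$ of minimal dimension. Then Claim~\ref{claim:stub-subset} gives the inclusion $\Smu(p)\subseteq\st(S\alpha^{-1})$, where $\st$ is the standard part map in $\CN$.

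Next I would bound the right-hand side. The set $S\alpha^{-1}$ is definable in $\CN$ (it is the translate of the $M$-definable set $S$ by the parameter $\alpha^{-1}\in G(N)$), and right translation by a fixed element is a definable bijection of $G$, so $\dim(S\alpha^{-1})=\dim(S)=\dim(p)$. By \cite[Proposition 1.10]{vdd1}, for any definable set $V$ in a tame elementary extension we have $\dim(\st(V))\le\dim(V)$; applying this with $V=S\alpha^{-1}$ yields $\dim\st(S\alpha^{-1})\le\dim(p)$. Combining with the inclusion from Claim~\ref{claim:stub-subset} gives $\dim\Smu(p)\le\dim\st(S\alpha^{-1})\le\dim(p)$, as required. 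One small point worth noting is that $\Smu(p)$ is genuinely $M$-definable here (by Proposition~\ref{DCC}, using that $G$ has DCC in the o-minimal setting), so $\dim\Smu(p)$ makes sense as the dimension of a definable set and monotonicity of dimension under inclusion applies.

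There is essentially no obstacle: the content is entirely in Claim~\ref{claim:stub-subset} and in the cited dimension-inequality for standard parts, both already available. The only thing one must be a little careful about is the choice of $S$ with $\dim(S)=\dim(p)$ — but this is immediate since $\dim(p)$ is by definition $\min\{\dim(\varphi):\varphi\in p\}$ — and the observation that translating $S$ by $\alpha^{-1}$ inside $\CN$ does not change its dimension. So the proof is a two-line assembly of previously established results.
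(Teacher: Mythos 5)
Your proof is correct and is essentially identical to the paper's argument: both deduce the corollary from Claim~\ref{claim:stub-subset} together with \cite[Proposition 1.10]{vdd1}, choosing an $M$-definable $S\in p$ with $\dim S=\dim p$ and noting that right translation by $\alpha^{-1}$ preserves dimension. The extra remarks you make (definability of $\Smu(p)$ via Proposition~\ref{DCC}, the invariance of dimension under translation) are only spelling out what the paper leaves implicit.
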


We can now state our main theorem.
\begin{thm}\label{Prop:definable}
Let $p\in S_G(M)$ be a definable $\mu$-reduced type. Then,
\begin{enumerate}[leftmargin=*]
\item $\dim \Smu(p)=\dim p$.

\item There exists an $M$-definable set $S$, with $p\vdash S$, such that $\dim S=\dim p$,
and for every $\alpha\models p$, and $\CN=\CM\la \alpha\ra$, we have

\noindent (i) $\Smu(p)=\st(S\alpha^{-1})$.

 \noindent (ii) The tangent space to $\Smu(p)$ at $e$ equals the standard part
   of the tangent space to $S\alpha^{-1}$ at $e$, i.e.
${T(\Smu(p))}_e = \st({T(S\alpha^{-1})}_e)$.
 \end{enumerate}
In particular, if $p$ is not a bounded type in $M^n$ then $\dim \Smu(p)>0$.
\end{thm}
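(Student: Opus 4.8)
The first inclusion, $\Smu(p)\subseteq\st(S\alpha^{-1})$, is already Claim~\ref{claim:stub-subset}, and the bound $\dim\Smu(p)\le\dim p$ follows from it together with $\dim(\st(V))\le\dim V$. So the real content is the reverse direction: producing an $M$-definable $S$ in $p$ with $\dim S=\dim p$ and showing $\st(S\alpha^{-1})\subseteq\Smu(p)$. The plan is to exploit $\mu$-reducedness crucially: if $S$ is chosen so that $\dim S=\dim p$ (which we may do, since $p$ is a complete type and dimension is computed via formulas in $p$), then every element $g=\st(\beta\alpha^{-1})$ with $\beta\in S(N)\cap\CO_G(N)^n$ gives a type $q=\tp(\beta/M)$ of dimension $\le\dim S=\dim p$ with $g\ccdot\alpha=\beta\cdot(\text{something in }\mu(N))^{-1}\cdot\alpha$, i.e.\ $g^{-1}\ccdot q_\mu$ and $p_\mu$ are related; more precisely $\beta\models\mu\ccdot p$ (since $\beta\in\mu(N)\cdot g\cdot p(N)$-type), forcing $q\sim_\mu g\ccdot p$, hence $\dim(g\ccdot p)=\dim p$ controls $\dim q$ from below by reducedness of $g\ccdot p$ (Claim~\ref{claim:simple}). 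Pinning down that $g\in G(M)$ actually stabilizes $\mu\ccdot p$ is then the step where one invokes Claim~\ref{claim:inf-eq}: if $g\ccdot p$ and $p$ have a common $\mu$-translate they are $\sim_\mu$-equal, so $g\in\Smu(p)$.

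**Getting the right $S$.** The subtlety is that the naive $S$ with $\dim S=\dim p$ need not satisfy $\st(S\alpha^{-1})\subseteq\Smu(p)$ for an arbitrary choice; one wants to shrink $S$ using the reducedness hypothesis. Here is the mechanism I would use. Let $S_0\in p$ be $M$-definable with $\dim S_0=\dim p$. For $g=\st(\beta\alpha^{-1})\in G(M)$ with $\beta\in S_0(N)$, we have (as in Claim~\ref{claim:stub-subset} run backwards) $g\ccdot\alpha=g^*\ccdot\beta$ with $g^*\in\mu(N)$, so $\beta=(g^*)^{-1}g\ccdot\alpha\models\mu\ccdot(g\ccdot p)$, whence $\tp(\beta/M)\sim_\mu g\ccdot p$. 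Since $g\ccdot p$ is $\mu$-reduced (Claim~\ref{claim:simple}) and $\tp(\beta/M)$ is $\sim_\mu$-equivalent to it, $\dim\tp(\beta/M)\ge\dim(g\ccdot p)=\dim p=\dim S_0$, but also $\dim\tp(\beta/M)\le\dim S_0$; so $\tp(\beta/M)$ has dimension exactly $\dim p$ and lies in a component of $S_0$ of full dimension. Now the point is that $\beta\models\mu\ccdot(g\ccdot p)$ together with $\tp(\beta/M)\sim_\mu g\ccdot p$ is strong enough: since $g\ccdot p$ is a complete type and $\beta\models\mu\cdot(g\cdot p)$ implies, via Claim~\ref{claim:inf-eq2} with $\Sigma=\mu\cdot p$ and using $p_\mu$'s relation to $(g\cdot p)_\mu$, that $g\cdot p\vdash\mu\cdot p$, i.e.\ $g\in\Smu(p)$. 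I would take $S$ to be $S_0$ itself (or its full-dimensional locus of $p$); the reducedness forces the above chain to go through without further shrinking.

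**The tangent space statement.** Part (2)(ii) should follow once (2)(i) and (1) are in place, by a dimension count plus the general fact about standard parts of tangent spaces to definable sets in tame extensions: $\st$ applied to $T(S\alpha^{-1})_e$ is a linear subspace of $T(G)_e$ of dimension $\le\dim(S\alpha^{-1})=\dim p=\dim\Smu(p)$, and it contains $T(\Smu(p))_e$ because $\Smu(p)\subseteq\st(S\alpha^{-1})$ locally near $e$ and $\Smu(p)$ is a smooth submanifold; equality of dimensions then forces equality of the spaces. I would cite the relevant statement from \cite{vdd1} (the analogue of Proposition~1.10 at the level of tangent cones/spaces) rather than reprove it; if that precise statement is not available, one argues directly using that $S\alpha^{-1}$ is $C^1$ near $e$ and $\st$ commutes with the derivative of its defining parametrization up to infinitesimals. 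The last sentence ($p$ unbounded $\Rightarrow\dim\Smu(p)>0$) is then immediate from (1) together with Fact~\ref{fact:algebraic-types}: $p$ unbounded means $p$ is not $\mu$-equivalent to an algebraic type, so by $\mu$-reducedness $\dim p>0$.

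**Main obstacle.** The step I expect to be delicate is the backward direction $\st(S\alpha^{-1})\subseteq\Smu(p)$: specifically, converting the containment $\tp(\beta/M)\sim_\mu g\ccdot p$ (which only says the $\mu$-classes agree) into the statement $g\in\Smu(p)$, i.e.\ $(g\cdot p)_\mu=p_\mu$. This is exactly where $\mu$-reducedness is not merely a dimension bookkeeping device but is doing real work: a priori $\tp(\beta/M)$ could be a \emph{different} type from $g\cdot p$ in the same $\sim_\mu$-class, and one must rule out that $\beta$ "escapes" $g\cdot p$ while still $\mu$-close to it in a way that breaks the return trip to $p$. I would handle this by checking that the construction is symmetric — realizing $p$ as $\st$ of a translate of $q'=\tp(\beta/M)$ by $g^{-1}$, exactly as in the proof of Claim~\ref{claim:def-red} — so that $p_\mu=g^{-1}\cdot q'_\mu$ and hence $g\cdot p_\mu=q'_\mu$; combined with $q'_\mu=(g\cdot p)_\mu$ this is a tautology, but the genuinely needed input is that both $p$ and $q'$ sit at the same (minimal, by reducedness) dimension so no collapse of $\mu$-classes onto lower-dimensional types can intervene. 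Making this airtight, with careful tracking of which model ($N$ vs.\ $\UU$) each realization lives in and applying Remark~\ref{rem:products}(4) about $p(N)\cdot r(N)$ possibly being a proper subset of $(p\cdot r)(N)$, is the technical heart.
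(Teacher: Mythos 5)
There is a genuine gap at the heart of your argument for $\st(S\alpha^{-1})\subseteq\Smu(p)$. Your chain of deductions gives only this: if $g=\st(\beta\alpha^{-1})$ with $\beta\in S_0(N)$, then $\beta\models\mu\ccdot(g\ccdot p)$, hence $\tp(\beta/M)\sim_\mu g\ccdot p$, and by $\mu$-reducedness of $g\ccdot p$ the type $\tp(\beta/M)$ has dimension exactly $\dim p$. But to conclude $g\in\Smu(p)$ you need $(g\ccdot p)_\mu=p_\mu$, i.e.\ $\tp(\beta/M)\sim_\mu p$, and nothing in your argument yields this: $\beta$ merely satisfies the formula $S_0$, and $\tp(\beta/M)$ can be a full-dimensional, $\mu$-reduced type in a completely different $\sim_\mu$-class. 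Your own ``symmetric construction'' remark concedes that it closes into a tautology. Concretely, your assertion that one may take $S=S_0$ itself (or its full-dimensional locus) is false: in $G=\la M^2,+\ra$ with $p=\tp((\alpha,0)/M)$, $\alpha>M$, take $S_0=M\times\{0,1\}$; then $p\vdash S_0$, $\dim S_0=\dim p=1$, but $\beta=(\alpha,1)\in S_0(N)$ gives $g=(0,1)\in\st(S_0-(\alpha,0))\setminus\Smu(p)$, since $\Smu(p)=M\times\{0\}$. This is exactly why the paper does not use an arbitrary $S$: it proves (Claim~\ref{claim:components2}) that one can choose $S$ so that \emph{every} element of $S\cap(\CO\ccdot\alpha)$ realizes $p$, by minimizing the number of connected components of $S\alpha^{-1}\cap\CO$ — a notion that itself requires the Appendix~B result (Theorem~\ref{thm:conv-def}) on components of relatively definable subsets of $\CO$ — together with a frontier argument resting on Claim~\ref{claim:smallerdim}. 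That construction is the missing idea; $\mu$-reducedness alone, used only as dimension bookkeeping, does not suffice.

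A second gap: you never prove clause (1) beyond the upper bound $\dim\Smu(p)\le\dim p$. Knowing $\Smu(p)=\st(S\alpha^{-1})$ does not give the lower bound, since the standard part map can drop dimension (e.g.\ a ball of infinitesimal radius has a point as its standard part). The paper's proof of $\dim\st(S\alpha^{-1})=\dim S$ is substantive: Proposition~\ref{claim:general} (standard parts preserve dimension when the tangent spaces at all infinitesimal points have the same standard part, proved via the implicit function theorem), combined with Claim~\ref{label:claim1.7}, which verifies that hypothesis for $S\alpha^{-1}$ using the differentials of right translations $r_h$ with $h\in\mu(N)$. Your sketch of 2(ii) inherits both problems: it presupposes clause (1), and the claimed containment $T(\Smu(p))_e\subseteq\st(T(S\alpha^{-1})_e)$ is not justified — relating the tangent space of a standard part to standard parts of tangent spaces is precisely the delicate point, which the paper handles via Marikova's theorem and a translation back to $e$, and which a bare dimension count does not resolve.
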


\begin{rem}
In the case when $\dim p=1 $, clauses 1 and 2(i) of the above follow from \cite{PS},
and 2(ii) is contained in \cite{gp}.
\end{rem}

The proof of the above theorem will go through several steps and lemmas, and we divide it
into several subsections. The main point of the proof is to find an appropriate set $S$.

\subsubsection{\bf The existence of $S$ and the proof of of clause 2(i) in Theorem~\ref{Prop:definable}.}
\label{sec:proof-1}

\emph{During the rest of the proof we fix  a definable $\mu$-reduced
  type $p\in S_G(M)$, a realization
$\alpha\models p$ and $\CN=\CM\la \alpha\ra$. We work in the structure $\CN$.}

Notice that since $p$ is $\mu$-reduced,  for every $g\in G(M)$, the type $g\ccdot p$ is
also $\mu$-reduced. To simplify notation we use  from now on $\mathcal O$ for $\mathcal
O_G(N)$.

We first note:

\begin{claim}\label{claim:smallerdim} For every $M$-definable set $Y\sub G(N)$, if $\dim Y<\dim p$ then $\CO\ccdot
\alpha \cap Y=\emptyset$.
\end{claim}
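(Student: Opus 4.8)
The plan is to argue by contradiction, exploiting both the $\mu$-reducedness of $p$ and the fact that $\CO\ccdot\alpha$ is, up to the standard part map, a copy of the $\mu$-type $p_\mu$. Suppose $Y$ is $M$-definable with $\dim Y<\dim p$ and pick some $a\in\CO\ccdot\alpha\cap Y$, say $a=h\ccdot\alpha$ with $h\in\CO=\CO_G(N)$. Write $g=\st^*(h)\in G(M)$, so $h=g\ccdot\varepsilon$ for some $\varepsilon\in\mu(N)$ and $a=g\ccdot(\varepsilon\ccdot\alpha)$. Hence $g^{-1}\ccdot a=\varepsilon\ccdot\alpha$ realizes $\mu\ccdot p$, and in particular $g^{-1}\ccdot a$ realizes the partial type $\mu\ccdot p$.

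The key move is to look at the type $q=\tp(a/M)$. Since $a=g\ccdot(\varepsilon\ccdot\alpha)$ with $\varepsilon\in\mu(N)$, Claim~\ref{claim:g-action} gives $q_\mu=(g\ccdot(\varepsilon\ccdot\alpha))_\mu$; more precisely, because $\varepsilon\in\mu(N)$ we have $\varepsilon\ccdot\alpha\sim_\mu\alpha$, so $a\sim_\mu g\ccdot\alpha$, i.e.\ $q_\mu=g\ccdot p_\mu=(g\ccdot p)_\mu$. But $a\in Y(N)$ forces $q\vdash Y$, so $\dim q\le\dim Y<\dim p=\dim(g\ccdot p)$. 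Thus $q$ is a complete type, $\sim_\mu$-equivalent to $g\ccdot p$, of strictly smaller dimension, contradicting the fact (Claim~\ref{claim:simple}) that $g\ccdot p$ is $\mu$-reduced. This contradiction shows $\CO\ccdot\alpha\cap Y=\emptyset$.

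The one point that needs care — and the place where I expect the only real friction — is the passage from ``$a=h\ccdot\alpha$ with $h\in\CO_G(N)$'' to ``$h=g\ccdot\varepsilon$ with $g\in G(M)$ and $\varepsilon\in\mu(N)$'', together with the identity $\mu(N)\cap G(M)=\{e\}$ that makes $\st^*$ well-defined; this is exactly the content of the partial standard part map developed in Section~\ref{sec:standard-part-map} (recall $\CO_G(N)=\mu(N)\ccdot G(M)$), so it is available but should be invoked explicitly. A secondary subtlety is that $\dim q\le\dim Y$ really does hold: $q$ is a complete type over $M$ containing the formula $Y$, and $\dim q$ is by definition the minimum of $\dim Z$ over $M$-definable $Z\in q$, so $\dim q\le\dim Y$ is immediate. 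Everything else is a direct assembly of Claims~\ref{claim:g-action} and~\ref{claim:simple}.
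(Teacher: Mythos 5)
Your proof is correct and follows essentially the same route as the paper: pick a point of $\CO\ccdot\alpha\cap Y$, take the standard part $g$ of the displacement, observe that its type over $M$ is $\sim_\mu$-equivalent to $g\ccdot p$ while having dimension at most $\dim Y<\dim p=\dim(g\ccdot p)$, and contradict the $\mu$-reducedness of $g\ccdot p$ (Claim~\ref{claim:simple}).
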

\begin{proof} Assume for contradiction that $\CO\ccdot \al \cap Y\neq\emptyset$ and let
$\beta\in G(N)$ be a point in $Y\cap \mathcal O \ccdot \al$. If we let
$g=\st(\beta\alpha^{-1})$ then $tp(\beta/M)\sim_\mu g\ccdot p$ while
$\dim(tp(\beta/M))\leq \dim(Y)<\dim g\ccdot p$. This contradicts our above observation
that $g\ccdot p$ is $\mu$-reduced.
\end{proof}

\begin{claim}\label{claim:components2}
There exists an $M$-definable set $S$ in $p$ such that  every element of $S\cap (\mathcal
O\ccdot \alpha)$ realizes $p$.
\end{claim}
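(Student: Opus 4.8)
<br>

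The plan is to use a compactness/definability argument to extract $S$ as a definable subset of an arbitrary definable set $V \in p$ of dimension $\dim p$, by throwing away the points of $V$ that lie on lower-dimensional $M$-definable sets yet are ``too close'' to $\alpha$. More precisely, first fix any $M$-definable $V \subseteq G$ with $p \vdash V$ and $\dim V = \dim p$ (such $V$ exists since $p$ is a type in $S_G(M)$). The difficulty is that $V$ may contain points in $\mathcal{O} \cdot \alpha$ that realize types of dimension $\dim p$ but are not $p$ itself — for these the argument in Claim~\ref{claim:smallerdim} does not directly apply. So the real content is to carve out from $V$ a definable subset still in $p$ on which every point of $\mathcal{O}\cdot\alpha$ realizes $p$.

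The key tool will be the definability of $p$ together with Claim~\ref{claim:smallerdim}. Concretely, I would argue as follows. Suppose $\beta \in S(N) \cap (\mathcal{O}\cdot\alpha)$ does not realize $p$; then there is an $M$-definable set $W$ with $\beta \in W(N)$ but $p \not\vdash W$, equivalently $p \vdash \neg W$ (as $p$ is complete). If $\dim W < \dim p$ then Claim~\ref{claim:smallerdim} already forbids $\beta \in W(N) \cap \mathcal{O}\cdot\alpha$, so the only problematic witnesses come from $W$ with $\dim W = \dim p$. The strategy is then to intersect $V$ with the complements of such $W$'s. Using definability of $p$: for the formula $\phi(x, y)$ expressing ``$x \in V$ and $y$ is a point of a fixed uniformly-definable family'', one obtains an $M$-definable bound, and since $p$ is a complete definable type we can (after possibly replacing $V$ by an $M$-definable subset in $p$ of the same dimension) ensure that $V \setminus p$, as a subset of $\mathcal{O}\cdot\alpha$, lies in a set of dimension $< \dim p$. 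Here the crucial point is a dimension-theoretic observation: the union of all $M$-definable sets $W$ with $\dim W = \dim p$ that meet $V$ in a \emph{lower-dimensional} set can be uniformly handled, because if $\beta \in W \cap V$ with $\dim(W\cap V) < \dim p$ then $\beta$ lies in the lower-dimensional $M$-definable set $W \cap V$, contradicting Claim~\ref{claim:smallerdim} again — so in fact the only way $\beta \in V(N) \cap \mathcal{O}\cdot\alpha$ fails to realize $p$ is that $\beta$ lies in some $M$-definable $W$ with $\dim(W \cap V) = \dim p$, i.e.\ $W \cap V$ is a ``large'' proper definable subset of $V$ not in $p$.

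To eliminate the large-proper-subset case, I would invoke that $p$ is a \emph{complete} definable type: the family of $M$-definable subsets $V' \subseteq V$ with $p \vdash V'$ is a filter, and by o-minimal dimension (finiteness of dimension and the fact that a strictly descending chain of definable sets must strictly decrease in dimension at most finitely often before stabilizing, together with cell decomposition) one can find a \emph{minimal-dimension} witness and then, within that dimension, shrink further. Formally: let $d = \dim p$ and consider the $M$-definable set $S := \{ x \in V : x$ does not lie in any $M$-definable $W$ with $\dim W < d$ that is disjoint from $p\}$ — but this is not first-order as stated, so instead I would argue by contradiction combined with compactness. Assume no such $S$ exists; then for every $M$-definable $S \subseteq V$ in $p$ there is $\beta_S \in S(N) \cap \mathcal{O}\cdot\alpha$ with $\tp(\beta_S/M) \ne p$. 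Letting $S$ range over a decreasing sequence of definable sets in $p$, one uses saturation of $N$ and definability of $p$ to derive a contradiction with $\mu$-reducedness of $g\cdot p$ (where $g = \st(\beta_S\alpha^{-1})$) exactly as in Claim~\ref{claim:smallerdim}. I expect the main obstacle to be precisely this last maneuver: making the ``peeling off'' of lower-dimensional exceptional loci \emph{uniform} and first-order, so that the resulting $S$ is genuinely $M$-definable rather than merely a type-definable or external object; this is where definability of $p$ (hence definability of $\mu \cdot p$ via Claim~\ref{claim:stab3}) and the o-minimal dimension calculus must be combined carefully.
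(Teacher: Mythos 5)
There is a genuine gap, and it is exactly at the step you flag as the main obstacle. Your reduction is fine as far as it goes: by Claim~\ref{claim:smallerdim} the only problematic points $\beta\in S\cap(\CO\ccdot\alpha)$ are those realizing types $q\neq p$ with $\dim q=\dim p$, so the issue is to remove ``large'' proper definable subsets of $S$ not in $p$ in a uniform way. But your proposed maneuver for this step does not work. First, the model in which the claim lives is $\CN=\CM\la\alpha\ra$, which is a tame but decidedly non-saturated extension of $\CM$ (it is the definable closure of $M\cup\{\alpha\}$), so ``saturation of $N$'' is not available. Second, even in a saturated model no compactness argument of the shape you describe can close the gap: the conditions ``$\beta\in\CO\ccdot\alpha$'' (a small union of definable sets $B_m\ccdot\alpha$) and ``$\tp(\beta/M)\neq p$'' (an open condition) are not type-definable, so running a decreasing family of definable sets $S$ in $p$ and extracting a limit witness $\beta$ is not legitimate; and for the witnesses that do exist, $g\ccdot p$ with $g=\st(\beta\alpha^{-1})$ is $\mu$-equivalent to a type of the \emph{same} dimension $\dim p$, so there is no contradiction with $\mu$-reducedness ``exactly as in Claim~\ref{claim:smallerdim}'' --- that claim is silent precisely in the full-dimensional case. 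Definability of $p$ (and of $\mu\ccdot p$ via Claim~\ref{claim:stab3}) also does not by itself make the exceptional locus definable, since the family of offending sets $W$ is indexed by arbitrary $M$-formulas, not a single uniformly definable family.

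The paper's proof uses a different, and essential, idea that is absent from your outline: one works with the relatively definable set $S\alpha^{-1}\cap\CO$ inside the convexly definable set $\CO$, invokes Theorem~\ref{thm:conv-def} (Appendix~B, together with Example~\ref{sample:conv-def}) to get that it has \emph{finitely many connected components}, and then chooses $S$ to be a cell in $p$ with $\dim S=\dim p$ minimizing the number of these components. If some $\beta\in S\cap(\CO\ccdot\alpha)$ realized $q\neq p$, one takes $Y\in q$, $Y\subseteq S$ relatively open with $p\not\vdash Y$; if $Y\alpha^{-1}\cap\CO$ were closed in $S\alpha^{-1}\cap\CO$ it would be clopen, would swallow the component of $\beta$, and passing to $S\setminus Y$ (still in $p$) would strictly decrease the number of components, contradicting minimality; hence $Fr(Y)$ meets $\CO\ccdot\alpha$, contradicting Claim~\ref{claim:smallerdim} since $\dim Fr(Y)<\dim p$. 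This finiteness-of-components statement for relatively definable subsets of $\CO$ is the uniformization device your argument is missing, and it is nontrivial enough that the paper devotes Appendix~B to it; without it (or some substitute of comparable strength) the ``peeling off'' you describe cannot be made first order.
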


\begin{proof}
For every definable set $S\in p$, the set $S\alpha^{-1}\cap \mathcal O$ is a relatively
definable subset of
 $\CO\sub  {\CO_M(N)}^n$, hence, by Theorem~\ref{thm:conv-def}
 in Appendix~\ref{sec:setting} (see also
Example~\ref{sample:conv-def}) it has finitely  many connected components. Namely, it
can be written as a  finite union of   pairwise disjoint, relatively definable subsets of
$\CO$, each of which is clopen in $S\al^{-1}\cap \CO$ and such that any other
relatively definable clopen subset of $S\al^{-1}\cap \CO$ contains one of those.

We choose an  $M$-definable $S$ in $p$  such that $\dim S=\dim p$, $S$ is a cell, and the
number of connected components of $S\alpha^{-1}\cap \mathcal O$ is minimal. We claim that
$S$ has the desired property.

Indeed, assume not, namely there exists $\beta\in S\cap (\mathcal O\ccdot \alpha)$ such
that $\beta\models q\in S_G(M)$ and $q\neq p$. By Claim~\ref{claim:smallerdim}, we must
have $\dim q=\dim p=\dim S$. Since $p\neq q$ there exists an $M$-definable set $Y$ in $q$
but not in $p$. We may assume that $Y\sub S$ and furthermore that $Y$ is relatively open
in $S$ (because $\dim q=\dim S$), so $Y\alpha^{-1}$ is relatively open in $S\alpha^{-1}$.

We claim that  $Y\alpha^{-1}\cap \mathcal O$ cannot be relatively closed in
$S\alpha^{-1}\cap \mathcal O$. Indeed, if it were closed then $Y\alpha^{-1}\cap \mathcal O$
would be clopen in $S\al^{-1} \cap \mathcal O$ and therefore would contain the whole connected
component of $\beta$ in $S\alpha^{-1}\cap \mathcal O$. This would imply that the number
of components of  $(S\setminus Y)\al^{-1}\cap \CO$ is smaller than that of $S\al^{-1}\cap
\CO$ and furthermore $\al\in S\setminus Y$. Since $S\setminus Y$ is defined over $M$ we
obtain a contradiction to the minimality of components in $S\al^{-1}\cap \CO$.

Thus, $Y\cap \CO\ccdot \al$ is not closed in $S\cap \CO\ccdot \al $, so the set
$Fr(Y)\cap \mathcal O \ccdot \alpha$ is non empty. However, $Fr(Y)$ is $M$-definable and
its dimension is smaller than that of $Y$, so also smaller than $\dim p$. This
contradicts Claim~\ref{claim:smallerdim}, so ends the proof of
Claim~\ref{claim:components2}.
\end{proof}

In the rest we will show that any set $S$ as in Claim~\ref{claim:components2} satisfies the clause 2 of Theorem~\ref{Prop:definable}.

\begin{claim}\label{claim:one} Let $S$ be as in Claim~\ref{claim:components2} and assume that $\alpha\models p$.
Then  $\Smu(p)=\st(S\alpha^{-1})$. Moreover, for every $M$-definable set $S_1\sub S$, if
$p\vdash S_1$ then $\Smu(p)=\st(S_1\alpha^{-1})$ \end{claim}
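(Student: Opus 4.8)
The plan is to prove the two inclusions $\Smu(p)\subseteq \st(S\alpha^{-1})$ and $\st(S\alpha^{-1})\subseteq \Smu(p)$ separately, and then observe that the ``moreover'' statement follows by the same argument applied to $S_1$. The first inclusion is already available: it is exactly Claim~\ref{claim:stub-subset}, which holds for \emph{any} $M$-definable set $S$ in $p$, so in particular for our chosen $S$ and for $S_1$. So the content of the claim is the reverse inclusion, where the special properties of $S$ from Claim~\ref{claim:components2} are used.

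For the reverse inclusion, I would take $g\in\st(S\alpha^{-1})$; by definition this means there is $\beta\in S(N)$ with $\beta\alpha^{-1}\in\CO_G(N)$ and $g=\st(\beta\alpha^{-1})$, i.e.\ $\beta = g^*\ccdot\alpha$ for some $g^*\in\CO_G(N)$ with $\st(g^*)=g$. Then $\beta\in S\cap(\CO\ccdot\alpha)$, so by the defining property of $S$ (Claim~\ref{claim:components2}) $\beta$ realizes $p$. Writing $g^* = g\ccdot\varepsilon$ with $\varepsilon\in\mu(N)$ (using that $\st(g^*)=g$ and $\mu(N)=\ker\st^*$), we get $\beta = g\ccdot(\varepsilon\ccdot\alpha)$ with $\varepsilon\ccdot\alpha\models\mu\ccdot p$. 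Since $\beta\models p$, this shows $g\ccdot(\mu\ccdot p)$ is consistent with $p$, hence — as $g\ccdot p$ is a complete type — $g\ccdot p\vdash\mu\ccdot p$, i.e.\ by Claim~\ref{claim:inf-eq} $g\mu\ccdot p = \mu\ccdot p$, so $g\in\Smu(p)$. One subtlety: I must check $g\in G(M)$ before this makes sense, but this is automatic since $\st$ maps into $M$ and $\beta\alpha^{-1}\in\CO_G(N)$ lands in $G(M)$ because $G$ is closed (as noted in Section~\ref{sec:standard-part-map}).

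For the ``moreover'' clause: if $S_1\subseteq S$ is $M$-definable with $p\vdash S_1$, then Claim~\ref{claim:stub-subset} gives $\Smu(p)\subseteq\st(S_1\alpha^{-1})$, and $\st(S_1\alpha^{-1})\subseteq\st(S\alpha^{-1})=\Smu(p)$ since $S_1\subseteq S$; combining with the first part gives equality. (Alternatively, one notes that every $\beta\in S_1\cap(\CO\ccdot\alpha)$ lies in $S\cap(\CO\ccdot\alpha)$ hence realizes $p$, so $S_1$ also has the property of Claim~\ref{claim:components2} and the same argument applies directly.)

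I do not anticipate a serious obstacle here — the real work was done in establishing Claim~\ref{claim:components2} (finding an $S$ every ``near-$\alpha$'' point of which realizes $p$) and in Claim~\ref{claim:stub-subset}. The only points to handle carefully are: (a) the passage from $g^*\in\CO_G(N)$ with $\st(g^*)=g$ to the factorization $g^* = g\ccdot\varepsilon$, $\varepsilon\in\mu(N)$, which is exactly the structure of $\st^*$ described before Claim~\ref{normal}; and (b) invoking completeness of $g\ccdot p$ to upgrade ``consistent with $\mu\ccdot p$'' to ``implies $\mu\ccdot p$'', as in the proof of the last claim of Section~\ref{sec:prelim}. Both are routine given the earlier material.
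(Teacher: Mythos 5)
Your proposal is correct and follows essentially the same route as the paper: the inclusion $\Smu(p)\subseteq \st(S\alpha^{-1})$ is quoted from Claim~\ref{claim:stub-subset}, and for the reverse inclusion you write $\beta=g\ccdot\varepsilon\ccdot\alpha$ with $\varepsilon\in\mu(N)$, invoke the defining property of $S$ from Claim~\ref{claim:components2} to get $\beta\models p$, and pass from mutual consistency of $g\ccdot p$ and $\mu\ccdot p$ to $g\in\Smu(p)$ via Claim~\ref{claim:inf-eq}, exactly as the paper does. Your handling of the ``moreover'' clause (either by the sandwich $\Smu(p)\subseteq\st(S_1\alpha^{-1})\subseteq\st(S\alpha^{-1})$ or by noting $S_1$ inherits the property of Claim~\ref{claim:components2}) matches the paper's remark, so there is nothing to fix.
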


 \begin{proof} By Claim~\ref{claim:stub-subset}, we have $\Smu(p)\subseteq \st(S_1\alpha^{-1})$, so if we prove
that $\st(S\alpha^{-1})\sub \Smu(p)$ then we
get equality.

Assume that $g\in \st(S\alpha^{-1})$. Then there exists $\epsilon \in \mu(N)$ and
$\beta\in S$ such that $\epsilon g=\beta\alpha^{-1}$, so $ \epsilon g
\alpha=\beta$. 
By the choice of $S$ we have $\beta\models p$ which implies
that the types $g \ccdot p(x)$ and  $\mu\ccdot p(x)$ are mutually consistent so $g\in
\Smu(p)$. Thus, $\st(S\alpha^{-1})\subseteq \Smu(p)$.

For the moreover part, it is easy to see that any such $S_1$ also
satisfies Claim~\ref{claim:components2}.
\end{proof}

This ends the proof of clause 2(i) in Theorem~\ref{Prop:definable}.

\subsubsection{\bf The dimension of $\Smu(p)$ and the proof of clause 1
in Theorem~\ref{Prop:definable}.}
\label{sec:proof-2}

 We start by proving, via  sequence of claims,  a general proposition. Below we
write $\dim_\CM$ and $\dim_\CN$ to emphasize that we compute the dimension of
$\CM$-definable and $\CN$-definable sets, respectively. Recall that we are using $\nu(N)$
to denote the infinitesimals of $\CM$ in $\CN$.

\begin{prop}\label{claim:general} Let $U\subseteq N^k$ be an
  $M$-definable open set containing $0$,  and  $Y\sub N^k$ be  an $N$-definable,  relatively closed
$C^1$-submanifold of  $U$ with $0\in Y$.  Assume that for every $h_1,h_2\in {\nu(N)}^k\cap
Y$ we have $\st({T(Y)}_{h_1})=\st({T(Y)}_{h_2})$. Then $\dim_\CM(\st(Y))=\dim_\CN(Y)$.
\end{prop}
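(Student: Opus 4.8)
The plan is to reduce the statement to a clean local-to-global argument. Since $\dim_\CM(\st(Y)) \le \dim_\CN(Y)$ always holds by \cite[Proposition 1.10]{vdd1}, the content is the reverse inequality. First I would exploit the hypothesis that the standard parts of the tangent spaces of $Y$ are constant along ${\nu(N)}^k \cap Y$: let $L \le M^k$ be this common standard tangent space, say $\dim_\CM L = d$. I claim $d = \dim_\CN Y$. One inclusion is clear: the tangent space ${T(Y)}_0$ has $\CN$-dimension $\dim_\CN Y$, it lies in $\mathcal O^k$ after rescaling (or rather, $\st$ applied to the rescaled tangent vectors), and $\st$ of a $\dim_\CN Y$-dimensional subspace can only drop dimension — but a careful choice of a basis for ${T(Y)}_0$ consisting of vectors in ${\nu(N)}^k$-controlled directions shows $\st({T(Y)}_0)$ has dimension exactly $\dim_\CN Y$. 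Actually the cleaner route: after an $M$-linear change of coordinates on $N^k$ we may assume $L = M^d \times \{0\}$, and then the hypothesis says $Y$ near $0$ (inside $\nu^k$) looks like the graph of a $C^1$-function from an open subset of $\nu^d$ into $\nu^{k-d}$ whose differential at every point has standard part $0$.

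The heart of the argument is then: show that the standard part of the coordinate projection $\pi\colon N^k \to N^d$ restricted to $Y \cap \nu^k$ is (locally) surjective onto a neighborhood of $0$ in $M^d$, which forces $\dim_\CM \st(Y) \ge d = \dim_\CN Y$. For this I would use the mean value inequality: if $y \in Y\cap \nu^k$ is written as $(u, f(u))$ with $u \in \nu^d$ and $\|df_v\|$ infinitesimal for all $v$ along the relevant segment, then $\|f(u) - f(0)\| \le (\sup\|df\|)\cdot\|u\|$ is infinitesimal whenever $\|u\|$ is, so $\st(y) = (\st(u), 0)$; conversely, for any $m \in M^d$ sufficiently small, $m \in U$ forces (by relative closedness of $Y$ in $U$ and an appropriate saturation/definability argument, or by the fact that $Y$ is a $C^1$-manifold of the right dimension and $\pi|_Y$ is a local submersion near $0$ since $\ker d\pi_0 \cap T(Y)_0$ is "small") that some point of $Y$ projects to something infinitesimally close to $m$. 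Then $\st(Y) \supseteq$ a neighborhood of $0$ in $M^d$, giving $\dim_\CM\st(Y) \ge d$.

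I expect the main obstacle to be the surjectivity step — establishing that $\pi(Y\cap\nu^k)$ is $\nu$-dense in a genuine $M$-neighborhood of $0$. The issue is that $Y$ is only assumed relatively closed in $U$, not compact or complete, so one cannot naively invoke a global inverse function theorem; the control on $df$ is only along $\nu^k \cap Y$, not on all of $Y$. The way around this is to first pass to the germ: choose an $M$-definable open box $V \ni 0$ with $\bar V \subseteq U$ (shrinking $U$), observe $Y \cap V(N)$ is $\CN$-definable and $Y\cap \nu^k \subseteq V(N)$, and run the inverse-function-theorem argument entirely inside $V$ using that the differential bound, together with $Y$ being a $C^1$ manifold through $0$ of the same dimension as its tangent space, propagates the "graph with infinitesimal slope" description from an infinitesimal scale to the fixed scale $V$ by an o-minimal compactness/definable-choice argument. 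Once surjectivity onto an $M$-neighborhood is in hand, the dimension count is immediate, and combined with the trivial inequality we conclude $\dim_\CM\st(Y) = \dim_\CN Y$.
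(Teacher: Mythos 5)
You set up the argument exactly along the lines of the paper's proof (reduce to the lower bound via \cite[Proposition 1.10]{vdd1}, let $L=\st({T(Y)}_0)$, change coordinates over $M$ so that $L=M^\ell\times\{0\}$, and use the implicit function theorem to see that near each point of ${\nu(N)}^k\cap Y$ the manifold $Y$ is the graph, over the first $\ell$ coordinates, of a function with infinitesimal differential), and you correctly identify the surjectivity step as the heart of the matter; but that is precisely where your proposal stops being a proof. The local IFT/submersion argument only produces a graph over a ball of \emph{some} radius around each infinitesimal point, with no lower bound on that radius by an element of $M$, so by itself it only shows that $\pi(Y)$ contains a ball of possibly infinitesimal radius, which says nothing about $\dim_\CM \st(Y)$. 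Moreover, your proposed fix --- that the ``graph with infinitesimal slope'' description ``propagates from an infinitesimal scale to the fixed scale $V$'' --- is not available: the hypothesis controls ${T(Y)}_h$ only at points $h\in{\nu(N)}^k\cap Y$, and at points of $Y$ at non-infinitesimal distance from $0$ nothing constrains the tangent spaces, so $Y$ need not be a graph (let alone one with infinitesimal slope) over any fixed $M$-ball; a saturation argument is likewise unavailable, since the relevant extensions $\CN=\CM\langle\alpha\rangle$ are not saturated.

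The missing idea is an overspill argument organized through definably connected components, which is how the paper closes exactly this gap. For $a>0$ let $Y_0^a$ be the definably connected component through $0$ of $Y\cap(D_a\times N^{\ell'})$, where $D_a\subseteq N^\ell$ is the open ball of radius $a$ about $0$. Using the local graph description at infinitesimal points, the mean-value bound you state, and the relative closedness of $Y$ in $U$ (which prevents the component from terminating over a proper subset of $D_a$), one shows that for every positive $a\in\nu(N)$ the set $Y_0^a$ lies in ${\nu(N)}^k$ and is the graph of a definable function defined on all of $D_a$. The condition ``$Y_0^a$ is the graph of a function $D_a\to N^{\ell'}$'' is definable in $a$ and holds for every positive infinitesimal $a$, so by o-minimality it holds for some $a>\nu(N)$; hence $\pi(Y)$ contains a ball of radius $b$ for some positive $b\in M$ (with no slope control there, and none needed). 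Then $\pi(\st(Y))=\st(\pi(Y))$ contains an $M$-definable neighborhood of $0$ in $M^\ell$, giving $\dim_\CM\st(Y)\geq\ell$. Without this (or an equivalent) precise mechanism converting the infinitesimal-scale control into an $M$-scale conclusion, your proposal does not establish the proposition.
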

\begin{proof} Assume that $\dim(Y)=\ell$. By \cite{vdd1}, $\dim_\CM(\st(Y))\leq \dim_\CN(Y)$ so
it is sufficient to prove that $\dim_\CM(\st(Y))\geq \ell$.

Let $H_0=\st({T(Y)}_0)$. So $H_0$ is a  linear subspace of $M^k$ of dimension
$\ell$.  Doing a linear change of variables defined over $M$ we may assume that
$H_0=M^{\ell}$, identified with the first $\ell$ coordinates of $M^k$. Let $\ell'=k-\ell$
and write $N^k=N^\ell\times N^{\ell'}$. We will denote by $\pi\colon N^k\to N^\ell$ the
projection onto the first $\ell$ coordinates.

The following is a special case of the implicit function theorem.

\begin{fact}[Implicit Function Theorem]\label{fact:imp} 
Let $Z\subseteq N^k$ be a definable $C^1$-submanifold
  of dimension $\ell$, $a\in Z$, $b=\pi(a)$, and $L={T(Z)}_a$.
Assume $\pi(L)=N^\ell$.  Then locally, near $a$, $Z$ is the graph of a function
$F\colon N^\ell\to N^{\ell'}$, and $L$ is the graph of the differential of $F$ at $b$.
\end{fact}

\begin{claim}\label{claim:inf-diff} 
If $h\in Y\cap {\nu(N)}^k$ then, near $h$, $Y$ is
  the graph of a function $F\colon N^\ell\to N^{\ell'}$. Moreover $\|
  dF_b\|\in \nu(N)$, where $b=\pi(h)$.
\end{claim}
\begin{proof}
  This follows from our assumption on ${T(Y)}_h$ and Fact~\ref{fact:imp}.
\end{proof}

The following is easy to prove.

\begin{claim}\label{claim:gen-mu}
Let $D\subseteq {\nu(N)}^\ell$ be an open ball centered at  $0$ and $F\colon D\to
N^{\ell'}$ a definable smooth function. Assume $F(0)=0$ and $\|dF_b\|\in \nu(N)$ for all
$b\in D$. Then $F(D)\subseteq {\nu(N)}^{\ell'}$ and therefore the graph of $F$ is contained
in ${\nu(N)}^k$.
\end{claim}

Combining   Claim~\ref{claim:inf-diff}, Claim~\ref{claim:gen-mu} with the fact that
$Y\subseteq   U$ is a closed submanifold, we obtain:
\begin{claim}
  \label{claim:contains-graph-ball}
Let $a>0$ be in  $\nu(N)$ and $D_a$ be the open ball of radius $a$ in $N^\ell$ centered
at $0$.  Let $Y_0^a$ be the connected component of  $Y\cap (D_a\times N^{\ell'})$
containing $0$.
 Then $Y_0^a \subseteq {\nu(N)}^k$ and it is the graph of a definable function
$F\colon D_a\to N^{\ell'}$.
\end{claim}

By o-minimality we conclude:
\begin{claim}
There is $a>\nu(N)$ in $N$ such that $Y$ contains the graph of a definable function
$F\colon B_a\to N^{\ell'}$, where $B_a$ is the open ball of radius $a$
in $N^k$ centered at $0$. In particular, $\pi(Y)$ contains an open ball $B_b \subseteq
N^\ell$ centered at $0$ or radius $b$ with $b\in M$.
\end{claim}

We can now complete the proof of Proposition~\ref{claim:general}. Because $\pi(Y)$
contains an $M$-definable ball around $0$, the set  $\st(\pi(Y))$ contains an
$M$-definable neighborhood of $0$ in $M^\ell$, so in particular its
dimension is at least $\ell$.
But $\st(\pi(Y))=\pi(\st(Y))$, hence $\dim_\CM(\st(Y))\geq \ell$.
\end{proof}

We now return to the proof of clause 1 in Theorem~\ref{Prop:definable}. As before, we assume that $G\sub M^n$ is an
embedded $k$-dimensional closed $C^1$-submanifold.

\begin{claim} \label{claim:1}
 Let $Y\sub G$ be an 
$M$-definable set with $p\vdash Y$ and $\dim Y=\dim p=\ell$. Then for every $m\in M$,
$B_m\ccdot \alpha\cap Y$ is a closed $\ell$-dimensional submanifold of $B_m\ccdot
\alpha$.
\end{claim}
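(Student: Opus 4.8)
\textbf{Proof plan for Claim~\ref{claim:1}.}

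The plan is to analyze the set $Y$ locally near $\alpha$ (equivalently, after translating by $\alpha^{-1}$, near $e$) and argue that the intersection with the small ball $B_m\ccdot\alpha$ picks out exactly the part of $Y$ sitting near $\alpha$, which is a manifold of dimension $\dim p$. First I would fix $m\in M$, $m>0$. The set $B_m\ccdot\alpha$ is an $M$-definable open neighborhood of $\alpha$ in $G(N)$, being the translate by $\alpha$ (on the right) of the $G$-ball $B_m$; note that right translation by $\alpha$ is a definable $C^1$-diffeomorphism of $G(N)$, so it carries closed $C^1$-submanifolds to closed $C^1$-submanifolds and preserves dimension. Hence it suffices to show that $B_m\cap Y\alpha^{-1}$ is a closed $\ell$-dimensional $C^1$-submanifold of $B_m$ (here $Y\alpha^{-1}$ need not be $M$-definable, only $N$-definable, but that is enough).

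The key point is that $\alpha\in Y$ implies $e\in Y\alpha^{-1}$, and since $\dim_\CN(Y\alpha^{-1})=\dim_\CM Y=\ell$, the set $Y\alpha^{-1}$ is, by o-minimality (cell decomposition in $\CN$), a finite union of $C^1$-cells, and after possibly shrinking we may assume $Y$ itself is a cell of dimension $\ell$ with $\alpha$ in its closure. Actually the cleanest route: decompose $Y$ into $C^1$-cells over $M$; since $p\vdash Y$ and $p$ is complete, $p$ concentrates on exactly one of these cells, call it $C$, which must have $\dim C=\ell$; then $C$ is a $C^1$-submanifold of $G$ of dimension $\ell$ with $p\vdash C$ and with the lower-dimensional pieces $Y\setminus C$ not in $p$. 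Now apply Claim~\ref{claim:smallerdim}: since $\dim(Y\setminus C)<\ell=\dim p$, we have $\mathcal O\ccdot\alpha\cap(Y\setminus C)=\emptyset$, and in particular $B_m\ccdot\alpha\cap(Y\setminus C)=\emptyset$ because $B_m\ccdot\alpha\subseteq\mathcal O\ccdot\alpha$. Therefore $B_m\ccdot\alpha\cap Y=B_m\ccdot\alpha\cap C$.

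So it remains to see that $B_m\ccdot\alpha\cap C$ is a closed $\ell$-dimensional $C^1$-submanifold of $B_m\ccdot\alpha$. Transporting by $\alpha^{-1}$, we need $B_m\cap C\alpha^{-1}$ to be relatively closed in $B_m$ and an $\ell$-dimensional $C^1$-submanifold. It is a $C^1$-submanifold of dimension $\ell$ because $C\alpha^{-1}$ is (diffeomorphic image of a cell) and open subsets of a manifold are submanifolds of the same dimension. For relative closedness in $B_m$: the frontier $\operatorname{Fr}(C)=\overline{C}\setminus C$ is $M$-definable of dimension $<\ell=\dim p$, so by Claim~\ref{claim:smallerdim} again $\mathcal O\ccdot\alpha\cap\operatorname{Fr}(C)=\emptyset$, hence $B_m\ccdot\alpha\cap\operatorname{Fr}(C)=\emptyset$; thus within the open set $B_m\ccdot\alpha$ the set $C$ has no frontier points, i.e. $B_m\ccdot\alpha\cap C$ is closed in $B_m\ccdot\alpha$. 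This finishes the argument. The main obstacle, and the only place real content enters beyond bookkeeping, is the repeated use of Claim~\ref{claim:smallerdim} to kill both the lower-dimensional cells of $Y$ and the frontier of the good cell $C$ inside the $\mathcal O$-neighborhood of $\alpha$; everything else is standard o-minimal cell-decomposition and the fact that right translation by $\alpha$ is a definable diffeomorphism.
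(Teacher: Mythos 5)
There is a genuine gap in the middle of your argument: you assert that after a $C^1$-cell decomposition of $Y$ over $M$, the cell $C$ on which $p$ concentrates satisfies $\dim(Y\setminus C)<\ell$, and you then invoke Claim~\ref{claim:smallerdim} to conclude $B_m\ccdot\alpha\cap Y=B_m\ccdot\alpha\cap C$. This is false in general: the decomposition of $Y$ may contain several cells of dimension $\ell$, in which case $\dim(Y\setminus C)=\ell$ and Claim~\ref{claim:smallerdim} says nothing about $Y\setminus C$. The equality $B_m\ccdot\alpha\cap Y=B_m\ccdot\alpha\cap C$ can really fail: in $G=\la \RR^2,+\ra$ take $p$ the type of $(\alpha,0)$ with $\alpha>M$ and $Y$ the union of the two lines $y=0$ and $y=1$; then $p\vdash Y$, $\dim Y=\dim p=1$, but for $m>1$ the set $B_m\ccdot\alpha\cap Y$ meets both lines. (The conclusion of the claim still holds there, but not via your reduction; note also that points of $B_m\ccdot\alpha\cap Y$ need not realize $p$ --- that only becomes true for the carefully chosen set $S$ of Claim~\ref{claim:components2}, which is a later and harder step.)

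The rest of your plan is sound and is in the same spirit as the paper's proof, which is shorter: let $Z$ be the $M$-definable set of points of $Y$ at which $Y$ is \emph{not} an $\ell$-dimensional submanifold of $G$; by o-minimality $\dim Z<\ell$, so Claim~\ref{claim:smallerdim} gives $(B_m\ccdot\alpha)\cap Z=\emptyset$, and likewise $(B_m\ccdot\alpha)\cap Fr(Y)=\emptyset$, whence $B_m\ccdot\alpha\cap Y$ is a closed $\ell$-dimensional submanifold. If you prefer to stay with your cell decomposition, the correct repair is not to isolate a single cell but to use Claim~\ref{claim:smallerdim} to exclude from $\mathcal O\ccdot\alpha$ \emph{all} the lower-dimensional cells together with the frontiers of \emph{all} the $\ell$-dimensional cells (each such frontier is $M$-definable of dimension $<\ell$); then near any point of $B_m\ccdot\alpha\cap Y$ the set $Y$ coincides with exactly one $\ell$-dimensional cell, and your frontier argument gives relative closedness. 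Your use of right translation by $\alpha$ as a definable $C^1$-diffeomorphism is fine but not needed for this claim.
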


\begin{proof} By o-minimality, the set $Z$ of all points in $Y$ at
  which $Y$ is {\bf not} an $\ell$-dimensional submanifold of $G$
is an $M$-definable subset of smaller dimension. By Claim~\ref{claim:smallerdim},
$(B_m\ccdot \alpha)\cap Z=\emptyset$. Similarly, the intersection of $Fr(Y)$ with
$B_m\ccdot \al$ is empty, so $B_m\ccdot \alpha\cap Y$ is a closed submanifold.
\end{proof}

 By working in a ($M$-definable) chart
of $G$ near $e$ we  identify $G$ locally at $e$ with an open
neighborhood $U$ of $0$ in $M^k$.  Since $U$ is an open
subset of $M^k$, for each $g\in U$ the tangent space ${T(U)}_g$ can
be identified with $M^k$.  Thus working in $\CN$, for $g\in G(N)\cap
U(N)$ we identify the tangent space ${T(G)}_g$ with $N^k$.

For $g\in G$, let $r_g\colon G\to G$ be the right multiplication by
$g$.   The differential  ${d(r_g)}_e$ of $r_g$ at $e$, is a linear
isomorphism from ${T(G)}_e$ to ${T(G)}_g$,  and for $g\in G(N)\cap U(N)$ we
will view  ${d(r_g)}_e$ as a linear isomorphism from $N^k$ to
$N^k$, i.e.\  an element of $\GL_k(N)$.  Thus we
have a continuous, $M$-definable map  $g\mapsto {d(r_g)}_e$ from $U(N)$ to $\GL_k(N)$.

Let $S$ be an $M$-definable set as in Claim~\ref{claim:components2}.  
Then   every $\beta \in \CO\ccdot  \alpha\cap S$
realizes $p$ and, by Claim~\ref{claim:one},
$\Smu(p)=\st(S\al^{-1})$.  Replacing $U$ with $B_1\cap U$ if needed, 
and using  Claim~\ref{claim:1}, we may assume that $S\alpha^{-1}\cap
U(N)$ is a closed submanifold of $U(N)$. 

\begin{claim}\label{label:claim1.7} For every $h\in \mu(N)\cap S\alpha^{-1}$,
$$\st({T(S\alpha^{-1})}_h)=\st({T(S\alpha^{-1})}_e).$$
\end{claim}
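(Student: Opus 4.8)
The plan is to reduce the comparison of $\st\bigl({T(S\alpha^{-1})}_h\bigr)$ and $\st\bigl({T(S\alpha^{-1})}_e\bigr)$ to two separate facts: the behaviour of the $M$-definable map $g\mapsto {d(r_g)}_e$ at the standard point $e$, and the fact that the standard part of a bounded tuple is determined by its type over $M$. The point to watch is that the Gauss map of $S\alpha^{-1}$ is only $\CN$-definable and $\alpha$ is unbounded, so the naive argument ``$h$ is infinitesimally close to $e$, hence the two tangent spaces are infinitesimally close'' is not available; the rewriting below is arranged so that the unbounded parameter $\alpha$ enters only through quantities that are either bounded or $M$-infinitesimal.

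First, for $h\in\mu(N)\cap S\alpha^{-1}$ set $\beta:=h\ccdot\alpha$. Since $h\in\mu(N)\subseteq\CO$, we have $\beta\in\CO\ccdot\alpha\cap S$, so by the choice of $S$ (Claim~\ref{claim:components2}) $\beta\models p$; in particular $\tp(\beta/M)=\tp(\alpha/M)$, and by Claim~\ref{claim:smallerdim} the set $S$ is a smooth $\ell$-dimensional $C^1$-submanifold of $G$ near $\beta$ (and near $\alpha$). The algebraic heart of the argument is the translation identity
\[ {T(S\alpha^{-1})}_h \;=\; {d(r_h)}_e\bigl({T(S\beta^{-1})}_e\bigr). \]
To obtain it, note that $r_{\alpha^{-1}}=r_h\co r_{\beta^{-1}}$, since $\beta^{-1}h=\alpha^{-1}$; as $r_{\beta^{-1}},r_{\alpha^{-1}}$ are $C^1$-diffeomorphisms of $G$, the chain rule gives ${d(r_{\alpha^{-1}})}_\beta={d(r_h)}_e\co {d(r_{\beta^{-1}})}_\beta$, and ${d(r_{\alpha^{-1}})}_\beta$ maps ${T(S)}_\beta$ onto ${T(S\alpha^{-1})}_h$ while ${d(r_{\beta^{-1}})}_\beta$ maps ${T(S)}_\beta$ onto ${T(S\beta^{-1})}_e$.

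Now I would take standard parts. The map $g\mapsto {d(r_g)}_e$ is $M$-definable, continuous on $U(N)$, and sends $e$ to $\id$; since $h$ lies in every $M$-definable open neighbourhood of $e$, the matrix ${d(r_h)}_e$ lies in every $M$-definable open neighbourhood of $\id$, so $\st\bigl({d(r_h)}_e\bigr)=\id$. Viewing $\ell$-dimensional subspaces as points of a compact Grassmannian, realized as a bounded $M$-definable set (e.g.\ in orthogonal-projection coordinates) on which $\GL_k$ acts $M$-definably and continuously, and noting that ${T(S\beta^{-1})}_e$ is bounded, we obtain $\st\bigl({d(r_h)}_e({T(S\beta^{-1})}_e)\bigr)=\st\bigl({T(S\beta^{-1})}_e\bigr)$; hence $\st\bigl({T(S\alpha^{-1})}_h\bigr)=\st\bigl({T(S\beta^{-1})}_e\bigr)$. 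Finally, $\gamma\mapsto {T(S\gamma^{-1})}_e$ is an $M$-definable map on the $M$-definable set of those $\gamma\in S$ at which $S$ is a smooth $\ell$-submanifold (which contains every realization of $p$), the values ${T(S\alpha^{-1})}_e$ and ${T(S\beta^{-1})}_e$ are bounded, and $\tp(\alpha/M)=\tp(\beta/M)$; since the standard part of a bounded tuple depends only on its type over $M$, it follows that $\st\bigl({T(S\beta^{-1})}_e\bigr)=\st\bigl({T(S\alpha^{-1})}_e\bigr)$. Chaining the last two equalities gives $\st\bigl({T(S\alpha^{-1})}_h\bigr)=\st\bigl({T(S\alpha^{-1})}_e\bigr)$, as required.

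I expect the only delicate points to be the two uses of the principle ``a bounded definable function of parameters of a fixed type over $M$ has a standard part determined by that type'' — once for the $\GL_k$-action on the Grassmannian near $(\id,\cdot)$, and once for $\gamma\mapsto {T(S\gamma^{-1})}_e$ at $\alpha$ and $\beta$. Neither invokes continuity of a genuinely $\CN$-definable object; all the ``continuity'' used is $M$-definable continuity at standard points.
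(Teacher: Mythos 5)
Your proposal is correct and follows essentially the same route as the paper's proof: write $h=\beta\alpha^{-1}$ with $\beta\models p$ (via Claim~\ref{claim:components2}), use right translation by $h$ to identify ${T(S\alpha^{-1})}_h$ with ${d(r_h)}_e({T(S\beta^{-1})}_e)$, note that ${d(r_h)}_e$ is infinitesimally close to the identity because $g\mapsto {d(r_g)}_e$ is $M$-definable and continuous, and use $\alpha\equiv_M\beta$ to equate $\st({T(S\beta^{-1})}_e)$ with $\st({T(S\alpha^{-1})}_e)$. The only differences are presentational: the paper applies ${d(r_h)}_e=I+\varepsilon$ to an orthonormal basis where you instead argue on the Grassmannian via continuity at standard points, and you spell out the ``type over $M$ determines the standard part'' step that the paper leaves implicit.
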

\begin{proof} If $h\in \mu(N)\cap S\alpha^{-1}$ then it is of the form $h=\beta\al^{-1}$ for some $\beta
\models p$. But then, since $\alpha\equiv_M\beta$,
$\st({T(S\beta^{-1})}_e)=\st({T(S\al^{-1})}_e)$. The map $r_{h}$ sends
$S\beta^{-1}$ to $S\al^{-1}$ with $e$ going to $h$, so its differential at $e$ sends
${T(S\beta^{-1})}_e$ to ${T(S\al^{-1})}_h$. Since $h\in \mu(N)$ and the
map $h\mapsto {d(r_h)}_e$ is continuous and $M$-definable,  viewing ${d(r_h)}_e$ as an
element of $\GL_k(N)$, we can write it as ${d(r_h)}_e= I +\varepsilon$,
where $\varepsilon$ is a $k\times k$ matrix whose entries are in
$\nu(N)$.  Applying ${d(r_h)}_e$  to an orthonormal basis of
${T(S\beta^{-1})}_e$ (with respect to the standard dot product in $N^k$)
we conclude that for each such basis vector
$v$, $\st({d(r_h)}_e(v))=\st(v)$. It follows that $\st({T(S\beta^{-1})}_e)=\st({T(S\al^{-1})}_h)$,
so we get the desired result.\end{proof}

Using Claim~\ref{label:claim1.7}, and Proposition~\ref{claim:general}  we conclude that
$\dim \Smu(p)=\dim S=\dim(p)$, namely we end the proof of clause 1 in Theorem~\ref{Prop:definable}.

\subsubsection{\bf Proof of clause 2(ii) in Theorem~\ref{Prop:definable}}
\label{sec:proof-3}
As in the previous section, we identify $G$ near $e$ with an open
$M$-definable subset $U\subseteq M^k$ containing $0$. 
Again, shrinking $U$ if needed 
we assume that $U$ is contained in  $B_1$.

Note that in general it is not true that ${T(\st(Y))}_a=\st({T(Y)}_{\st(a)})$, even for a
smooth definable manifold $Y$. However, we use the next Fact, which easily
follows from Marikova's result (\cite[Theorem 2.23]{Marikova}):
\begin{fact} Let $Y\sub {\CO(N)}^k$
be an $N$-definable submanifold of dimension $\ell$. Assume also that $\dim_\CM
\st(Y)=\dim_\CN Y$. Then there exists $y\in Y$ such that
$\st(T(Y)_y)=T(\st(Y))_{\st(y)}$.\end{fact}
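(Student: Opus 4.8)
The plan is to read the statement off from \cite[Theorem 2.23]{Marikova}, which is precisely an analysis of how the standard part map interacts with tangent spaces of definable $C^1$-manifolds lying in ${\CO(N)}^k$. Before invoking it I would record two trivialities. First, since $Y\sub{\CO(N)}^k$ we have $\st(Y)=\st(Y\cap{\CO(N)}^k)$, so every point of $\st(Y)$ is of the form $\st(y)$ for some $y\in Y$; and by \cite[Corollary 1.3]{vdd1} and \cite[Proposition 1.10]{vdd1}, $\st(Y)$ is $M$-definable with $\dim_\CM\st(Y)\le\ell$, equality being the hypothesis. Second, for $y\in Y$ the tangent space $T(Y)_y\in\mathrm{Gr}(\ell,k)(N)$ always has a well-defined standard part $\st(T(Y)_y)\in\mathrm{Gr}(\ell,k)(M)$: realizing $\mathrm{Gr}(\ell,k)$ via orthogonal projection matrices (entries in $[-1,1]$) makes it a bounded $M$-definable set on which $\st$ is defined, and this is the same $\st$ of tangent spaces already used in Claim~\ref{label:claim1.7} and Proposition~\ref{claim:general}.

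Next, by o-minimality the set $W\sub\st(Y)$ of points at which $\st(Y)$ is a $C^1$-submanifold of $M^k$ of dimension $\ell$ is relatively open with $\dim_\CM(\st(Y)\setminus W)<\ell$; since $\dim_\CM\st(Y)=\ell$ this forces $W\neq\emptyset$, indeed $W$ dense in $\st(Y)$. Now apply \cite[Theorem 2.23]{Marikova}: under the equidimensionality hypothesis $\dim_\CM\st(Y)=\dim_\CN Y$ it provides, for every $a$ in a dense relatively open subset of $\st(Y)$ — which we may take inside $W$ — a point $y\in Y$ with $\st(y)=a$ and $\st\bigl(T(Y)_y\bigr)=T(\st(Y))_a$. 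Fixing any such $a$ together with the associated $y$, and rewriting $T(\st(Y))_a=T(\st(Y))_{\st(y)}$, yields the Fact.

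The only real content is matching the exact formulation of \cite[Theorem 2.23]{Marikova} to the statement above, and it is exactly here that the hypothesis $\dim_\CM\st(Y)=\dim_\CN Y$ does all the work: equidimensionality is what prevents the limiting directions of $T(Y)_y$ from spanning strictly more than $\st(Y)$ does, and it cannot be dropped — e.g.\ for $Y=\{(s,\varepsilon u,\varepsilon u):s,u\in[0,1]\}$ with $\varepsilon\in\nu(N)$, $\st(Y)$ is a segment while $\st(T(Y)_y)$ is a genuine $2$-plane for every $y$. If Theorem 2.23 is phrased for graphs of definable $C^1$-functions rather than for abstract submanifolds, one inserts the routine step of choosing, near a point of $W$, an $M$-definable coordinate chart in which $Y$ is locally such a graph over ${\CO(N)}^\ell$, applying the theorem there, and transporting the conclusion back through the chart, whose differential is $M$-definable and hence commutes with $\st$.
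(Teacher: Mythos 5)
Your proposal is correct and takes essentially the same route as the paper: the paper offers no independent argument for this Fact, deriving it directly from \cite[Theorem 2.23]{Marikova} exactly as you do, with the equidimensionality hypothesis $\dim_\CM\st(Y)=\dim_\CN Y$ playing the same role. The extra bookkeeping you supply (standard parts of tangent planes via the bounded Grassmannian, restricting to smooth points of $\st(Y)$, and a local graph chart if needed) is routine and consistent with the paper's assertion that the Fact ``easily follows'' from Mar\'{\i}kov\'a's result.
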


Let $S$ be an $M$-definable set as in Claim~\ref{claim:components2} and $Y=S\alpha^{-1}\cap U(N)$.  We need to show
that $T(\Smu(p))_e = \st(T(Y)_e)$.

We apply the above fact to $Y$, and fix $h\in Y$ with
$\st(T(Y)_h)=T(\st(Y)_{\st(h)})$.

As in the proof of Claim~\ref{label:claim1.7}, $h=\beta\alpha^{-1}$ with $\beta\models p$, and
$$d(r_h)_e(T(S\beta^{-1})_e)=T(Y)_h.$$

 Let $g=\st(h)$. By continuity of the map $h\mapsto d(r_h)_e$, as a map from $U(N)$ to $\GL_k(N)$, we have
$$d(r_{g})_e(\st(T(S\beta^{-1})_e))=\st(T(Y)_h).$$ However,
since $\beta\equiv_M \alpha$,  $\st(T(S\beta^{-1})_e)=\st(T(Y)_e)$. We
conclude that $$d(r_{g})_e(\st(T(Y)_e))=\st(T(Y)_h).$$ By our assumption on $h$,
we have $\st(T(Y)_h)=T(\st(Y))_g$, hence 
$$d(r_g)_e(\st(T(Y)_e))=T(\st(Y))_g.$$

By Claim~\ref{claim:one} we have that $\Smu(p)\cap U= \st(Y)\cap U$, hence
$T(\Smu(p))_g = T(\st(Y))_g$, and 
$$d(r_g)_e(\st(T(Y)_e))= T(\Smu(p))_g.
$$
Since $g\in \Smu(p)$ and $\Smu(p)$ is a subgroup of $G$, the map $r_g$ is a diffeomorphism  of $\Smu(p)$
to itself,  hence $ T(\Smu(p))_g = d(r_g)_e( T(\Smu(p))_e)$, and $d(r_g)_e(\st(T(Y)_e))= d(r_g)_e( T(\Smu(p))_e)$.

The linear map $d(r_g)_e$ is invertible, hence $\st(T(Y)_e)=  T(\Smu(p))_e$
which is what we wanted to prove.

This ends the proof of   Theorem~\ref{Prop:definable}.

\subsection{The structure of $\Smu(p)$}

 Below  $\CM$ is an o-minimal expansion of
a real closed field and $G$ a definable group.

We  begin with  an observation:
\begin{claim} \label{claim:type-in-group} If $p\in S_G(M)$ is a
  definable type and $H$ is an $M$-definable subgroup of $G$ with $p\vdash H$ 
   then $\Smu(p)\sub H$.
\end{claim}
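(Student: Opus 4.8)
The plan is to use the standard-part description of $\Smu(p)$ from Theorem~\ref{Prop:definable}(2)(i), together with the fact that $p$ concentrates on $H$, to force the relevant ``infinitesimal translates'' to live inside $H$. First I would reduce to the case where $p$ is $\mu$-reduced: by Corollary~\ref{cor:def-reduced} there is a definable $\mu$-reduced $q$ with $q_\mu = p_\mu$, and since $\Smu$ depends only on the $\mu$-type, $\Smu(p) = \Smu(q)$; moreover $q \vdash H$ as well, because $q_\mu = p_\mu$ means any realization of $q$ is infinitesimally close to a realization of $p$, and $H$ being a (closed, as a subgroup) $M$-definable subset, infinitesimal perturbations within $G$ of points of $H(N)$ that still lie in $G$ must lie in $H(N)$ — more directly, $q \sim_\mu p$ and $p \vdash H$ give $q \vdash \mu \ccdot H \cap \text{(formulas in }q)$, and since $H$ is open in itself and $q$ is complete, $q \vdash H$. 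So assume $p$ is $\mu$-reduced and $p \vdash H$.

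Next I would invoke Theorem~\ref{Prop:definable}(2)(i): there is an $M$-definable set $S$ with $p \vdash S$, $\dim S = \dim p$, and for any $\alpha \models p$ with $\CN = \CM\la\alpha\ra$, $\Smu(p) = \st(S\alpha^{-1})$. Since $p \vdash H$ and $p \vdash S$, I can intersect: replacing $S$ by $S \cap H$, which is still an $M$-definable set in $p$ of dimension $\dim p$, the ``moreover'' part of Claim~\ref{claim:one} (applied to $q$) shows $\Smu(p) = \st((S\cap H)\alpha^{-1})$. Now fix $\alpha \models p$; since $p \vdash H$ we have $\alpha \in H(N)$, so $\alpha^{-1} \in H(N)$, and hence $(S\cap H)\alpha^{-1} \subseteq H(N)$. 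Because $H$ is an $M$-definable subgroup it is in particular a closed $M$-definable subset of $G \subseteq M^n$, so the standard part map restricted to $H(N) \cap \CO$ lands in $H(M)$: for any $\beta \in H(N)\cap\CO$, $\st(\beta) \in H(M)$. Therefore $\st((S\cap H)\alpha^{-1}) \subseteq H(M)$, which gives $\Smu(p) \subseteq H$.

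The main obstacle — really the only nontrivial point — is the justification that $q \vdash H$ once we pass to the $\mu$-reduced representative $q$, and dually that we may intersect $S$ with $H$ and still have a legitimate ``$S$'' for Claim~\ref{claim:one}. For the first: from $p\sim_\mu q$, Claim~\ref{claim:inf-eq} gives $q \vdash \mu \ccdot p$; since $p \vdash H$ we get $q \vdash \mu \ccdot H$, i.e.\ every realization $\beta \models q$ is of the form $\epsilon\cdot\gamma$ with $\epsilon \in \mu(N)$, $\gamma \in H(N)$; but $\beta \in G$ and $H$ is $M$-definable closed, and $\mu\ccdot H \cap G$ is contained in the $M^n$-infinitesimal neighborhood of $H(N)$ intersected with $G$, which by closedness of $H$ in $G$ equals $H(N)$ (using $\mu(N) = (e+\nu^n)\cap G(N)$ and $\gamma$-translation). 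Hence $\beta \in H(N)$, so $q \vdash H$. For the second obstacle: Claim~\ref{claim:one}'s ``moreover'' clause is exactly the statement that any $M$-definable $S_1 \subseteq S$ with $p \vdash S_1$ again satisfies the conclusion, so taking $S_1 = S \cap H$ is legitimate. Everything else is a routine application of the already-established standard-part machinery, so the argument is short. I would also remark that this claim is the key ingredient for later extracting solvability and torsion-freeness of $\Smu(p)$ by intersecting with suitable definable subgroups.
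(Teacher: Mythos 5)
There is a genuine gap in your reduction to the $\mu$-reduced case. You claim that if $q_\mu=p_\mu$ and $p\vdash H$ then $q\vdash H$, arguing that a realization of $q$ is an infinitesimal perturbation of a point of $H(N)$ and therefore, by closedness of $H$, lies in $H(N)$. This is false: being at $M$-infinitesimal distance from the closed set $H(N)$ does not place a point in $H(N)$ (closedness controls limits in the topology of $\CN$, and standard parts of $M$-bounded points, not infinitesimal perturbations). The paper's own example following the definition of $\mu$-reduced types is a direct counterexample: in $\la M^2,+\ra$ with $H=M\times\{0\}$, the types $p$ of $(\alpha,0)$ and $q$ of $(\alpha,1/\alpha)$, for $\alpha>M$, satisfy $p_\mu=q_\mu$ and $p\vdash H$, yet $q\not\vdash H$. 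So the $\mu$-reduced representative supplied by Corollary~\ref{cor:def-reduced} need not concentrate on $H$, and your subsequent appeal to Theorem~\ref{Prop:definable}(2)(i) and the ``moreover'' clause of Claim~\ref{claim:one} for that representative is not justified. Note also that the claim is later applied (in the proof of Theorem~\ref{theorem:1}(1)) to types which are not assumed $\mu$-reduced, so one cannot repair matters by simply adding that hypothesis to the statement.

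The second half of your argument is, however, essentially the whole proof, and the detour through $\mu$-reducedness is unnecessary: you only ever use the inclusion $\Smu(p)\subseteq\st(S\alpha^{-1})$, and that inclusion is Claim~\ref{claim:stub-subset}, which holds for \emph{every} definable type $p$ and every $M$-definable $S$ with $p\vdash S$, with no reducedness assumption (the equality of Theorem~\ref{Prop:definable}(2)(i) is not needed). Take $S\subseteq H$ (for instance $S=H$ itself, or replace a given $S$ by $S\cap H$); since $p\vdash H$ we have $\alpha\in H(N)$, hence $S\alpha^{-1}\subseteq H(N)$, and since $H$ is a closed subset of $G$ the standard part of any $M$-bounded element of $H(N)$ lies in $H(M)$, so $\Smu(p)\subseteq\st(S\alpha^{-1})\subseteq H$. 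This is exactly the paper's argument; deleting your first paragraph and citing Claim~\ref{claim:stub-subset} instead of Theorem~\ref{Prop:definable} makes the proof correct.
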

\begin{proof}Let $S,\alpha$ and $\CN$ be as in Theorem~\ref{Prop:definable}. Replacing $S$ by $S\cap H$ if needed  we may
  assume  $S\sub H$.  Then $S\ccdot
\alpha^{-1}$ is also contained in $H(N)$. Since $H$ is closed in $G$
we get  $\st(S\ccdot \alpha^{-1})\sub H$.\end{proof}

We will need  the following fact about definable groups.

\begin{fact}\label{Borel}
Let $G$ be a definable, definably connected group in $\CM$. Then there exists an
$M$-definable solvable, torsion free subgroup $H_1\sub G$ and a definably compact
set $C\sub G$ such that $G=C\ccdot H_1$. In particular, $G/H_1$ is a definably
compact space.
\end{fact}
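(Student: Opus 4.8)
The plan is to prove Fact~\ref{Borel} by reducing to the simple case via the standard structure theory of definably connected groups in o-minimal structures. First I would invoke the fact (due to Peterzil--Pillay--Starchenko and others) that any definably connected group $G$ has a definable normal solvable subgroup $R$ — the \emph{solvable radical} — such that $G/R$ is semisimple with finite center; moreover one can arrange $R$ to be definably connected, and then its subgroup of torsion elements is finite and central, so after passing to a further quotient or to an appropriate subgroup one gets a torsion-free solvable piece. The cleaner route is to split the argument: handle $G$ semisimple, where one uses the o-minimal analogue of the Iwasawa decomposition $G = K A N$ (with $K$ definably compact, $A$ a definable torus, $N$ a definable unipotent group, both $A$ and $N$ torsion-free solvable, and $AN$ a solvable subgroup), and handle $G$ solvable separately, where the claim is essentially that $G = C \ccdot H_1$ with $H_1$ the maximal torsion-free part.

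Concretely, the key steps in order: (1) For $G$ definably connected, let $R = R(G)$ be the definable solvable radical; by the structure theory $G/R$ is a central extension by a finite group of a definable semisimple group. (2) For the semisimple quotient $\bar G = G/R$, use the o-minimal Iwasawa decomposition to write $\bar G = \bar K \ccdot \bar S$ where $\bar K$ is definably compact and $\bar S = \bar A \bar N$ is a definable torsion-free solvable subgroup (this is the o-minimal analogue of the real Lie group fact, available in the literature on groups definable in o-minimal structures over real closed fields). (3) Pull $\bar S$ back to a subgroup $S^* \le G$ containing $R$; since $R$ is solvable and $S^*/R \cong \bar S$ is solvable, $S^*$ is solvable. (4) Take $H_1$ inside $S^*$ to be a maximal definable torsion-free subgroup with $S^* = C_0 \ccdot H_1$ for $C_0 \sub S^*$ definably compact — this uses the solvable case: a definably connected solvable group is a (definable) semidirect-type product of a definably compact part (the maximal torus's compact part, or more precisely the maximal definably compact subgroup) and a torsion-free solvable part; finiteness of torsion in the connected solvable case and the structure of definable tori make this precise. (5) Finally assemble: the preimage in $G$ of $\bar K$ is $C_1 \ccdot (\text{something})$ with $C_1$ definably compact, and combining the decompositions of the $R$-part, the $\bar S$-part, and the $\bar K$-part, collect all definably compact factors into a single definably compact set $C$ (using that a finite product and continuous image of definably compact sets is definably compact) and all torsion-free solvable factors into $H_1$; conclude $G = C \ccdot H_1$, whence $G/H_1$ is the continuous image of $C$, hence definably compact.

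The main obstacle I expect is step (4) together with the bookkeeping in step (5): namely, making precise that in the solvable (and more generally in the general) case the torsion-free part can be chosen as an honest \emph{subgroup} $H_1$ — not merely a definable section — and simultaneously that the complementary definably compact \emph{set} $C$ exists with $G = C \ccdot H_1$ as sets. In the Lie setting this is the Iwasawa/Levi--Mostow decomposition, but over a general real closed field one must either cite the existing o-minimal versions (e.g., the work on definably compact subgroups and maximal tori, and the decomposition $G = K \ccdot H$ appearing in treatments of definable groups) or reprove it; the torsion-freeness of $H_1$ follows because it is built from definable tori's "positive" parts and unipotent-type pieces, both of which are torsion-free, and solvability is inherited from extensions of solvable by solvable. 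Once the decomposition $G = C \ccdot H_1$ is in hand, the "in particular" is immediate: the quotient map $G \to G/H_1$ restricted to $C$ is a definable continuous surjection from a definably compact set, so $G/H_1$ is definably compact.
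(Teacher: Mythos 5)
Your overall strategy---reduce to the semisimple and solvable cases via the radical, quote Conversano's decomposition for the semisimple quotient and Conversano--Pillay for the solvable part---uses the same family of ingredients as the paper (which runs an induction on $\dim G$ through an infinite definable normal abelian subgroup rather than the solvable radical, a cosmetic difference). The genuine gap sits exactly where you flag ``bookkeeping'', in your steps (4) and (5), and it is not bookkeeping. What the o-minimal literature gives you in the solvable case (Conversano--Pillay, Proposition 2.2) is a maximal normal torsion-free definable subgroup $H_1$ with $H/H_1$ definably compact \emph{as a quotient}; it does not hand you a definably compact \emph{set} $C_0$ with $H=C_0\ccdot H_1$. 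Likewise, in your assembly you must replace $\pi^{-1}(\bar K)$ (for $\pi\colon G\to G/R$) by a definably compact subset of $G$ mapping onto $\bar K$; the preimage itself is not definably compact once $R$ is unbounded (already for $G=\RR\times \SO(2,\RR)$ it is all of $G$), and a definable section need not have bounded image, so ``collect the compact factors into one definably compact $C$'' does not go through as stated. Moreover, since you propose to deduce definable compactness of $G/H_1$ \emph{from} the set decomposition $G=C\ccdot H_1$, while the results you can cite only give the quotient statement, the argument as written is circular at precisely that point.

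The missing idea, and the way the paper closes this, is a short exhaustion argument using the affine embedding of $G$: write $G=\bigcup_{r>0}B_r$ as an increasing union of definable open sets with $Cl(B_r)$ definably compact; for the projection $\pi\colon G\to G/H_1$ the sets $\pi(B_r)$ form an increasing cover of $G/H_1$ by open sets, so once one knows $G/H_1$ is definably compact there is $r_0$ with $G/H_1=\pi(B_{r_0})$, whence $G=Cl(B_{r_0})\ccdot H_1$. In other words, the paper first proves (by the induction, using Conversano's theorem for the semisimple case and Conversano--Pillay for the solvable step) that $G/H_1$ is definably compact, and only then extracts the set $C$; the ``in particular'' clause is the input to the construction of $C$, not a consequence of it. The same trick would also repair your lifting of $\bar K$: trap the definably compact set $\bar K\sub G/R$ inside some $\pi(B_{r_0})$ and use $Cl(B_{r_0})$ as the compact set of representatives. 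With such a lemma inserted your outline can be completed, but without it steps (4)--(5) are a genuine gap rather than an assembly detail.
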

\begin{proof} We first prove the existence of a torsion free $H_1$ such that that $G/H_1$ is
definably compact. This basically follows from the work of A. Conversano, but we
give the details.

Use induction on $\dim G$. If $G$ is not semisimple then it has an infinite
$M$-definable normal abelian subgroup $N$. By induction the group $G/N$ has an
$M$-definable  solvable torsion free subgroup, which we may assume is of the form
$H/N$,  such that $(G/N)/(H/N)$ is a definably compact space. But then, the group
$H$ is clearly solvable as well, and the quotient $G/H$ is isomorphic to
$(G/N)/(H/N)$ so definably compact. By \cite[Proposition 2.2]{CP}, $H$ has a maximal
normal torsion-free definable subgroup $H_1\unlhd H$ with $H/H_1$ definably compact.
It follows that the space $G/H_1$ is definably compact as well.

Assume then that $G$ is semi-simple. Then by \cite[Theorem 1.2]{conversano1}, $G$
can be written as a product of two subgroups $G=K\ccdot H_1$ for $K$ a definably
compact and $H_1$ torsion-free  (so necessarily solvable). This clearly implies that
$G/H_1$ is definably compact.

Let us prove now the existence of a definably compact $C\sub G$ such that $G=C\cdot
H_1$. We first note that $G$ can be written as an increasing union of open sets
$G=\bigcup_{r>0} B_r$, such that for each $r$, $Cl(B_r)$ is definably compact (here
we use the fact that $G$ is embedded in $M^n$). We also have $G/H_1=\bigcup_r
\pi(B_r)$ and since $\pi$ is an open map each $\pi(B_r)$ is open. Since $G/H_1$ is
definably compact there is $r_0$ such that $G/H_1=\pi(B_{r_0})$, but then
$G=B_{r_0}\cdot H_1$ so in particular $G=Cl(B_{r_0})\ccdot H_1$.
 \end{proof}

\begin{thm}\label{theorem:1}
\begin{enumerate}[leftmargin=*]
\item Let $p$ be a definable type in $S_G(M)$. Then
the group $H=\Smu(p)$  is solvable, torsion-free, with
$\dim H\leq \dim p$. In particular, $\Stab(p)$ is torsion-free as well.

\item
 In the opposite direction, if $H$ is a torsion-free group definable over $M$ then there exists
 a complete definable $H$-type $p$ over $M$ such that
 $\Smu(p)=\Stab(p)=H$.
\item Every two maximal torsion free definable subgroups of $G$ are conjugate.
\end{enumerate}
\end{thm}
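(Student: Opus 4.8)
I will prove the three clauses of Theorem~\ref{theorem:1} in order, building up the structural facts about $\Smu(p)$ from the geometric description obtained in Theorem~\ref{Prop:definable}.

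For clause (1): the bound $\dim H\le \dim p$ is Theorem~\ref{Prop:definable}, and for torsion-freeness the idea is that $\Smu(p)$ cannot contain a nontrivial element of finite order because such an element generates a definably compact (indeed finite) subgroup, and stabilizing $p_\mu$ together with realizing a bounded piece forces a standard-part contradiction. More precisely: first reduce to the case where $p$ is $\mu$-reduced using Corollary~\ref{cor:def-reduced} and Claim~\ref{claim:conjugate} — conjugating $p$ does not change whether $\Smu(p)$ is torsion-free. Now suppose $g\in\Smu(p)$ has finite order $k>1$. Consider the $M$-definable subgroup $\langle g\rangle$ of $G$; the cyclic group it generates is finite. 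Take $S$ and $\alpha$ as in Theorem~\ref{Prop:definable} with $\Smu(p)=\st(S\alpha^{-1})$, so there is $\beta\in S(N)$, $\beta\models p$, with $g=\st(\beta\alpha^{-1})$, i.e.\ $\beta\alpha^{-1}\in\mu(N)g$. Iterating and using that $g$ normalizes $\mu(N)$ (Corollary~\ref{mu-group}), one gets $g^k=e$ forces $\beta$ to be $\mu$-close to $\alpha$, hence (by Fact~\ref{fact:algebraic-types}) $p$ is a bounded type and $\Smu(p)$ is trivial, contradiction — or, more directly, one runs the argument giving solvability and then invokes the structure theory. For solvability: let $H=\Smu(p)$ and let $H^0$ be its definably connected component. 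The key input is Fact~\ref{Borel} applied to $H^0$: write $H^0 = C\cdot H_1$ with $C$ definably compact and $H_1$ torsion-free solvable. I want to show $C$ is trivial (up to finite index), equivalently that $H^0$ itself is torsion-free, from which solvability is immediate since a torsion-free definable group in an o-minimal structure is solvable (this is itself part of the Conversano-type input, or can be cited). So the crux of clause (1) reduces to: \emph{$\Smu(p)$ has no torsion}.

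Here is the mechanism I expect to use for the no-torsion claim. If $K\le\Smu(p)$ were a nontrivial definably compact subgroup, then since $p$ is $\mu$-reduced and $K\subseteq\Stab(p_\mu)$ acts on the space $S^\mu_G(M)$ fixing $p_\mu$, the orbit $K\cdot\alpha$ inside $\CO\cdot\alpha$ would be a definably compact set all of whose elements realize (translates of) types $\mu$-equivalent to $p$; but a definably compact subset of $G(N)$ lying in $\CO\cdot\alpha$ has a well-defined standard part, and $\st(K\cdot\alpha\cdot\alpha^{-1})=\st(K)=K$ (as $K$ is defined over $M$ and closed), which would then be a \emph{positive-dimensional} (if $K$ infinite) definably compact subgroup of $\Smu(p)$; combining with Theorem~\ref{Prop:definable}(1) and the fact that the tangent space at $e$ is the standard part of the tangent space of $S\alpha^{-1}$, one extracts a contradiction with $\mu$-reducedness by exhibiting a lower-dimensional type in $p_\mu$ — essentially translating $\alpha$ along $K$ produces realizations concentrated on a proper definably compact subgroup, and the "compact direction" can be absorbed into $\mu$. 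I expect \emph{this} step — showing the definably compact part of $\Smu(p)$ is trivial — to be the main obstacle, and the cleanest route may well be to instead cite or adapt Fact~\ref{Borel} together with a direct argument: if $g\in\Smu(p)$ has finite order, the finite group $\langle g\rangle$ is contained in $\Smu(p)=\st(S\alpha^{-1})$, pull back each power $g^i=\st(\beta_i\alpha^{-1})$ with $\beta_i\models p$, and use that the $\beta_i$ are all $M$-conjugate to $\alpha$ to force, via the $C^1$ structure near $e$ and the computation $d(r_h)_e = I+\varepsilon$ from Claim~\ref{label:claim1.7}, that $g$ acts trivially, i.e.\ $g=e$.

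For clause (2): given a torsion-free $M$-definable $H$, I want a definable complete $H$-type $p$ over $M$ with $\Stab(p)=\Smu(p)=H$. Take $p$ to be a definable type concentrating on $H$ that is "generic at infinity" for $H$ in an appropriate sense — concretely, use that $H$ is solvable torsion-free, hence (by the structure theory of such groups in o-minimal expansions of real closed fields, or directly as an iterated extension of one-dimensional torsion-free groups) $H$ admits an unbounded definable curve through which, as in \cite{PS}, one builds $H$ itself; more robustly, take $\alpha$ to be a "generic" point of $H$ over $M$ in a tame extension, lying outside every proper $M$-definable subset, at a "corner at infinity" of $H$, and let $p=\tp(\alpha/M)$. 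Then $H\subseteq\Smu(p)$ follows because left-translating $\alpha$ by any $h\in H(M)$ keeps it $\mu$-close to a realization of $p$ (translation by $h$ moves $\alpha$ within $H$, and genericity plus the group structure absorbs it), while $\Smu(p)\subseteq H$ is Claim~\ref{claim:type-in-group}. Getting $\Stab(p)=H$ on the nose requires choosing $p$ so that additionally $h\cdot p=p$ for all $h\in H(M)$, which is exactly asking $p$ to be the (left-)generic type of the definably connected torsion-free group $H$; since $H$ is torsion-free it is contractible/definably connected with a unique generic type up to the relevant considerations, and left-genericity gives $\Stab(p)=H$. I would spell this out by induction on $\dim H$ using an $M$-definable normal subgroup.

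For clause (3): let $H,H'$ be two maximal torsion-free $M$-definable subgroups of $G$. By clause (2) choose definable types $p,p'$ over $M$ with $\Smu(p)=\Stab(p)=H$ and $\Smu(p')=\Stab(p')=H'$. I want to conjugate one to the other. The plan is to use that $G$ is covered by $C\cdot H$ for a definably compact $C$ (Fact~\ref{Borel}, applied after reducing to $G$ definably connected — maximal torsion-free subgroups are connected and lie in $G^0$), so that $\dim H$ is the same for all maximal torsion-free subgroups: indeed $\dim H = \dim G - \dim(G/H)$ and $G/H$ is definably compact of the "same size", or more carefully, any maximal torsion-free $H$ has $\dim H$ equal to the dimension of the torsion-free radical part in the $C\cdot H_1$ decomposition, which is an invariant of $G$. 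Then, given $H$ and $H'$ of the same dimension, realize $H=\Smu(p)$ and pick $\alpha\models p$; since $\Stab(p)$ acts with, roughly, $\Smu(p)$-many translates, and $G$ acts transitively-enough on the relevant part of $S^\mu_G(M)$, one finds $g\in G(M)$ with $g\cdot p_\mu = (p')_\mu$, whence by Claim~\ref{claim:conjugate} $\Smu(p')=g\Smu(p)g^{-1}$, i.e.\ $H'=gHg^{-1}$. The real content is producing such a $g$: this is where I expect to lean on Fact~\ref{Borel} again — both $H$ and $H'$ appear as the "$H_1$" factor in decompositions $G=C\cdot H=C'\cdot H'$, and the conjugacy of such factors is essentially Conversano's conjugacy theorem for maximal torsion-free (equivalently, maximal solvable of that flavor) subgroups, which one can either cite or re-derive from \cite[Proposition 2.2]{CP}, \cite{conversano1}. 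So clauses (1) and (3) both ultimately rest on the same structural fact (Fact~\ref{Borel}) plus the standard-part dictionary of Theorem~\ref{Prop:definable}, and the hardest technical point throughout is the no-torsion step in (1); once that is in hand, solvability and the conjugacy statement follow by citing the group-theoretic machinery already assembled.
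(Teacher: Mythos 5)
Your dimension bound is fine (it is really the corollary to Claim~\ref{claim:stub-subset}; Theorem~\ref{Prop:definable} itself only gives equality for $\mu$-reduced types), but the heart of clause (1) --- torsion-freeness and solvability --- is exactly where your proposal stalls, and the paper's proof avoids your obstacle entirely. You try to show directly that $\Smu(p)$ has no torsion (equivalently no definably compact part), and you yourself flag that you cannot complete that step; the sketches you offer (the orbit $K\ccdot\alpha$ standard-part argument, or iterating $g^i=\st(\beta_i\alpha^{-1})$ together with $d(r_h)_e=I+\varepsilon$) do not produce a contradiction as stated. The paper's idea is different and makes the torsion question disappear: apply Fact~\ref{Borel} to $G$ itself (not to $\Smu(p)$), writing $G=C\ccdot H_1$ with $C$ definably compact and $H_1$ torsion-free solvable, all $M$-definable. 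For $\alpha\models p$ and $\CN=\CM\la\alpha\ra$, write $\alpha=g\ccdot\beta$ with $g\in C(N)$ and $\beta\in H_1(N)$; since $C$ is $M$-definable and definably compact, $g$ has a standard part $g_0\in C(M)$, so $g_0^{-1}\alpha\in\mu(N)\ccdot\beta$, hence $(g_0^{-1}\ccdot p)_\mu=q_\mu$ where $q=\tp(\beta/M)$ concentrates on $H_1$. Claim~\ref{claim:type-in-group} gives $\Smu(q)\sub H_1$, and Claim~\ref{claim:conjugate} then shows $\Smu(p)$ is conjugate, by an element of $G(M)$, to a subgroup of $H_1$, hence solvable and torsion-free --- no separate no-torsion argument is needed.

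The same mechanism is what your clause (3) is missing. Your plan either looks for $g\in G(M)$ with $g\ccdot p_\mu=p'_\mu$ (there is no reason such a $g$ exists, and you give none), or assumes that both $H$ and $H'$ occur as the $H_1$-factor of a decomposition $G=C\ccdot H$ (unjustified --- that is essentially what is being proved), or falls back on citing Conversano's conjugacy result, which bypasses the clause rather than proving it in this framework. The paper instead fixes one $H_1$ as in Fact~\ref{Borel}, taken maximal, uses clause (2) --- which the paper does not reprove but quotes as \cite[Proposition 4.7]{CP}, so your inductive sketch there is optional --- to produce a definable $H$-type $p$ with $\Smu(p)=\Stab(p)=H$ for an arbitrary maximal torsion-free $H$, and then reruns the proof of (1) to conjugate $H=\Smu(p)$ into $H_1$; maximality forces equality, so every maximal torsion-free definable subgroup is conjugate to $H_1$. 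In short, the single shift-and-standard-part argument through the decomposition $G=C\ccdot H_1$ carries both (1) and (3), and that is the idea your proposal lacks.
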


\begin{proof}  (1) Let $H_1\sub G$ be any $M$-definable torsion-free solvable group as in Fact~\ref{Borel} and let $C\sub G$ be a definably compact set, defined over $M$, such
that $C\ccdot H_1=G$.

We take $\alpha\models p$ and work inside $\CN=\CM\langle \alpha\rangle$. There is
some $g\in C(N)$ such that $g^{-1}\alpha=\beta \in H_1$. Because $C$ is an
$M$-definable,  definably compact set, there exists  some $g_0\in C(M)$ such that
$g_0g^{-1}\in \mu(N)$. It follows that $g_0^{-1}\alpha \in \mu(N)\ccdot  \beta$ and
since $g_0\in M$ we have $g_0^{-1}\alpha\models g_0^{-1}\ccdot p$. If we let
$q=tp(\beta/M)$ then this implies $(g_0^{-1}\ccdot p)_{\mu}=q_{\mu}$.

Because $\beta\in H_1$, it follows from Claim~\ref{claim:type-in-group} that
$\Smu(q)$ is a subgroup of $H_1$ and
 hence $\Smu(p)$ is conjugate, by an element of $G(M)$,
to a definable subgroup of $H_1$. Clearly, every  such group is solvable,
torsion-free.

(2) This is exactly Proposition 4.7 in \cite{CP}.

(3) Let $H_1\subseteq G$ be a maximal torsion free subgroup as in Fact\ref{Borel}.
Let $H\subseteq G$ be a definable torsion-free  subgroup. By (2) there is a complete
$H$-type $p$ with   $\Smu(p)=\Stab(p)=H$.  By the proof of (1), $H=\Smu(p)$ is
conjugate to a subgroup of $H_2<H_1$. As $H$ is maximal, we get  $H_2=H_1$.
\end{proof}

\subsection{Stabilizers of types in definable $G$-sets.}

All that we have done so far in the o-minimal setting was to analyze $S^\mu_G(M)$.
 We present here several consequences for  definable $G$-sets and leave the more
 substantial
 investigation for further research. We thus fix a definable group $G$ and a
 definable $G$-set $X$,  in an o-minimal expansion $\CM$
 of a real closed field.

We first observe the following:
\begin{prop}\label{no-torsion}
\begin{enumerate}
\item The group $\mu(\mathbb U)$ is torsion-free.

\item If $\CN$ is a tame extension of $\CM$ then for every $N$-definable subgroup
$H$, the $M$-definable group $\st(H)$ has the same dimension as $H$.
\end{enumerate}
\end{prop}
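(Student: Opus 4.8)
For part (1), I would argue in $\mathbb U$ that $\mu(\mathbb U)$ contains no element of finite order other than $e$. Suppose $\varepsilon\in \mu(\mathbb U)$ satisfies $\varepsilon^m = e$ for some $m\geq 2$ and $\varepsilon\neq e$. The plan is to use the fact (from the embedding of $G$ as a closed $C^1$-submanifold of $\mathbb U^n$, via Pillay's structure theorem) that $G(\mathbb U)$ is a definable Lie-like group over the real closed field: in a sufficiently small $M$-definable chart around $e$, the group operation and inversion are given by $C^1$ maps, and $\varepsilon$ lies in the set of $M$-infinitesimals. The key point is that a torsion element infinitesimally close to $e$ is forced to be $e$ itself. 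Concretely, I would pass to the local one-parameter analysis: since $G$ is a definable $C^k$-group over a real closed field, by the theory of definable local subgroups (or simply by working in the tangent space at $e$ and using the exponential-type coordinates available in this setting) one shows that a nontrivial infinitesimal element generates an infinite subgroup, contradicting $\varepsilon^m = e$. Alternatively, and more in the spirit of the paper, I would note that if $\varepsilon$ had order $m$, then in an $M$-definable chart $U\cong$ (open subset of $\mathbb U^k$) around $e$, the map $x\mapsto x^m$ is $C^1$ with differential $m\cdot\mathrm{id}$ at $e$ (computing in the Lie-algebra coordinates), hence a local diffeomorphism near $e$; so $x^m = e$ has only the solution $x=e$ among infinitesimals. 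This is essentially the argument that a real Lie group has no small torsion, transported to the o-minimal setting.

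For part (2), let $\CN$ be a tame extension of $\CM$ and $H\leq G(N)$ an $N$-definable subgroup. We already know from \cite{vdd1} that $\dim_\CM \st(H)\leq \dim_\CN H$, so it suffices to prove the reverse inequality. The plan is to invoke Proposition~\ref{claim:general}: I would work in an $M$-definable chart $U\subseteq N^k$ of $G$ around $e$ (identifying $e$ with $0$), set $Y = H\cap U$, shrink $U$ so that $Y$ is a relatively closed $C^1$-submanifold of $U$, and verify the hypothesis that $\st(T(Y)_{h_1}) = \st(T(Y)_{h_2})$ for all $h_1, h_2 \in \nu(N)^k\cap Y$. For this last verification I would mimic Claim~\ref{label:claim1.7}: given $h\in \mu(N)\cap H$ (note $\mu(N)^k\cap Y$ corresponds to $\nu(N)^k\cap Y$ under the chart, since the group topology agrees with the $M^n$-topology), right multiplication $r_h$ is a diffeomorphism of $H$ onto itself sending $e$ to $h$, so $d(r_h)_e$ carries $T(H)_e = T(Y)_0$ onto $T(H)_h = T(Y)_h$; and since $h$ is infinitesimal and $g\mapsto d(r_g)_e$ is continuous and $M$-definable, $d(r_h)_e = I + \varepsilon$ with $\varepsilon$ having entries in $\nu(N)$, whence $\st(T(Y)_h) = \st(T(Y)_0)$. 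Then Proposition~\ref{claim:general} gives $\dim_\CM\st(Y) = \dim_\CN Y = \dim_\CN H$ (using $\dim Y = \dim H$ since $U$ is open), and since $\st(H)\cap \st(U)$ contains $\st(Y)$, we get $\dim_\CM \st(H)\geq \dim_\CN H$, as required.

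I expect the main obstacle to be part (1): making the "no small torsion" argument rigorous in the o-minimal setting without appeal to classical Lie theory. The cleanest route is probably via the differential of $x\mapsto x^m$ at $e$ in canonical coordinates, but one must be careful that such coordinates exist and that the computation of the differential ($m\cdot\mathrm{id}$) is valid over an arbitrary real closed field; this should follow from the $C^1$-group structure and the formula for the differential of iterated multiplication at the identity, but it deserves a careful statement. Part (2) should be essentially a routine application of Proposition~\ref{claim:general} together with the tangent-space-comparison trick already used in Claim~\ref{label:claim1.7}, so I anticipate no real difficulty there beyond bookkeeping about charts.
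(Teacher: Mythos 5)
Your proposal is correct and follows essentially the same route as the paper: for (1) the paper likewise observes that the differential at $e$ of $g\mapsto g^m$ is $v\mapsto mv$ on the Lie algebra, hence the power map is locally injective and $\mu(\mathbb U)$ can contain no nontrivial torsion, and for (2) it likewise uses $d(r_h)_e(T(H)_e)=T(H)_h$ for $h\in\mu\cap H$ to get $\st(T(H)_h)=\st(T(H)_e)$ and then applies Proposition~\ref{claim:general}. The only difference is presentational (the paper phrases (1) via the $M$-definable set of nontrivial $m$-torsion points accumulating at $e$, while you transfer local injectivity to the infinitesimals), which changes nothing of substance.
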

\begin{proof} (1) Assume for contradiction that $\mu(\mathbb U)$ contains an $n$-torsion
point of $G$. Thus the set $\{g\in G\colon  g\neq e \mbox{ and } g^n=e\}$ is an
$M$-definable set whose closure contains $e$.

We consider the Lie Algebra $\mathfrak{g}$, associated to $G$ (see \cite{PPS}). It
is not hard to see that for every $n\in \mathbb N$, the differential of the map
$g\mapsto g^n$ at $e$, call it $d_n\colon \mathfrak g\to \mathfrak g$, is just
the map $v\mapsto n v$. Since it is invertible, the map $g\mapsto g^n$ must be
injective in a small neighborhood of $e$, contradicting the fact that every
neighborhood of $e$ contains an $n$-torsion point.

(2) Assume that $H$ is an $N$-definable subgroup of $G$. As noted before, for every
$h\in H$, $d(r_h)_e(T(H)_e)=T(H)_h$. Hence for all
$h\in \mu(\mathbb U)\cap H$, $\st(T(H)_h)=\st(T(H)_e)$. But then, by Proposition~\ref{claim:general}, $\dim (\st(H))=\dim H$.
\end{proof}

Regarding the above proposition, note that while $\dim(\st(H))=\dim(H)$, these
groups could be quite different. For example, let $H$ be the $\SO(2,R)^A$, where
$$A=\begin{pmatrix}
  \al & 0 \\
  0 & \al^{-1} \\
\end{pmatrix},$$ for $\al$ realizing the type of an infinitely large element.
The group $H$ is definably compact but
$$\st(H)=\left\{\begin{pmatrix}
  1 & a \\
  0 & 1 \\
\end{pmatrix}: a\in \mathbb R\right\}$$ is torsion-free.

 We can now state two results on $\Smu(p)$ for definable $G$-sets.
\begin{prop} Assume that $X$ is a definable $G$-set, $p\in S_X(M)$
a definable type and $\al\models p$ is in $\mathbb U$.
\begin{enumerate}
\item Let $G_\al=\{g\in G(\mathbb U)\colon g\ccdot \al=\al\}$. Then
$\dim(\Smu(p))\geq\dim (G_\al)$.

\item Assume that $G$ acts transitively on $X$, and endow $X$ with the induced
topology (either through $S^\mu_X(M)$, see Corollary~\ref{cor:G-mod-closed}, or by
identifying $X$ with $G/G_a$ for some $a\in X(M)$). If $p$ is unbounded with respect
to this topology then $\dim (\Smu(p))>0$.
\end{enumerate}
\end{prop}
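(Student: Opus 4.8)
For part (1), the plan is to compare the $\mu$-stabilizer of $p$ with the full stabilizer $G_\alpha$ of a realization, using the partial standard part map. I would first pass to an $|M|^+$-saturated $\CN \sube \CM$ and pick $\alpha \models p$ in $N$, so that Claim~\ref{normal} applies: the group $\st^*(G_\alpha)$ is a (normal) subgroup of $\Smu(p)$, and it is defined over $M$. By Proposition~\ref{no-torsion}(2), since the extension $\CM\la\alpha\ra$ (or any tame extension on which $\st$ makes sense) is tame and $G_\alpha$ is an $N$-definable subgroup of $G(N)$, we have $\dim(\st(G_\alpha)) = \dim(G_\alpha)$; note that $\st^*$ restricted to $G_\alpha \cap \CO_G(N)$ agrees with the o-minimal standard part map $\st$ on the closed submanifold $G \sub M^n$, so $\st^*(G_\alpha) = \st(G_\alpha \cap \CO_G(N))$. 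Combining, $\dim(\Smu(p)) \geq \dim(\st^*(G_\alpha)) = \dim(G_\alpha)$. The only subtlety is matching the saturated-model setup of Claim~\ref{normal} with the tame-extension setup of Proposition~\ref{no-torsion}: one takes $\CN$ large and saturated, notes $G_\alpha$ is $N$-definable, and uses that dimension and the inequality $\dim \st(H) = \dim H$ are insensitive to which tame model one computes in, since $\dim$ is definable.

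For part (2), the strategy is to reduce unboundedness of $p$ on $X \cong G/G_a$ to unboundedness of a lifted type on $G$, and then invoke Theorem~\ref{Prop:definable}. Fix $a \in X(M)$ and identify $X$ with $G/G_a$ via $g \mapsto g\ccdot a$; let $\pi\colon G \to X$ be the quotient map, which is open and definable, so the quotient topology on $X$ matches the one induced from $S^\mu_X(M)$ (this is what Corollary~\ref{cor:G-mod-closed} gives). Since $p$ is a definable type on $X$ and $\pi$ is definable with definable fibers, I can lift $p$ to a definable type $\tilde p$ on $G$ with $\pi_*(\tilde p) = p$ (pushing forward a definable type along a definable surjection is definable; concretely, take $\tilde p$ to be the type of a realization $\beta \in G(N)$ of a definable section over a realization $\alpha \models p$). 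The key point is that $p$ being unbounded in $X$ with the quotient topology forces $\tilde p$ to be unbounded in $G$: if $\tilde p$ contained a formula defining a definably compact $K \sub G$, then $\pi(K)$ would be a definably compact subset of $X$ containing $\pi(\beta)$, and the corresponding formula would lie in $p$, contradicting unboundedness of $p$. Then Theorem~\ref{Prop:definable} (after replacing $\tilde p$ by a $\mu$-reduced $\mu$-equivalent definable type via Corollary~\ref{cor:def-reduced}, which changes neither the $\mu$-stabilizer nor unboundedness up to the bounded/unbounded dichotomy of Fact~\ref{fact:algebraic-types}) gives $\dim \Smu(\tilde p) > 0$. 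Finally, since $G_\beta \sube G_\alpha$ for the lift and $\pi$ intertwines the $G$-actions, we get $\Smu(\tilde p) \sube \Smu(p)$ up to the identification—more carefully, $g \in \Smu(\tilde p)$ implies $g\ccdot \tilde p \vdash \mu_G \ccdot \tilde p$, and applying $\pi$ yields $g \ccdot p \vdash \mu_X \ccdot p$ since $\pi(\mu_G) \sub \mu_X$ (continuity of the action plus $\pi$ open), so $g \in \Smu(p)$. Hence $\dim(\Smu(p)) \geq \dim(\Smu(\tilde p)) > 0$.

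The main obstacle I anticipate is the careful handling of the lifting step and the compatibility of the two topologies on $X$ in part (2): one must verify that the quotient topology on $G/G_a$ really is the topology coming from the $\mu$-construction on $S^\mu_X(M)$ (this is where Corollary~\ref{cor:G-mod-closed} is essential and must be applied correctly), and that ``unbounded'' transfers correctly under $\pi$ in both directions—unboundedness of $p$ upstairs to $\tilde p$, and then non-triviality of $\Smu(\tilde p)$ back down to $\Smu(p)$. The pushforward/pullback of definable types along $\pi$ and the inclusion $\pi(\mu_G(N)) \sub \mu_X(N)$ of infinitesimals are the technical hinges; everything else is an application of results already proved (Theorem~\ref{Prop:definable}, Claim~\ref{normal}, Proposition~\ref{no-torsion}).
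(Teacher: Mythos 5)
Your proposal is correct and takes essentially the same route as the paper: for (1) you combine Claim~\ref{normal} with Proposition~\ref{no-torsion}(2) in the tame extension $\CM\la\alpha\ra$ exactly as the paper does, and for (2) you lift $p$ to a definable $G$-type via a definable section (the paper's definable choice giving $Y\sub G$ in definable bijection with $X$), transfer unboundedness through $\pi$, apply Theorem~\ref{Prop:definable} (with the $\mu$-reduction step the paper leaves implicit), and conclude via $\Smu(\tilde p)\sub \Smu(p)$. The only blemish is your phrase ``$\pi(\mu_G)\sub\mu_X$'': there is no infinitesimal type on $X$, and the correct justification for passing from $g\ccdot\tilde p\vdash \mu\ccdot\tilde p$ to $g\ccdot p\vdash \mu\ccdot p$ is simply the $G$-equivariance of the definable map $\pi$, i.e.\ $\pi_*(\mu\ccdot q)=\mu\ccdot \pi_*(q)$ as in Claim~\ref{claim:morph-ext}.
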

\begin{proof} (1) We work in $\CN=\CM\la \al \ra$ a tame extension of $\CM$. By Claim~\ref{normal}, the group $\st(G_\al)$ is a subgroup of $\Smu(p)$ definable in $\CM$,
and by Proposition~\ref{no-torsion}, its dimension equals to that of $G_\al$. Hence
$\dim(\Smu(p))\geq\dim (G_\al)$.

(2) Fix $a\in X(M)$. By definable choice we can
find an $M$-definable set $Y\sub G$ which is in definable bijection with $X$ via the
map $\pi(g)=g\ccdot a$. While $\pi$ is not a homeomorphism of $Y\sub G$ and $X$
(with its quotient topology), if $D\sub Y$ is a definable set such that $Cl(D)$ is
definably compact in $G$, then $\pi(Cl(D))$ is definably compact with respect to the
topology of $X$.

Assume now that  $p\in S_X(M)$ is a definable type which is not bounded, namely,
does not contain any definably compact $X$-formula. Let $q\in S_Y(M)$ be the pull-back
of $p$ under $\pi$ (so $q\ccdot a=p$). Then $q$ is a definable $G$-type which, by
the above discussion, is unbounded in $G$. By Theorem~\ref{Prop:definable} the
$\mu$-stabilizer of $q$ has positive dimension and it is easy to see that
$\Smu(q)\sub \Smu(p)$. We thus showed that $\Smu(p)$ is infinite.
\end{proof}

\section{Some examples}

\subsection{The group $G=\la \mathbb R^2,+\ra$}

Let $\CM$ be an o-minimal expansion of the real field, and $G=\la \mathbb R^2,+\ra$. Our
goal is to understand the space $S_G(M)$ and the $\mu$-stabilizers of types. Note that
every type in $S_G(M)$ is definable (see \cite{MS}).  In our discussion below, $p$ is a
complete type in $S_G(M)$.

\medskip

 \noindent{\bf Bounded
types.}  As we pointed earlier, if $p$ contains any formula which defines a bounded
subset of $\mathbb R^2$ then it is $\mu$-equivalent to a (type of an) element $g\in
\mathbb R^2$ and hence $\Smu(p)=\{e\}$ (here $e=(0,0)$).
\medskip

\noindent{\bf Unbounded types of dimension $1$ }. We may assume that there is a
definable unbounded curve $\gamma(t)=(\gamma_1(t),\gamma_2(t))\colon (0,\infty)\to \mathbb
R^2$ such that $p$ is the type of $\gamma$ ``at $\infty$''. Clearly, $p$ is
$\mu$-reduced , for otherwise it will be infinitesimally close to a point $g\in
\mathbb R^2$, contradicting the fact that it is unbounded.

If we let $$\vec{v}=\lim_{t\to \infty} \frac{\dot{\gamma(t)}}{||\dot{\gamma(t)}||}.$$
then $\Smu(p)=\mathbb R\vec{v}$.
\medskip

\noindent{\bf Unbounded types of dimension $2$}. We prove here a general claim:

\begin{claim} If $G$ is any definable group in an o-minimal structure $\CM$ and $p\in
S_G(M)$ is a definable $\mu$-reduced  type with $\dim(p)=\dim(G)$ then $\Stab(p)=\Smu(p)=G$.
\end{claim}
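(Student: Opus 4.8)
The plan is to show $\Smu(p)=G$ directly, which by the inclusions $\Stab(p)\subseteq\Smu(p)\subseteq G$ will yield both equalities. So fix $g\in G(M)$; I want to show $g\in\Smu(p)$, i.e.\ that $g\ccdot p$ and $\mu\ccdot p$ are mutually consistent (by Claim~\ref{claim:inf-eq} this suffices). Take a realization $\alpha\models p$ and work in $\CN=\CM\la\alpha\ra$, a tame extension since $p$ is definable. The heart of the matter is that $p$ has maximal dimension $\dim G$, so $p(\mathbb U)$ is ``large'' in $G$: no $M$-definable set of smaller dimension can be in $p$, and in particular (this is the analogue of Claim~\ref{claim:smallerdim}) any $M$-definable $Y\subseteq G$ with $\dim Y<\dim G$ satisfies $\CO_G(N)\ccdot\alpha\cap Y=\emptyset$.

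The key step is to verify that the element $g^{-1}\ccdot\alpha$ lies in $\mu(N)\ccdot\alpha$, equivalently that $\alpha$ and $g^{-1}\ccdot\alpha=g^{-1}+\alpha$ (additively, in the $G=\la\mathbb R^2,+\ra$ picture; in general $g^{-1}\alpha$) are infinitesimally close. Here is the idea in the abelian case which is the point of this subsection: the translation $x\mapsto x-g^{-1}$ is an $M$-definable homeomorphism of $G$, so it maps $p$ to a complete type $q=(-g^{-1})\ccdot p$ of dimension $2=\dim G$; now I claim $q=p$. Indeed, for a contradiction suppose $q\neq p$, so there is an $M$-definable $D$ with $p\vdash D$ but $q\not\vdash D$, equivalently $p\vdash D$ but $p\not\vdash D+g^{-1}$. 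Since $\dim p=\dim G$ we may take $D$ relatively open; then $D\cap(D+g^{-1})$ is still $M$-definable, and repeatedly translating by $g^{-1}$ produces $M$-definable sets $D\cap(D+g^{-1})\cap(D+2g^{-1})\cap\cdots$; because $p$ is translation-covariant and $q\neq p$, iterating gives arbitrarily many pairwise ``incompatible'' open pieces near infinity along the direction of $g^{-1}$ — this forces $D$ (or some cell inside it that is in $p$) to have a frontier meeting $\CO_G(N)\ccdot\alpha$, contradicting the dimension count above. So $q=p$, hence $g^{-1}\ccdot p=p$, which immediately gives $g\in\Stab(p)\subseteq\Smu(p)$. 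Running this for all $g$ gives $\Stab(p)=\Smu(p)=G$.

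Alternatively — and this is probably the cleaner route to write up — I would invoke the already-proved Theorem~\ref{Prop:definable}: since $p$ is definable and $\mu$-reduced, $\dim\Smu(p)=\dim p=\dim G$, and $\Smu(p)$ is an $M$-definable subgroup of $G$. A definable subgroup of full dimension is open, hence a union of definably connected components of $G$; but $\Smu(p)$ contains $\Stab(p)$ and in particular, by Claim~\ref{claim:stub-subset} with a set $S\in p$ of dimension $\dim G$, one sees $\st(S\alpha^{-1})$ contains a neighborhood of $e$, so $\Smu(p)$ is the identity component $G^0$ of $G$. To upgrade $G^0$ to all of $G$ I would use that $p$, having full dimension, cannot be concentrated on a single coset of a proper subgroup (again by the $\dim Y<\dim G$ argument applied to the complement of $G^0$), so in fact $\Stab(p)$ already meets every component, forcing $\Smu(p)=G$; and then $\Stab(p)\subseteq\Smu(p)\subseteq G$ with $\Stab(p)$ of full dimension and meeting every component gives $\Stab(p)=G$ too.

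The main obstacle I anticipate is the translation-covariance/frontier argument establishing $q=p$ (equivalently, ruling out that a full-dimensional definable type can be moved off itself by a nontrivial translation): one must be careful that the set witnessing $p\neq q$ can be taken open and that iterating translations genuinely produces a lower-dimensional obstruction in $\CO_G(N)\ccdot\alpha$. Using the Theorem~\ref{Prop:definable} route sidesteps most of this, reducing the whole claim to the elementary fact that a definable subgroup of $G$ of dimension $\dim G$ that also meets every definably connected component must be $G$ itself — and handling the components by the same Claim~\ref{claim:smallerdim}-style dimension count.
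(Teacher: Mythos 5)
Your first route is not a proof as it stands: the key step $g^{-1}\ccdot p=p$ is precisely the content of the claim, and the iterated-translation, ``incompatible pieces near infinity'' sketch is both vague and tied to the abelian picture, whereas the statement concerns an arbitrary definable $G$. The paper's argument is much shorter and needs neither Theorem~\ref{Prop:definable} nor any translation dynamics: take the $M$-definable set $S\in p$ given by Claim~\ref{claim:components2}, so that every point of $S\cap(\CO\ccdot\alpha)$ realizes $p$. Since $\dim S=\dim p=\dim G$, the type $p$ concentrates on the interior of $S$ and the frontier of $S$ in $G$ is an $M$-definable set of dimension $<\dim p$; so if some point of $B_m\ccdot\alpha$ lay outside $S$, then (using that $B_m\ccdot\alpha$ is definably connected and contains $\alpha$) it would meet that frontier, contradicting Claim~\ref{claim:smallerdim}. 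Hence every element of $B_m\ccdot\alpha$ lies in $S\cap(\CO\ccdot\alpha)$ and realizes $p$, and since $G(M)=\bigcup_m B_m(M)$ this gives $g\ccdot\alpha\models p$ for every $g\in G(M)$, i.e.\ $\Stab(p)=G$ outright; then $\Stab(p)\sub\Smu(p)\sub G$ finishes. Note it is the equality $\Stab(p)=G$ --- the stronger half --- that neither of your routes actually delivers.

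Your second route has a genuine gap at the passage from $G^0$ to $G$. Theorem~\ref{Prop:definable} does give that $\Smu(p)$ is a definable subgroup of dimension $\dim G$, hence open and closed and containing $G^0$; but the proposed dimension count for the remaining components fails: $G\setminus G^0$ has full dimension, so a Claim~\ref{claim:smallerdim}-style argument says nothing about it, and in fact every complete type \emph{does} concentrate on a single coset of $G^0$. No dimension argument can bridge components: for the two-component group $G=\{0,1\}\times\la\mathbb R,+\ra$ and $p$ the (definable, $\mu$-reduced, full-dimensional) type at $+\infty$ inside the identity component, one has $\Stab(p)=\Smu(p)=G^0\neq G$. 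What makes the claim work is definable connectedness of $G$ (automatic in the intended application $G=\la\mathbb R^2,+\ra$), and it enters the paper's proof through the connectedness of $B_m\ccdot\alpha$ used above. Two further slips in this route: Claim~\ref{claim:stub-subset} gives the inclusion $\Smu(p)\sub\st(S\alpha^{-1})$, which is the wrong direction for concluding that $\Smu(p)$ contains a neighborhood of $e$ (you need Claim~\ref{claim:one}, or just Theorem~\ref{Prop:definable} itself); and you give no justification that $\Stab(p)$ --- as opposed to $\Smu(p)$ --- has full dimension or meets every component, so even granting $\Smu(p)=G$ your sketch does not yield $\Stab(p)=G$.
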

\begin{proof} By Claim~\ref{claim:components2}, there is an $M$
  definable set $S$  in $p$, such that for every $\alpha\models p$ and $m\in M$, every
element in $(B_m\ccdot \alpha)\cap S$ realizes $p$. Since $\dim S=\dim
p=\dim G$, by Claim~\ref{claim:smallerdim}, every element in $B_m\ccdot \alpha$ realizes $p$. Because $G(M)$ is the
union of these $B_m(M)$'s, it follows that every element in $G(M)\ccdot \alpha$
realizes $p$, so $G(M)=\Stab(p)$.\end{proof}

Going back to our example, we may conclude that $\Stab(p)=\Smu(p)=\mathbb R^2$.

\subsection{The action of $\SL(2,\mathbb R)$ on $\mathbb H$}

We now let $G=\SL(2,\mathbb R)$ and consider its usual action on the upper half plane
$\mathbb H=\{z\in \mathbb C\colon  Im(z)>0\}$ via
$$\begin{pmatrix}
  a & b \\
  c & d \\
\end{pmatrix}
z = \frac{az+b}{cz+d}.$$ The action is transitive and the
stabilizer of each point $z\in \mathbb H$, call it $G_z$, is a conjugate of
$\SO(2,\mathbb R)$.

Our goal is to understand $\mu$-types in $\mathbb H$. We work in an
o-minimal expansion $\CM$ of the field $\RR$.

\medskip

 We are using the fact that there is a definable compactification of $\mathbb H$, call
it $\bbar{ \mathbb H}$ such that the action of $G$ can be definably extended to
that of $\bH$. Namely,

$$\bH=\{z\in \mathbb C\colon Im(z)\geq 0\}\cup \{\infty\},$$
with the action of $G$ on the real line given by the same linear fraction, and with
$A\ccdot z=\infty$ when $cz+d=0$ and $A\ccdot \infty =a/c$. Clearly, the action is
transitive on $\bH\setminus \mathbb H$. The topology on $\bH$ is as follows: the
induced topology on $\bH\cap \mathbb C$ is the Euclidean one, and neighborhoods of
$\infty$ are of the form $g\ccdot U$, where $U\sub \bH$ is a neighborhood of $0$ (by
transitivity we could have chosen here any point other than $0$). The space $\bH$ is
compact and the action of $G$ is continuous. Note however, that because
$\bH\setminus \mathbb H$ is a closed  orbit of $G$, it is not true for $B\sub G$
open and $z\in\bH\setminus \mathbb H$ that the set $B\ccdot z$ is open.

\medskip

Given a complete type $p\in S_{\mathbb H}(M)$ we say that $z\in \bH$
is a limit point of $p$  if for every $M$-definable open neighborhood $U\sub \bH$
of $z$, the formula defining $U\cap \mathbb H$ is in $p$. Since $p$ is a complete
type and $\bH$ is a Hausdorff space each $p$ has at most one limit
point in $\bH$.

Consider now the intersection $\bigcap_{p\vdash Y} Cl(Y)$, where  $Cl(Y)$ denotes the closure of $Y$ in  $\bH$. Since $\bH$ is
compact and the collection of these $Y$'s is finitely satisfiable it follows that
the intersection is non-empty. It is not hard to see that any $z$ in this
intersection is a limit point of  $p$. Hence every $p\in S_{\mathbb H}(M)$ has a
unique limit point, call it $\lim p$, in $\bH$.

\begin{claim} The map $p\mapsto \lim (p)$ factors through $S^\mu_{\mathbb H}(M)$
and induces a topological homeomorphism of $G$-spaces $S^\mu_{\mathbb
H}(M)$ and $\bH$.
\end{claim}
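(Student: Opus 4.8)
The plan is to show the map $p\mapsto \lim(p)$ respects $\sim_\mu$, is a bijection onto $\bH$, is $G$-equivariant, and is continuous, so it is a homeomorphism of compact Hausdorff spaces.

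\textbf{Well-definedness through $S^\mu_{\mathbb H}(M)$.} First I would check that if $p\sim_\mu q$ then $\lim(p)=\lim(q)$. Suppose $z=\lim(p)$ and let $U\sub\bH$ be any $M$-definable open neighborhood of $z$; I want the formula defining $U\cap\mathbb H$ to lie in $q$. Since the topology of $\mathbb H$ is the Euclidean one and $\mathbb H$ is open in $\bH$, we may shrink and find a smaller $M$-definable open $V$ with $z\in V\sub Cl(V)\sub U$; because $G$ acts continuously on $\bH$ and $\mu$ consists of (formulas defining) open neighborhoods of $e$, we get $\mu\ccdot V\vdash U$ (more precisely, for $\varepsilon\in\mu(N)$ and $w\in V(N)$, $\varepsilon\ccdot w$ is still infinitesimally close to $w$, hence in $U(N)$, using that the action map is continuous and $Cl(V)\sub U$). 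Now $V\in p$ gives $\mu\ccdot p\vdash \mu\ccdot V\vdash U$, hence $\mu\ccdot q\vdash U$, and since $q$ is complete, $U\cap\mathbb H\in q$. As $U$ was arbitrary, $z=\lim(q)$. Conversely, if $\lim(p)=\lim(q)=z$, then every $M$-definable open neighborhood of $z$ is in both $p$ and $q$; since $\mu\ccdot p$ is implied by the family of such neighborhoods (a realization of $p$ is infinitesimally close to $z$, hence within any definable open $G$-nbhd of any definable open set around it), one checks $\mu\ccdot p=\mu\ccdot q$. So $\lim$ descends to an injective map $\bar\ell\colon S^\mu_{\mathbb H}(M)\to\bH$.

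\textbf{Surjectivity and equivariance.} For surjectivity, given $z\in\bH$: if $z\in\mathbb H(M)$ or $z$ is $M$-rational we take $p=\tp(z/M)$; in general, take an $M$-definable curve converging to $z$ (using that $\mathbb H$, and $\bH$ via charts, is $M$-definably path-connected and $z$ lies in the closure of $\mathbb H(M)$) and let $p$ be its type at the endpoint — then $\lim(p)=z$ by construction. Equivariance $\bar\ell(g\ccdot p_\mu)=g\ccdot\bar\ell(p_\mu)$ is immediate from the definition of $\lim$ and continuity of the $G$-action on $\bH$: if $z=\lim(p)$, then for any $M$-definable open $U\ni g\ccdot z$ the set $g^{-1}\ccdot U$ is an $M$-definable open neighborhood of $z$, so its defining formula is in $p$, hence $U$'s is in $g\ccdot p$.

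\textbf{Continuity and conclusion.} For continuity of $\bar\ell$, let $U\sub\bH$ be open (it suffices to treat $M$-definable basic opens by density, and then general opens since $\bar\ell$ is a bijection between compact Hausdorff spaces once continuity on a basis is established). The preimage $\{p_\mu : \lim(p)\in U\}$ is, by the characterization above, $\{p_\mu : U\cap\mathbb H\in \mu\ccdot p\}$, which — using that $\mu\ccdot p$ is determined by $p_\mu$ and that ``$\mu\ccdot p\vdash \psi$'' is, by Claim~\ref{claim:inf-eq2}, equivalent to $p\vdash\mu\ccdot\psi$, an open condition on the type space — is open in $S^\mu_{\mathbb H}(M)$. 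Finally, $S^\mu_{\mathbb H}(M)$ is compact (Appendix~A) and $\bH$ is Hausdorff, so the continuous bijection $\bar\ell$ is a homeomorphism, and it is a $G$-map by the previous paragraph.

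\textbf{Main obstacle.} The delicate point is the precise interplay between the $\mu$-relation and limit points in $\bH$ at the closed orbit $\bH\setminus\mathbb H$: there $B\ccdot z$ need not be open, so one must argue with the specific description of neighborhoods of $\infty$ (sets of the form $g\ccdot U$) rather than treating $\bH$ as homogeneous. Concretely, the subtlety is to verify $\mu\ccdot V\vdash U$ near boundary points, i.e.\ that an infinitesimal $G$-translate of something infinitesimally close to a boundary point $z$ stays inside a prescribed definable neighborhood of $z$; this follows from continuity of the extended $G$-action on the compact space $\bH$, but it is the step that genuinely uses the definable compactification and must be done carefully. Everything else is formal manipulation of $\mu$-types together with o-minimal definable connectedness for surjectivity.
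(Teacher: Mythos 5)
Your overall architecture (factor through $\sim_\mu$, bijectivity, continuity, then compact--Hausdorff gives a homeomorphism) is the same as the paper's, and your well-definedness and continuity steps are essentially sound: the needed fact that for each definable neighborhood $U$ of $z=\lim p$ there are definable $V\ni z$ and $W\ni e$ with $W\ccdot V\sub U$ does follow from continuity of the action on the compact space $\bH$ (though your phrase ``$\varepsilon\ccdot w$ is still infinitesimally close to $w$'' is not the right justification; the correct statement is $W(N)\ccdot V(N)\sub U(N)$ by transfer). Your surjectivity via definable curves is a legitimate alternative to the paper's density-plus-compactness argument.

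The genuine gap is injectivity. You dispose of it with ``since $\mu\ccdot p$ is implied by the family of such neighborhoods \dots one checks $\mu\ccdot p=\mu\ccdot q$,'' but this is exactly the statement that needs proof, and it is not a soft consequence of $p$ and $q$ concentrating at the same boundary point $z\in\bH\setminus\mathbb H$. The paper itself warns that the orbit map is not open at boundary points, so one cannot argue that everything infinitesimally close to $z$ lies in a single set $\mu(N)\ccdot\alpha$; and the implicit general principle ``same limit point $\Rightarrow$ $\mu$-equivalent'' is false for smaller groups (e.g.\ for the diagonal subgroup acting on $\mathbb H$, two types approaching $0$ along the imaginary axis and along another curve have the same limit but are not $\mu$-equivalent), so the claim genuinely uses the size of $\SL(2,\RR)$. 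What is needed, and what the paper supplies as its only real computation, is: after moving $z$ to $0$ by an element of $G(\mathbb R)$, fix the reference type $q_0=\tp(\beta i/\RR)$ with $\beta>0$ infinitesimal, write a realization of $p$ as $\alpha=\alpha_1+\alpha_2 i$ with $\alpha_1,\alpha_2$ infinitesimal, and check that
\[
h=\begin{pmatrix} 1 & -\alpha_1\\ 0 & 1\end{pmatrix}
\]
lies in $\mu(N)$ and satisfies $h\ccdot\alpha=\alpha_2 i\models q_0$; then Claim~\ref{claim:inf-eq} gives $p_\mu=(q_0)_\mu$, and likewise for $q$. Without an argument of this kind (exhibiting an infinitesimal group element carrying a realization of $p$ to a realization of a fixed type with the same limit), your proof of injectivity is an assertion rather than a proof, and the claim does not go through.
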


\proof In order to see that the map factors through $S^\mu_{\mathbb H}(M)$ we need to
show: If $p,q\in S_{\mathbb H}(M)$ and $z_p=\lim p\neq \lim q=z_q$ then $p_\mu\neq
q_\mu$.

Without loss of generality, $z_p, z_q\in \mathbb C\cap \bH$. If either of them is in
$\mathbb H$ then it is easy to see that $p_\mu\neq q_\mu$ so we may assume that
$z_p,z_q\in \mathbb R$. Now, let $D_p,D_q\sub \mathbb C$ be open discs centered at
$z_p, z_q$, respectively, such that $D_p\cap D_q=\emptyset$. By continuity of the
action on $\bH$, there exist open discs $U_p\sub D_p$ around $z_p$ and $U_q\sub D_q$
around $z_q$ and a definable open neighborhood $W$ of $e$ in $G$  such that $W\ccdot
U_p\sub D_p$ and $W \ccdot U_q\sub D_q$. By definition of $\lim p$ and $\lim q$ it
follows that $p\vdash U_p$ and $q\vdash U_q$, hence $\mu\ccdot p\sub D_p$ and
$\mu\ccdot q\vdash U_q$, so $p_\mu\neq q_\mu$.

A small variation of the above argument  shows that the induced map
$\pi\colon S_{\mathbb
H}^\mu(M)\to \bH$ is continuous. Since $\mathbb H$ is dense in $\bH$ and $S^\mu_{\mathbb
H}(M)$ is compact, the map is surjective. Let us see that it is also injective.

Assume that $\lim p=\lim q$; we will show that $p_\mu=q_\mu$. Without loss of generality,
we may assume that $\lim p=\lim q=0$, and to simplify matters it will be sufficient to
take $q=tp(\beta i/\mathbb R)$, with $\beta>0$ in an elementary
extension of $\CM$ infinitesimally close to $0$. It is
clearly enough to show that $p_\mu=q_\mu$. Let $\alpha\models p$ in some elementary
extension. We will show that there exists $h \in \mu(G)$ such that $h\ccdot \alpha \models
q$.

Indeed, write $\alpha=\alpha_1+\alpha_2 i$ with $\alpha_1, \alpha_2$ infinitesimally
close to $0$ and take
$$h=\left ( \begin{array}{cc}
  1 & -\alpha_1 \\
  0 & 1 \\
\end{array}\right).$$ Clearly, $h\in\mu(G)$ and we have $h\ccdot\alpha=\alpha_2 i\models q$. It follows, using \ref{claim:inf-eq} that
$p_\mu=q_\mu$.

We therefore showed that $\pi\colon S^\mu_{\mathbb H}(M) \to \bH$ is a continuous bijection.
Since both are compact and Hausdorff it is actually a homeomorphism. \qed

As an immediate corollary we obtain:

\begin{claim} If $p_\mu \in S^\mu_{\mathbb H}(M)$  and $z_p=\lim p \in \bH$  then
$\Smu(p)=G_{z_p}$. Hence, if $\lim p\in \mathbb H$ then $\Smu(p)$ is a conjugate of
$\SO(2,\mathbb R)$ and if $\lim (p)\in \bH\setminus \mathbb H$ then $\Smu(p)$ is a
conjugate of the solvable group

$$\left\{\left ( \begin{array}{cc}
  a & b \\
  0 & 1/a \\
\end{array}\right )\colon a,b\in \mathbb R\right\}.$$

\end{claim}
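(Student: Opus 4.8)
The statement follows immediately from the previous claim once one observes that $\Smu(p)$ is precisely the stabilizer of the point $p_\mu$ under the action of $G(M)=\SL(2,\mathbb R)$ on $S^\mu_{\mathbb H}(M)$. To see this, recall that $\Smu(p)=\Stab(\mu\ccdot p)$; unwinding the definition of $\Stab$, an element $g\in G(M)$ lies in $\Stab(\mu\ccdot p)$ exactly when the partial types $g\ccdot(\mu\ccdot p)$ and $\mu\ccdot p$ are logically equivalent. By Claim~\ref{claim:g-action}, $g\ccdot(\mu\ccdot p)=\mu\ccdot(g\ccdot p)$, and by Claim~\ref{claim:inf-eq} the equality $\mu\ccdot(g\ccdot p)=\mu\ccdot p$ is equivalent to $(g\ccdot p)_\mu=p_\mu$, i.e.\ to $g\ccdot p_\mu=p_\mu$. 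Hence $\Smu(p)=\{g\in G(M):g\ccdot p_\mu=p_\mu\}$, which is the point of view already taken in the introduction.

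Next, let $\pi\colon S^\mu_{\mathbb H}(M)\to\bH$ be the $G$-equivariant homeomorphism $p_\mu\mapsto\lim p$ of the previous claim, and put $z_p=\pi(p_\mu)=\lim p$. Since $\pi$ intertwines the two $G$-actions, for $g\in G(M)$ we have $g\ccdot p_\mu=p_\mu$ if and only if $g\ccdot z_p=z_p$. Combining this with the previous paragraph, $\Smu(p)=G_{z_p}$, the stabilizer of $z_p$ for the usual action of $\SL(2,\mathbb R)$ on $\bH$. This proves the first assertion.

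For the ``hence'' clause we use what was already recorded about the action on $\bH$. If $z_p\in\mathbb H$, then $G_{z_p}$ is a conjugate of $\SO(2,\mathbb R)$, since $\SL(2,\mathbb R)$ acts transitively on $\mathbb H$ with $G_i=\SO(2,\mathbb R)$. If $z_p\in\bH\setminus\mathbb H$, then $G_{z_p}$ is conjugate to $G_\infty$ because $\SL(2,\mathbb R)$ acts transitively on $\bH\setminus\mathbb H$, and $G_\infty$ is exactly the displayed solvable group: the condition $A\ccdot\infty=\infty$ forces the lower-left entry of $A$ to vanish, and then $\det A=1$ determines the lower-right entry as the inverse of the upper-left one.

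There is no real obstacle in this argument; the only point requiring a little care is the first paragraph, where one must check that the formula-theoretic definition of $\Stab(\mu\ccdot p)$ coincides with the stabilizer of the point $p_\mu$ in the $G$-space $S^\mu_{\mathbb H}(M)$, and for this the identities of Claims~\ref{claim:g-action} and~\ref{claim:inf-eq} are exactly what is needed.
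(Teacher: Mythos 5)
Your proof is correct and follows the same route the paper intends: the paper derives this claim as an immediate corollary of the preceding $G$-equivariant homeomorphism $S^\mu_{\mathbb H}(M)\cong\bH$, exactly as you do, with $\Smu(p)$ identified as the stabilizer of the point $p_\mu$ (which the paper already notes in the introduction) and the point stabilizers of $\bH$ computed as you describe. Your extra paragraph justifying $\Smu(p)=\{g\in G(M):g\ccdot p_\mu=p_\mu\}$ via Claims~\ref{claim:g-action} and~\ref{claim:inf-eq} is a welcome, if routine, elaboration of what the paper leaves implicit.
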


\appendix
\renewcommand{\thesection}{Appendix~\Alph{section}}

\section{More on the space $S_X^\mu(M)$}

\renewcommand{\thesection}{\Alph{section}}

In Topological Dynamics the Samuel compactification $S(G)$ of a topological group
$G$  is the  compactification of $G$ with respect to the uniformity induced via the
action of $G$ on itself by left multiplication.  Up to a homeomorphism it is the
unique compact space with the following properties.
\begin{enumerate}[label=(\roman*), leftmargin=*]
\item There is an embedding $\rchi\colon G\to S(G)$ such that the image
  $\rchi(G)$ is dense in $S(G)$, and  the group
  multiplication on $G$ extends to a continuous map from $G\times
  S(G)$ to $S(G)$. (In particular  $S(G)$ is a $G$-space.)
\item If $C$ is a compact $G$-space then for any $p\in C$ the map
$g\mapsto g\ccdot p$ from $G$ to $C$ extends uniquely to a continuous map
from $S(G)$ to $C$.
\end{enumerate}
 We refer to the original article \cite{samuel} and a survey \cite{usp} for more details on the
 Samuel compactification.

\medskip

In this section we investigate further properties of $S_G^\mu(M)$ for
a topological group $G$ definable in a first order structure $\CM$. We show
that it has a natural compact topology and under the additional assumption
that every complete $G$-type is definable, property (ii) above
holds if we restrict ourselves to definably separable actions of $G$
on compact spaces.

In the special  case, when {\em every\/}  subset of $G(M)$ is definable in $\CM$, the
space $S_G^\mu(M)$ is exactly the Samuel compactification $S(G)$ (by properties (i) and
(ii) above). A slightly different model theoretic approach to the Samuel
compactification is contained in \cite[Section 2]{GPP}.

If the topology on $G$ is discrete then $S^\mu_G(M)=S_G(M)$. This case has been
already studied in several papers (e.g.  \cite{N}).

\subsection{The topology of $S_X^\mu(M)$}

\noindent{\bf Setting:} We work in the same setting as in Section~\ref{sec:group-actions}.
We fix a first order structure $\CM$ and by definable we always mean
$M$-definable.
We also fix a group $G$ definable in $\CM$. We
assume  that $G$ is a topological group and that it has a basis of open
neighborhoods of $e$ consisting of definable sets.  As in Section~\ref{sec:group-actions} we will denote by $\mu(v)$ the infinitesimal type of $G$.
Here we always write $G$ for $G(M)$.

If  $X$ is a definable $G$-set then   by Claim~\ref{claim:g-action}, the action of
$G$ on $X$ induces an action of $G$ on $S^\mu_X(M)$.  Our first goal is to show that
$S^\mu_X(M)$ has a natural Hausdorff topology. With respect to this topology
$S_G^\mu(M)$ is compact and the group action is continuous, in other words
$S_X^\mu(M)$ is a compact $G$-space (compact $G$-spaces are also called  $G$-flows).

Since the relation $\sim_\mu$ on $S_X(M)$ is induced by a type definable equivalence
relation (see below), we present the basics in a more general setting, still
denoting the underlying set by $X$.

\medskip

Let  $X$ be  a definable set in $\CM$ and $E(x,y)$ an
$X^2$-type over $M$ which defines an equivalence relation on $X(\mathbb U)$ (we call
it {\em a type-definable equivalence relation on $X$\/}). We denote by $E^*$ the
associated equivalence relation on $S_X(M)$:
$$p \, E^* \, q \Leftrightarrow p(x)\,\cup\, q(y)\,\cup\,E(x,y) \,\mbox{ is
consistent}.$$ For $p\in S_X(M)$, let $[p]$ denote its $E^*$-equivalence class.

\begin{defn} We equip the set $S^*_X(M)=S_X(M)/E^*$ with the quotient topology, namely a subset $F\subseteq S^*_X(M)$ is
closed if and only if there is a partial $X$-type $\Sigma$ over $M$ such that
\begin{equation}\label{eq-top} F=\{ [p]\in S_X^*(M) \colon p\vdash
\Sigma\}.\end{equation}
\end{defn}

\begin{claim}
  \label{claim:mu-haus} The relation $E^*$ is closed in $S_X(M)\times S_X(M)$,  hence
  $S_X^*(M)$ is compact and the projection map $\pi\colon S_X(M)\to
  S_X^*(X)$ is closed.
\end{claim}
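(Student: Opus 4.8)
The plan is to prove directly that $E^*$ is a closed subset of $S_X(M)\times S_X(M)$; the other two assertions then follow by standard topology. Recall that $S_X(M)\times S_X(M)$ is compact Hausdorff (it is a product of Stone spaces), so a closed subset is itself compact; then $S_X^*(M)$, as the image of this closed equivalence relation under the quotient map of a compact Hausdorff space, is compact Hausdorff, and the quotient map $\pi$ of a compact space onto a Hausdorff space is automatically closed. So the whole content is the closedness of $E^*$.

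To see that $E^*$ is closed, I would exhibit its complement as a union of basic open sets. Suppose $(p,q)\notin E^*$, i.e.\ $p(x)\cup q(y)\cup E(x,y)$ is inconsistent. By compactness of first-order logic there are finitely many formulas $\varphi(x)\in p$, $\psi(y)\in q$, and $\theta(x,y)\in E$ such that $\varphi(x)\wedge\psi(y)\wedge\theta(x,y)$ is inconsistent. Then the basic clopen neighborhood $[\varphi]\times[\psi]$ of $(p,q)$ in $S_X(M)\times S_X(M)$ is disjoint from $E^*$: for any $(p',q')$ with $\varphi\in p'$ and $\psi\in q'$, the type $p'(x)\cup q'(y)\cup E(x,y)$ contains $\varphi(x)$, $\psi(y)$, and $\theta(x,y)$, hence is inconsistent, so $(p',q')\notin E^*$. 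This shows the complement of $E^*$ is open, i.e.\ $E^*$ is closed.

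For the remaining clauses: since $E^*$ is a closed equivalence relation on the compact Hausdorff space $S_X(M)$, the quotient $S_X^*(M)$ with the quotient topology is compact and Hausdorff. (Hausdorffness of the quotient is exactly equivalent to $E^*$ being closed when the total space is compact; compactness is immediate since $S_X^*(M)$ is a continuous image of $S_X(M)$.) One should also check that the quotient topology agrees with the description by partial types in \eqref{eq-top}: a set of the form $\{[p]:p\vdash\Sigma\}$ is the $\pi$-image of the closed (hence compact) saturated set $\{p:p\vdash\Sigma\}$, and conversely every $\pi$-closed set has $\pi$-preimage a saturated closed subset of $S_X(M)$, which by compactness of $S_X(M)$ is of the form $\{p:p\vdash\Sigma\}$ for the partial type $\Sigma$ consisting of all $M$-formulas implied by every $p$ in that set. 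Finally, $\pi$ is a closed map because a continuous map from a compact space to a Hausdorff space is always closed. I do not anticipate a genuine obstacle here; the only point requiring a little care is the compactness-argument identifying a closed saturated subset of $S_X(M)$ with the set of completions of a partial type, which is routine.
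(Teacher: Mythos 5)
Your proposal is correct and follows essentially the same route as the paper: a compactness argument produces formulas $\varphi\in p$, $\psi\in q$ whose basic clopen neighborhood misses $E^*$, and then the standard facts about closed equivalence relations on compact Hausdorff spaces give Hausdorffness, compactness of the quotient, and closedness of $\pi$. Your extra verification that the paper's description of closed sets via partial types agrees with the quotient topology is a reasonable (and correct) addition, but it does not change the substance of the argument.
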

\begin{proof} Take $(p,q)\notin E^*$. Then $p(x)\cup q(y)\cup E(x,y)$ is
inconsistent, and therefore by compactness there are formulas $\varphi(x)\in p$ and
$\psi(y)\in q$ such that $\{\varphi(x),\psi(y)\} \cup E(x,y)$ is inconsistent. The
formulas $\varphi$ and $\psi$ define an open neighborhood $U_\varphi \times U_\psi$
of $(p,q)$ which is disjoint from $E^*$, so $E^*$ is closed.

We now conclude that $S^*_X(M)$ is Hausdorff, so by continuity of $\pi$ it is
compact, and $\pi$ is a closed map.\end{proof}

As a corollary of the above we see that for each $p\in S_X(M)$, the
$E^*$-equivalence class $[p]\sub S_X(M)$  is a closed subset of $S_X(M)$, so given
by a partial type, which we denote by $\Phi_p(x)$. The next claim describes the
topology on $S^*_X(M)$.

\begin{claim}
\label{claim:mu-top} Assume that $X$ and $E$ are as above.
\begin{enumerate}[leftmargin=*]
\item A non-empty subset $F\subseteq S_X^*(M)$ is closed if and only if there is
  an $X$-type $\Sigma(x)$ over $M$ such that $$F=\{[p]\in S_X^*(M)
  \colon p(x) \, \cup  \, E(x,y) \cup\, \Sigma(y) \mbox{ is consistent} \}.$$
\item A basis for the open sets in the topology of $S^*_X(M)$ is the collection, as
$\varphi$ varies in $\mathcal L_X(M)$, of sets of the form $$U^*_\varphi:=\{[p]\in
S^*_X(M)\colon  \Phi_p(x)\vdash \varphi(x)\}.$$

\end{enumerate}
\end{claim}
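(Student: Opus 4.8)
The statement has two parts, and I would prove them together by carefully unwinding the definition of the quotient topology on $S^*_X(M)$ given above. Recall that by definition, a subset $F\subseteq S_X^*(M)$ is closed iff $F=\{[p]:p\vdash\Sigma\}$ for some partial $X$-type $\Sigma$ over $M$, equivalently iff its preimage $\pi^{-1}(F)$ under the projection $\pi\colon S_X(M)\to S_X^*(M)$ is a closed, $E^*$-saturated subset of $S_X(M)$ (closed subsets of $S_X(M)$ are exactly those of the form $\{p:p\vdash\Sigma\}$). Since $\pi$ is a closed map by Claim~\ref{claim:mu-haus}, the saturation $\pi^{-1}(\pi(\{p:p\vdash\Sigma\}))$ of any closed set is again closed, and this is precisely the content of part (1): the saturated closed set attached to $\Sigma$ is $\{[p]: p(x)\cup E(x,y)\cup\Sigma(y)\text{ consistent}\}$. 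So the first step is to verify the set-theoretic identity
\[
\pi^{-1}\bigl(\pi(\{p:p\vdash\Sigma\})\bigr)=\{q\in S_X(M): q(x)\cup E(x,y)\cup\Sigma(y)\text{ is consistent}\},
\]
which is immediate from the definition of $E^*$: $q\,E^*\,p$ for some $p\vdash\Sigma$ iff $q(x)\cup p(y)\cup E(x,y)$ is consistent for some such $p$, iff (by compactness and completeness of the types, working in $\UU$) $q(x)\cup\Sigma(y)\cup E(x,y)$ is consistent. Then I would argue that \emph{every} closed $F$ arises this way: if $F$ is closed, $\pi^{-1}(F)$ is closed and $E^*$-saturated, so $\pi^{-1}(F)=\{p:p\vdash\Sigma\}$ for the type $\Sigma$ consisting of all $M$-formulas implied by $\pi^{-1}(F)$, and by saturation this $\Sigma$ reproduces $F$ in the desired form. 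This gives part (1).

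For part (2), the plan is to show that the sets $U^*_\varphi$ are open and that every open set is a union of such. That $U^*_\varphi$ is open follows from (1): its complement is $\{[p]:\Phi_p(x)\not\vdash\varphi(x)\}=\{[p]:\Phi_p(x)\cup\{\neg\varphi(x)\}\text{ consistent}\}$, and since $\Phi_p$ is exactly the partial type cutting out the closed $E^*$-class $[p]$, this complement is $\{[p]:p'(x)=\neg\varphi(x)\text{ realized in }[p]\}$, i.e. $\{[q]: q(x)\cup E(x,y)\cup\{\neg\varphi(y)\}\text{ consistent}\}$, which is closed by part (1) with $\Sigma(y)=\{\neg\varphi(y)\}$. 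Conversely, given an open set $W\subseteq S^*_X(M)$ and a point $[p]\in W$, I want to produce a single formula $\varphi\in\CL_X(M)$ with $[p]\in U^*_\varphi\subseteq W$. The complement $F=S^*_X(M)\setminus W$ is closed, so by part (1), $F=\{[q]:q(x)\cup E(x,y)\cup\Sigma(y)\text{ consistent}\}$ for some $\Sigma$. Since $[p]\notin F$, the type $\Phi_p(x)\cup E(x,y)\cup\Sigma(y)$ is inconsistent; by compactness there are $\theta(x)\in\Phi_p$ (hence $\Phi_p\vdash\theta$) and finitely many formulas from $\Sigma$, whose conjunction I call $\sigma(y)$, such that $\{\theta(x)\}\cup E(x,y)\cup\{\sigma(y)\}$ is inconsistent. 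Set $\varphi(x):=\neg\,(\exists y)\,(E(x,y)\wedge\sigma(y))$ — or rather, since $E$ is type-definable not definable, I should instead take $\varphi$ to be an appropriate formula from the type $\{\neg(\exists y)(e(x,y)\wedge\sigma(y)): e\in E\}$ separating $[p]$ from $F$; the inconsistency above guarantees some such finite approximation works. Then $\Phi_p\vdash\varphi$, so $[p]\in U^*_\varphi$, and one checks $U^*_\varphi\cap F=\emptyset$, i.e. $U^*_\varphi\subseteq W$.

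The main obstacle, and the place requiring genuine care, is the bookkeeping with the type-definable (as opposed to definable) equivalence relation $E$ in part (2): one cannot literally push $E$ through an existential quantifier to get an $\CL_X(M)$-formula, so the separation argument must be phrased in terms of finite subtypes of $E$ and a compactness argument in $\UU$, and one must check that the resulting basic open neighborhood $U^*_\varphi$ genuinely lies inside $W$ rather than merely containing $[p]$. Once the correspondence between closed $E^*$-saturated subsets of $S_X(M)$ and closed subsets of $S^*_X(M)$ is set up cleanly (which is really just the statement that $\pi$ is a closed quotient map, already in hand), both parts reduce to routine compactness manipulations; I expect no further difficulties, and the arguments for $S^\mu_X(M)$ then follow by specializing $E$ to the type-definable relation "$\mu\ccdot x=\mu\ccdot y$", using Claim~\ref{claim:inf-eq} to identify it with $\sim_\mu$.
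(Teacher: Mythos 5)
Your proposal is correct and follows essentially the same route as the paper: part (1) comes from identifying the closed sets of the quotient with images under the closed projection $\pi$ (Claim~\ref{claim:mu-haus}) of closed, $E^*$-saturated sets, and part (2) comes from a compactness argument applied to complements of closed sets written in the form of (1). The only cosmetic difference is in part (2): where you construct a separating formula $\neg\exists y\,(e(x,y)\wedge\sigma(y))$ from finite approximations of $E$ and $\Sigma$, the paper observes that $[p]\notin F$ exactly when $\Phi_p(x)\cup\Sigma(x)$ is inconsistent, so (taking $\Sigma$ closed under finite conjunctions) the complement of $F$ is already the union of the sets $U^*_{\neg\varphi}$ for $\varphi\in\Sigma$, with no need to mention $E$ in the separating formula.
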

\proof (1) follows from the description of closed sets in (\ref{eq-top}).

\medskip

(2) If $F$ is any closed set as in (1) then its complement is $$\{[p]\in S_X^*(M)\colon
p(x)\cup  E(x,y) \cup \Sigma(y) \mbox{  is inconsistent}\}, $$ which is the same as
$$\{[p]\in S^*_X(M)\colon  \Phi_p(x) \,\cup \, \Sigma(x) \mbox{ is
inconsistent}\}.$$ By compactness, this is the union over all $\varphi\in \Sigma$,
of the sets
$$\{[p]\in S_X^*(M)\colon  \Phi_p\vdash \neg\varphi\}.$$ Every such set is an open set of
the form $U^*_{\neg\varphi}$.\qed

\begin{rem} Note that the notion of a type definable equivalence relation $E$ on $X$ is the
same that that of {\em a uniformity on $X$\/} (see \cite[Chapter 6]{Kelley}). Namely,
the set $\{\phi(M^2)\colon \phi(x,y)\in E\}$ is a base for a uniformity on $X$.
Conversely, given a uniformity $\mathcal U$ on a set $X$, if we endow $X^2$ with a
predicate  for every set in $\mathcal U$, then the set of these predicates is a
type-definable equivalence relation on $X$.

The space $S^*_X(M)$ that we described above is basically the compactification of a
uniform space (a set together with a uniformity) described by Samuel in
\cite{samuel}.
\end{rem}

\medskip

We now return to the case when $X$ is  a definable $G$-space.
Obviously,
$$\Phi(x,y)=\{\exists z(\theta(z)\wedge z\ccdot x=y)\,\colon \, \theta(v)\in \mu\}$$
defines the equivalence relation $\mu(\mathbb U)\ccdot x=\mu(\mathbb U)\ccdot y$ on
$X(\mathbb U)$, and we have  $$p \sim_\mu q\Leftrightarrow p(x)\cup q(y) \cup
\Phi(x,y)\,\, \mbox{ is consistent},$$ therefore $\sim_\mu$ is the  equivalence
relation on $S_X(M)$  associated to $\Phi$, whose quotient space is $S_X^\mu(M)$.

By our above analysis, the space $S_X^\mu(M)$ is compact and a basis for its
topology is given by open sets of the form $$U^\mu_\varphi=\{p_\mu\in
S^\mu_X(M)\colon \mu\ccdot p\vdash \varphi\},$$ as $\varphi$ varies over the $X$-formulas.

Although parts of what we do below can be still presented in a more general setting
we stick to the case of definable $G$-sets.

\begin{claim}
The action of $G$ on $S^\mu_X(M)$ (see Claim~\ref{claim:g-action}) is continuous,
hence
  $S^\mu_X(M)$ is a $G$-space.
\end{claim}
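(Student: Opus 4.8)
The plan is to show that the action map $G \times S^\mu_X(M) \to S^\mu_X(M)$, $(g, p_\mu) \mapsto (g \ccdot p)_\mu$, is continuous, where $G = G(M)$ carries its given topology and $S^\mu_X(M)$ carries the quotient topology described via the basic open sets $U^\mu_\varphi = \{p_\mu : \mu \ccdot p \vdash \varphi\}$. Since a basis for the topology on $S^\mu_X(M)$ is given by the sets $U^\mu_\varphi$ as $\varphi$ ranges over $\mathcal L_X(M)$, it suffices to fix such a $\varphi$, fix a pair $(g_0, p_\mu)$ with $(g_0 \ccdot p)_\mu \in U^\mu_\varphi$ — i.e. $\mu \ccdot (g_0 \ccdot p) \vdash \varphi$ — and produce an open neighborhood $B \times U^\mu_\psi$ of $(g_0, p_\mu)$ in $G \times S^\mu_X(M)$ whose image lies in $U^\mu_\varphi$.

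First I would use $\mu \ccdot (g_0 \ccdot p) = g_0 \ccdot (\mu \ccdot p)$ (Claim~\ref{claim:g-action}), so the hypothesis reads $g_0 \ccdot (\mu \ccdot p) \vdash \varphi$, equivalently $\mu \ccdot p \vdash g_0^{-1} \ccdot \varphi$. By compactness there is a single formula $\theta(x) \in \mu \ccdot p$ with $\theta \vdash g_0^{-1}\ccdot\varphi$, and unwinding the definition of $\mu \ccdot p$, there is some $B_t \in \mu$ and some $\chi(x) \in p$ such that $B_t \ccdot \chi \vdash g_0^{-1} \ccdot \varphi$, i.e. $(g_0 B_t) \ccdot \chi \vdash \varphi$. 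Now I would exploit continuity of the group operation in $G$ at $g_0$: since $g_0 \in g_0 B_t$ and $g_0 B_t$ is open, there is an open neighborhood $B$ of $e$ and an open neighborhood $B_s$ of $e$ (with $B_s \in \mu$, shrinking $B$ to be definable and in $\mu$ if needed) such that $g_0 B B_s \subseteq g_0 B_t$ — concretely, pick a definable open $B \ni e$ with $g_0 B \cdot B_s \subseteq g_0 B_t$, using continuity of multiplication. Then set $\psi := \chi$, giving the basic open neighborhood $U^\mu_\psi \ni p_\mu$ (indeed $\mu \ccdot p \vdash \psi$ since $\chi \in p$), and the open neighborhood $g_0 B$ of $g_0$ in $G$.

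I would then verify the containment: take any $h \in g_0 B$ and any $q_\mu \in U^\mu_\psi$, so $\mu \ccdot q \vdash \psi = \chi$, meaning $B_s \ccdot \chi \in \mu \ccdot q$ for every small enough $B_s \in \mu$; in particular $\mu \ccdot q \vdash B_s \ccdot \chi$. Then $\mu \ccdot (h \ccdot q) = h \ccdot (\mu \ccdot q) \vdash h \ccdot (B_s \ccdot \chi) = (h B_s) \ccdot \chi$, and since $h \in g_0 B$ we get $h B_s \subseteq g_0 B B_s \subseteq g_0 B_t$, so $(h B_s)\ccdot\chi \vdash (g_0 B_t)\ccdot\chi \vdash \varphi$. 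Hence $\mu \ccdot (h \ccdot q) \vdash \varphi$, i.e. $(h \ccdot q)_\mu \in U^\mu_\varphi$, as required. Combined with the already established facts that $S^\mu_X(M)$ is compact Hausdorff (Claim~\ref{claim:mu-haus} and Claim~\ref{claim:mu-top}), this shows $S^\mu_X(M)$ is a compact $G$-space.

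The main obstacle, and the place to be careful, is the bookkeeping with the incomplete types $\mu \ccdot p$: one must apply compactness at the right moment to replace $\mu$ and $p$ by single formulas $B_t$ and $\chi$, and then translate the set-theoretic inclusion $g_0 B B_s \subseteq g_0 B_t$ (available from continuity of multiplication in the topological group $G$, using that $\mu$ has a definable basis) into the type-theoretic implication $(hB_s)\ccdot\chi \vdash \varphi$ via Remark~\ref{rem:products}(3). Everything else is routine manipulation of the $\ccdot$ operation on formulas and Claim~\ref{claim:g-action}.
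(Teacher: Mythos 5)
Your strategy---reduce to the basic opens $U^\mu_\varphi$, pull $\varphi$ back through $g_0$, extract a single $B_t\in\mu$ and $\chi\in p$ with $(g_0 B_t)\ccdot\chi\vdash\varphi$, and shrink using continuity of multiplication---is essentially the paper's, but there is one genuine error, and it is exactly at the point the paper's proof is designed to handle: the choice of the neighborhood of $p_\mu$ on the $S^\mu_X(M)$ side. You take $U^\mu_\chi$ and justify $p_\mu\in U^\mu_\chi$ by ``$\mu\ccdot p\vdash\chi$ since $\chi\in p$''. That implication is false in general: the realizations of $\mu\ccdot p$ in $\UU$ are $\mu(\UU)\ccdot p(\UU)$, which need not lie in $\chi(\UU)$ even though $p(\UU)$ does. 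Concretely, take $G=X=\la\RR,+\ra$ in the real field, $g_0=0$, $p=\tp(5/\RR)$, and $\varphi(x)$ defining $|x-5|<1$; a perfectly legitimate outcome of your extraction step is $\chi(x)\colon x=5$ and $B_t=(-1,1)$, and then $\mu\ccdot p\not\vdash\chi$, so your proposed box $g_0B\times U^\mu_\chi$ does not contain the point $(g_0,p_\mu)$ and the continuity argument collapses. (A smaller remark: in your verification paragraph the passage from $\mu\ccdot q\vdash\chi$ to $\mu\ccdot q\vdash B_s\ccdot\chi$ is trivial, since $e\in B_s$ gives $\chi\vdash B_s\ccdot\chi$; so as you have arranged things the shrunken $B_s$ does no work where you invoke it.)

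The fix is to put the shrinking where the paper puts it: use the neighborhood $U^\mu_{B_s\ccdot\chi}$ instead of $U^\mu_\chi$. The formula $B_s\ccdot\chi$ literally belongs to $\mu\ccdot p$ (as $B_s\in\mu$ and $\chi\in p$), so $p_\mu\in U^\mu_{B_s\ccdot\chi}$; and for $h\in g_0B$ and any $q$ with $\mu\ccdot q\vdash B_s\ccdot\chi$, your own computation $\mu\ccdot(h\ccdot q)=h\ccdot(\mu\ccdot q)\vdash (hB_s)\ccdot\chi$, together with $hB_s\subseteq g_0BB_s\subseteq g_0B_t$, gives $\mu\ccdot (h\ccdot q)\vdash\varphi$ verbatim. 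This is precisely the paper's proof, where a single $\theta_1\in\mu$ with $\theta_1\ccdot\theta_1\vdash\theta$ plays the roles of both your $B$ and $B_s$, and the neighborhood used is $g\ccdot\Theta_1\times U^\mu_{\theta_1\ccdot\psi}$. With that one correction your argument is complete and coincides with the paper's.
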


\proof  It is sufficient to show that for each $X$-formula $\varphi$ over
  $M$  the set $W_\varphi=\{ (g,p_\mu) \colon g\ccdot p_\mu \in U_\varphi^\mu\}$ is
  open in the product topology on $G\times S_X^\mu(M)$.

Assume  $(g,p_\mu)\in W_\varphi$. Then there is $\theta(v)\in \mu$ and $\psi(x)\in
p(x)$ such that $(g\ccdot \theta \ccdot \psi)(x) \vdash \varphi(x)$.  Since
$\mu\ccdot\mu=\mu$, there is $\theta_1(v)\in \mu$ such that
$(\theta_1\ccdot\theta_1)(v) \vdash \theta(v)$.  Let $\Theta_1=\{ h\in G \colon
\CM\models \theta_1(h) \}$. It is an open subset of $G$. For every $h\in \Theta_1(M)$ we
have
\[ \bigl((g\ccdot h)\ccdot (\theta_1\ccdot\psi)\bigr)(x)\vdash \varphi(x). \]
It is easy to see that $g\ccdot \Theta_1\times
U^\mu_{\theta_1\ccdot\psi}$ is an open subset of $W_\varphi$
containing $(g,p_\mu)$.
\qed

\medskip

Recall that in general when a group $G$ acts on sets $X$ and $Y$ then
a map $f\colon A\to B$ is called {\em equivariant\/} if it commutes with
the action of $G$, i.e. $f(g\ccdot a)=g\ccdot f(a)$ for all $g\in G$
and $a\in A$.

Also recall that if $f\colon X\to Y$ is an $M$-definable map between $M$-definable
sets (and hence also from $S_X(M)$ to $S_Y(M)$) then $f$ has a canonical extension
to a map $ f_*$ from $\mathcal L_X(M)$ to $\mathcal L_Y(M)$ so that for a saturated
elementary extension $\UU$ of $\CM$ we have $f(\Sigma(\UU))= f_*(\Sigma)(\UU)$ for
every $X$-type $\Sigma(x)$, and if $p\in S_X(M)$ then $f_*(p)\in S_Y(M)$.

\begin{claim}
  \label{claim:morph-ext} Let $X,Y$ be definable $G$-sets and let $f\colon X\to Y$ be an $M$-definable
  equivariant map. Then the map $f_*\colon S_X(M)\to  S_Y(M)$
  respects  $\sim_\mu$, namely for $p\in S_X(M)$ we have $f_*(\mu\ccdot
  p)=\mu\ccdot f_*p$. The induced map from $S_X^\mu(M)$ to $S_Y^\mu(M)$, still
  denoted by $f_*$,
is equivariant and continuous.
\end{claim}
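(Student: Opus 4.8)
The plan is to verify the three assertions in turn, using only the definition of $\mu\ccdot p$, the compatibility of the canonical extension $f_*$ with products (via the family of $X$-formulas), and equivariance of $f$. First I would check the key algebraic identity: for every $G$-formula $\theta(v)\in\mu$ and every $X$-formula $\psi(x)$, the equivariance of $f$ gives $f_*(\theta\ccdot\psi) = \theta\ccdot f_*(\psi)$ as $Y$-formulas. Indeed, working in $\UU$, the left side defines $f\bigl(\theta(\UU)\ccdot\psi(\UU)\bigr)$ and, since $f(v\ccdot u)=v\ccdot f(u)$, this equals $\theta(\UU)\ccdot f(\psi(\UU)) = (\theta\ccdot f_*\psi)(\UU)$; as both are $M$-definable they define the same formula. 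This is the one computational point, and I expect it to be the main (though modest) obstacle, since one must be slightly careful that $f_*$ on formulas really does commute with the existential-quantifier definition of $\varphi\ccdot\psi$ — this is exactly where equivariance enters.

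Granting that identity, the equality $f_*(\mu\ccdot p) = \mu\ccdot f_*(p)$ of partial $Y$-types follows immediately: by definition $\mu\ccdot p$ is generated by the formulas $\theta\ccdot\psi$ with $\theta\in\mu$, $p\vdash\psi$; applying $f_*$ term-by-term and using the identity above together with the fact that $f_*$ carries the complete type $p$ to the complete type $f_*(p)$ (so $p\vdash\psi$ iff $f_*p\vdash f_*\psi$, and the $f_*\psi$ exhaust $\mathcal L_Y(M)$ modulo $f_*p$... more precisely, every $Y$-formula in $\mu\ccdot f_*p$ comes from some $\theta\ccdot\chi$ with $f_*p\vdash\chi$, and by completeness we may pull $\chi$ back), one gets that the two partial types are logically equivalent. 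In particular $p\sim_\mu q$ implies $f_*p\sim_\mu f_*q$, so $f_*$ descends to a well-defined map $S_X^\mu(M)\to S_Y^\mu(M)$.

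Equivariance of the induced map is inherited from equivariance of $f_*\colon S_X(M)\to S_Y(M)$ (which in turn is immediate from equivariance of $f$): for $g\in G$ and $p\in S_X(M)$ we have $f_*(g\ccdot p) = g\ccdot f_*(p)$ on the level of complete types, hence $f_*\bigl((g\ccdot p)_\mu\bigr) = \bigl(g\ccdot f_*(p)\bigr)_\mu = g\ccdot\bigl(f_*(p)_\mu\bigr)$ using the definition of the $G$-action on $\mu$-types from Claim~\ref{claim:g-action}.

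Finally, for continuity it suffices to check that preimages of the basic open sets $U^\mu_\varphi$ (for $\varphi\in\mathcal L_Y(M)$) are open in $S_X^\mu(M)$. By the description of the topology, $f_*^{-1}(U^\mu_\varphi) = \{p_\mu : \mu\ccdot f_*p \vdash \varphi\}$; using the identity $f_*(\mu\ccdot p)=\mu\ccdot f_*p$ just established, the condition $\mu\ccdot f_*p\vdash\varphi$ holds iff $\mu\ccdot p \vdash \chi$ for some $X$-formula $\chi$ with $f_*(\chi)\vdash\varphi$ — more cleanly, $\mu\ccdot f_*p\vdash\varphi$ iff there is an $X$-formula $\psi$ with $\mu\ccdot p\vdash\psi$ and $f_*\psi\vdash\varphi$, because $\mu\ccdot f_* p$ is generated by formulas of the form $\theta\ccdot f_*\psi = f_*(\theta\ccdot\psi)$. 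Hence $f_*^{-1}(U^\mu_\varphi) = \bigcup U^\mu_\psi$, the union taken over those $X$-formulas $\psi$ with $f_*(\psi)\vdash\varphi$, which is open. This completes the plan; the only nontrivial ingredient is the formula-level identity in the first paragraph, everything else being bookkeeping with the quotient topology and the definitions.
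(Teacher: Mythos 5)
Your proposal is correct and takes essentially the paper's approach: the key identity $f_*(\mu\ccdot p)=\mu\ccdot f_*p$ comes from the same equivariance computation in $\UU$ (the paper does it in one line on realization sets of the whole partial type, you do it formula-by-formula as $f_*(\theta\ccdot\psi)=\theta\ccdot f_*\psi$), and your continuity check via the basic opens $U^\mu_\varphi$ just unpacks the paper's appeal to continuity of $f_*\colon S_X(M)\to S_Y(M)$ together with the quotient topology. One small caveat: the parenthetical ``$p\vdash\psi$ iff $f_*p\vdash f_*\psi$'' holds only left-to-right unless $f$ is injective, but your subsequent ``pull $\chi$ back'' refinement is what the argument actually uses, so nothing breaks.
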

\begin{proof}
  Since $f\colon X \to Y$ is definable, the map from $X(\UU)$ to
  $Y(\UU)$ that it defines is equivariant with respect of the action of
  $G(\UU)$. Using Remark~\ref{rem:products} and properties of $f_*$ we obtain
  \begin{multline*}
  f_*(\mu\ccdot p)(\UU) =   f\bigl((\mu\ccdot p)(\UU)\bigr )=
f\bigl(\mu(\UU)\ccdot p(\UU)\bigr ) \\
= \mu(\UU)\ccdot f\bigl(p(\UU)\bigr)=\mu(\UU)\ccdot f_*(p)(\UU)= (\mu\ccdot
f_*(p))(\UU).
  \end{multline*}
Hence $f_*(\mu\ccdot p) = \mu\ccdot f_*(p)$, and it is not hard to see that the
induced map from $S^\mu_X(M)$ to $S^\mu_Y(M)$ is equivariant.   It is continuous
since $f_*\colon S_X(M)\to S_Y(M)$ is continuous in logic topology.
 \end{proof}

\begin{ntn}
  For a definable $G$-set $X$ we will denote by $\rchi_X$ the map from
  $X$ to $S_X^\mu(M)$ defined as  $\rchi_X\colon a\mapsto
\tp(a/M)_\mu$.
\end{ntn}

\begin{claim}
  \label{claim:ctab-closure}  Let $X$ be a definable $G$-set.

\begin{enumerate}[leftmargin=*]
\item The image $\rchi_X(X)$ is dense in $S_X^\mu(M)$. \item Let $a,b\in X$. Then
$\rchi_X(a)=\rchi_X(b)$ if and only if $b\in \bbar{G_a}\ccdot a$, where $G_a=\{g\in
G \colon g\ccdot a =a\}$ is the stabilizer of $a$ in $G$ and $\bbar{G_a}$ is its
topological closure in $G$. In particular the stabilizer of every $a\in X$ is closed
in $G$ if and only if $\rchi_X$ is injective.
\end{enumerate}
\end{claim}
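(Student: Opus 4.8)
The statement to prove is Claim~\ref{claim:ctab-closure}: for a definable $G$-set $X$, (1) the image $\rchi_X(X)$ is dense in $S_X^\mu(M)$, and (2) $\rchi_X(a)=\rchi_X(b)$ iff $b\in\bbar{G_a}\ccdot a$. The plan is to unwind both statements through the explicit basis for the topology on $S_X^\mu(M)$ that was established above, namely the sets $U_\varphi^\mu=\{p_\mu:\mu\ccdot p\vdash\varphi\}$.

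\textbf{Density.} For (1), I would take a nonempty basic open set $U_\varphi^\mu$ and produce an element $a\in X(M)$ with $\rchi_X(a)\in U_\varphi^\mu$. Nonemptiness of $U_\varphi^\mu$ means there is a complete type $p\in S_X(M)$ with $\mu\ccdot p\vdash\varphi$; in particular $\mu\ccdot p$ is consistent, so by Claim~\ref{claim:inf-eq2} (with $\Sigma=\varphi$) we get $p\vdash\mu\ccdot\varphi$, i.e. $(\mu\ccdot\varphi)(x)$ is consistent. Since $\mu$ is a definable partial type identified with $\{B_t:t\in T\}$, the formula $(\mu\ccdot\varphi)$ is essentially $\exists t\in T\,\exists u\,(x=v\ccdot u\ \&\ v\in B_t\ \&\ \varphi(u))$ made true by some witness; in any case it is a consistent $X$-formula over $M$, hence realized by some $a\in X(M)$ (here I use that $\CM\esub\UU$, so formulas over $M$ consistent in $\UU$ are already realized in $\CM$ — or more carefully, I pass to an $M$-realization; this is where I should double-check whether one needs $\CM$ to realize its own consistent formulas, which holds since any formula with parameters in $M$ asserting $\exists$ a witness is decided in $\CM$). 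For such $a$ we have $a\models\mu\ccdot\varphi$, so by Claim~\ref{claim:inf-eq2} again $(\mu\ccdot\tp(a/M))\cup\{\varphi\}$ is consistent; since $\mu\ccdot\tp(a/M)$ together with $\varphi$ is consistent and $\varphi$ is a single formula, and $\mu\ccdot\mu=\mu$, one gets $\mu\ccdot\tp(a/M)\vdash\varphi$ — wait, this needs care: consistency of $\mu\ccdot\tp(a/M)\cup\{\varphi\}$ gives, via Claim~\ref{claim:inf-eq2} with $p=\tp(a/M)$ and $\Sigma=\varphi$, that $\tp(a/M)\vdash\mu\ccdot\varphi$, which is automatic. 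What I actually want is $\mu\ccdot\tp(a/M)\vdash\varphi$. Since $a\models\mu\ccdot\varphi$ means $a=\epsilon\ccdot u$ with $\epsilon\models\mu$, $u\models\varphi$; then for any $\epsilon'\models\mu$, $\epsilon'\ccdot a=(\epsilon'\epsilon)\ccdot u$ and $\epsilon'\epsilon\models\mu$, so $\epsilon'\ccdot a\models\mu\ccdot\varphi$ — still not $\varphi$. The clean fix: replace $\varphi$ at the outset. Since $\mu\ccdot\mu=\mu$, for the basic open set $U_\varphi^\mu$ to be used we only ever need, for a witness $p$, some formula $\psi$ with $\mu\ccdot\psi\vdash\varphi$ and $p\vdash\psi$; then pick $a\models\psi$ in $M$ and conclude $\mu\ccdot\tp(a/M)\vdash\mu\ccdot\psi\vdash\varphi$, so $\rchi_X(a)\in U_\varphi^\mu$. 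Existence of such $\psi$ follows from compactness applied to $\mu\ccdot p\vdash\varphi$. This is the density argument; the only subtlety is the bookkeeping with $\mu\ccdot\mu=\mu$.

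\textbf{Identification of fibers.} For (2), first note $\rchi_X(a)=\rchi_X(b)$ means $\tp(a/M)\sim_\mu\tp(b/M)$, i.e. (Claim~\ref{claim:inf-eq}) $\mu\ccdot a$ and $\mu\ccdot b$ are the same partial type, equivalently (working in $\UU$) there is $\epsilon\models\mu$ in $\UU$ with $\epsilon\ccdot a=b$. So I must show: $(\exists\epsilon\in\mu(\UU))\ \epsilon\ccdot a=b$ $\iff$ $b\in\bbar{G_a}\ccdot a$, for $a,b\in X=X(M)$. For the direction ($\Leftarrow$): if $b=g\ccdot a$ with $g\in\bbar{G_a}$, then every open neighborhood of $g$ meets $G_a$; pick a definable open $B\ni e$, then $gB$ (or $Bg$) meets $G_a$, say $h\in G_a\cap gB$, so $g\in hB^{-1}=hB$ (using $\mu=\mu^{-1}$), hence $g\ccdot a\in hB\ccdot a$... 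I want to conclude $b\in\mu(\UU)\ccdot a$. Concretely: $g\in\bbar{G_a}$ iff for every $t\in T$, $gB_t\cap G_a\neq\emptyset$ iff for every $t$, $\exists h\in G_a$ with $g^{-1}h\in B_t$ iff the partial type $\{g^{-1}v:v\in G_a\}$ — better, the set $g^{-1}G_a$ meets every $B_t$, i.e. $\mu\cup\{g^{-1}v: v\in G_a\text{ "approximated"}\}$... Let me phrase it as: $g\in\bbar{G_a}$ iff $g^{-1}\cdot\bbar{G_a}\ni e$ iff for all $t\in T$, $B_t\cap g^{-1}G_a\neq\emptyset$, iff (by saturation of $\UU$, since $\mu(\UU)=\bigcap_t B_t(\UU)$) there is $\epsilon\in\mu(\UU)$ with $\epsilon\in g^{-1}\overline{G_a}$ — more precisely with $g\epsilon\in G_a(\UU)$ actually $g\epsilon\in\bbar{G_a}(\UU)$, hmm. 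The cleanest: $g\in\bbar{G_a}$ iff every definable open $W\ni g$ meets $G_a$, iff (compactness/saturation) there is $h\in G_a(\UU)$ with $h\in\mu(\UU)\ccdot g$, i.e. $h=\epsilon\ccdot g$, $\epsilon\in\mu(\UU)$. Then $b=g\ccdot a=\epsilon^{-1}h\ccdot a=\epsilon^{-1}\ccdot a$ (since $h\in G_a$), and $\epsilon^{-1}\in\mu(\UU)$, giving $b\in\mu(\UU)\ccdot a$, hence $\rchi_X(b)=\rchi_X(a)$. Conversely if $\epsilon\ccdot a=b$ with $\epsilon\in\mu(\UU)$: I want $g\in G(M)$ with $g\ccdot a=b$ and $g\in\bbar{G_a}$. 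Consider the $M$-definable set $\{g\in G:g\ccdot a=b\}$; this is a coset $c\,G_a$ for any $c$ in it (if nonempty over $M$). It is nonempty over $\UU$ (witnessed by $\epsilon$), hence nonempty over $M$ — no wait, $\epsilon\notin M$. But the set $\{g:g\ccdot a=b\}$ is $M$-definable and consistent in $\UU$, so has a point $c\in G(M)$. Then $\{g\in G:g\ccdot a=b\}=c\,G_a$. Now $\epsilon\in c\,G_a(\UU)$ and $\epsilon\in\mu(\UU)$, so $c^{-1}\epsilon\in G_a(\UU)$ with $c^{-1}\epsilon\in c^{-1}\mu(\UU)=\mu(\UU)c^{-1}$ (normality), i.e. $c^{-1}\epsilon$ is infinitesimally close to $c^{-1}$. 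Thus $c^{-1}\in\bbar{G_a(\UU)}\cap G(M)=\bbar{G_a}$ (closure computed in $G=G(M)$, using that membership of a standard point in the closure of an $M$-definable set is detected by $M$-definable open neighborhoods, equivalently by $\UU$). Hence $c^{-1}\in\bbar{G_a}$, so $c\in\bbar{G_a}$ as well (the closure of a subgroup is a subgroup), and $b=c\ccdot a\in\bbar{G_a}\ccdot a$. The "in particular" clause is then immediate: $\rchi_X$ is injective iff for all $a$, $\bbar{G_a}\ccdot a=\{a\}$ iff $\bbar{G_a}\sub G_a$ iff $G_a$ is closed.

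\textbf{Main obstacle.} The genuinely delicate point, and the one I would write out most carefully, is the interplay between "consistent in $\UU$" and "realized in $M$" together with the translation of topological closure in $G=G(M)$ into the statement "infinitesimally close to the $M$-definable set in $\UU$"; in particular pinning down that for a standard point $c\in G(M)$ and an $M$-definable set $D$, one has $c\in\bbar{D}$ iff $\mu(\UU)\ccdot c$ meets $D(\UU)$. Once this dictionary is in place, both parts are short. Everything else is routine manipulation of the product-of-types notation and repeated appeals to Claim~\ref{claim:inf-eq2}, Claim~\ref{claim:inf-eq}, Claim~\ref{claim:cont}, and $\mu\ccdot\mu=\mu$.
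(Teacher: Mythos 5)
Your proof is correct and follows essentially the same route as the paper: density comes from realized types (the paper just observes that realized types are dense in $S_X(M)$ and projects, while you verify it on the basic open sets $U^\mu_\varphi$ via compactness), and part (2) rests, exactly as in the paper, on translating $\rchi_X(a)=\rchi_X(b)$ into $b\in \mu\ccdot a$ and detecting membership in $\bbar{G_a}$ through the definable neighborhoods $\theta(M)\ccdot g$, $\theta\in\mu$. The only cosmetic difference is that you do the bookkeeping with realizations in $\UU$ (saturation, the transporter coset $cG_a$, normality of $\mu(\UU)$, closure of a subgroup being a subgroup) where the paper argues directly with formulas over $M$; both versions are sound.
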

\begin{proof} (1) Since the set $\{ \tp(a/M)\colon a\in X\}$ is dense
  in $S_X(M)$ in the logic topology, its image in $S_X^\mu(X)$ under the projection map is dense
  as well.

\medskip

(2)  Assume first that $b\in \bbar{G_a}\ccdot a$ and we will show that $\tp(b/M)\vdash
  \mu\ccdot \tp(a/M)$. Choose  $g\in \bbar{G_a}$ with $b=g\ccdot a$.
Since $g$ is in the closure of $G_a$,  for every $\theta(v)\in \mu(v)$ there is $h_\theta\in G_a$ with $g\in \theta(M)\ccdot
 h_\theta$. Thus $b\in \theta(M)\ccdot h_\theta\ccdot a =\theta(M)\ccdot a$.
  Hence $\tp(b/M)\vdash \theta\ccdot \tp(a/M)$ for every $\theta(x)\in
  \mu(x)$,  so $\tp(b/M)\vdash \mu\ccdot \tp(a/M)$

For the opposite direction assume $\mu\ccdot
\tp(b/M)=\mu\ccdot\tp(a/M)$, or equivalently,
$\tp(b/M)\vdash\mu\ccdot\tp(a/M)$.
First we claim that in this case $b$ is in the $G$-orbit of
$a$. Indeed take any $\varphi(v)\in \mu(v)$. We have $\tp(b/M)\vdash
\varphi\ccdot \tp(a/M)$ hence $b=g\ccdot a$ for some $g\in
\varphi(M)$.  Now we fix some $g\in G$ such that $b=g\ccdot a$. For
every $\theta(v)\in \mu(v)$ there is $h_\theta\in \theta(M)$ such
that $b=h_\theta\ccdot a$, hence $h^{-1}_\theta g\in G_a$. Since
$\mu=\mu^{-1}$, and the sets $\theta(M)\ccdot g, \theta(v)\in \mu$ form a
basis of open neighborhoods  of $g$ we obtain that $g$ is in the
closure of $G_a$.
\end{proof}

Thus if the stabilizer of each point $a\in X$ is closed, the map $\rchi_X$ is
injective and we can consider $X$ as a subset of $S_X^\mu(M)$. Notice that this is
indeed the case when $X$ is a definable $G$-set in an o-mnimal structure, since all
definable subgroups of $G$ are closed.

The next claim describes the induced topology on $X$.

\begin{claim}
  \label{claim:orbits-top}
Let $X$ be a definable $G$-space. Assume the map $\rchi_X\colon X\to S_X^\mu(M)$  is
injective. Then after identifying $X$ with $\rchi(X)$ the topology on $X$ induced by
$S_X^\mu(M)$ is exactly the topology whose basis is:
$$\{V\ccdot a: V \mbox{ open in } G\, , \,\, a\in X\}.$$
In particular,
\begin{enumerate}[leftmargin=*]
\item Every $G$-orbit in $X$ is open and closed.
\item For every $a\in X$ the orbit $G\ccdot a$ is homeomorphic to the factor
  space $G/G_a$ (with the quotient topology) under the natural map
  $gG_a\mapsto g\ccdot a$.
\end{enumerate}
\end{claim}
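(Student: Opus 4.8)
The plan is to identify, for each definable $G$-space $X$ with $\rchi_X$ injective, the topology that $S_X^\mu(M)$ induces on the image $\rchi_X(X)\cong X$. Recall from Claim~\ref{claim:ctab-closure}(2) that $\rchi_X(a)=\rchi_X(b)$ iff $b\in\overline{G_a}\ccdot a$, so injectivity of $\rchi_X$ means each stabilizer is closed. The main step is to compute, for a basic open set $U^\mu_\varphi=\{p_\mu\colon \mu\ccdot p\vdash\varphi\}$, its preimage $\rchi_X^{-1}(U^\mu_\varphi)$ inside $X$, and conversely to show that each set of the form $V\ccdot a$ ($V\subseteq G$ open, $a\in X$) arises this way, up to unions. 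First I would fix $a\in X$: by definition, $\rchi_X(a)\in U^\mu_\varphi$ iff $\mu\ccdot\tp(a/M)\vdash\varphi(x)$, which by compactness (using $\mu\ccdot\mu=\mu$, Claim~\ref{claim:cont}(3)) is equivalent to the existence of $\theta(v)\in\mu$ with $(\theta\ccdot\psi)(x)\vdash\varphi(x)$ for some $\psi\in\tp(a/M)$; concretely this says there is an open neighborhood $\Theta$ of $e$ in $G$ and a definable $\psi\ni a$ with $\Theta\ccdot\psi(M)\subseteq\varphi(M)$. Hence $\rchi_X^{-1}(U^\mu_\varphi)=\bigcup\{W\ccdot b\colon W\subseteq G\text{ open}, b\in X, W\ccdot b\subseteq\varphi(M)\}$, which is a union of sets of the form $V\ccdot a$. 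This shows the topology induced by $S_X^\mu(M)$ is contained in the topology with basis $\{V\ccdot a\}$.

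For the reverse inclusion I would show that each $V\ccdot a$ (with $V$ open in $G$, hence $V=\theta(M)$ up to a translate, $\theta\in\mu$ after translating so $e\in V$) is open in the induced topology. Given $b=h\ccdot a\in V\ccdot a$, shrink $\theta$: pick $\theta_1\in\mu$ with $\theta_1\ccdot\theta_1\vdash\theta$; then the basic open set $U^\mu_{\theta_1\ccdot\psi}$, for $\psi$ a definable formula in $\tp(b/M)$ chosen so that $h\theta_1(M)\ccdot\psi(M)\subseteq V\ccdot a$ (possible since $h\theta_1(M)\subseteq V$ and $a\in\psi(M)\cap G_b^{-1}\ccdot b$ can be arranged, or simply take $\psi$ defining a small enough orbit-piece through $b$), is an open neighborhood of $\rchi_X(b)$ contained in $\rchi_X(V\ccdot a)$. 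This is a routine but slightly delicate compactness argument parallel to the one in the proof that $S_X^\mu(M)$ is a $G$-space; the main obstacle is bookkeeping the relationship between the formula $\psi$, the orbit of $b$, and the fixed translate $h$ — one must use that $G_b$ is closed (injectivity of $\rchi_X$) to know that $b$'s $\mu$-class inside $X$ is exactly $\mu(\UU)\ccdot b$ and not something larger.

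Once the topology is identified as having basis $\{V\ccdot a\}$, clauses (1) and (2) are immediate. For (1): $G\ccdot a=\bigcup_{g\in G} (G\ccdot(g\ccdot a))$ is a union of basic open sets hence open; and its complement is the union of the other orbits, each open by the same reasoning, so $G\ccdot a$ is also closed. For (2): the natural map $G/G_a\to G\ccdot a$, $gG_a\mapsto g\ccdot a$, is a continuous bijection (continuity because preimages of $V\ccdot(g\ccdot a)$ pull back to open saturated sets in $G$), and it is open because the image of a basic open $V\cdot G_a$ in $G/G_a$ is $V\ccdot a$, which is open in $G\ccdot a$ by the description just established; hence it is a homeomorphism. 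I expect the first paragraph's computation of $\rchi_X^{-1}(U^\mu_\varphi)$ to be the technical heart, and the step I flagged as the main obstacle — matching $\psi$ to $b$, $h$, and the closedness of $G_b$ — to be where care is genuinely required.
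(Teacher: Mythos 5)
Your overall route is the same as the paper's: compute the trace on $\rchi_X(X)$ of the basic open sets $U^\mu_\varphi$ and match it against the family $\{V\ccdot a\}$, then read off (1) and (2). Your first paragraph is essentially the paper's second half and is fine, modulo the glossed-over (but routine) verification that every $W\ccdot b$ with $W\ccdot b\subseteq \varphi(M)$ is actually contained in $\rchi_X^{-1}(U^\mu_\varphi)$; this uses continuity of multiplication to shrink $W$ to a definable neighborhood and elementarity to pass from $M$-points to an entailment of formulas, and without it the preimage would not yet be exhibited as a union of sets $V\ccdot a$.

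Your second paragraph, however, is garbled at exactly the point you flag as delicate. To get $U^\mu_{\theta_1\ccdot\psi}\cap\rchi_X(X)\subseteq\rchi_X(V\ccdot a)$ you need $\theta_1(M)\ccdot\psi(M)\subseteq V\ccdot a$, not $h\theta_1(M)\ccdot\psi(M)\subseteq V\ccdot a$: from $\rchi_X(c)\in U^\mu_{\theta_1\ccdot\psi}$ one only gets $c\in\theta_1(M)\ccdot\psi(M)$ (since $e$ satisfies every formula of $\mu$), so the left translate by $h$ buys nothing. Likewise the parenthetical ``$a\in\psi(M)\cap G_b^{-1}\ccdot b$ can be arranged'' is incoherent: $\psi$ must be a formula in $\tp(b/M)$, hence must contain $b$, and it need say nothing about $a$ or the orbit. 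The repair is immediate and removes all the delicacy: $b=h\ccdot a$ is a point of $X(M)$, so its type over $M$ is realized; take $\psi$ to be the formula $x=b$, reduce to $V$ definable, and choose $\theta_1\in\mu$ with $\theta_1(M)\ccdot h\subseteq V$; then $\theta_1\ccdot\psi\vdash V\ccdot a$, so $U^\mu_{\theta_1\ccdot\psi}$ works. (The paper does this even more directly: for $\varphi$ defining $V$ it shows $U^\mu_{\varphi\ccdot a}\cap\rchi_X(X)=\rchi_X(V\ccdot a)$ exactly.) Finally, your announced ``main obstacle'' --- that one must use closedness of $G_b$ to know that the $\mu$-class of $b$ is exactly $\mu(\UU)\ccdot b$ --- is a red herring: neither inclusion uses it; injectivity of $\rchi_X$ (equivalently, closedness of stabilizers, Claim~\ref{claim:ctab-closure}) enters only in making the identification of $X$ with $\rchi_X(X)$ legitimate. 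With the topology identified, your derivations of (1) and (2) are fine.
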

\begin{proof} Let us see that every such $V\ccdot a\sub X$ is indeed open in the induced topology. Without loss of generality,
$V$ is  a definable  open set given by a $G$-formula $\varphi(v)$. We claim that
$U^\mu_{\varphi\ccdot a}\cap \rchi_X(X)=\rchi_X(V\ccdot a)$. Indeed, if
$\tp(b/M)_\mu\in U^\mu_{\varphi(x)\ccdot a}$ then in particular, $b\in V\ccdot a$.
For the converse, if $b\in V\ccdot a$ then there is $g\in V$ with $b=g\ccdot a$. But
then there is an $M$-definable open neighborhood $V_1$ of $g$ contained in $ V$, so
in particular, $(\mu\ccdot g)\ccdot a\vdash V\ccdot a$, hence $\tp(b/M)_\mu\in
U^\mu_{\varphi\ccdot a}$.

Let us see that every open subset of $\rchi_X(X)$ is a union of sets of the form
$\rchi_X(V\ccdot a)$. Consider $ \tp(a/M)_\mu \in U^\mu_{\psi}\cap \rchi_X(X)$ for
some $X$-formula $\psi(x)$ and $x\in X$. Since $\mu\ccdot a \vdash \psi$ there
exists $\theta \in \mu$ with $\theta \ccdot a\vdash \psi$. If we take $V=\theta(G)$
then $\rchi_X(V\ccdot a)\sub  U^\mu_{\psi}$, so we can write $U^\mu_{\psi}\cap
\rchi_X(X)$  as a union of sets of this form.

(1) and (2) easily follow.\end{proof}

\begin{rem}
  It is not hard to see that in the previous claim properties (1)
  and (2) define unique topology on $X$ and it is the strongest
  topology on $X$ making the action of $G$ on $X$ continuous.
\end{rem}

\begin{cor}\label{cor:G-mod-closed}
 If $H$ is a
 definable closed subgroup of $G$ then the space $X=G/H$ with the
 quotient topology embeds into  the compact space $S_X^\mu(M)$ and the
 action of $G$ on $S_X^\mu(M)$ is the unique continuous extension of the
 action of $G$ on $G/H$.

In particular, for $H=\{e\}$ we obtain that the map $\rchi_G$ is a
topological embedding of $G$ into $S_G^\mu(M)$ and under this
embedding the action of $G$
on $S_G^\mu(M)$ is the unique continuous extension of the action of $G$ on itself by left
multiplication.
\end{cor}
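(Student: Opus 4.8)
The plan is to deduce Corollary~\ref{cor:G-mod-closed} as a direct consequence of the preceding claims, so this will be a short synthesis rather than a new argument. First I would observe that when $H$ is a definable closed subgroup of $G$, the set $X = G/H$ is a definable $G$-set (via definable choice, or by working with a definable transversal), and the stabilizer of the coset $eH$ is exactly $H$, which is closed by hypothesis; since all stabilizers of points in $X$ are conjugates of $H$, hence also closed, Claim~\ref{claim:ctab-closure}(2) tells us that $\rchi_X\colon X\to S_X^\mu(M)$ is injective. Thus we may identify $X$ with its image $\rchi_X(X)\subseteq S_X^\mu(M)$.

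Next I would invoke Claim~\ref{claim:orbits-top}: under this identification the induced topology on $X$ has as a basis the sets $\{V\ccdot a : V \text{ open in } G,\ a\in X\}$. For $X = G/H$ this is precisely the quotient topology, by part (2) of that claim applied to $a = eH$ (whose orbit is all of $X$, since $G$ acts transitively on $G/H$): the orbit $G\ccdot a$ is homeomorphic to $G/G_a = G/H$ with the quotient topology under $gH\mapsto g\ccdot a$. Hence the embedding $\rchi_X$ is a topological embedding of $G/H$ (with its quotient topology) into the compact Hausdorff space $S_X^\mu(M)$, and $\rchi_X(X)$ is dense by Claim~\ref{claim:ctab-closure}(1).

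For the uniqueness of the continuous extension of the $G$-action, I would argue as follows. The action of $G$ on $S_X^\mu(M)$ is continuous by Claim on the $G$-space structure of $S_X^\mu(M)$, and it extends the action on the dense subset $\rchi_X(X)\cong G/H$ by equivariance of $\rchi_X$. If $\ast$ were another continuous $G$-action on $S_X^\mu(M)$ agreeing with the given one on $\rchi_X(X)$, then for each fixed $g\in G$ the two continuous maps $q\mapsto g\ccdot q$ and $q\mapsto g\ast q$ from $S_X^\mu(M)$ to itself agree on the dense set $\rchi_X(X)$; since $S_X^\mu(M)$ is Hausdorff, two continuous maps into a Hausdorff space that agree on a dense set are equal, so they agree everywhere. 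This gives uniqueness. Finally, the last sentence of the corollary is just the special case $H = \{e\}$: here $X = G$ with left multiplication, $\rchi_G$ is a topological embedding, and the extension statement specializes accordingly.

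The only genuinely delicate point is the identification of the $S_X^\mu(M)$-induced topology on $X$ with the quotient topology on $G/H$ — but this is already handled by Claim~\ref{claim:orbits-top}(2), so there is essentially no obstacle; the corollary is bookkeeping on top of the machinery already built. If one wished to be careful about the passage from "$X = G/H$" to "$X$ is a \emph{definable} $G$-set," one would note that a closed definable subgroup has a definable set of coset representatives (definable choice in the o-minimal case, or more generally whenever the ambient structure admits it), making $G/H$ definably isomorphic to that transversal with the induced action; this is the one spot where a hypothesis on $\CM$ is implicitly used.
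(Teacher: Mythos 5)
Your proof is correct and follows exactly the route the paper intends: the corollary is stated there without proof as an immediate consequence of Claim~\ref{claim:ctab-closure} (injectivity from closed stabilizers, density) and Claim~\ref{claim:orbits-top}(2) (identification of the induced topology with the quotient topology), with uniqueness of the extension following from density plus Hausdorffness, just as you argue. Your added remark about needing $G/H$ to be a definable $G$-set (via a definable transversal or definable choice) is a fair point the paper leaves implicit, and does not affect correctness.
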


\subsection{Definably-separable actions}
\label{sec:acti-defin-topol}

In \cite{GPP} a map $f$ from a definable set $D$ to a compact space
$C$ was called definable if for every disjoint closed
$C_1,C_2\subseteq C$ their prei-mages $f^{-1}(C_1)$ and $f^{-1}(C_2)$ can be separated by definable subsets of $D$.
Since  we prefer
to reserve the term ``definable'' for actual definable maps, we
will use the term ``definably separable'' instead.

As in \cite{GPP} we say that  an action of a definable group
  $G$ on a compact space $C$ is {\em definably separable\/}  if
  for every $x_0\in C$ the map from $G$ to $C$ given by $g\mapsto
  g\ccdot x_0$ is definably separable.

\begin{lem} Assume $G$ acts continuously on a compact space $C$.  Let
  $c_0\in C$ and assume the map $f\colon G\to C$ given by $g\mapsto
  g\ccdot c_0$ is definably separable.  Then $f$ can be extended
  uniquely to a continuous $G$-equivariant map  $f_*\colon S_G^\mu(M)\to C$.
\end{lem}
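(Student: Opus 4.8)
The plan is to build $f_*$ on the dense subset $\rchi_G(G)\subseteq S_G^\mu(M)$ in the only way the equivariance and continuity requirements permit, and then show this rule is well-defined and extends continuously to all of $S_G^\mu(M)$. Concretely, if $p_\mu = \tp(g/M)_\mu$ for some $g\in G$, we are forced to set $f_*(p_\mu) = g\ccdot c_0 = f(g)$; the first thing to check is that this does not depend on the choice of representative $g$, i.e.\ that $\tp(g/M)_\mu = \tp(h/M)_\mu$ implies $g\ccdot c_0 = h\ccdot c_0$. By Claim~\ref{claim:ctab-closure}(2) the hypothesis means $h\in \bbar{G_{c_0}'}\ccdot g$ where $G_{c_0}' = \{x\in G: x\ccdot g\ccdot c_0 = g\ccdot c_0\}$ — more precisely one reduces to the case $g = e$ and gets $h\in \bbar{G_{c_0}}$, where $G_{c_0}$ is the stabilizer of $c_0$ \emph{in the acting group $G$}. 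Here is where definable separability and continuity of the $C$-action enter: we do \emph{not} know $f$ is continuous (it need not be), so $\bbar{G_{c_0}}$ need not fix $c_0$; instead, for any open $U\ni c_0$ in $C$, continuity of the $C$-action gives an open $W\ni e$ in $G$ with $W\ccdot c_0\subseteq U$, and picking $x\in G_{c_0}$ with $h\in W\ccdot x$ (possible since $h\in\bbar{G_{c_0}}$) yields $h\ccdot c_0\in W\ccdot x\ccdot c_0 = W\ccdot c_0\subseteq U$; as $C$ is Hausdorff and $U$ was arbitrary, $h\ccdot c_0 = c_0$. So $f_*$ is well-defined on $\rchi_G(G)$, and equivariant there by construction.

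Next I would show $f_*$ extends continuously to $S_G^\mu(M)$. Since $S_G^\mu(M)$ is compact Hausdorff and $C$ is compact Hausdorff, it suffices to show $f\colon \rchi_G(G)\to C$ is uniformly continuous with respect to the unique uniformity on the compact space $S_G^\mu(M)$ — equivalently, and more in the spirit of the paper, that the graph closure trick works: one shows that for every pair of disjoint closed sets $C_1,C_2\subseteq C$, the sets $\{p_\mu : f_*(p_\mu)\in C_1\}$ and $\{p_\mu : f_*(p_\mu)\in C_2\}$ (defined so far only on the dense subset) have disjoint closures in $S_G^\mu(M)$. This is exactly where definable separability of $g\mapsto g\ccdot c_0$ is used: it provides a \emph{definable} set $D\subseteq G$ separating $f^{-1}(C_1)$ from $f^{-1}(C_2)$, and a definable set gives an honest $G$-formula, hence — after the usual shrinking using $\mu\ccdot\mu=\mu$ and continuity of the $C$-action to thicken by an infinitesimal neighborhood — basic open sets $U^\mu_\varphi$, $U^\mu_\psi$ of $S_G^\mu(M)$ whose closures are disjoint and contain the respective preimages. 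Using compactness of $C$ one covers $C$ by finitely many such separated pairs and concludes that the map $p_\mu\mapsto$ (the unique point of $C$ in every closed set whose preimage's closure contains $p_\mu$) is well-defined and continuous; this is the desired $f_*$, and by continuity plus density it agrees with $f$ on $\rchi_G(G)$ and inherits equivariance. Uniqueness is immediate: any continuous equivariant extension must send $\tp(g/M)_\mu = g\ccdot\tp(e/M)_\mu$ to $g\ccdot f_*(\rchi_G(e))$, and $\rchi_G(e)$ must go to $c_0$ (taking $g=e$ in the equivariance identity forces nothing directly, but density of $\rchi_G(G)$ together with the requirement $f_*\circ\rchi_G = f$ pins it down), so two such extensions agree on a dense set, hence everywhere.

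\textbf{Main obstacle.} The delicate point is the well-definedness/continuity argument, i.e.\ correctly exploiting definable separability: a priori $f$ is badly discontinuous on $G$ (for instance the limit of $g\ccdot c_0$ as $g\to e$ along different directions need not be $c_0$), so one cannot simply say "$f$ is continuous, extend by density." The whole content is that passing to the $\mu$-quotient collapses exactly the right amount: two types in the same $\sim_\mu$-class, when pushed forward to $C$ via the dense orbit, land at the same point \emph{because} the $C$-action is continuous, and the two preimage families stay apart in $S_G^\mu(M)$ \emph{because} the separating sets can be taken definable. I expect the bookkeeping with $\mu\ccdot\mu=\mu$, infinitesimal thickenings of the definable separating set, and the finite subcover of $C$ to be the technically fussy part, but conceptually routine given Claim~\ref{claim:ctab-closure}, Claim~\ref{claim:mu-top}, and continuity of the two actions.
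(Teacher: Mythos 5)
Your overall architecture (define the map on the dense copy of $G$, extend via a separation criterion, get equivariance and uniqueness from density) is reasonable, but there is a genuine gap at exactly the step that carries the content of the lemma, and the mechanisms you propose for it are incorrect as stated. First, since the hypothesis is that $G$ acts continuously on $C$ (i.e.\ $G\times C\to C$ is continuous), the map $f(g)=g\ccdot c_0$ \emph{is} continuous on $G$; your ``main obstacle'' paragraph misidentifies the difficulty, which is not discontinuity of $f$ but the fact that continuity alone does not allow an extension to the compactification. Relatedly, your appeal to Claim~\ref{claim:ctab-closure}(2) is not legitimate ($C$ is not a definable $G$-set), and the well-definedness issue it is meant to address is vacuous: for the left action of $G$ on itself the stabilizers are trivial, so $\tp(g/M)_\mu=\tp(h/M)_\mu$ already forces $g=h$. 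The real work is to show that no two $\sim_\mu$-equivalent types are finitely satisfiable in $f^{-1}(C_1)$ and $f^{-1}(C_2)$ respectively, for disjoint closed $C_1,C_2\subseteq C$; your recipe --- take the definable separating set $D$ and ``thicken it by an infinitesimal neighborhood'' --- does not do this. Two disjoint definable subsets of $G$ can perfectly well carry $\sim_\mu$-equivalent types (they may be adjacent along their boundaries), so thickening $D$ by some $\theta\in\mu$ destroys disjointness rather than producing basic open sets with disjoint closures; moreover a type in the closure of $\rchi_G(f^{-1}(C_i))$ is only \emph{finitely satisfiable} in $f^{-1}(C_i)$, so it contains no formula lying inside the preimage that you could thicken. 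You also never prove the singleton assertion hidden in your last sentence (``the unique point of $C$ \dots''), which is precisely where definable separability is needed.

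The correct order of operations is to separate inside $C$, not inside $G$. The paper defines $f[p]=\bigcap_{\varphi\in p}\bbar{f(\varphi(M))}$, uses definable separability plus compactness of $C$ to see it is a singleton, and then shows $p\sim_\mu q$ implies $f[p]=f[q]$: otherwise compactness yields $\varphi\in p$, $\psi\in q$ with $\bbar{f(\varphi(M))}\cap\bbar{f(\psi(M))}=\emptyset$, continuity of the action on the compact space gives $\theta\in\mu$ with $\theta(M)\ccdot\bbar{f(\varphi(M))}$ still disjoint from $\bbar{f(\psi(M))}$, and pulling back through $f$ gives $\theta(M)\ccdot\varphi(M)\cap\psi(M)=\emptyset$, contradicting the consistency of $\mu\ccdot p\cup q$ (a consistent formula over $M$ is realized in $M$). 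Your Taimanov-style route can be salvaged, but only by the same kind of manoeuvre: enlarge $C_1,C_2$ to disjoint closed sets $\bbar{U_1},\bbar{U_2}$, pick a definable $\theta\in\mu$ with $\theta(M)\ccdot C_i\subseteq U_i$, apply definable separability to $\bbar{U_1},\bbar{U_2}$ to obtain $D$, and then work with the definable sets $E_1=\{g\colon\theta(M)\ccdot g\subseteq D(M)\}$ and $E_2=\{g\colon\theta(M)\ccdot g\cap D(M)=\emptyset\}$, which contain $f^{-1}(C_1)$, $f^{-1}(C_2)$ respectively and for which $(\theta\ccdot E_1)\wedge E_2$ is inconsistent. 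As written, your sketch is missing exactly this step, which is the heart of the proof.
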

\begin{proof} For $C_0\subseteq C$ we will denote by $\bbar{C_0}$ the
  topological closure of $C_0$ in $C$.

The definition of the map is classical and goes back to Stone-\v{C}ech: If $p(v)\in
S_G(M)$ then using definable separation of $f$ and compactness of $C$ it follows
that the set
\[ f[p]: =\bigcap_{p(v)\vdash \varphi(v)} \bbar {f(\varphi(M)) }\] is a
singleton. This gives a map from $S_G(M)$ to $C$. We claim that for
$p,q\in S_G(M)$ with $p\sim_\mu q$ we have $f[p]=f[q]$.

Assume not, and $f[p]\neq f[q]$ for some $p\sim_\mu q$, hence
$f[p]\cap f[q] =\emptyset$.  By compactness of $C$ it
follows then that there are $\varphi(v)\in p(v)$ and $\psi(v)\in q(v)$
such that $\bbar{f(\varphi(M) )} \cap \bbar{f(\psi(M) )} =\emptyset$.

In general, by standard compactness arguments, when a group $G$ acts
continuously on a compact space $C$, for any two disjoint closed subsets $C_0,
C_1$ of $C$, there is an open subset $\CO$ of $G$ containing $e$ such
that $\CO\ccdot C_1 \cap C_2=0$. Therefore there is a formula
$\theta(v)\in \mu$ such that
\[ \theta(M)\ccdot \bbar{f(\varphi(M) )} \cap \bbar {f(\psi(M) )}
=\emptyset, \]
and in particular $\theta(M)\ccdot {f(\varphi(M) )} \cap {f(\psi(M) )}
=\emptyset$.

Therefore $(\theta(M)\ccdot \varphi(M))\ccdot c_0 \cap
\psi(M)\ccdot c_0
=\emptyset$, and hence $\theta(M)\ccdot \varphi(M) \cap
\psi(M)=\emptyset$.
But this contradicts to consistancy of $\mu\ccdot p$ and $q$.

For $p\in S_G(M)$ we define $f_*(p_\mu)$ to be the unique element in
$f[p]$. It is not hard to check that the map $f_*$ is continuous.

The uniqueness of $f_*$ and its $G$-equivariance follow from the density of $G$ in $S_G^\mu(M)$.

 \end{proof}

The proof of the following claim is similar to the proof of
\cite[Lemma 3.7]{GPP}

\begin{lem}
  \label{claim:orb-def-sep} Let $X$ be a definable $G$-set, and assume
$p\in S_X(M)$ is a definable type. Then the map $f_p\colon G\to
S_X^\mu(M)$ given by $g\mapsto g\ccdot p_\mu$ is definably separable.
In particular, if every type $p\in S_X(M)$ is definable then the
action of $G$ on $S_X^\mu(M)$ is definably separable.
\end{lem}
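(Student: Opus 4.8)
Let $X$ be a definable $G$-set and $p \in S_X(M)$ a definable type; then the map $f_p \colon G \to S^\mu_X(M)$, $g \mapsto g \ccdot p_\mu$, is definably separable; and consequently if every type in $S_X(M)$ is definable, the action of $G$ on $S^\mu_X(M)$ is definably separable.

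**Overall approach.** The plan is to recall what definable separability demands: given two disjoint closed subsets $F_1, F_2 \subseteq S^\mu_X(M)$, I must separate $f_p^{-1}(F_1)$ and $f_p^{-1}(F_2)$ by definable subsets of $G$. Since the basic closed sets of $S^\mu_X(M)$ are, by Claim~\ref{claim:mu-top}(1), of the form $\{r_\mu : r(x) \cup E(x,y) \cup \Sigma(y) \text{ consistent}\}$ for a partial type $\Sigma$, and since by compactness of $S^\mu_X(M)$ disjoint closed sets can be separated by a single basic open set, it suffices to handle the case where $F_2$ is replaced by the complement of one basic open set $U^\mu_\psi = \{r_\mu : \mu \ccdot r \vdash \psi\}$ and $F_1 \subseteq U^\mu_\psi$. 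So I really only need: for each $X$-formula $\psi$, the set $f_p^{-1}(U^\mu_\psi) = \{g \in G : g \ccdot p_\mu \in U^\mu_\psi\} = \{g \in G : \mu \ccdot (g \ccdot p) \vdash \psi\}$ and its ``outside'' are separated by definable sets. The key observation is that $\mu \ccdot (g \ccdot p) \vdash \psi$ is equivalent, via Claim~\ref{claim:inf-eq2} (with $p$ in place of its complete-type role, using $\mu \ccdot \mu = \mu$), to: $g \ccdot p \vdash \mu \ccdot \psi$, i.e. $p \vdash g^{-1} \ccdot (\mu \ccdot \psi)$, i.e. $p \vdash (g^{-1} \ccdot \bigcup_{\theta \in \mu} \theta) \ccdot \psi$. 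Now I exploit that $p$ is a complete definable type: $p \vdash g^{-1} \ccdot (\mu \ccdot \psi)$ holds iff there is some $\theta \in \mu$ (i.e. some $B_t$, $t \in T$) with $p \vdash g^{-1} \ccdot (\theta \ccdot \psi)$, and for each fixed $\theta = B_t$ the condition $p \vdash (g^{-1} B_t) \ccdot \psi$ is a definable condition on $(g, t)$ by definability of $p$ — the $\rchi$-schema applied to the formula $\phi(x, (g,t)) := (g^{-1} B_t \ccdot \psi)(x)$. Hence $f_p^{-1}(U^\mu_\psi) = \{g \in G : \exists t \in T \; \rchi_\psi(g, t)\}$ for a definable $\rchi_\psi$, so it is already a \emph{definable} subset of $G$ — and then its complement in $G$ is definable too, so the two pieces are trivially separated by definable sets.

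**Key steps in order.** First, I would state precisely that by compactness it is enough to separate, for each $X$-formula $\psi$, the set $f_p^{-1}(U^\mu_\psi)$ from $G \setminus f_p^{-1}(U^\mu_\psi)$. Second, unwind $g \ccdot p_\mu \in U^\mu_\psi$ using the notation: it means $\mu \ccdot (g \ccdot p) \vdash \psi$. Third, apply Claim~\ref{claim:inf-eq2} together with $\mu \ccdot \mu = \mu$ to rewrite this as $g \ccdot p \vdash \mu \ccdot \psi$, then left-translate by $g^{-1}$ (legitimate since $g \in G(M)$ and $g \ccdot(-)$ is a bijection on types/formulas) to get $p \vdash g^{-1} \ccdot (\mu \ccdot \psi)$. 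Fourth, use completeness of $p$ and compactness to turn $p \vdash g^{-1} \ccdot (\mu \ccdot \psi)$ into $\exists t \in T$ with $p \vdash (g^{-1} B_t \ccdot \psi)$. Fifth, invoke definability of $p$ over $M$: there is $\rchi_\psi(u, v) \in \CL(M)$ with $p \vdash (u^{-1} B_v \ccdot \psi)(x)$ iff $\CM \models \rchi_\psi(u, v)$ — here one must check that ``$(u^{-1} B_v \ccdot \psi)(x)$'' is genuinely a single formula $\phi(x, (u,v))$, which it is, with $(u,v)$ ranging over $G \times T$. Conclude $f_p^{-1}(U^\mu_\psi) = \{g : \CM \models \exists v \in T\, \rchi_\psi(g, v)\}$, a definable set. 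Finally, for the ``in particular'' clause, note that if every type in $S_X(M)$ is definable then in particular every orbit-defining type is, so the hypothesis of the first part applies to every $p_\mu$ (more precisely, for each $c_0 \in S^\mu_X(M)$ pick a complete type $p$ with $p_\mu = c_0$ — every point of $S^\mu_X(M)$ is an equivalence class of some complete type — and apply the first part), giving definable separability of the $G$-action.

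**Main obstacle.** The delicate point is the bookkeeping in Step~4–5: one must be careful that the existential quantifier over $\mu$ (equivalently over $t \in T$) can be pulled outside correctly, which hinges on $p$ being a \emph{complete} type so that $p$ fails to imply a disjunction only if it fails to imply each disjunct — this is exactly the step that fails for incomplete types and is why the hypothesis ``$p$ definable'' (hence in particular a complete type in $S_X(M)$) is essential. One also has to make sure the family $\{g^{-1}B_t \ccdot \psi : g \in G, t \in T\}$ is a \emph{uniformly} definable family of $X$-formulas so that definability of $p$ applies to it as a single instance schema; this uses that $\mu$ is given by the uniformly definable basis $\{B_t : t \in T\}$ (the ``definably topological group'' assumption) together with the definition of $\varphi \ccdot \psi$. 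Modulo this, the argument is a direct translation of the Stone–\v{C}ech/GPP-style computation (as in \cite[Lemma 3.7]{GPP}) into the present $\mu$-type language, and everything else is routine.
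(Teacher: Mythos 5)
Your overall plan (reduce to showing that the $f_p$-preimage of a basic open set is a definable subset of $G$) is viable, but the central computation is wrong in two places, and as written the argument does not prove the lemma. Claim~\ref{claim:inf-eq2}, applied with $\Sigma=\{\psi\}$ to the complete type $g\ccdot p$, says that $g\ccdot p\vdash \mu\ccdot\psi$ if and only if $\mu\ccdot(g\ccdot p)$ is \emph{consistent with} $\psi$ --- not if and only if $\mu\ccdot(g\ccdot p)\vdash\psi$. So your claimed equivalence of $g\ccdot p_\mu\in U^\mu_\psi$ with $g\ccdot p\vdash\mu\ccdot\psi$ confuses the open set $U^\mu_\psi$ with the closed set $\{q_\mu\colon q\vdash\mu\ccdot\psi\}$, which strictly contains it (for $G=X=\la\RR,+\ra$, $\psi=(x>0)$ and $q=\tp(0/M)$ one has $q\vdash\mu\ccdot\psi$ but $\mu\ccdot q\not\vdash\psi$). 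The next step compounds this: $p\vdash g^{-1}\ccdot(\mu\ccdot\psi)$ means $p$ entails \emph{every} instance $g^{-1}\ccdot(B_t\ccdot\psi)$, a universal condition on $t$; neither completeness of $p$ nor compactness turns it into an existential one. The existential over $t$ appears only after negating: $\mu\ccdot q\vdash\psi$ iff $\mu\ccdot q\cup\{\neg\psi\}$ is inconsistent iff $q\not\vdash\mu\ccdot\neg\psi$ iff there is $t\in T$ with $q\vdash\neg(B_t\ccdot\neg\psi)$ --- exactly the computation in Claim~\ref{claim:stab3}. The set your two steps actually produce, $\{g\in G\colon\exists t\in T,\ p\vdash g^{-1}\ccdot(B_t\ccdot\psi)\}$, is definable but in general strictly larger than $f_p^{-1}(U^\mu_\psi)$ (it already contains every $g$ with $g\ccdot p\vdash\psi$), so there is no reason its complement should contain $f_p^{-1}(F_2)$, and the separation fails. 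The fix is the negated form: $f_p^{-1}(U^\mu_\psi)=\{g\colon\exists t\in T,\ g\ccdot p\vdash\neg(B_t\ccdot\neg\psi)\}$, definable by definability of $p$ (equivalently, $\mu\ccdot(g\ccdot p)=g\ccdot(\mu\ccdot p)$ by Claim~\ref{claim:g-action} and $\mu\ccdot p$ is a definable partial type by Claim~\ref{claim:stab3}). A smaller point: the sets $U^\mu_\psi$ are not closed under finite unions, so ``separated by a single basic open set'' should be a finite union of basic opens; this is harmless once preimages of basic opens are known to be definable.

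For comparison, the paper's proof avoids the $\exists t\in T$ bookkeeping altogether: writing the disjoint closed sets as $C_i=\{q_\mu\colon q\vdash\mu\ccdot\Sigma_i\}$ (Claim~\ref{claim:mu-top}(1)), disjointness plus compactness yields a single $\theta\in\mu$ and $\psi_i\in\Sigma_i$ with $(\theta\ccdot\psi_1\wedge\theta\ccdot\psi_2)$ inconsistent, and then $U_i=\{g\in G\colon g\ccdot p\vdash\theta\ccdot\psi_i\}$ are definable by a single instance of the definability of $p$, disjoint, and contain $f_p^{-1}(C_i)$. Your route, once repaired, proves slightly more (definability of the preimage of every basic open set), but the misapplication of Claim~\ref{claim:inf-eq2} and the for-all/exists swap are genuine errors, not merely presentational.
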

\begin{proof} Let $C_1, C_2$ be disjoint closed subsets of $S_X^\mu(M)$.
 By Claim~\ref{claim:mu-top}(1) there are  $X$-types $\Sigma_1(x),
 \Sigma_2(x)$ such that
\[ C_i=\{ s_\mu(x) \in S^\mu_X(M) \colon s\vdash \mu\ccdot
 \Sigma_i\},\quad  i=1,2.\]
It follows that $\mu\ccdot \Sigma_1 \cup \mu\ccdot \Sigma_2$ is inconsistent
so there are
 $\theta(v)\in \mu$ and $\psi_i(x)\in \Sigma_i(x)$ such that
the $(\theta\ccdot \psi_1 \wedge \theta\ccdot \psi_2)(x)$ is
inconsistent.  Obviously we have $q_\mu\in C_i \Rightarrow q(x)\vdash
(\theta\ccdot \psi_i)(x)$.

Since $p(x)$ is a definable type the sets $U_i=\{ g\in G \colon
g\ccdot p(x) \vdash   (\theta\ccdot \psi_i)(x)\}$, $i=1,2$, are definable. They are
disjoint and, by above, $g\ccdot p_\mu\in C_i \Rightarrow g\in U_i$,
hence $f_p^{-1}(C_i)\subseteq U_i$.
\end{proof}

\begin{rem}  It follows from the proof that instead of the assumption
  of definability of $p$ it is sufficient to assume that for any
  formulas $\varphi(x)$ the set $\{g\in G \colon g\ccdot p \vdash
  \varphi(x) \}$ is definable.
\end{rem}

In the next theorem we list main properties of $S_G^\mu(M)$.
\begin{thm}
  \label{thm-def-sam}
  \begin{enumerate}[label=(\roman*), leftmargin=*]
  \item
The space $S_G^\mu(M)$ is compact. There is a
  topological
  embedding $\rchi\colon G\to S_G^\mu(M)$ such that the image
  $\rchi(G)$ is dense in $S_G^\mu(M)$, and the group multiplication of
  $G$ extends to a continuous map from $G\times S^\mu_G(M)$ to $S_G^\mu(M)$.
\item Assume that every type $p\in S_G(M)$ is   definable. Then the
  action of $G$ on $S^\mu_G(M)$ is definably separable.
\item For any
  compact $G$-space $C$ and $p\in C$ if the map $g\mapsto g\ccdot p
  $ is  definably separable then it  extends
  uniquely to a continuous $G$-equivariant map from $S^\mu_G(M)$ to $C$.
  \end{enumerate}
\end{thm}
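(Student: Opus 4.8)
The plan is to assemble Theorem~\ref{thm-def-sam} directly from the component results already proved in this appendix, verifying only that they fit together. Part (i) is essentially a repackaging: compactness of $S_G^\mu(M)$ is Claim~\ref{claim:mu-haus} applied to the type-definable equivalence relation $\Phi$ (recall from the discussion after Remark~\ref{rem:products} that $\sim_\mu$ is exactly $E^*$ for $E=\Phi$, and that the quotient topology makes $S^*_X(M)$ compact Hausdorff). The density of $\rchi(G)$ is Claim~\ref{claim:ctab-closure}(1) with $X=G$; the fact that $\rchi$ is a topological \emph{embedding} (not merely continuous) comes from Corollary~\ref{cor:G-mod-closed} in the case $H=\{e\}$, which already states precisely this and identifies the $G$-action on $S_G^\mu(M)$ as the unique continuous extension of left multiplication. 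Continuity of the multiplication map $G\times S_G^\mu(M)\to S_G^\mu(M)$ is the claim ``the action of $G$ on $S^\mu_X(M)$ is continuous, hence $S^\mu_X(M)$ is a $G$-space'' specialized to $X=G$. So part (i) requires no new argument beyond citing these.

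For part (ii), I would apply Lemma~\ref{claim:orb-def-sep} with $X=G$: under the hypothesis that every $p\in S_G(M)$ is definable, the last sentence of that lemma says exactly that the action of $G$ on $S_G^\mu(M)$ is definably separable. Again nothing new is needed. For part (iii), the hypothesis on $C$ and $p\in C$ is precisely the hypothesis of the Lemma preceding Lemma~\ref{claim:orb-def-sep} (the Stone--\v{C}ech-style extension lemma), which concludes that $f\colon g\mapsto g\ccdot p$ extends uniquely to a continuous $G$-equivariant map $f_*\colon S_G^\mu(M)\to C$. So part (iii) is a verbatim restatement of that lemma.

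Consequently the proof is short: for each clause I would name the earlier result, check the instantiation ($X=G$, and in (iii) the identification of the two hypotheses), and invoke it. The only point needing a sentence of care is the embedding claim in (i) --- one must note that Corollary~\ref{cor:G-mod-closed} supplies a genuine homeomorphism onto its image rather than merely a continuous injection, which in turn rests on Claim~\ref{claim:orbits-top} identifying the subspace topology on $\rchi_G(G)$ with the group topology via the basis $\{V\ccdot e : V \text{ open in } G\}$. I do not anticipate a genuine obstacle here; the substance has all been done, and this theorem is the summary. If anything is delicate, it is making sure the continuity-of-action statement was proved for a general definable $G$-set $X$ and hence applies to $X=G$, which it was.

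\begin{proof}
(i) The relation $\sim_\mu$ on $S_G(M)$ is the equivalence relation $E^*$ associated to the type-definable equivalence relation $\Phi(x,y)$, so by Claim~\ref{claim:mu-haus} the quotient $S_G^\mu(M)$ is compact Hausdorff. By Corollary~\ref{cor:G-mod-closed} (with $H=\{e\}$) the map $\rchi=\rchi_G$ is a topological embedding of $G$ into $S_G^\mu(M)$, and by Claim~\ref{claim:ctab-closure}(1) its image is dense. Finally, by the continuity of the $G$-action on $S_X^\mu(M)$ for any definable $G$-set $X$, applied to $X=G$, the multiplication map $G\times S_G^\mu(M)\to S_G^\mu(M)$ is continuous.

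(ii) This is the final assertion of Lemma~\ref{claim:orb-def-sep} applied to $X=G$: if every $p\in S_G(M)$ is definable then the action of $G$ on $S_G^\mu(M)$ is definably separable.

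(iii) This is exactly the Stone--\v{C}ech-type extension lemma preceding Lemma~\ref{claim:orb-def-sep}: if $C$ is a compact $G$-space and $c_0\in C$ is such that $g\mapsto g\ccdot c_0$ is definably separable, then this map extends uniquely to a continuous $G$-equivariant map $S_G^\mu(M)\to C$.
\end{proof}
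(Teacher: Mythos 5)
Your proposal is correct and is essentially the paper's own treatment: Theorem~\ref{thm-def-sam} is stated there as a summary, with each clause resting on exactly the results you cite (Claim~\ref{claim:mu-haus}, Claim~\ref{claim:ctab-closure}, Corollary~\ref{cor:G-mod-closed}, the continuity-of-action claim, Lemma~\ref{claim:orb-def-sep}, and the extension lemma). Your added remark that the embedding in (i) uses the injectivity of $\rchi_G$ (trivial stabilizers being closed since $G$ is Hausdorff) is the right point of care and matches the paper's setup.
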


\begin{rem}
 \begin{enumerate}[label=(\alph*),leftmargin=*]
  \item
It follows from the above theorem that in the case when
  every type $p\in S_G^\mu(M)$ is definable, the space
  $S_G^\mu(M)$ has the same universal property as the Samuel
  compactification if one considers only definably separable actions
  of $G$ on compact spaces.

\item In general, the spaces $S_G^\mu(M)$ and the Samuel compactification
$S(G)$ have very different properties. For example, by a theorem of W.~Veech \cite{V} every
locally compact group acts freely on its Samuel compactification, but
as it follows from Theorem~\ref{theorem:1} if $H$ is a torsion free
group definable in an o-minimal expansion $\CM$ of the real field
then $H$ fixes a type $p_\mu\in S_H^\mu(M)$.

\item Theorem~\ref{theorem:1} and Fact~\ref{Borel} give a complete
  description of  minimal compact $G$-invariant subsets of $S_G^\mu(M)$
  in the case when $G$ is a group definable in an o-minimal expansion
  of the real field.  They are exactly orbits of $\mu$-types $q_\mu$
  for types $q$ whose stabilizers are maximal torsion free subgroups
  of $G$. Indeed, each such orbit is compact since $G/\Smu(q)$ is compact. If $Y\sub
  S^\mu_G(M)$ is a minimal compact $G$-invariant set then there exists a $G$-map
  $f\colon S^\mu_G(M)\to Y$ which, by minimality, necessarily sends the orbit of $q_\mu$ above
  onto $Y$. It follows that $Y=G\ccdot f(q_\mu)$ and $\Stab(q_\mu)\sub
  \Stab(f(q_\mu))$. Because $\Smu(q)$ is a maximal torsion-free group,
  $\Stab(q_\mu)=\Stab(f(q_\mu))$.

\item As for Samuel compactifications, it is easy to see, by
  properties (ii) and (iii) in Theorem~\ref{thm-def-sam}, that when  every type $p\in S_G^\mu(M)$ is definable,   the
  action of $G$ on $S_G^\mu(M)$ extends to a semi-group operation on $S_G^\mu(M)$.

  \end{enumerate}
\end{rem}

\renewcommand{\thesection}{Appendix~\Alph{section}}
\section{On connected components of relatively definable subsets}
\label{sec:setting}
\renewcommand{\thesection}{\Alph{section}}

We work in an o-minimal structure  $\CM$  and  by definable we mean definable in
$\CM$.

Recall that every definable non-empty subset $X\subseteq M^n$ is a disjoint union of
finitely many definably connected components $X_1,\dots X_k$.  These components are
unique, up to a permutation, and characterized by the following two properties:
\begin{enumerate}[label=(\Roman*),leftmargin=*]
\item  Every $X_i$ is a non-empty definable subset of $X$ that is
  clopen  in
$X$. \item If $Y\subseteq X$ is a definable clopen set and $Y\cap
  X_i\neq \emptyset$ then $X_i\subseteq Y$.
\end{enumerate}

In this section we show that the same is true for  relatively definable subsets of
convexly  definable open sets

\medskip
\def\CX{\mathcal{X}}
\def\CJ{\mathcal{J}}

Let $I\subseteq M$ be an interval and $\{V_i\colon i\in I\}$ a uniformly definable
family of open sets of $M^n$.  For a closed downward subset $\CJ\subseteq I$ we will
denote by $V_\CJ$ the open set
\[ V_\CJ=\bigcup_{ r\in \CJ} V_r, \]
and call such an open set {\em convexly definable}.

As usual, a subset $\CX \subseteq V_\CJ$ is called {\em relatively definable\/} if
$\CX=X\cap V_\CJ$ for some definable set $X$.

\begin{sample}
\label{sample:conv-def} Assume $\CM$ is an o-minimal expansion of a group and
$\CM_0\esub \CM$ is an elementary substructure. Then the convex hull $\CO_{M_0}(M)$
and all its cartesian powers are convexly definable open sets, via $V_r=(-r,r)$, for
$r\in M_{\geq 0}$.
\end{sample}

The following is the main theorem of this section.

\begin{thm}
  \label{thm:conv-def} Let $V_\CJ\subseteq M^n$ be a convexly
  definable open set and  $\CX \subseteq V_\CJ$  a relatively definable set. Then
there are disjoint non-empty relatively definable subsets $\CX_1,\dotsc,\CX_k
\subseteq V_\CJ$ such that $\CX=\cup_{i=1}^k \CX_i$ and
\begin{enumerate}[leftmargin=*]
\item  Every $\CX_i$  is clopen in $\CX$ in the topology
  induced from $M^n$;
\item If $\mathcal Y\subseteq \CX$ is a relatively definable clopen set and $\mathcal Y\cap
  \CX_i\neq \emptyset$ then $\CX_i\subseteq \mathcal Y$.
\end{enumerate}
\end{thm}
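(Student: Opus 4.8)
The plan is to reduce the statement about a convexly definable open set $V_\CJ\subseteq M^n$ to the standard o-minimal fact about definably connected components of an honest definable set, by passing to a suitable elementary extension where $V_\CJ$ becomes relatively definable over the ground model. First I would replace $\CJ$ by a single ``infinitesimal'' parameter: since $\CJ$ is a closed downward subset of the interval $I$, choose a saturated enough elementary extension $\CN\succ\CM$ and pick $s\in N$ realizing the cut determined by $\CJ$ (so $s$ is above every element of $\CJ$ and below every element of $I\setminus\CJ$). Then $V_s=\bigcup\{V_r : r\in\CJ\}$ when computed appropriately --- more precisely, $V_\CJ(M)=V_s(N)\cap M^n$, using that the family $\{V_i\}$ is uniformly definable and the $V_i$ are increasing in $i$ (which one may assume after replacing $V_i$ by $\bigcup_{r\le i}V_r$). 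This makes $V_\CJ$ the trace on $M^n$ of a set $V_s$ definable over $Ms$ in the o-minimal structure $\CN$.

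Next I would apply the usual o-minimal decomposition in $\CN$ to the $\CN$-definable set $\CX_s := X\cap V_s$ (where $X$ is a fixed definable set over $M$ with $\CX = X\cap V_\CJ$), obtaining finitely many definably connected components $\CX_s^{(1)},\dots,\CX_s^{(k)}$ of $\CX_s$ in $\CN$, each definable over $Ms$. Setting $\CX_i := \CX_s^{(i)}(N)\cap M^n$ gives relatively definable subsets of $V_\CJ$ (relatively definable over $M$, since cutting down to $M^n$ kills the dependence on $s$ for subsets of $V_\CJ$ --- this needs a small argument using that $s$ realizes a cut and $V_\CJ\subseteq M^n$ is fixed), and $\CX=\bigcup_i\CX_i$. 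Clopenness of each $\CX_i$ in $\CX$ in the $M^n$-topology transfers down from the corresponding statement in $\CN$: being clopen is expressed by the vanishing of the frontier, which is an elementary property, and the induced topology on $M^n$ from $V_\CJ$ is the restriction of the topology from $V_s(N)$.

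The substantive point is the universal property (2): if $\mathcal Y\subseteq\CX$ is relatively definable and clopen in $\CX$ with $\mathcal Y\cap\CX_i\ne\emptyset$, I want $\CX_i\subseteq\mathcal Y$. Write $\mathcal Y = Y\cap V_\CJ$ for $Y$ definable over $M$. The obstacle is that $\mathcal Y$ need not be the trace of a set clopen in $\CX_s$ inside $V_s(N)$ --- ``clopen in $\CX$ in the induced topology from $M^n$'' is weaker than ``clopen in $\CX_s$'' when we only control points of $M^n$. I would handle this by a compactness/saturation argument: the statement that $Y\cap V_s$ is clopen in $X\cap V_s$ near the standard part, together with $\mathcal Y\cap\CX_i\ne\emptyset$, forces (in $\CN$, by o-minimality applied to the definable family indexed near $s$) that for parameters $s'$ below $s$ and close to it, $Y\cap V_{s'}$ already contains the component of $\CX_{s'}$ meeting it; pushing down to $M^n$ then yields $\CX_i\subseteq\mathcal Y$. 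Concretely: if $\CX_i\not\subseteq\mathcal Y$, then $\CX_i\setminus\mathcal Y$ is nonempty relatively definable and, being the complement in a clopen piece, is also clopen in $\CX$; so $\CX_i$ is partitioned into two nonempty relatively definable clopen pieces, and I must derive a contradiction with ``$\CX_i$ is the trace of a definably connected set.'' For this I would argue that any relatively definable clopen partition of $\CX_i = \CX_s^{(i)}(N)\cap M^n$ lifts, by saturation of $\CN$ and o-minimality (choosing $s'$ realizing the relevant sub-cut), to a definable clopen partition of $\CX_{s'}^{(i)}$ in $\CN$, contradicting definable connectedness there. I expect this lifting step --- controlling clopenness ``only on $M^n$'' and transferring it to genuine clopenness over a nearby parameter in $\CN$ --- to be the main obstacle, and it is where the ``convexly definable'' hypothesis (as opposed to an arbitrary increasing union) is essential, because it guarantees $V_\CJ$ is a cut-trace of a uniformly definable increasing family and hence that frontiers behave well as $s'\to s$.
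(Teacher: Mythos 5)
Your reduction to an extension $\mathcal N\succ\mathcal M$ with $s$ realizing the cut of $\mathcal J$ does identify the right pieces: when $\mathcal J$ has no least upper bound (the only nontrivial case; otherwise $V_{\mathcal J}$ and $\mathcal X$ are outright definable), one checks that the trace on $M^n$ of the component of $X(N)\cap V_s(N)$ containing $\alpha\in\mathcal X$ is exactly $\bigcup_{r\in\mathcal J}X_r(\alpha)$, where $X_r(\alpha)$ is the definably connected component of $X\cap V_r$ containing $\alpha$ --- which is what the paper takes as the definition of the pieces. But the proposal leaves unproven precisely the two points where the content of the theorem lies. The most serious gap is relative definability of the $\mathcal X_i$ over $M$: the trace on $M^n$ of an $Ms$-definable set is in general \emph{not} of the form $Z\cap V_{\mathcal J}$ with $Z$ $M$-definable (whether an $M$-point satisfies $\varphi(x,s)$ is a condition on the cut of $s$, i.e.\ a condition quantifying over $\mathcal J$, which is not definable), so ``cutting down to $M^n$ kills the dependence on $s$'' is not a small argument --- it is the crux, and your outline contains no substitute for it. The paper's argument (Claim~\ref{claim:comp-def}) is genuinely needed here: the finitely many slice components $X_r(\alpha_i)$ remain pairwise disjoint on an $M$-definable set of levels containing $\mathcal J$, hence at a single level $r^*\notin\mathcal J$, and then $\mathcal X_i=X_{r^*}(\alpha_i)\cap V_{\mathcal J}$, using that any point of the right-hand side must lie in \emph{some} piece and the pieces are disjoint at level $r^*$.

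The other two justifications are also not proofs as stated, although here the needed repairs are easier than you fear. For clause (2) you pose a lifting problem (clopenness ``only on $M^n$'' to genuine clopenness at a parameter $s'$ in $\mathcal N$) and call it the main obstacle without solving it; in fact no transfer to $\mathcal N$ is needed: for each $r\in\mathcal J$ the set $\mathcal Y\cap V_r=Y\cap V_r$ is an honest $M$-definable set, clopenness of $\mathcal Y$ in $\mathcal X$ restricts to the open subset $X_r=\mathcal X\cap V_r$, so $Y\cap V_r$ is a definable clopen subset of $X_r$ and hence contains $X_r(\alpha)$ for any $\alpha\in\mathcal Y\cap\mathcal X_i\cap V_r$; taking the union over $r\in\mathcal J$ gives $\mathcal X_i\subseteq\mathcal Y$ (this is Claim~\ref{claim:prop2}; your lifting can be completed the same way, by noting the $M$-definable set of levels $r$ at which $Y\cap X_r$ is clopen in $X_r$ contains $\mathcal J$ and hence $s$, but that is the slice-wise argument in disguise). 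Similarly, clause (1) does not simply ``transfer down'': the witnessing neighborhood in $\mathcal N$ of an $M$-point of $\mathcal X_i$ may have radius below every positive element of $M$ and then yields no $M$-neighborhood; one argues instead that each $X_r(\alpha)$ is open in $X_r$, which is open in $\mathcal X$, so $\mathcal X_i$ is open in $\mathcal X$, and closedness follows from there being finitely many disjoint pieces covering $\mathcal X$. In short, the detour through $\mathcal N$ buys you only the finiteness of the number of pieces (which the paper gets from a uniform bound on the number of components of the $X_r$), while the substantive steps still require the paper's slice-wise arguments over $M$, which your sketch does not supply.
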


We call the sets $\CX_i$ above {\em the connected
  components of $\CX$}.

In the rest of this section we prove the above theorem.

\medskip
Replacing $V_r$ by $\cup_{s\leq r}V_s$ if needed we assume that $s\leq r$ implies
$V_s\subseteq V_r$. Let $X \subseteq M^n$ be a definable set such that $\CX=X\cap
V_\CJ$. For $r\in I$ we will denote by $X_r$ the set $X\cap V_r$, so $\CX=\cup_{r\in
\CJ} X_r$.

\medskip

For $r\in I$ and $\alpha\in X_r$ we will denote by $X_r(\alpha)$ the definable
connected component of $X_r$ containing $\alpha$.

The following Claim follows easily from o-minimality and properties of connected
components.
\begin{claim}
  \label{claim:uniform}
\begin{enumerate}[leftmargin=*]
\item The family $\{ X_r(\alpha) \colon \alpha\in X, r\in I\}$ is uniformly
definable. \item For any $\alpha\in M^n$ and  $r_1<r_2 \in I$ we have
\[ X_{r_1}(\alpha)\subseteq X_{r_2}(\alpha). \]
\item Assume  $X_r(\alpha)\cap X_r(\beta)\neq \emptyset$. Then
$X_r(\alpha)=X_r(\beta)$ and for all $r<s\in I$  we have   $X_s(\alpha)=X_s(\beta)$.
\end{enumerate}
\end{claim}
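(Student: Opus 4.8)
The plan is to reduce the statement to two standard inputs from o-minimality --- the existence of a uniform bound on the number of definably connected components in a uniformly definable family, and the defining properties (I)--(II) above of definably connected components --- and then to argue the monotonicity statements by hand. Only part (1) uses o-minimality in an essential way; parts (2) and (3) are formal consequences.

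For part (1), I would first invoke the standard consequence of cell decomposition with parameters (see e.g.\ \cite{vdd}): since $\{V_r : r\in I\}$, and hence $\{X_r=X\cap V_r : r\in I\}$, is uniformly definable, there is $N\in\NN$ bounding the number of definably connected components of $X_r$ uniformly in $r$. Granting such a bound, the relation ``$x$ and $y$ lie in the same definably connected component of $X_r$'' is itself definable: it fails exactly when $X_r$ admits a decomposition into at most $N$ pairwise disjoint definable clopen subsets with $x$ and $y$ in different pieces, and (because of the bound $N$) this is a first-order condition on $(x,y,r)$. Calling this relation $E(x,y,r)$, the family
\[ X_r(\alpha)=\{x\in M^n : E(x,\alpha,r)\} \]
(with the convention $X_r(\alpha)=\emptyset$ when $\alpha\notin X_r$) is then uniformly definable, and for $\alpha\in X_r$ it is by construction the definably connected component of $X_r$ containing $\alpha$.

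For part (2), I would use the elementary fact that a definably connected subset of a definable set lies in a single definably connected component of that set: if $C$ is definably connected, $C\sub D$, and $D=\bigsqcup_i D_i$ is the decomposition into components, then the sets $C\cap D_i$ partition $C$ into definable clopen pieces, so all but one are empty. Now $r_1\le r_2$ gives $V_{r_1}\sub V_{r_2}$, hence $X_{r_1}\sub X_{r_2}$; applying the fact to $C=X_{r_1}(\alpha)$ and $D=X_{r_2}$ (the case $\alpha\notin X_{r_1}$ being trivial) shows that $X_{r_1}(\alpha)$ is contained in the component of $X_{r_2}$ containing $\alpha$, i.e.\ $X_{r_1}(\alpha)\sub X_{r_2}(\alpha)$.

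For part (3), distinct definably connected components of a definable set are disjoint, so $X_r(\alpha)\cap X_r(\beta)\neq\emptyset$ already forces $X_r(\alpha)=X_r(\beta)$; then for $s>r$ part (2) gives $\emptyset\neq X_r(\alpha)=X_r(\beta)\sub X_s(\alpha)\cap X_s(\beta)$, and the same disjointness applied in $X_s$ yields $X_s(\alpha)=X_s(\beta)$. The only genuine point in the whole argument is isolating the uniform bound $N$ in part (1); once that is in place, the definability of $E$ and the monotonicity arguments in (2) and (3) are routine.
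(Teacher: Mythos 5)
Parts (2) and (3) of your argument are correct and are the routine arguments one expects (a definably connected subset of a definable set lies in a single definably connected component; distinct components are disjoint), and they do go through once (1) is available. The problem is in part (1), at the step where you assert that the relation ``$x$ and $y$ lie in different definably connected components of $X_r$'' is a first-order condition on $(x,y,r)$ ``because of the bound $N$''. The set-theoretic characterization you give is correct, but it quantifies over definable clopen decompositions of $X_r$, i.e.\ over definable subsets, and a bound on the \emph{number} of pieces does not convert this into a first-order condition: the pieces are unions of components whose defining formulas and parameters are a priori unbounded as $r$ varies, and exhibiting a single formula $E(x,y,r)$ is precisely the uniform definability you are trying to establish. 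As written, the key step of (1) is therefore unjustified (essentially circular); $N$ bounds how many classes the equivalence relation has for each fixed $r$, not the complexity of the formulas needed to define them uniformly.

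The standard repair is to pass to the total space of the family: let $\widehat{X}=\{(r,x)\in I\times M^n : x\in X_r\}$ and take a cell decomposition of $\widehat{X}$ compatible with the projection onto the $r$-coordinate. The fibers of the cells over a fixed $r$ are again cells, hence definably connected, and each component $X_r(\alpha)$ is a union of such fiber-cells; two nonempty fiber-cells lie in the same component of $X_r$ exactly when they are joined by a chain of nonempty fiber-cells in which consecutive members are not separated (one meets the closure of the other), and these finitely many adjacency conditions are definable in $r$. Since there are only finitely many cells, ``same component'' becomes a finite Boolean combination of definable conditions, which yields the uniformly definable family $\{X_r(\alpha)\}$ and, as a byproduct, the uniform bound $N$ you invoke. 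Alternatively, you may simply quote the standard fact that in an o-minimal structure the definably connected components of the fibers of a definable family form a uniformly definable family (see \cite{vdd}); this is exactly what the paper relies on when it says the claim ``follows easily from o-minimality and properties of connected components''. With (1) repaired in this way, your proofs of (2) and (3) stand as written.
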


For $\alpha\in \CX$ we define
\[ \CX(\alpha)=\bigcup_{r\in \CJ} X_r(\alpha). \]

The following claim follows from Claim~\ref{claim:uniform}(3).
\begin{claim}\label{claim:disjoint}
  For $\alpha,\beta \in \CX$, either $\CX(\alpha)=\CX(\beta)$ or $\CX(\alpha)\cap \CX(\beta)=\emptyset$.
\end{claim}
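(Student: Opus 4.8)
The plan is to prove the equivalent dichotomy by showing: if $\CX(\alpha)\cap\CX(\beta)\neq\emptyset$ then $\CX(\alpha)=\CX(\beta)$. So I would fix a point $\gamma\in\CX(\alpha)\cap\CX(\beta)$. By the definition $\CX(\alpha)=\bigcup_{r\in\CJ}X_r(\alpha)$ (and likewise for $\beta$), there are $r,s\in\CJ$ with $\gamma\in X_r(\alpha)$ and $\gamma\in X_s(\beta)$; in particular $\alpha\in X_r$ and $\beta\in X_s$. Set $u=\max(r,s)$, which lies in $\CJ$ since it equals $r$ or $s$. By Claim~\ref{claim:uniform}(2) (monotonicity in the parameter), $\gamma\in X_r(\alpha)\subseteq X_u(\alpha)$ and $\gamma\in X_s(\beta)\subseteq X_u(\beta)$, so $X_u(\alpha)\cap X_u(\beta)\neq\emptyset$.

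Now I would apply Claim~\ref{claim:uniform}(3) at level $u$: it yields both $X_u(\alpha)=X_u(\beta)$ and $X_t(\alpha)=X_t(\beta)$ for every $t\in I$ with $t>u$. It remains to assemble these into $\CX(\alpha)=\CX(\beta)$. For the inclusion $\CX(\alpha)\subseteq\CX(\beta)$, take any $t\in\CJ$ with $\alpha\in X_t$ (these index exactly the nonempty terms of the union defining $\CX(\alpha)$). If $t\leq u$, then by Claim~\ref{claim:uniform}(2) we have $X_t(\alpha)\subseteq X_u(\alpha)=X_u(\beta)\subseteq\CX(\beta)$, using $u\in\CJ$. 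If $t>u$, then $X_t(\alpha)=X_t(\beta)\subseteq\CX(\beta)$. Taking the union over all such $t$ gives $\CX(\alpha)\subseteq\CX(\beta)$. Since the conclusion of Claim~\ref{claim:uniform}(3) obtained above is symmetric in $\alpha$ and $\beta$, the same assembly argument with the roles reversed gives $\CX(\beta)\subseteq\CX(\alpha)$, and hence $\CX(\alpha)=\CX(\beta)$.

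I do not expect a genuine obstacle here beyond bookkeeping; the one subtlety worth flagging is that $X_t(\alpha)$ and $X_t(\beta)$ need \emph{not} coincide for $t<u$, since a single definably connected component of $X_u$ can break up into several components of $X_t$. The argument therefore has to be run at the level of the whole unions $\CX(\alpha),\CX(\beta)$, using monotonicity to push every component up to level $u$ and the upward-propagation clause of Claim~\ref{claim:uniform}(3) for the levels above $u$, rather than trying to match components level by level.
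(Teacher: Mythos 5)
Your proposal is correct and is essentially the paper's argument: the paper simply states that the claim follows from Claim~\ref{claim:uniform}(3), and your write-up fills in exactly the intended bookkeeping, using the monotonicity in Claim~\ref{claim:uniform}(2) to push the common point up to a single level $u\in\CJ$ and then the upward-propagation clause of Claim~\ref{claim:uniform}(3) to identify all levels above $u$. The subtlety you flag (components need not match below $u$) is real but, as you note, harmless for the union-level conclusion.
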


For $\alpha\in \CX$ let's call the  set $\CX(\alpha)$ {\em a
 component of $\CX$}.

\begin{claim}\label{claim:fin-many-comp}  $\CX$ has finitely many components.
\end{claim}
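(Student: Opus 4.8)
The plan is to reduce the statement to ordinary o-minimal finiteness by passing to a single definable parameter controlling the components. First I would observe that the distinct components $\CX(\alpha)$ are, by Claim~\ref{claim:disjoint}, pairwise disjoint, so it suffices to bound the number of equivalence classes of the relation ``$\alpha,\beta$ lie in the same component''. The key idea is that, by Claim~\ref{claim:uniform}(3), if $X_{r_0}(\alpha)=X_{r_0}(\beta)$ for some $r_0\in\CJ$ then $X_s(\alpha)=X_s(\beta)$ for all $s\geq r_0$, hence $\CX(\alpha)=\CX(\beta)$; conversely if $\CX(\alpha)=\CX(\beta)$ then $X_{r}(\alpha)\cap X_r(\beta)\neq\emptyset$ for some $r\in\CJ$ (since these are increasing unions of definably connected sets with a common point), and then they agree at that level. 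So two points are in the same component of $\CX$ if and only if there is $r\in\CJ$ with $X_r(\alpha)=X_r(\beta)$.

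Next I would fix, for each $r\in I$, the finite set of definably connected components of the definable set $X_r=X\cap V_r$; by o-minimality (uniform finiteness in definable families, applied to the family $\{X_r(\alpha):\alpha\in X, r\in I\}$ from Claim~\ref{claim:uniform}(1)) the number $N(r)$ of components of $X_r$ is a definable function of $r$, hence bounded on the interval $I$ by some fixed $N\in\NN$. The point is that a component $\CX(\alpha)$ of $\CX$ is determined by the ``trace'' it leaves at each level $r\in\CJ$: for $r$ large enough in $\CJ$ (i.e.\ once $\alpha\in V_r$) the component $X_r(\alpha)$ is one of at most $N$ definably connected pieces of $X_r$, and by Claim~\ref{claim:uniform}(2),(3) the assignment $r\mapsto (\text{which piece contains }\alpha)$ is eventually constant along the increasing chain — distinct pieces at a lower level either stay distinct or merge, never split, as $r$ increases. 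Therefore distinct components of $\CX$ must already be separated by distinct pieces at some common sufficiently large level $r\in\CJ$, and at that level there are at most $N$ pieces; one then checks that if two components were separated at level $r$ they remain separated (their restrictions to $V_r$ stay in different pieces) so the map ``component of $\CX$'' $\to$ ``eventual piece index'' is injective, giving at most $N$ components.

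The main obstacle I anticipate is the ``eventually constant / no splitting'' bookkeeping: one must argue carefully that as $r$ runs upward through $\CJ$, the finite partition of $X_r$ into definably connected components only coarsens, never refines, on the overlap $V_r\subseteq V_s$ — this is exactly Claim~\ref{claim:uniform}(2),(3), but one has to be precise that $X_{r_1}(\alpha)\subseteq X_{r_2}(\alpha)$ for $r_1<r_2$ means each component at level $r_2$ is a union of components at level $r_1$, so the component count $N(r)$ is nonincreasing in $r$ and hence eventually constant on a final segment of $\CJ$. Fixing such a final segment, the components of $\CX$ biject with the components of $X_r$ for $r$ in that segment (each $\CX(\alpha)$ being the increasing union of the corresponding chain), and finiteness follows. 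A minor point to handle is that $\CJ$ may have no largest element and may not be definable, but since $N(r)$ is a definable nonincreasing integer-valued function on the definable interval $I$, it is eventually constant on $I$ itself, which is more than enough.
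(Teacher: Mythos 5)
You have the right core mechanism, and it is the same one the paper uses: o-minimality applied to the uniformly definable family $\{X_r(\alpha)\colon \alpha\in X,\ r\in I\}$ gives one integer $N$ bounding the number of definably connected components of every $X_r$, and distinct components of $\mathcal{X}$ are detected by distinct components of a single $X_r$. However, two of the steps your write-up actually leans on are wrong or circular. First, the component structure of $X_r$ does not only coarsen as $r$ grows: Claim~\ref{claim:uniform}(2),(3) do show that components present at level $r_1$ can only merge at a level $r_2>r_1$, but entirely new components of $X_{r_2}$ may appear inside $V_{r_2}\setminus V_{r_1}$, so it is false that each component of $X_{r_2}$ is a union of components of $X_{r_1}$, and false that $N(r)$ is nonincreasing (take $X=\{0,10\}\subseteq M$ and $V_r=(-r,r)$: then $N(1)=1$ while $N(20)=2$). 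Consequently the ``eventually constant on a final segment'' bookkeeping and the claimed bijection between components of $\mathcal{X}$ and components of $X_r$ on that segment are unjustified: for a fixed $r\in\mathcal{J}$ the natural map from components of $X_r$ to components of $\mathcal{X}$ need be neither injective (they may merge higher up in $\mathcal{J}$) nor reach every component of $\mathcal{X}$. Second, the phrase ``distinct components of $\mathcal{X}$ must already be separated at some common sufficiently large level $r\in\mathcal{J}$'' presupposes a single level containing representatives of all components, which is automatic only once you know there are finitely many of them --- the statement being proved.

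Both issues disappear if you argue on finite subfamilies, which is exactly what the paper does: if $\mathcal{X}$ had more than $N$ components, choose $\alpha_0,\dots,\alpha_N$ in pairwise distinct (hence, by Claim~\ref{claim:disjoint}, pairwise disjoint) components; since the $V_r$ increase with $r$ and $\mathcal{J}$ is a downward closed subset of a linear order, a single $r\in\mathcal{J}$ satisfies $\alpha_i\in X_r$ for all $i$; the sets $X_r(\alpha_i)$ are then pairwise distinct, because $X_r(\alpha_i)=X_r(\alpha_j)$ would put a common point into $\mathcal{X}(\alpha_i)\cap\mathcal{X}(\alpha_j)$; so $X_r$ has at least $N+1$ definably connected components, contradicting the uniform bound. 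Your two sound ingredients --- the uniform bound $N$ and the observation that disjointness of components of $\mathcal{X}$ forces $X_r(\alpha)\neq X_r(\beta)$ --- are all that is needed; the monotonicity, stabilization and bijection claims should simply be dropped.
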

\begin{proof}
By o-minimality there an integer $N$ such that for every $r\in I$ the set $X_r$ has
at most $N$ connected components.  We claim that $\CX$  has at most $N$  components.

Assume not. Then there are $\alpha_0,\dotsc \alpha_N\in \CX$  such that the sets
$\CX(\alpha_i), i\leq  N$ are disjoint.   We can choose $r\in \CJ$ such that all
$\alpha_i$ are in $X_r$.  But then $X_r$ has at least $N+1$ connected components.  A
contradiction.
\end{proof}

\begin{claim}\label{claim:comp-def}
Each component $\CX_i$ is relatively definable.
\end{claim}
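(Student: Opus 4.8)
The plan is to write down, for each component, an explicit $M$-definable set whose trace on $V_\CJ$ is that component. By Claim~\ref{claim:fin-many-comp} fix representatives $\alpha_1,\dots,\alpha_k\in\CX$, one from each component, so that $\CX=\CX(\alpha_1)\cup\dots\cup\CX(\alpha_k)$ is a partition (Claim~\ref{claim:disjoint}). It is then enough to produce, for each $i$, a definable $X'_i$ with $\CX(\alpha_i)=X'_i\cap V_\CJ$, where $\CX(\alpha_i)=\bigcup_{r\in\CJ}X_r(\alpha_i)$ and, as usual, $X_r(\alpha)$ is read as $\emptyset$ when $\alpha\notin X_r$.

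The first step is a ``merging lemma'': for every $s\in\CJ$ and all $i\neq j$ with $\alpha_i,\alpha_j\in X_s$ one has $X_s(\alpha_i)\neq X_s(\alpha_j)$. Indeed, if $X_s(\alpha_i)=X_s(\alpha_j)$, then by Claim~\ref{claim:uniform}(3) $X_{s'}(\alpha_i)=X_{s'}(\alpha_j)$ for all $s'\geq s$ in $I$; splitting the union defining $\CX(\alpha_i)$ into the levels $r\geq s$ (where the two agree) and the levels $r<s$ (where $X_r(\alpha_i)\subseteq X_s(\alpha_i)=X_s(\alpha_j)$ by Claim~\ref{claim:uniform}(2), using $s\in\CJ$), one gets $\CX(\alpha_i)\subseteq\CX(\alpha_j)$, and symmetrically $\CX(\alpha_i)=\CX(\alpha_j)$ — contradicting the choice of the $\alpha$'s as representatives of distinct components.

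Now set
\[
X'_i=\bigl\{\beta:\ \exists r\in I\ \bigl[\,\beta\in X_r(\alpha_i)\ \text{and for all }s\in I\text{ with }s\le r\text{ and all }j\neq i,\ (\alpha_i,\alpha_j\in X_s)\Rightarrow X_s(\alpha_i)\neq X_s(\alpha_j)\,\bigr]\bigr\};
\]
by Claim~\ref{claim:uniform}(1) and finiteness of the index set this is $M$-definable. For ``$\subseteq$'': $\CX(\alpha_i)\subseteq\CX\subseteq V_\CJ$, and if $\beta\in X_{s_0}(\alpha_i)$ with $s_0\in\CJ$, then $r:=s_0$ witnesses $\beta\in X'_i$, because every $s\le s_0$ in $I$ lies in $\CJ$ (downward closedness) and the merging lemma applies there. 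For ``$\supseteq$'': take $\beta\in X'_i\cap V_\CJ$ with witnessing level $r$. From $\beta\in X_r(\alpha_i)\subseteq X$ and $\beta\in V_{r''}$ for some $r''\in\CJ$ we get $\beta\in X\cap V_{r''}=X_{r''}\subseteq\CX$, so $\beta\in\CX(\alpha_j)$ for the unique such $j$; pick $s\in\CJ$ with $\beta\in X_s(\alpha_j)$ and suppose for contradiction $j\neq i$. If $s\le r$ then $\beta\in X_r(\alpha_j)$ by Claim~\ref{claim:uniform}(2), so $X_r(\alpha_i)=X_r(\alpha_j)$ with $\alpha_i,\alpha_j\in X_r$, contradicting the bracketed clause of $X'_i$ taken at the level $s=r$. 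If $s>r$ then $\beta\in X_r(\alpha_i)\subseteq X_s(\alpha_i)$, so $X_s(\alpha_i)=X_s(\alpha_j)$ with $\alpha_i,\alpha_j\in X_s$ and $s\in\CJ$, contradicting the merging lemma. Hence $j=i$ and $\beta\in\CX(\alpha_i)$, so $\CX_i:=\CX(\alpha_i)=X'_i\cap V_\CJ$ is relatively definable.

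The point to get right — and the reason the naive choice $X'_i=\bigcup_{r\in I}X_r(\alpha_i)$ is too large — is that components of $X_r$ can merge as $r$ grows past the cut $\CJ$: a point may join $\alpha_i$'s component only at a level outside $\CJ$, at which that component has already swallowed some $\alpha_j$ with $j\neq i$. The bracketed clause forbidding a merge of $\alpha_i$ with another representative below the witnessing level, combined with the downward closedness of $\CJ$, is precisely what forces any witnessing level to behave as though it lay in $\CJ$, and it is the only delicate point in the argument.
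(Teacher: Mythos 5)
Your proof is correct, and it takes a genuinely different route from the paper's. The paper first disposes of the case where $\mathcal{J}$ has a least upper bound (then everything is outright definable), and otherwise uses an overspill-type argument: for each pair $i<j$, the set of levels $r$ at which $X_r(\alpha_i)$ and $X_r(\alpha_j)$ are disjoint is definable and contains $\mathcal{J}$, hence (as $\mathcal{J}$ has no least upper bound) contains some $r_{ij}\in I\setminus\mathcal{J}$; taking $r^*=\min_{i<j} r_{ij}$, one gets the very simple description $\mathcal{X}_i=X_{r^*}(\alpha_i)\cap V_{\mathcal{J}}$, i.e.\ a single connected component at one fixed level just above the cut. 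You instead avoid both the case split and the overspill step by writing down a uniformly definable set directly: an existential over levels $r\in I$, guarded by the clause forbidding a merge of $\alpha_i$ with any other representative at or below the witnessing level, together with your ``merging lemma'' (which is the same disjointness-on-$\mathcal{J}$ fact the paper uses implicitly when it notes its definable set contains $\mathcal{J}$). Both arguments rest on Claim~\ref{claim:uniform}; the paper's buys a cleaner defining set (one fiber of the uniformly definable family), while yours buys uniformity — it never needs to know whether the cut $\mathcal{J}$ is definable or has a supremum, and the verification that the trace on $V_{\mathcal{J}}$ is exactly $\mathcal{X}(\alpha_i)$ is carried out correctly in your two cases $s\le r$ and $s>r$.
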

\begin{proof}
  The claim is obvious if $\CJ$ has a least upper bound (in
  $M\cup\{+\infty\}$),
since then $\CX$ and all $\CX_i$ are definable. Assume $\CJ$ does not have  a least
upper bound. Choose $\alpha_1,\dotsc\alpha_k\in \CX$ such that
$\CX_i=\CX(\alpha_i)$.

For every $i< j$  consider the set of all $r\in I$ such that $X_r(\alpha_i)\cap
X_r(\alpha_j)=\emptyset$.  By Claim~\ref{claim:uniform}(1), it is a definable set
containing $\CJ$, hence contains an element $r_{ij}\in I$ that is not in $\CJ$.

Let $r^*=\min\{ r_{ij} \colon i<j \}$. Notice $r^*\not\in \CJ$.  Since the set
$X_{r^*}(\alpha_i)$ is definable and $\CX_i=X_{r^*}(\alpha_i)\cap V_\CJ$, the set
$\CX_i$ is relatively definable.
\end{proof}

\begin{claim} \label{claim:clopen} For every $\alpha\in \CX$, the component $\CX(\alpha)$ is clopen
 in $\CX$, with respect to the $M^n$-induced topology.
\end{claim}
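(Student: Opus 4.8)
The plan is to show that the component $\CX(\alpha)$ is both open and closed in $\CX$ with respect to the topology induced from $M^n$. Recall that $\CX = \cup_{r\in\CJ} X_r$, that $\CX(\alpha) = \cup_{r\in\CJ} X_r(\alpha)$, and that for $r_1 < r_2$ we have $X_{r_1}(\alpha) \subseteq X_{r_2}(\alpha)$ by Claim~\ref{claim:uniform}(2). Since each $V_r$ is open and $X_r(\alpha)$ is a definably connected component of the definable set $X_r = X\cap V_r$, each $X_r(\alpha)$ is clopen in $X_r$; being clopen in $X_r$ and $X_r$ being the intersection of $X$ with the open set $V_r$, the set $X_r(\alpha)$ is in fact relatively open and relatively closed in $X\cap V_r$ in the $M^n$-induced topology.

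First I would prove openness. Take a point $\beta \in \CX(\alpha)$. Then $\beta \in X_r(\alpha)$ for some $r\in\CJ$, and since $V_r$ is open in $M^n$ and $X_r(\alpha)$ is open in $X\cap V_r$, there is an $M^n$-open neighborhood $W$ of $\beta$ with $W\cap X \cap V_r \subseteq X_r(\alpha)$. Shrinking $W$ so that $W\subseteq V_r$ (using that $V_r$ is open), we get $W\cap \CX \subseteq W\cap X\cap V_r \subseteq X_r(\alpha) \subseteq \CX(\alpha)$. Hence $\CX(\alpha)$ is open in $\CX$.

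Next I would prove closedness, which I expect to be the main obstacle, since $\CJ$ need not have a least upper bound in $M$ and so $\CX(\alpha)$ need not be closed as a subset of $M^n$ — it is only closed within $\CX$. The key is to use that $\CX(\alpha)$ is relatively definable (Claim~\ref{claim:comp-def}), say $\CX(\alpha) = X_{r^*}(\alpha) \cap V_\CJ$ for some $r^* \notin \CJ$ as in that proof, or more simply to argue directly: suppose $\beta\in\CX$ lies in the $M^n$-closure of $\CX(\alpha)$; we want $\beta\in\CX(\alpha)$. Since $\beta\in\CX$, there is $r\in\CJ$ with $\beta\in X_r$; pick $s\in\CJ$ with $r<s$ (possible as $\CJ$ is a downward-closed subset with no largest element, or if it has one take $s=r$). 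By the previous paragraph, $X_s(\beta)$ is open in $\CX$ and contains $\beta$, so it meets $\CX(\alpha)$; by Claim~\ref{claim:uniform}(3) (with the roles adjusted), meeting $X_s(\alpha')$ for some $\alpha'$ with $X_r(\alpha')\subseteq X_s(\alpha')\subseteq\CX(\alpha)$ forces $X_s(\beta)=X_s(\alpha')$, hence $\beta\in X_s(\alpha')\subseteq\CX(\alpha)$. Therefore $\CX(\alpha)$ is closed in $\CX$, and combined with openness we conclude $\CX(\alpha)$ is clopen in $\CX$.

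Putting these together establishes the claim. The one point requiring care is the bookkeeping in the closedness argument: one must ensure that the index $s$ can be chosen in $\CJ$ large enough to simultaneously capture $\beta$ and to apply the monotonicity from Claim~\ref{claim:uniform}(2), and that the component of $\beta$ at level $s$ coincides with the component of $\alpha$ at level $s$ once they share a point — both of which follow from Claim~\ref{claim:uniform}.
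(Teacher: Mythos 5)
Your proof is correct, and the openness half is essentially the paper's argument: each $X_r(\alpha)$ is (cl)open in $X_r=\CX\cap V_r$, which is itself open in $\CX$ because $V_r$ is open in $M^n$, so $\CX(\alpha)=\bigcup_{r\in\CJ}X_r(\alpha)$ is open in $\CX$. Where you diverge is closedness: the paper gets it for free from the fact that $\CX$ is a \emph{finite} disjoint union of components (Claims~\ref{claim:disjoint} and \ref{claim:fin-many-comp}), since the complement of $\CX(\alpha)$ in $\CX$ is then a finite union of open sets; you instead give a direct closure-point argument via Claim~\ref{claim:uniform}. Your route buys a little more self-containment (it does not use the finiteness of the number of components), at the cost of some bookkeeping, and that bookkeeping can be streamlined: there is no need to insist $r<s$ or to worry whether $\CJ$ has a largest element — take any $s\in\CJ$ with $\beta\in X_s$; if $\gamma\in X_s(\beta)\cap\CX(\alpha)$ is a meeting point, then $\gamma\in X_s$, the component $X_s(\gamma)$ is contained in $\CX(\gamma)=\CX(\alpha)$ (Claim~\ref{claim:disjoint}), and $X_s(\beta)\cap X_s(\gamma)\ni\gamma$ forces $X_s(\beta)=X_s(\gamma)$ by Claim~\ref{claim:uniform}(3), so $\beta\in\CX(\alpha)$. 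With that reading your auxiliary element $\alpha'$ (and the condition $X_r(\alpha')\subseteq X_s(\alpha')$) is unnecessary, and the argument is airtight; your instinct that closedness is the delicate point is also right, since $\CX(\alpha)$ is generally not closed in $M^n$ when $\CJ$ has no least upper bound.
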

\begin{proof} Since $\CX$ is a disjoint union of finitely many
  $\CX(\alpha)$ we only need to show that each $\CX(\alpha)$ is open.

Since every $V_r$ is open, for any $r\in \CJ$ the set $X_r=\CX\cap V_r$ is open in
$\CX$.  The connected component $X_r(\alpha)$ of $X_r$ is open in $X_r$. Hence
$X_r(\alpha)$ is open in $\CX$, and $\CX(\alpha)$ is open in $\CX$ as a union of
open sets.
\end{proof}

\begin{claim}
\label{claim:prop2} Let $\mathcal Y$ be a relatively definable clopen
subset of $\CX$. If $ \mathcal Y\cap \CX_i\neq \emptyset$ then $\CX_i \subseteq
\mathcal Y$.
\end{claim}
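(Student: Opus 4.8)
The plan is to mimic the classical argument for connected components of definable sets, adapted to the convexly definable setting, using the ``approximation by definable sets'' that has already been set up. Recall the setup just before the statement: $\CX = X \cap V_\CJ$, and for $\alpha \in \CX$ the component $\CX(\alpha) = \bigcup_{r \in \CJ} X_r(\alpha)$, where $X_r = X \cap V_r$ and $X_r(\alpha)$ is the definable connected component of $X_r$ containing $\alpha$. We already know from Claim~\ref{claim:fin-many-comp} that there are only finitely many components $\CX_1, \dots, \CX_k$, each relatively definable (Claim~\ref{claim:comp-def}) and clopen in $\CX$ (Claim~\ref{claim:clopen}). What remains is precisely property (2): any relatively definable clopen $\mathcal Y \subseteq \CX$ meeting $\CX_i$ must contain $\CX_i$.

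The key step is to reduce to a single level $r$. Fix $\alpha \in \mathcal Y \cap \CX_i$, so $\CX_i = \CX(\alpha)$. First I would handle the easy case where $\CJ$ has a least upper bound in $M \cup \{+\infty\}$: then $\CX$, $\CX_i$, and $\mathcal Y$ are all genuinely definable, $\CX_i$ is an honest definable connected component of the definable set $\CX$, and property (2) is the standard fact (II) about definable connected components recalled at the start of the appendix. So assume $\CJ$ has no least upper bound. Write $\mathcal Y = Y \cap V_\CJ$ for some definable $Y$. For each $r \in \CJ$, the trace $\mathcal Y \cap X_r = Y \cap X_r$ is a relatively definable clopen subset of the definable set $X_r$ (it is the intersection with the definable $X_r$ of a set that is open and closed in $\CX \supseteq X_r$); hence it is a union of definable connected components of $X_r$. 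Since $\alpha \in \mathcal Y \cap X_r$ whenever $r$ is large enough in $\CJ$ that $\alpha \in X_r$, and $X_r(\alpha)$ is the connected component of $X_r$ through $\alpha$, we get $X_r(\alpha) \subseteq \mathcal Y \cap X_r$ for all such $r$. Taking the union over all $r \in \CJ$ large enough, $\CX(\alpha) = \bigcup_r X_r(\alpha) \subseteq \mathcal Y$, which is exactly $\CX_i \subseteq \mathcal Y$.

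The main obstacle I anticipate is being careful about the topological bookkeeping: one must check that $\mathcal Y \cap X_r$ really is clopen \emph{in $X_r$} (not merely in $\CX$), and that ``clopen in a definable set'' forces it to be a union of the definable connected components of that set. The first is immediate since $X_r = \CX \cap V_r$ is open in $\CX$, so a clopen subset of $\CX$ intersected with $X_r$ is clopen in $X_r$; and $\mathcal Y \cap X_r = (Y \cap V_\CJ) \cap X_r = Y \cap X_r$ is definable because $X_r$ is. The second is property (II) from the start of the appendix applied to the definable set $X_r$. A minor point to state cleanly is that for $\alpha \in \CX$ there is indeed some $r \in \CJ$ with $\alpha \in X_r$ (by definition of $V_\CJ$ as a union), and that the union $\bigcup_{r \in \CJ} X_r(\alpha)$ is genuinely all of $\CX(\alpha)$ — but this is just the definition. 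So the whole proof is short:

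\begin{proof}
If $\CJ$ has a least upper bound in $M\cup\{+\infty\}$ then $\CX$, each $\CX_i$, and $\mathcal Y$ are definable, $\CX_i$ is a definable connected component of $\CX$ in the usual sense, and the conclusion is property~(II) of definable connected components. So assume $\CJ$ has no least upper bound. Fix $\alpha\in \mathcal Y\cap \CX_i$, so that $\CX_i=\CX(\alpha)$, and write $\mathcal Y=Y\cap V_\CJ$ with $Y$ definable. Let $r\in \CJ$ be arbitrary with $\alpha\in X_r$ (such $r$ exists since $\CX(\alpha)\subseteq \bigcup_{s\in\CJ}X_s$). Since $X_r=\CX\cap V_r$ is open in $\CX$ and $\mathcal Y$ is clopen in $\CX$, the set $\mathcal Y\cap X_r$ is clopen in $X_r$; it is also definable, being equal to $Y\cap X_r$. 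Hence $\mathcal Y\cap X_r$ is a union of definable connected components of $X_r$, and as $\alpha\in \mathcal Y\cap X_r$ we get $X_r(\alpha)\subseteq \mathcal Y\cap X_r\subseteq \mathcal Y$. Taking the union over all such $r\in\CJ$ gives $\CX_i=\CX(\alpha)=\bigcup_{r}X_r(\alpha)\subseteq \mathcal Y$.
\end{proof}
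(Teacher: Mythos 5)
Your proof is correct and takes essentially the same route as the paper's: fix $\alpha\in\mathcal Y\cap\CX_i$, observe that for each relevant $r\in\CJ$ the trace $\mathcal Y\cap X_r$ is a definable clopen subset of $X_r$ and hence contains $X_r(\alpha)$, and take the union over $r$. The preliminary case split on whether $\CJ$ has a least upper bound is harmless but unnecessary, since the level-by-level argument already covers that case.
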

\begin{proof}
 Assume  $\mathcal Y\cap \CX_i\neq \emptyset$ and  choose $\alpha\in
 \mathcal Y\cap \CX_i$. Since $\mathcal Y$ is relatively definable subset,
 for every $r\in \CJ$ the set $\mathcal Y\cap V_r$ is a definable subset of $X_r$
 that is clopen in $X_r$. Thus it contains the definably
 connected component $X_r(\alpha)$, hence $\mathcal Y$ contains
 $\CX_i=\CX(\alpha)$.
\end{proof}

This finishes the proof of Theorem~\ref{thm:conv-def}

\bigskip
\footnotesize
\noindent\textit{Acknowledgments.}
The authors thank Anand Pillay for useful discussions and in particular for his
alternative proof of the definability of $\Smu(p)$, which gave rise to Proposition~\ref{DCC}.
They also thank the referee for very useful comments on the earlier
version of the paper. 

The authors thank  the US-Israel Binational Science Foundation for its
suppor, and also thank
the Mathematical Science Research Institute at Berkeley  for its hospitality during
Spring 2014. The second author thanks the National Science Foundation for its support.

\begin{bibdiv}
\begin{biblist}

\bib{conversano1}{article}{
   author={Conversano, Annalisa},
   title={A reduction to the compact case for groups definable in o-minimal structures},
   journal={Journal of Symbolic Logic},
   volume={79},
   date={2014},
   number={1},
   pages={45--53},
}

\bib{CP}{article}{
   author={Conversano, Annalisa},
   author={Pillay, Anand},
   title={Connected components of definable groups and $o$-minimality I},
   journal={Adv. Math.},
   volume={231},
   date={2012},
   number={2},
   pages={605--623},
}

\bib{Coste}{book}{
   author={Coste, Michel},
      title={An introduction to o-minimal geometry},
   publisher={Istituti Editoriali e Poligrafici Internazionali, Pisa-Roma},
   date={2000},
}

\bib{vdd}{book}{
   author={van den Dries, Lou},
   title={Tame topology and o-minimal structures},
   series={London Mathematical Society Lecture Note Series},
   volume={248},
   publisher={Cambridge University Press, Cambridge},
   date={1998},
   pages={x+180},
}

\bib{vdd1}{article}{
   author={van den Dries, Lou},
   title={$T$-convexity and tame extensions. II},
   journal={J. Symbolic Logic},
   volume={62},
   date={1997},
   number={1},
   pages={14--34},
}

\bib{DM}{article}{
   author={van den Dries, Lou},
   author={Miller, Chris},
   title={Geometric categories and o-minimal structures},
   journal={Duke Math. J.},
   volume={84},
   date={1996},
   number={2},
   pages={497--540},
}

\bib{EO}{article}{
   author={Edmundo, M{\'a}rio J.},
   author={Otero, Margarita},
   title={Definably compact abelian groups},
   journal={J. Math. Log.},
   volume={4},
   date={2004},
   number={2},
   pages={163--180},
}
\bib{F}{article}{
   author={Fischer, Andreas},
   title={Smooth functions in o-minimal structures},
   journal={Advances in Mathematics},
   volume={218},
   date={2008},
   pages={496--514},
  }

\bib{GPP}{article}{
author={Gismatullin, Jakub},
  author={Penazzi, Davide},
  author={Pillay, Anand},
  title={On compactifications and the topological dynamics of definable
  groups},
  journal={Ann. Pure Appl. Logic},
  volume={165},
  date={2014},
  number={2},
  pages={552--562},
}

\bib{Kelley}{book}{
   author={Kelley, John L.},
   title={General topology},
   note={Reprint of the 1955 edition [Van Nostrand, Toronto, Ont.];
   Graduate Texts in Mathematics, No. 27},
   publisher={Springer-Verlag, New York-Berlin},
   date={1975},
   pages={xiv+298},
}

\bib{Marikova}{article}{
   author={Marikova, Jana},
   title={O-minimal fields with standard part map},
   journal={Fund. Math},
   volume={209},
   date={2010},
   number={2},
   pages={115--132},
}

\bib{MS}{article}{
   author={Marker, David},
   author={Steinhorn, Charles},
   title={Definable types in $\scr O$-minimal theories},
   journal={J. Symbolic Logic},
   volume={59},
   date={1994},
   number={1},
   pages={185--198},
}

\bib{N}{article}{
   author={Newelski, Ludomir},
   title={Topological dynamics of definable group actions},
   journal={J. Symbolic Logic},
   volume={74},
   date={2009},
   number={1},
   pages={50--72},
}

\bib{PPS}{article}{
   author={Peterzil, Ya'acov},
   author={Pillay, Anand},
   author={Starchenko, Sergei},
   title={Definably simple groups in o-minimal structures},
   journal={Trans. Amer. Math. Soc.},
   volume={352},
   date={2000},
   number={10},
   pages={4397--4419},
}

\bib{PS}{article}{
   author={Peterzil, Ya'acov},
   author={Steinhorn, Charles},
   title={Definable compactness and definable subgroups of o-minimal groups},
   journal={J. London Math. Soc. (2)},
   volume={59},
   date={1999},
   number={3},
   pages={769--786},
}

\bib{p}{article}{
   author={Pillay, Anand},
   title={On groups and fields definable in $o$-minimal structures},
   journal={J. Pure Appl. Algebra},
   volume={53},
   date={1988},
   number={3},
   pages={239--255},
}

\bib{pillay-t}{article}{
   author={Pillay, Anand},
   title={First order topological structures theories},
   journal={Journal of Symbolic Logic},
   volume={652},
   number={3},
   date={1987},
   pages={763--778},
}

\bib{gp}{thesis}{
   author={Poulios, Georgios},
   title={Peterzil-Steinhorn subgroups of real algebraic groups, Ph.D Thesis},
   organization={University of Notre Dame},
   date={2013},
}

\bib{samuel}{article}{
   author={Samuel, Pierre},
   title={Ultrafilters and compactification of uniform spaces},
   journal={Trans. Amer. Math. Soc.},
   volume={64},
   date={1948},
   pages={100--132},
}

\bib{usp}{article}{
   author={Uspenskij, Vladimir},
   title={Compactifications of topological groups},
   conference={
      title={Proceedings of the Ninth Prague Topological Symposium (2001)},
   },
   book={
      publisher={Topol. Atlas, North Bay, ON},
   },
   date={2002},
   pages={331--346},
}

\bib{V}{article}{
   author={Veech, William A.},
   title={Topological dynamics},
   journal={Bull. Amer. Math. Soc.},
   volume={83},
   date={1977},
   number={5},
   pages={775--830},
}

\end{biblist}
\end{bibdiv}

\end{document}